\documentclass[10pt]{amsart}
\usepackage{verbatim, amsmath,amstext,amssymb,mathtools}

\usepackage{graphicx,xcolor}

\usepackage{tikz-cd}

\usetikzlibrary{calc}

\usepackage{enumerate}

\usepackage{tikz}

\usetikzlibrary{arrows.meta, decorations.markings}

\usepackage[hyperindex=true,plainpages=false,colorlinks=false,pdfpagelabels]{hyperref}

\theoremstyle{plain}
\newtheorem{The}{Theorem}
\newtheorem*{The*}{Theorem}
\newtheorem{Pro}{Proposition}[section]
\newtheorem{Lem}[Pro]{Lemma}
\newtheorem{Cor}[Pro]{Corollary}

\newtheorem*{Cor*}{Corollary}

\theoremstyle{definition}
\newtheorem{Def}[Pro]{Definition}
\newtheorem{Rem}[Pro]{Remark}

\newtheorem{Exa}[Pro]{Example}
\newtheorem*{Exa*}{Example}
\newtheorem*{Rem*}{Remark}

\numberwithin{equation}{section}

\DeclareMathOperator{\Hom}{Hom}

\DeclareMathOperator{\SL}{SL}

\DeclareMathOperator{\PSU}{PSU}

\DeclareMathOperator{\SU}{SU}          

\DeclareMathOperator{\Id}{Id}

\DeclareMathOperator{\Ad}{Ad}

\DeclareMathOperator{\im}{im}
\DeclareMathOperator{\Li2}{Li_2}
\DeclareMathOperator{\li}{Li}
\DeclareMathOperator{\tr}{tr}
\DeclareMathOperator{\diag}{diag}

\renewcommand{\Im}{\operatorname{Im}}
\renewcommand{\Re}{\operatorname{Re}}

\newcommand{\dbar}{\bar\partial}
\newcommand{\del}{\partial}

\DeclareMathOperator{\Aut}{Aut}

\DeclareMathOperator{\Jac}{Jac}
\DeclareMathOperator{\Res}{Res}

\newcommand{\R}{\mathbb{R}}

\newcommand{\D}{\mathbb{D}}
\newcommand{\C}{\mathbb{C}}
\newcommand{\N}{\mathbb{N}}
\newcommand{\Z}{\mathbb{Z}}
\renewcommand{\S}{\mathbb{S}}
\newcommand{\RP}{\mathbb{RP}}
\newcommand{\CP}{\mathbb{CP}}

\newcommand{\sslash}{\mathbin{/\mkern-6mu/}}

\begin{document}

\author{Sebastian Heller}
\author{Charles Ouyang}
\author{Franz Pedit}
\title[Embedded special Legendrian surfaces]{Embedded special Legendrian surfaces in $\S^5$}

\date{\today}
\maketitle
\begin{abstract}
We construct the first smooth embedded compact special Legendrian surfaces in
\(\S^5\) of genus greater than one.  More precisely, for every sufficiently
large integer \(k\), we construct an embedded special Legendrian surface whose
conformal structure is the Fermat curve of degree \(k\) and genus
\(\tfrac12(k-1)(k-2)\).
Our approach combines an elementary implicit function theorem with the description of special Legendrian surfaces via loop algebra-valued meromorphic connections and a characterization of the unitarizability locus in the ${SL}_{3}(\C)$-character variety of the thrice-punctured sphere.
\end{abstract}
\setcounter{tocdepth}{1}

\section{Introduction}

Calabi--Yau manifolds have occupied a central role in both differential geometry and theoretical physics. In complex dimension three, they are particularly prominent because the Strominger--Yau--Zaslow conjecture \cite{SYZ} predicts that, in the large complex structure limit, they admit fibrations by special Lagrangian three-tori.
 From this perspective, certain point singularities of Calabi--Yau threefolds can be viewed as arising by ``pinching off'' a generator of the 3-torus fiber. Upon rescaling near the singularity, one expects to obtain a special Lagrangian cone in $\C^{3}$, whose link is a closed surface. While it is expected that generically the link will have genus one, higher genus links cannot completely be ruled out. Indeed, Joyce \cite{Joyce2} defined a stability index, showing that eigenvalues in a certain range of the induced Laplacian on the link will correspond to how likely its cone will appear as a singularity---regardless of genus. Intersecting the cone with the round 5-sphere $\S^5$ yields a special Legendrian surface, which, being calibrated, is automatically minimal \cite{HL}. Taking the quotient under the Hopf projection then produces a minimal Lagrangian surface in $\CP^{2}$. Conversely, starting from a minimal Lagrangian surface in $\CP^{2}$, one may lift---possibly after passing to a threefold cover---to a special Legendrian surface in $\S^{5}$, whose cone in $\C^3$ is special Lagrangian. Hence, constructing special Lagrangian cones in $\C^{3}$, special Legendrian surfaces in $\S^5$, and  minimal Lagrangian surfaces in  $\CP^{2}$  is equivalent.

From a differential-geometric point of view, minimal Lagrangian surfaces in $\CP^2$ are natural objects because they combine the Riemannian and symplectic geometry of the K\"ahler manifold $\CP^2$. In genus zero the theory is rigid: an immersed minimal Lagrangian $2$-sphere in $\CP^2$ is the double cover of a totally geodesic $\RP^2$ \cite{Yau}. More generally, classical holomorphic-differential methods lead to the classification of minimal $2$-spheres in spheres and complex projective spaces \cite{Calabi,Calabi2,Chern,EW}. In genus one, by contrast, integrable-systems methods become effective: minimal Lagrangian tori in $\CP^2$ admit a complete algebro-geometric description in terms of spectral curves and linear flows in Jacobians \cite{Sharipov,CastroUrbano,Ha,MM,CMcI}. The Clifford torus is the simplest example.

Beyond genus one, the theory is far less developed.  For minimal
Lagrangian surfaces in \(\CP^2\), Joyce emphasized the lack of
compact higher-genus examples \cite{Joyce}.  Prior to the present work, the
only smooth compact higher-genus examples known were those of
Haskins and Kapouleas \cite{HaK}, obtained by gluing special Legendrian
cylinders to an equatorial \(2\)-sphere in \(\S^5\).  Their cyclic symmetry
forces the genus to be odd, except for one tetrahedrally symmetric example
of genus four, and embeddedness of the corresponding special Legendrian
surfaces in $\S^5$ was not established.
One should also mention Wang's variational construction \cite{Wang}, which was
proposed as a method to obtain compact special Legendrian surfaces
of high genus.  
However, the result only provides possibly branched
Hamiltonian stationary Lagrangian surfaces in \(\CP^2\).

The examples constructed in this paper appear to be the first smooth embedded compact
special Legendrian surfaces in \(\S^5\) of genus greater than one. 
Our main result is the following.
\begin{The*}
There exists \(k_0\in\N\) such that for every \(k\geq k_0\) there exists
an embedded special Legendrian surface in $\S^{5}$ of genus 
$g = \tfrac{(k-1)(k-2)}{2}$ whose  conformal type is that of the Fermat curve.
\end{The*}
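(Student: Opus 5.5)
The plan is to realize the surface as a $\Z_k\times\Z_k$-symmetric cover of a thrice-punctured sphere. The Fermat curve $F_k=\{x^k+y^k=z^k\}$ is a branched $\Z_k^2$-Galois cover of $\CP^1$ with branching over $0,1,\infty$. Accordingly, we look for special Legendrian immersions $f\colon F_k\to\S^5$ that are equivariant under this group, with the group acting on $\S^5$ through the diagonal torus of $\SU(3)$ (suitably twisted).

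\textbf{Step 1 (loop-algebra connection).} Special Legendrian surfaces in $\S^5$ correspond to harmonic maps into $\SU(3)/\mathrm{SO}(3)$, i.e. to Toda-type $\mathbb{Z}_3$-twisted flat connections. We write the associated family of flat connections $\nabla^\lambda=\lambda^{-1}\Phi+\nabla+\lambda\Phi^*$ on the trivial $\C^3$ bundle, pulled down to $\CP^1\setminus\{0,1,\infty\}$. On the base this becomes a Fuchsian system
\[
d+\Big(\tfrac{A_0(\lambda)}{z}+\tfrac{A_1(\lambda)}{z-1}\Big)dz ,
\]
whose residues lie in a loop algebra with the required twisting. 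The local monodromies are conjugate to diagonal matrices of order $k$ (eigenvalues $e^{2\pi i m_j/k}$ with $\sum m_j\equiv0$). This guarantees that on $F_k$ the pulled-back connection has trivial local monodromy at the branch points, so the resulting metric is smooth there.

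\textbf{Step 2 (closing and reality conditions).} We need two things.
\begin{itemize}
\item For all $\lambda$ on the unit circle, the monodromy representation of $\pi_1(\CP^1\setminus\{0,1,\infty\})$ lies, up to conjugation, in $\SU(3)$. This is the unitarizability condition.
\item At the Sym point $\lambda=1$ the monodromy of the lift to $F_k$ is trivial, so that $f$ closes.
\end{itemize}
Since the $\SL_3(\C)$ character variety of the thrice-punctured sphere with fixed local conjugacy classes is explicitly parametrized (by $\tr$ of products and one further trace coordinate), we use the paper's characterization of the unitarizability locus. The conditions then reduce to finitely many real equations in the residue parameters, formulated as a map between Banach spaces of loops (Wiener algebra in $\lambda$).

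\textbf{Step 3 (implicit function theorem at $k=\infty$).} Put $t=1/k$. As $t\to0$ the local monodromies tend to the identity, and the rescaled system $t^{-1}(\cdots)$ converges to a limit system solvable in closed form. This limit corresponds geometrically to a degenerate configuration, for instance a union of totally geodesic Legendrian pieces or a flat-torus-type limit. We expand the monodromy to first order in $t$ via iterated integrals (Li$_2$-type terms at second order), and check that the linearization of the closing and unitarity map is an isomorphism at $t=0$. The elementary IFT then yields a smooth family of solutions for $t$ small, i.e. $k\ge k_0$. I expect this nondegeneracy computation to be the main obstacle. The kernel coming from conjugation and gauge freedoms must be removed by normalizing the residues (e.g. fixing $A_0$ diagonal and using the $\Z_3$ symmetry), and the real unitarizability conditions must be shown to cut transversally. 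This is where the explicit description of the unitarizability locus is used.

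\textbf{Step 4 (embeddedness).} The solution converges as $k\to\infty$ to an explicit limit, and the $\Z_k^2$-symmetry means embeddedness only needs checking on one fundamental domain, plus distinctness of its images under the group. Near the limit the fundamental pieces are graphs over explicit, pairwise disjoint model pieces, separated in $\S^5$ by $O(1/k)$ rotations in distinct torus directions. A quantitative $C^1$ estimate from the IFT then shows that distinct group translates do not meet. Near the branch points, we use the local normal form of the order-$k$ singularity.
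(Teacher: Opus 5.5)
\textbf{Steps 2--3.} Your outline has the right overall shape (a Fuchsian $\lambda$-family on a thrice-punctured sphere, an implicit function theorem in $t=1/k$, asymptotics for embeddedness). However, the step everything rests on fails as you set it up. You descend only to $\Sigma_k/(\Z_k\times\Z_k)$, which has three punctures of order $k$. Closing on $F_k$ then forces the Sym-point monodromy to factor through the abelian deck group. It is therefore a sum of three characters: a \emph{reducible} representation, i.e.\ a singular point of the relative character variety. This is exactly where the unitarization machinery breaks down:
\begin{itemize}
\item Lemma~\ref{lem:tau-unitarize} and Theorem~\ref{THM:MLrecon} need irreducibility for every $\lambda\in\S^1$ to obtain a unique, real-analytic, twisted unitarizer.
\item The component criterion of Lemma~\ref{suf_un_con} is proved by continuity of a unique Hermitian conjugator. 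Real components meet at reducible points, so a loop $\lambda\mapsto\rho_\lambda$ passing through them can leave the unitary component.
\item The paper's description of the unitary locus is for a different variety (local classes of orders $3,k,3$, not $k,k,k$), so you cannot simply quote it.
\end{itemize}

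The missing idea is to use also the cyclic permutation $\sigma$ of the Fermat curve and work on $\Sigma_k/\Gamma_{deck}$, with $\Gamma_{deck}=(\Z_k\times\Z_k)\rtimes\Z_3$. There the Sym-point monodromy is the \emph{irreducible} finite representation $\rho$. Irreducibility for $0<|\lambda|\le 1$ then follows from a degree argument (Proposition~\ref{pro:int-extcond}). Unitarity on $\S^1$ follows from staying in $\mathcal C_u$, because the discriminant $(Z-\tilde Z)^2$ has only even zeros. Extrinsic closing becomes the single real equation $A(\lambda_0)=0$, with $B(\lambda_0)=0$ forced by $\bar\lambda_0=-\zeta^{-2}\lambda_0$; this is why the Sym point is $\lambda_0=e^{2\pi i/12}$ rather than $\lambda=1$.

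In Step 3 you also defer precisely the part that carries the content. At $t=0$ the representation is itself reducible ($M_1=\Id$, $M_0=\diag(1,\zeta,\zeta^{-1})$), so the conditions must be divided by $t^2$. It is the explicit data $a_0=-b_0=1/\sqrt6$, $c_0=\pm1/\sqrt3$ of Lemma~\ref{Lem:initial-data} that make the rescaled linearization invertible. Trivial local monodromy upstairs does not by itself remove the singularities either: you need a \emph{positive} twisted desingularizing gauge and $\Phi^2\neq0$ at the branch points (Lemma~\ref{harmonic-family-form}), otherwise you get branch points. Finally, the twist must have order six, since a $\Z_3$ twist alone only gives superconformal minimal surfaces in $\CP^2$; the extra $\Z_2$ encodes the Lagrangian condition.

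\textbf{Step 4.} The limit is not a union of pairwise disjoint model pieces. On each of the $3k$ discs centred at the order-$k$ ramification points, the surface converges to a totally geodesic special Legendrian cap of radius $\arctan\sqrt2$. All cap boundaries lie on a Clifford torus, caps of different families touch at common boundary points (Remark~\ref{rem:intersection}), and caps of the same family are only $O(1/k)$ apart. A $C^1$-closeness to this configuration coming from the implicit function theorem cannot separate such pieces. The paper needs three further inputs:
\begin{itemize}
\item A blow-up at the touching points: $k\exp_c^{-1}\circ\hat f_k$ converges to an embedded Scherk-type minimal Lagrangian surface in the horizontal $\C^2$ (Theorem~\ref{the:scherk}, Corollary~\ref{cor:scherk}).
\item The first-order variation on the discs, which is the inward conformal reparametrization by $X=-\tfrac{\log 3}{2}r\partial_r$ and produces the order-$1/k$ separation (Theorem~\ref{The:limit-surfacerp2T}). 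This in turn uses $a'(0)=b'(0)=c'(0)=0$ from Theorem~\ref{thm:expansionorder2}.
\item A uniform first-order estimate with $C^1$-error $\epsilon/k$ on the transition annulus between the cap and Scherk regimes (Proposition~\ref{pro:sphericalT}).
\end{itemize}
The delicate region is this neck near the Clifford torus, not the branch points, which are just the cap centres. Embeddedness also depends on the branch: it is proved only for $c_0=+1/\sqrt3$, and the other branch is expected to self-intersect. So it is not a soft consequence of symmetry plus convergence.
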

In particular, there are infinitely many new examples of minimal
Lagrangian surfaces in \(\CP^2\), including infinitely many examples of
even genus.  And when \(g\) is odd, the Fermat-curve symmetries show that our
examples are distinct from those of Haskins--Kapouleas \cite{HaK}.  In fact, the
areas of our examples grow on the order of \(\sqrt g\), in contrast with
the linear growth forced by the handle-gluing construction of
\cite{HaK}.  Moreover, the Haskins--Kapouleas construction requires small
neck sizes and hence conformal structures near the boundary of the moduli
space, whereas our surfaces have the conformal structure of the Fermat
curve itself.

Refining our implicit function theorem argument in the parameter $t=1/k$, we expect the threshold in the theorem above to be $k_0=3$, 
corresponding to the Clifford torus in $\S^5$. This expectation is supported both by the analysis in \cite{CHHT}, which reconstructs Lawson's 
minimal surfaces $\xi_{1,g}$ for all $g\ge 3$ using Fuchsian systems, and by numerical experiments based on our construction. We also expect that our
Fuchsian system description will lead to power series expansions of the areas in $1/g$, with alternating multiple zeta values as 
coefficients, analogous to those found in \cite{CHHT}.

\medskip
\noindent
{\bf Overview.}
The construction of our surfaces is guided by a general philosophy familiar both from integrable surface geometry \cite{Hi,DPW,H} and from the twistorial viewpoint on non-Abelian Hodge theory via $\lambda$-connections \cite{Simpson1997,Sim22}: instead of attacking the nonlinear Gauss-Codazzi type equations directly, one encodes the geometry in a holomorphic family of flat connections depending on a spectral parameter. 
For minimal Lagrangian surfaces in $\CP^2$, this leads to a cyclic Higgs bundle and an associated $\Z_6$-twisted family
of flat connections
\begin{equation}\label{eq:dlambdafamily}
d_{\lambda}=D+\lambda^{-1}\Phi-\lambda\Phi^{\dagger},\qquad \lambda\in\C^\times,
\end{equation}
whose reality and symmetry properties encode the minimal Lagrangian condition. The $\tau$-twisted formalism
developed in Section~\ref{sec:loop-groups} is the natural loop-group framework for families of flat connections with cyclic Higgs data \cite{Bar, Li}: its eigenspace decomposition reflects the cyclic form of $\Phi$ together with the additional symmetry coming from the Lagrangian condition. Conversely, such a family reconstructs the immersion once suitable intrinsic and extrinsic closing conditions are imposed.
The surface-theoretic and the basic gauge-theoretic background for this picture is explained in Section~\ref{sec:MLgeneral}.

Despite this formal similarity, the relevant gauge-theoretic equations are not Hitchin's self-duality equations: the signed self-duality equations differ from Hitchin's equations by the sign of the quadratic Higgs term. Thus the classical non-Abelian Hodge correspondence does not by itself produce the relevant harmonic map, or the closed minimal Lagrangian immersion, from the holomorphic Higgs data. Instead, the main task is to construct a $\lambda$-family of flat connections with prescribed local asymptotics and monodromy, satisfying the intrinsic unitarization condition for $\lambda\in\S^1$ and the extrinsic closing condition at a distinguished {\em Sym point}. Once such a family has been constructed, one applies the DPW recipe \cite{DPW,DM}: loop-group Iwasawa factorization recovers the unitary associated family, and hence the harmonic map and the minimal Lagrangian surface itself.

In this sense, our construction behaves like a local non-Abelian Hodge correspondence for a compact symmetric target: once the implicit function theorem produces a unique meromorphic $\lambda$-family for a given Higgs field which satisfies the required monodromy properties, those properties already determine the 
 geometry. In the case at hand, the cyclic Higgs field essentially forces the $\Z_6$-twisted form of the resulting family of flat connections, and,
after solving the monodromy problem,  leads to minimal Lagrangian surfaces---a phenomenon well known in higher Teichm\"uller theory
\cite{HiTeich,Sim}. What fails, in comparison with the classical non-Abelian Hodge correspondence, is that the signed self-duality equations do not arise from Hitchin's equations, so the relevant harmonic metric is not obtained abstractly from the holomorphic Higgs data, but must instead be constructed indirectly through this monodromy problem. On the other hand, a complex analytic, monodromy-based approach to the non-Abelian Hodge correspondence was already suggested by Deligne and Simpson (see \cite{Simpson1997}), and has been partially implemented in certain situations \cite{GMN,HHT1,HHM}.

The existence problem of minimal Lagrangian surfaces is therefore reformulated 
as a non-Abelian period (monodromy) problem. One seeks  families of meromorphic $\SL_3(\C)$-connections whose singularities become apparent on the relevant branched cover, whose monodromy is unitarizable for every $\lambda\in \S^1$, and whose monodromy at a distinguished Sym point satisfies the extrinsic closing condition. 
For a general higher genus surface this is a difficult global problem, 
governed by  the Riemann--Hilbert map from the moduli space of flat connections into the
 $\SL_3(\C)$-character variety. 
We simplify this by imposing the symmetries of the Fermat curve
\[
\Sigma_k=\{X^k+\zeta Y^k+\zeta^2 Z^k=0\}\subset \CP^2,\qquad \zeta=\exp(\tfrac{2\pi i}{3}),
\]
whose symmetry group
$
\Gamma_{sym}=\Z_k\times\Z_k\rtimes \Z_3
$
realizes $\Sigma_k$ as a branched cover of $\CP^1$ with three branch values. This reduces the global monodromy problem from a higher-genus surface to the relative $\SL_3(\C)$-character variety of the thrice-punctured sphere
$
S=\CP^1\setminus\{0,1,\infty\}.
$

We therefore work on $S$ and construct a $t$-dependent family, with $t=1/k$, of irreducible Fuchsian systems
\[
D_\lambda^t=d+\eta_\lambda^t
\]
with local conjugacy classes prescribed by the covering data. Using Lawton's description of the relative character variety in trace coordinates \cite{Law}, we identify a unitary locus explicitly and solve the intrinsic unitarization condition, together with the extrinsic closing condition at a distinguished Sym point, by an implicit function theorem starting from explicit initial data at $t=0$. There are two suitable initial conditions, leading to two geometrically distinct families of
minimal Lagrangian surfaces of the same conformal type and with the same ambient symmetries. They can be distinguished by
their area, which can be computed in terms of the residues of the meromorphic connection data.
The character variety analysis and the implicit function theorem argument are carried out in Sections~\ref{sec:chava} and~\ref{sec:solvingIFT}, respectively.
After pulling back to $\Sigma_k$ and removing the resulting apparent singularities, we obtain compact minimal Lagrangian immersions
$
f\colon \Sigma_k\to\CP^2
$
for all sufficiently large $k$, carrying the symmetries of the Fermat curve.
These surfaces and their basic geometric properties are studied in Section~\ref{sec:construction}.

It is worth noting that our construction completely avoids delicate gluing techniques. We exploit the prescribed symmetries of the desired surfaces to construct meromorphic loop Weierstrass data from which the surfaces are then derived. This approach is very much in the spirit of classical minimal surface theory, where one also uses a priori knowledge of symmetries and conformal type to determine the Weierstrass data. Integration of these data in the classical case leads to an Abelian period closing problem which, in our setting, becomes a non-Abelian unitarization problem for representations into the twisted loop group ${\bf\SL}^{\tau}(\S^1)$.  

The final part of the paper is devoted to the study of the special Legendrian lifts of the minimal Lagrangian surfaces constructed above. Since topological obstructions prevent higher-genus oriented Lagrangian surfaces from being embedded in $\CP^2$, embeddedness can only be expected after passing to the Legendrian lift in $\S^5$. We show that for one of the two families, and for all sufficiently large $k$, the corresponding special Legendrian lifts
$
\hat f_k\colon \Sigma_k\to \S^5
$
are embedded.

Our embeddedness arguments rely on an explicit asymptotic analysis as $t=1/k\to 0$, equivalently as $g\to\infty$, of solutions to the Fuchsian equation
$
d\Psi_{\lambda}^t+\eta_{\lambda}^{t}\Psi_{\lambda}^t=0
$
on the intermediate quotient
$
\Sigma_k/(\Z_k\times\Z_k)\cong \CP^1,
$
punctured at the three branch values $\{1,\zeta,\zeta^2\}$. Expanding the solutions by iterated integrals, and carrying out the corresponding loop Iwasawa factorization to sufficiently high order, yields precise asymptotic control of the unitary frame. At a distinguished Sym point, this frame closes on $\Sigma_k$ and gives the special Legendrian lift.
Two asymptotic regimes appear. Away from the branch values of 
$
\Sigma_k/(\Z_k\times\Z_k)$, the blow-up limit is an embedded Scherk-type minimal Lagrangian surface in $\C^2$, identified with a horizontal tangent space of the Hopf fibration $\S^5\to\CP^2$. 
This blow-up occurs at each point $q\in\S^5$ which lies on the Cliffod torus. Near each of the three branch values of our covering, the surfaces converge locally to special Legendrian spherical caps. The main difficulty is to control uniformly the transition between these two 
regions. This is achieved by uniform first-order estimates for the frame in $t$, and it yields embeddedness for all sufficiently large $k$. 
The asymptotic analysis and the embeddedness proof are carried out in Section~\ref{sec:em}.

\section*{Acknowledgments}{
The first author was supported by the Beijing Natural Science Foundation grant IS23003.  The second author  was funded in part from the National Science Foundation through DMS-2202832. The third author was supported by a research visiting program from SMRI at the University of Sydney. 
All three authors thank the TU Berlin, especially the SFB/TRR 109, for their hospitality and support during research visits, where parts of this work were carried out.}

\section{The fundamental equations for minimal Lagrangian surfaces}\label{sec:MLgeneral}

\subsection{Minimal Lagrangian cones in $\C^3$, special Legendrian surfaces in $\S^5$, and minimal Lagrangian surfaces in $\CP^2$}
For background on calibrations, special Lagrangian geometry, and Legendrian geometry, see \cite{HL,HiSL,Joyce}. We restrict attention to the three-dimensional case and follow the exposition in \cite{Ha,HaK}.

Consider $\C^3$ with complex coordinates $z=(z_1,z_2,z_3)$
equipped with the standard Euclidean metric. The  corresponding K\"ahler form is
\[
\omega_{\C^3}=\tfrac{i}{2}\sum_j dz_j\wedge d\bar z_j=\sum_j dx_j\wedge dy_j
\]
where
$z_j=x_j+i y_j$, and the holomorphic volume form is
\[
\Omega_{\C^3}=dz_1\wedge dz_2\wedge dz_3.
\] 
The real 3-form $\Re\Omega_{\C^3}$ is a calibration on $\C^3$, that is, $\Re\Omega_{\C^3}$ is closed and for every oriented real 3-dimensional subspace $W\subset \C^3=T_z\C^3$ one has
$(\Re\Omega_{\C^3})|_W\leq\det_W$
where $\det_W\in\Lambda^3 W^*$ is the oriented volume form induced from the Euclidean metric. An immersion $f\colon N^3\to \C^3$ of a connected, oriented real 3-manifold $N$ is  {\em special Lagrangian} if it is Lagrangian, that is $f^*\omega_{\C^3}=0$, and $f^*\Im\Omega_{\C^3}=0$. This is  equivalent \cite{HL} to 
\[
f^*\Omega_{\C^3}= {\det}_N
\]
 with $\det_N$ the volume form of the induced metric on $N$. 
 Thus $f^*\Re\Omega_{\C^3}=\det_N$, so $f$ is calibrated by $\Re\Omega_{\C^3}$ and hence is minimal.

 On the other hand, if $f\colon N\to\C^3$ is a Lagrangian immersion then $f^*\Omega_{\C^3}=\varphi \det_N$ for a phase $\varphi\colon N\to\S^1\subset \C$. Now
 \[
 i\,d\log \varphi=\omega_{\C^3}(H,df)
 \]
by \cite{HL} with $H$ the mean curvature of  $f$ and thus the Lagrangian immersion $f\colon N\to \C^3$ is minimal if and only if its phase $\varphi$ is constant.  Therefore, up to a $\mathrm U_3$-transformation, a minimal Lagrangian immersion $f\colon N\to \C^3$  is special Lagrangian.
 
Now consider the cone $C\colon (0,\infty)\times \Sigma\to \C^3$ 
over an immersed surface $\hat{f}\colon \Sigma\to \S^5$. 
Then $C$ is Lagrangian if and only if $\hat{f}$ is Legendrian, that is, $\hat{f}^*\alpha=0$, where
\[
\alpha=\omega_{\C^3}(X,\cdot)\big|_{\S^5}
\]
is the standard contact form on $\S^5$ and
\[
X=\tfrac{1}{2}\sum_j \left(x_j\frac{\partial}{\partial x_j}+y_j\frac{\partial}{\partial y_j}\right)
\]
is the Liouville vector field on $\C^3$.
 In this case the phase $\varphi\colon (0,\infty)\times \Sigma\to \S^1$ only depends on $\Sigma$, and  $\varphi\colon \Sigma\to \S^1$ satisfies
 \begin{equation}\label{eq:sLeg}
 \hat{f}^*(\Omega_{\C^3}(X,\cdot))=\varphi\,{\det}_\Sigma\,.
 \end{equation}
 As in the Lagrangian case, the Legendrian immersion $\hat{f}\colon \Sigma\to \S^5$ is minimal if and only if $\varphi$ is constant. 
 Therefore, the link in $\S^5$ of a special Lagrangian cone in $\C^3$ is special Legendrian, and conversely the cone over a special Legendrian surface is special Lagrangian.
 
 The Hopf fibration $\pi\colon \S^5\to\CP^2$ can be seen as the unit circle bundle in the tautological line bundle $L\to \CP^2$. Its connection 1-form is the contact structure $\alpha$ on $\S^5$ with curvature a multiple of the K\"ahler form $\omega$ of $\CP^2$.
 In particular, parallel unit length sections of $L$  take  values in the horizontal subbundle $\ker\alpha\subset T\S^5$. Therefore, a Legendrian immersion $\hat{f}\colon \Sigma\to \S^5$ projects via $\pi$ to a Lagrangian immersion $f\colon \Sigma\to \CP^2$. Since the Hopf fibration $\pi\colon \S^5\to \CP^2$ is a Riemannian submersion, minimal Legendrians in $\S^5$ project to minimal Lagrangians in $\CP^2$.  
 
 Lifting a Lagrangian immersion $f\colon \Sigma\to \CP^2$ via $\pi$ is somewhat more subtle: being Lagrangian, $f^*\omega=0$ implies that the pullback of the tautological bundle, again denoted by $L\to\Sigma$, is a flat Hermitian line bundle.
 Prescribing a unit-length vector at some fiber defines, via parallel transport, a unique Legendrian immersed lift $\tilde{f}\colon \tilde{\Sigma}\to \S^5$ on the universal cover of $\Sigma$.  If one additionally assumes $f$ to be minimal, then $L\to \Sigma$ satisfies $L^3=\underline{\C}$ and thus the Legendrian lift of a minimal Lagrangian immersion closes on at most a threefold cover
 $\hat{\Sigma}$ of $\Sigma$.  To summarize:
 \begin{Pro}\label{pro:upanddown}
 Minimal Legendrian immersions, and in particular special Legendrian immersions,
  $\hat{f}\colon \Sigma\to \S^5$ project via the Hopf fibration $\pi\colon \S^5\to\CP^2$ to a minimal Lagrangian immersion $f\colon \Sigma\to \CP^2$. Conversely, a minimal Lagrangian immersion $f\colon \Sigma\to\CP^2$
 horizontally lifts, on at most a threefold cover $\hat{\Sigma}$, to a minimal, not necessarily special, Legendrian immersion $\hat{f}\colon \hat{\Sigma}\to \S^5$. Choosing the initial condition for the lift appropriately, one can arrange for $\hat{f}$ to be special Legendrian. 
 \end{Pro}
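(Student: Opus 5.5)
We will prove the three assertions of Proposition~\ref{pro:upanddown} in turn, using the facts recalled above: $\pi$ is a Riemannian submersion with horizontal distribution $\ker\alpha$, and $d\alpha$ descends to a multiple of $\omega$.

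\emph{Downward direction.} Let $\hat f$ be a minimal Legendrian immersion and $f=\pi\circ\hat f$. Since $\hat f^*\alpha=0$, the differential $d\hat f$ takes values in the horizontal distribution. There $d\pi$ is an isometry, so $f$ is an immersion and induces the same metric as $\hat f$. Moreover $f^*\omega$ is a multiple of $\hat f^*d\alpha=d(\hat f^*\alpha)=0$, so $f$ is Lagrangian. For minimality, we compare second fundamental forms by O'Neill's formula for horizontal vectors $X,Y$:
\[
\nabla^{\S^5}_XY=\widetilde{\nabla^{\CP^2}_{d\pi X}d\pi Y}+\tfrac12[X,Y]^{\mathcal V}.
\]
The vertical term is skew in $X,Y$, so it drops out of the trace. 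Hence the horizontal part of the mean curvature $\hat H$ of $\hat f$ is the horizontal lift of the mean curvature $H$ of $f$, and $\hat H=0$ gives $H=0$.

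\emph{Upward direction.} Let $f$ be a minimal Lagrangian immersion. As explained above, $f^*L$ carries the pulled-back Chern connection. Its curvature is a multiple of $f^*\omega=0$, so the connection is flat with holonomy $\rho\colon\pi_1(\Sigma)\to \mathrm U_1$. Parallel transport of a unit vector gives a horizontal lift $\tilde f$ on the universal cover, and by the argument just given it is a minimal Legendrian immersion. The key claim is $\rho^3=1$. For this we identify $L^{-3}$ with $K_{\CP^2}$ restricted to $\Sigma$ and use that $f$ is Lagrangian: the isomorphism $T\CP^2|_\Sigma\cong T\Sigma\otimes\C$ turns $f^*K_{\CP^2}$ into $\Lambda^2T^*\Sigma\otimes\C$. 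This bundle is trivialized by the (complexified) volume form of the induced metric $\det_\Sigma$.

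Via this trivialization, the induced connection on $f^*L^3$ differs from the trivial one by $i\,d\theta$, where $\theta$ is the Lagrangian angle. Exactly as in the formula $i\,d\log\varphi=\omega(H,df)$ recalled above, $d\theta$ is the Maslov form, namely $\omega(H,\cdot)$ restricted to $\Sigma$ up to sign. Minimality gives $d\theta=0$, so $f^*L^3$ has a parallel nowhere-vanishing section and trivial holonomy. Therefore $\rho^3=1$, and $\rho$ factors through $\Z_3$. Its kernel defines a cover $\hat\Sigma$ of degree $1$ or $3$ on which $\tilde f$ descends to a closed immersion $\hat f$. I expect this identification of the flat connection on $L^3$ with the one determined by the Lagrangian angle, including keeping track of normalizations and signs, to be the main obstacle. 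The plan is to carry it out locally in a unitary frame adapted to the Lagrangian plane.

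\emph{Special Legendrian.} For a minimal Legendrian lift $\hat f$, equation \eqref{eq:sLeg} gives a constant phase $\varphi$. Replacing the initial unit vector $v$ by $e^{i s}v$ replaces $\hat f$ by $e^{is}\hat f$. This changes $\hat f^*(\Omega_{\C^3}(X,\cdot))$ by the factor $e^{3is}$, since $\Omega_{\C^3}$ has degree three and $X$ commutes with scalar multiplication, while $\det_\Sigma$ is unchanged. Choosing $s$ with $e^{3is}\varphi=1$ makes $\hat f$ special Legendrian.
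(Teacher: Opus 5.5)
Your argument is correct. The downward direction and the final phase rotation $\hat f\mapsto e^{is}\hat f$ are what the paper does; your O'Neill computation just makes explicit the paper's appeal to $\pi$ being a Riemannian submersion. The genuine difference is how you obtain $\rho^3=1$.

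The paper gets $L^3=\underline{\C}$ from the harmonic sequence. By Lemma~\ref{lem:mLag}, $\partial f_-\colon L_-\to KL$ and $\partial f\colon L\to KL_+$ are holomorphic isometries. Hence the orthogonal splitting $\underline{\C}^3=L_-\oplus L\oplus L_+$ identifies $\Lambda^3\underline{\C}^3$ with $KL\otimes L\otimes K^{-1}L=L^3$ as Hermitian line bundles with connection, see \eqref{eq:detV}.

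You instead trivialize $f^*\Lambda^2_{\C}T\CP^2\cong f^*L^{-3}$ by $e_1\wedge_{\C}e_2$, for an oriented orthonormal frame of the Lagrangian surface. Note that $K_{\CP^2}\cong L^{3}$, not $L^{-3}$, since $L=\mathcal O(-1)$; this slip is harmless for the holonomy.

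The step you flag as the main obstacle is a short computation. The Levi-Civita connection commutes with $J$, and $\langle \mathrm{II}(\cdot,\cdot),J\,\cdot\,\rangle$ is totally symmetric on a Lagrangian. Therefore $\nabla(e_1\wedge_{\C}e_2)=i\langle\tr\mathrm{II},J\,\cdot\,\rangle\,e_1\wedge_{\C}e_2$, which vanishes when $f$ is minimal. The identification $\Lambda^2_{\C}T\CP^2\cong L^{-3}$ including connections follows from $T\CP^2=\Hom(L,L^\perp)$ with Levi-Civita connection $\nabla^*\otimes\nabla^\perp$, together with the flatness of $L\otimes\Lambda^2L^\perp=\Lambda^3\underline{\C}^3$. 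A global ``Lagrangian angle'' does not exist in $\CP^2$, which has no holomorphic volume form; what you actually use is this mean curvature form.

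Each route has its own advantage. Yours needs no harmonic-map theory and works verbatim for oriented minimal Lagrangian submanifolds of $\CP^n$, where $K_{\CP^n}\cong L^{n+1}$ gives closing on an $(n+1)$-fold cover. The paper's route is specific to surfaces, but it comes for free from the cyclic decomposition $V=KL\oplus L\oplus K^{-1}L$ that the paper needs anyway for the signed self-duality equations. It also directly exhibits $L$ as a $3$-torsion point of $\Jac(\Sigma)$.
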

 
\subsection{The fundamental equations of minimal Lagrangian surfaces in $\CP^2$}
Let $L\subset \underline{\C}^{3}$ denote the tautological line bundle over the complex projective plane $\CP^2$.  
Then the trivial bundle
\[
\underline{\C}^{3}=L\oplus L^{\perp}
\]
splits as an orthogonal sum with respect to the standard Hermitian product $\langle\,,\,\rangle$ on $\C^{3}$,  which we assume to be sesquilinear in the first entry. In this decomposition, the trivial connection has the form 
\begin{equation}\label{eq:d}
d=\begin{pmatrix} \nabla&-\delta^{\dagger}\\ \delta& \nabla^{\perp}\end{pmatrix}
\end{equation}
with $\nabla$, $\nabla^{\perp}$ Hermitian connections on $L$, $L^{\perp}$ and 
\begin{align*}
\delta\colon T\CP^2 &\to \Hom(L,L^{\perp}),\quad
v\mapsto (d_v)^{\perp}
\end{align*}
the bundle isomorphism identifying the tangent bundle of $\CP^2$ with the bundle $\Hom(L,L^{\perp})$. The K\"ahler structure of $\CP^{2}$ is given by the complex structure $J(v)=iv$ and  the Hermitian metric 
\begin{equation}\label{hermitian_metric}
h_{\CP^2}(v,w)=\tr({v^{\dagger}w}),\qquad v,w\in T\CP^2=\Hom(L,L^{\perp})\,.
\end{equation}
The real and imaginary parts 
\[
h_{\CP^2}=g_{\CP^2}+i\,\omega
\]
yield  the Fubini-Study metric $g_{\CP^2}$  and the K\"ahler 2-form $\omega$. The Levi-Civita connection of $g_{\CP^2}$ is the connection $\nabla^*\otimes\nabla^{\perp}$ on $\Hom(L,L^{\perp})$.

A map $f\colon \Sigma \to \CP^2$ from a Riemann surface $\Sigma$ corresponds---via pullback of the tautological bundle over $\CP^2$---to a line subbundle $L\subset \underline{\C}^{3}$ of the trivial bundle over $\Sigma$. Then $df=f^*\delta$ is the derivative of $f$  and we have the type decomposition
\begin{equation}\label{eq:df-type}
\begin{gathered}
df=\del f+\dbar f\in \Gamma(K\Hom(L,L^{\perp}))\oplus \Gamma(\bar{K}\Hom(L,L^{\perp}))\,,\\
\del f=\tfrac{1}{2}(df-i*df)\,,\qquad \dbar f =\tfrac{1}{2}(df+i*df)\,.
\end{gathered}
\end{equation}
Here $K$, $\bar{K}$ denote the canonical and anti-canonical line bundles of $\Sigma$ and 
 $*=(J_\Sigma)^*\colon T\Sigma^*\to T\Sigma^*$ is the complex structure of $\Sigma$.
The Hermitian connections $\nabla$, $\nabla^{\perp}$  induce -- via type decomposition -- holomorphic and anti-holomorphic structures on the bundles $L$, $L^{\perp}$.  We have the following characterizations:
\begin{enumerate}
  \refstepcounter{equation}
  \makeatletter \def\theenumi{\theequation\@alph\c@enumi} \makeatother
 \item \label{2.4a} $f$ is (branched) conformal if and only if $h_{\CP^2}(\dbar f,\del f)=0$, in which case
\[
2|\del f|^2=|df|^2+\omega(df,*df)\quad\text{and}\quad 2|\dbar f|^2 =|df|^2-\omega(df,*df)\,;
\]
\item\label{2.4b}
$f$ is Lagrangian, that is $f^*\omega=0$, if and only if $|\del f|^2=|\dbar f|^2$;
\item\label{2.4c}
$f$ is (branched) conformal and Lagrangian if and only if $
|\del f|^2=|\dbar f|^2 =\tfrac{1}{2}|df|^2$.
\end{enumerate}
Here  $|\alpha |^2=g_{\CP^2}(\alpha,\alpha)\in\Gamma(|K|^2)$ denotes the 
quadratic form  $|\alpha |^2_v=g_{\CP^2}(\alpha(v),\alpha(v))$, $v\in T\Sigma$,  induced via  $\alpha\in\Omega^1(\Sigma, f^* T\CP^2)$. 

The following properties of harmonic maps into $\CP^2$  are well-documented in the literature \cite{durhamboys,BPW}:
{\bf (1)}
A map $f\colon \Sigma\to \CP^2$ is {\em harmonic}  if and only if $\del f \in \Gamma(K\Hom(L,L^{\perp}))$ is holomorphic, equivalently, 
$\dbar f\in \Gamma(\bar{K}\Hom(L,L^{\perp}))$ is anti-holomorphic. If neither of those bundle maps is trivial, that is, $f$ is neither anti-holomorphic nor holomorphic, then there exist unique line subbundles  $L_{\pm}\subset  L^{\perp}$ with $\del f\in \Gamma(K\Hom(L,L_{+}))$ and $\dbar f\in \Gamma(\bar{K}\Hom(L,L_{-}))$. The corresponding maps $f_{\pm}\colon \Sigma\to \CP^{2}$ are again harmonic and $\dbar f=-(\del f_{-})^{\dagger}$,  $\dbar f_{+}=-(\del f)^{\dagger}$.

{\bf (2)}
For a harmonic map $f\colon \Sigma\to \CP^2$, the quadratic Hopf differential 
\[
Q_2=(f^*g_{\CP^2})^{2,0} = h_{\CP^2}(\dbar f,\del f)\in H^0(K^2)
\]
is holomorphic. By \eqref{2.4a}, its vanishing $Q_2\equiv0$ characterizes  the conformality of the harmonic map $f\colon \Sigma\to \CP^2$, which thus becomes a branched minimal immersion. Assuming that $f$ is neither holomorphic nor anti-holomorphic, we have the orthogonal splitting 
\[
\underline{\C}^3=L_{-} \oplus  L\oplus L_{+}
\]
of the trivial $\C^3$-bundle over $\Sigma$.  The derivatives of the harmonic maps  $f_{\pm}, f$ give rise to the harmonic sequence 
 \begin{equation}\label{eq:harmseq}
\begin{tikzcd}
L_{-}  \arrow[r, "\del f_{-}"] & KL  \arrow[r, "\del f"] & K^2 L_{+} \arrow[r, "\del f_{+}"] & K^3 L_{-} 
\end{tikzcd}
\end{equation}
and the cubic differential 
\begin{equation}\label{eq:cubic}
Q=-h_{\CP^2}(\dbar f,\del f_{+}\del f)=\tr(\del f_{+}\del f\del f_{-}) \in H^0(K^{3})
\end{equation}
is holomorphic. Either $Q\equiv0$, in which case $f_{+}$ is anti-holomorphic and the sequence \eqref{eq:harmseq} terminates -- the  {\em superminimal} 
case \cite{durhamboys} -- or $Q\not\equiv 0$, and the  sequence \eqref{eq:harmseq}  is periodic with period three -- the {\em superconformal} case \cite{BPW}. 
Since the main focus of this paper is on minimal Lagrangian immersions 
$f\colon \Sigma\to \CP^2$ from a Riemann surface $\Sigma$, we collect the basic characteristics of their harmonic sequences \eqref{eq:harmseq}.

\begin{Lem}\label{lem:mLag}
Let $f\colon \Sigma\to \CP^2$ be a minimal Lagrangian immersion. Then the following properties hold:
\begin{enumerate}[(i)]
\item
 The holomorphic cubic differential \eqref{eq:cubic} vanishes identically if and only if $f(\Sigma)\subset \RP^2$. If $\Sigma$ is compact and of nonzero genus, the minimal Lagrangian immersion $f$  is superconformal, $Q\not \equiv 0$, and the harmonic sequence \eqref{eq:harmseq} is periodic.
\item
The quadratic forms
\(
\tfrac{1}{2}|df|^2=|\del f|^2=|\dbar f|^2 =|\del f_{-}|^2=|\dbar f_{+}|^2
\)
are nowhere vanishing.  
\item
The bundle maps
\begin{equation}\label{eq:isomorph}
\del f_{-}\colon L_{-}\cong KL\quad\text{and}\quad \del f\colon L \cong KL_{+}
\end{equation}
are holomorphic isometries if the Hermitian metric on $K$ comes from the induced metric $\tfrac{1}{2}f^*g_{\CP^2}$ on $\Sigma$.
\item
The zeros of $Q$ are the same as the zeros of $\del f_+$, 
i.e.  the divisors  $(Q)=(\del f_+)$ agree.
\end{enumerate}
\end{Lem}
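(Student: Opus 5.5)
We argue directly from the conformality and Lagrangian characterizations \eqref{2.4a}--\eqref{2.4c} together with the harmonic sequence \eqref{eq:harmseq}. We prove (ii) and (iii) first, then (iv), and deduce (i) last, since (i) needs the nondegeneracy supplied by the earlier items.

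For (ii), since $f$ is an immersion that is conformal and Lagrangian, \eqref{2.4c} gives $|\del f|^2=|\dbar f|^2=\tfrac12|df|^2$, and this is nowhere vanishing. The relations $\dbar f=-(\del f_-)^\dagger$ and $\dbar f_+=-(\del f)^\dagger$ from property (1) of harmonic maps then give $|\del f_-|^2=|\dbar f|^2$ and $|\dbar f_+|^2=|\del f|^2$, because adjoints preserve the trace norm \eqref{hermitian_metric}. In particular $f$ is neither holomorphic nor anti-holomorphic, so $L_\pm$ exist and the splitting $\underline{\C}^3=L_-\oplus L\oplus L_+$ holds.

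For (iii), $\del f$ is a holomorphic section of $K\Hom(L,L_+)$, and by (ii) it vanishes nowhere, so it defines a holomorphic bundle isomorphism $L\cong KL_+$. The same holds for $\del f_-\colon L_-\cong KL$. To see that these maps are isometries, note that $|\del f|^2$ equals $\tfrac12 f^*g_{\CP^2}$ as a quadratic form by (ii). So if $K$ carries the metric dual to $\tfrac12 f^*g_{\CP^2}$, then for unit $\ell\in L$ the section $\del f(\ell)$ has unit length in $KL_+$. The same argument applies to $\del f_-$, using $|\del f_-|^2=\tfrac12|df|^2$.

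For (iv), we use \eqref{eq:cubic}, $Q=\tr(\del f_+\del f\del f_-)$. This is a composition of line bundle maps $L_-\to KL\to K^2L_+\to K^3L_-$. By (iii) the first two factors are isomorphisms, so the divisor of $Q$ equals the divisor of $\del f_+$.

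For (i), suppose first that $Q\equiv0$. By (iv) this means $\del f_+=0$, so $f_+$ is anti-holomorphic and the sequence is superminimal. We need $f(\Sigma)\subset\RP^2$; this is the main obstacle. We would use the Lagrangian condition to identify $L_+$ with $\overline{L_-}$, or more precisely $L_-$ with $\overline{L_+}$, via the standard real structure. We would try to show that the frame $\ell_-,\ell,\ell_+$ can be chosen so that $\overline{\ell}$ is parallel to $\ell$. Concretely, the Lagrangian condition says that $\bar{\ell}\perp \ell_{\pm}$ up to the appropriate pairing. One can then check that $\bar L=L$ holds up to a constant unitary transformation, using the fact that the holomorphic line $\overline{L_+}$ has vanishing second fundamental form when $Q=0$. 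We expect this step to require the most care.

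Conversely, if $f(\Sigma)\subset\RP^2$, then $f$ is totally geodesic in a real form, and $L_+=\overline{L_-}$ is constant, so $\del f_+=0$ and hence $Q=0$. Finally, suppose $\Sigma$ is compact with genus $\geq1$ and $Q\equiv0$. Then $f$ factors through $\RP^2$ as a local isometry onto a totally geodesic surface, and this gives a covering $\Sigma\to\RP^2$. Since $\RP^2$ has finite fundamental group, $\Sigma$ would be $\S^2$, a contradiction. Hence $Q\not\equiv0$, and by the superconformal dichotomy of \cite{BPW} the harmonic sequence is periodic.
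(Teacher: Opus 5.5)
Your proofs of (ii)--(iv) are correct and match the paper. You derive (ii) from \eqref{2.4c} together with $\dbar f_{+}=-(\del f)^{\dagger}$ and $\del f_{-}=-(\dbar f)^{\dagger}$. You get (iii) from (ii), and (iv) from $Q=\tr(\del f_{+}\del f\del f_{-})$ combined with (iii).

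\textbf{The gap is in (i).} You never establish the implication $Q\equiv 0\Rightarrow f(\Sigma)\subset\RP^2$. You flag it as the main obstacle and then only sketch it conditionally (``we would try'', ``one can then check''). Nothing in the sketch converts $\del f_+=0$ into information about the second fundamental form of $f$ itself. The fact you plan to use is also false. When $Q\equiv 0$ the curve $\overline{L_+}$ is holomorphic, but it need not have vanishing second fundamental form. Take the standard $\RP^2$ with real unit lift $\ell$ and conformal coordinate $z$. Then $L_+=\C\ell_z$, and since $\ell_z^T\ell_z=0$ this line sweeps out the conic $\{x_1^2+x_2^2+x_3^2=0\}$, which is not a projective line. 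The same example shows that your argument for the converse, ``$L_+=\overline{L_-}$ is constant'', is wrong, although the conclusion $\del f_+=0$ does hold there. The correct reason is isotropy: $\langle\ell_{\bar z},\ell_{zz}\rangle=\ell_z^T\ell_{zz}=\tfrac12\del_z(\ell_z^T\ell_z)=0$.

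\textbf{The missing idea} is a link between $Q$ and the second fundamental form $S$ of $f$. The paper takes this from \cite{LoftinMcI}: $Q=i\,g_{\CP^2}(S,J\,df)^{(3,0)}$. For a Lagrangian immersion, $g_{\CP^2}(S,J\,df)$ is a totally symmetric cubic form. Minimality kills its $(2,1)$-part, so $Q\equiv 0$ if and only if the whole cubic form vanishes. Since $J\,df$ maps $T\Sigma$ isomorphically onto the normal bundle, that happens if and only if $S\equiv 0$. So $f$ is totally geodesic, hence locally congruent to $\RP^2$.

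An alternative that uses only your items (ii)--(iii) also works. These items suffice to derive the relation $G+|Q|^2_{f^*g}-1=0$ obtained later in Section 2. The Gauss equation for a minimal Lagrangian surface, where totally real planes have sectional curvature $1$, gives $G=1-\tfrac12|S|^2$. Hence $Q\equiv0$ if and only if $G\equiv 1$ if and only if $S\equiv 0$. Once this is in place, your covering argument for compact $\Sigma$ of positive genus finishes (i); Gauss--Bonnet with $G\equiv 1$ would also do.
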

\begin{proof}
As shown in \cite{LoftinMcI}, a  minimal Lagrangian immersion $f$ has  $Q=ig_{\CP^2}(S,Jdf)^{(3,0)}$, where $S$ denotes the normal bundle valued second fundamental form of $f$. Since $g_{\CP^2}(S,Jdf)^{(2,1)} =0$ by minimality,  $Q\equiv0$ is equivalent to  the vanishing of the symmetric 3-form $g_{\CP^2}(S,Jdf)=0$. The Lagrangian condition then implies $S\equiv 0$ and the map $f$ is totally geodesic which shows (i).

The relations $\tfrac{1}{2}|df|^2=|\del f|^2=|\dbar f|^2$ follow from \eqref{2.4b}, \eqref{2.4c}. Furthermore,  
$\dbar f_{+}=-(\del f)^{\dagger}$ and $\del f_{-}=-(\dbar f)^{\dagger}$, which provide the remaining  identities in (ii).
Since $f$ is assumed to be immersed, $|df|^2$ is nowhere vanishing. 

Item (iii) follows from (ii), and item (iv) follows from
$
Q=\tr(\del f_{+}\del f\del f_{-})
$
together with (iii).
\end{proof}
The holomorphic isomorphisms \eqref{eq:isomorph} identify $L_{-}\cong KL$ and $L_{+}\cong K^{-1}L$ so that the orthogonal decomposition
\begin{equation}\label{eq:C3decom}
\underline{\C}^3=L_{-}\oplus L\oplus L_{+}
\end{equation}
gives rise to the trivial rank $3$ bundle 
\begin{equation}\label{eq:cyclic-bundle}
V=KL\oplus L\oplus K^{-1}L\,.
\end{equation}
Via these identifications, the Hermitian metric on $V$ is diagonal: the metrics on $KL$ and $K^{-1}L$ are the product metrics coming from the Hermitian metrics on $K$ and $K^{-1}$, respectively, induced by $\tfrac12 f^*g_{\CP^2}$ \footnote{The factor $1/2$ is due to item (ii) of Lemma \ref{lem:mLag}.}, together with the induced Hermitian metric on $L\subset\underline{\C}^3$.
 The Hermitian connections on $L_{\pm}$ hence become  $\nabla_{\pm}=\nabla^{K^{\mp 1}}\otimes\nabla$ with $\nabla^{K^{\pm 1}}$ induced by the Levi-Civita connection of $\tfrac12f^*g_{\CP^2}$ on $\Sigma$ and $\nabla$ is the Hermitian connection on $L$ induced via the decomposition \eqref{eq:C3decom}.

We can now reformulate and refine the decomposition \eqref{eq:d} of the flat connection $d$ on the trivial
$\C^3$-bundle over $\Sigma$ in the orthogonal splitting \eqref{eq:cyclic-bundle} of $V$ as
\begin{equation}\label{eq:d-cyclic}
d_V=D+\Phi - \Phi^{\dagger}\,.
\end{equation}
Here  $D=\nabla_{-}\oplus\nabla\oplus \nabla_{+}$ is the Hermitian diagonal connection of the splitting \eqref{eq:cyclic-bundle} and 
\begin{equation}\label{eq:Higgsfield}
\Phi=\begin{pmatrix} 0 &0&Q\\ {\bf 1}&0&0 \\ 0&{\bf 1}& 0\end{pmatrix}\in \Gamma(\Sigma,K{\mathfrak{sl}}(V))
\end{equation}
is a cyclic Higgs field, see \cite{Bar,Li}. Note that due to \eqref{eq:cyclic-bundle}, the entry ${\bf 1}\in\C$ is the constant section of the trivial bundle $\underline{\C}=K\Hom(KL,L)=K\Hom(L,K^{-1}L)$ and the entry $Q\in H^0(K^3)$ is the holomorphic cubic differential \eqref{eq:cubic} since $K^3=K\Hom(K^{-1}L, KL)$. The adjoint of the Higgs field $\Phi$, with respect to the splitting \eqref{eq:cyclic-bundle}, is given by
\begin{equation}\label{eq:Higgsadj}
\Phi^{\dagger}=
\begin{pmatrix} 0 &\tfrac{1}{2}h&0\\ 0&0&\tfrac{1}{2}h \\ \tfrac{4\bar{Q}}{h^{2}}&0& 0\end{pmatrix}\in \Gamma(\Sigma,\bar{K}{\mathfrak{sl}}(V))\,,\end{equation}
where we view the conformal metric $h=|df|^2\in\Gamma(\Sigma,\bar KK)$ on $\Sigma$ as a quadratic form, i.e. as a section of \[|K|^2=\bar{K}\Hom(L,KL)=\bar{K}\Hom(K^{-1}L,L).\] Similarly, $4\bar{Q} h^{-2}$ is a section of $\bar{K}^3(K\bar{K})^{-2}= \bar{K}\Hom(KL,K^{-1}L)$ as required. 
From \eqref{eq:Higgsfield} and \eqref{eq:Higgsadj} we see that branch points of $f$ are characterized by the vanishing of $\Phi^2$. Since $f$ is assumed to be immersed, $\Phi^2\in\Gamma(K^2\mathfrak{sl}(V))$ is nowhere vanishing.

The zero curvature equation for the connection $d_V$ splits into diagonal and off-diagonal parts
\begin{equation}\label{eq:zero-curvature}
F^D=[\Phi\wedge \Phi^{\dagger}]\quad\text{and} \quad d^{D}(\Phi -\Phi^{\dagger}) =0\,.
\end{equation}
The holomorphicity of the cubic differential $Q$ is equivalent to the holomorphicity of the Higgs field, $\dbar^{\,D}\Phi=0$. The latter implies the off-diagonal part of the zero curvature equation since the diagonal connection $D$ is Hermitian. Thus a minimal Lagrangian immersion $f\colon \Sigma\to \CP^2$ gives rise to a solution to a special reduction of the {\em signed self-duality equations}
\begin{equation}\label{eq:signedSD}
F^D=[\Phi\wedge \Phi^{\dagger}]\quad\text{and} \quad \dbar^{\,D} \Phi=0\,
\end{equation}
for the group $\SU_3$. 
Despite their formal similarity, these equations are not Hitchin's self-duality equations \cite{HiSD} due to the opposite sign in the quadratic term of the first equation. 
To unravel the first equation of \eqref{eq:signedSD}, we identify 2-forms with quadratic forms via 
\begin{equation}\label{eq:2forms}
\alpha\wedge \beta=\alpha (*\beta)-(*\alpha)\beta
\end{equation}
for 1-forms $\alpha,\beta$ on $\Sigma$. In particular, under this prescription the area 2-form of a conformal metric on $\Sigma$ gets identified with the quadratic form of the metric. We thus obtain
\begin{equation}\label{eq:diagSD1}
[\Phi\wedge \Phi^{\dagger}]=(-2i)[\Phi,\Phi^{\dagger}]=i\bigl((|df|^2-8|Q|^2|df|^{-4})\oplus 0\oplus (-|df|^2+8|Q|^2|df|^{-4})\bigr)
\end{equation}
and the diagonal connection $D$ has curvature
\[
F^D=F^{\nabla_{-}}\oplus F^{\nabla}\oplus F^{\nabla_{+}}\,.
\]
Hence the first equation of \eqref{eq:signedSD} is equivalent to 
\begin{equation}\label{eq:diagSD}
F^{\nabla}=0\quad\text{and}\quad F^{\nabla^K}=i(|df|^2-8|Q|^2|df|^{-4})\,,
\end{equation}
where we used that $F^{\nabla^{K^{-1}}}=-F^{\nabla^{K}}$ and $F^{\nabla_{-}}=F^{\nabla{^{K}}\otimes \nabla}=F^{\nabla^{K}}$ since $F^{\nabla}=0$.
The curvature of the Hermitian line bundle $K$ is $F^{\nabla^{K}}=iG|df|^2$, where $G$ denotes the Gauss curvature of the induced metric $f^*g$.
We obtain
\[
G+|Q|^2_{f^*g}-1=0\,.
\]
A more familiar form of this equation may be obtained by expressing the induced metric $f^{*}g_{\CP^2}=e^{2u} g_0$ in a fixed conformal background metric $g_0$ on $\Sigma$. Then $G=(-\triangle_{g_0}u+G_{g_0})e^{-2u}$ and the first equation of \eqref{eq:signedSD} becomes the well-known (signed) {\em Tzitz{\'e}ica equation}
\begin{equation}\label{eq:Tzitz}
\triangle_{g_0} u-|Q|^2_{g_0} e^{-4u}+e^{2u}-G_{g_0}=0
\end{equation}
for the conformal factor $u\colon \Sigma\to\R$.

It is worth pointing out that the flatness of $\nabla$ on $L$ should not come as a surprise: the Lagrangian condition $f^*\omega=0$ is equivalent to the vanishing of the curvature of the pullback, via $f\colon \Sigma\to \CP^2$,  of the
dual  tautological bundle over $\CP^2$.  Moreover, by construction of $V$ its determinant bundle $\Lambda^3 V$ is trivial as a line bundle with Hermitian connection and therefore 
\begin{equation}\label{eq:detV}
L^3=\Lambda^3 V=\underline{\C}
\end {equation}
is trivial. We conclude that a Lagrangian minimal immersion $f\colon \Sigma\to \CP^2$ yields a 3-torsion point in the Jacobian of $\Sigma$. 
The special Legendrian lift to $\S^5$ is then given by a parallel section of unit length with specific phase, unique up to multiplication with $\exp(\pm\tfrac{2\pi i}{3})$.
Geometrically this means (as in Proposition \ref{pro:upanddown}) that in general one has to pass to a threefold cover $\hat{\Sigma}\to \Sigma$ in order for the Legendrian lift $\hat{f}\colon \hat{\Sigma}\to \S^5\subset \C^3$ to close, since $\hat{f}$ is given as a unit length parallel section of  $L\subset\underline{\C}^3$ with its induced Hermitian structure.

Lastly, the induced area of a minimal Lagrangian immersion $f\colon \Sigma\to\CP^2$ can be calculated in terms of the Higgs field $\Phi$. Since $\tr \Phi\wedge \Phi^{\dagger}=\tfrac{1}{2}\tr[ \Phi\wedge \Phi^{\dagger}]$, equations \eqref{eq:diagSD1} and \eqref{eq:diagSD} combine to
\begin{equation}\label{eq:areaformML}
3|df|^2=i\,\tr\, \Phi\wedge \Phi^{\dagger}-iF^{\nabla^K}.
\end{equation}
The line bundle decomposition \eqref{eq:C3decom} defines a lift $\psi\colon \Sigma\to \SU_3/T$ into the full flag variety (which happens to be a primitive harmonic map, see \cite{Burstall}). The energy density of $\psi$ calculates to
\begin{equation}\label{energydens}
\tfrac{1}{2}\langle d\psi\wedge * d\psi\rangle=i\,\tr\, \Phi\wedge \Phi^{\dagger}
\end{equation}
where $\langle\,\,,\,\rangle$ is the invariant Riemannian metric on $\SU_3/T$ coming from  the trace form on $\mathfrak{su}_3$.  If $\Sigma$ is compact, we thus obtain 
\begin{equation}\label{eq:areaML}
\mathrm{Area}(f)=\tfrac{2\pi}{3} (\tfrac{1}{2\pi}E(\psi)+ (2-2g))
\end{equation}
for the induced area of a minimal Lagrangian immersion  $f\colon \Sigma\to \CP^2$.

\subsection{The fundamental theorem for minimal Lagrangian surfaces}
Our construction of higher genus minimal Lagrangian surfaces is based on solving the signed self-duality equations \eqref{eq:signedSD}. 
This requires one to reconstruct a minimal Lagrangian surface $f\colon \Sigma\to\CP^2$ from the
Gauss-Codazzi  
data $(h,Q)$, where $h\in\Gamma(\Sigma,\bar KK)$ is a conformal metric, and $Q$ is a holomorphic cubic differential.

We begin by choosing a 3-torsion point  $L\in \Jac(\Sigma)$ in the Jacobian of $\Sigma$ and consider the rank three holomorphic  bundle
$
V=KL\oplus L\oplus K^{-1}L
$
which has trivial determinant line bundle $\Lambda^3 V=L^3=\underline{\C}$ over $\Sigma$.  The holomorphic line bundle $L$ has a unique (up to constant scaling) Hermitian metric $h_L$ with  flat Chern connection $\nabla$. We equip the line bundles $K$, $K^{-1}$ with the Hermitian structures given by the conformal metric $h$ so that their Hermitian connections $\nabla^{K^{\pm 1}}$ arise from the Levi-Civita connection of $h$. Thus, $V$ obtains the structure of a Hermitian rank three bundle with diagonal connection $D=(\nabla^K\otimes \nabla)\,\oplus \nabla\oplus \, (\nabla^{K^{-1}}\otimes \nabla)$ compatible with its holomorphic structure, in other words, $D$ is the Chern connection for the diagonal Hermitian metric $h^{-1}\oplus h_L\oplus h$ on $V$.  
Furthermore, consider the holomorphic cyclic Higgs field $\Phi$ as in \eqref{eq:Higgsfield}, which is uniquely determined by $Q$,
and  its adjoint $\Phi^\dagger$ with respect to the diagonal Hermitian metric. It is given by \eqref{eq:Higgsadj}
Then the Hermitian connection
\(
d_V=D+\Phi-\Phi^{\dagger}
\)
is flat if and only if the pair $(D,\Phi)$ solves the signed self-duality equations \eqref{eq:signedSD}.

 Let 
\[
\Upsilon\colon (V,d_V)\to (\underline{\C}^3, d)
\]
be the trivialization of the flat Hermitian connection $d_V$ on the universal cover $\tilde{\Sigma}\to \Sigma$. In other words, $\Upsilon$ satisfies $d. \Upsilon=d_V$  on $\tilde \Sigma$, and $\Upsilon$ is equivariant 
\begin{equation}\label{eq:equivara}
\gamma^* \Upsilon=\rho_{\gamma}^{-1} \Upsilon
\end{equation}
 with respect to the right action of deck transformations $\gamma\in\pi_1(\Sigma,p_0)$ on $\tilde{\Sigma}$  and the monodromy representation $\rho\colon \pi_1(\Sigma,p_0)\to \SU_3$ of $d_V$. 
 Then the image under $\Upsilon$ of the pullback $\tilde{L}$ of $L\subset V$ to $\tilde{\Sigma}$ defines a minimal Lagrangian immersion 
\begin{equation}\label{eq:f-recon}
f\colon \tilde{\Sigma}\to\CP^2\,,\quad f(p)=\Upsilon_p(\tilde{L}_p)\subset \C^3
\end{equation}
with cubic holomorphic differential $Q\in H^0(K^3)$. The equivariance of $\Upsilon$ implies that $f$ has the same equivariance property
\begin{equation}\label{eq:equiv}
\gamma^* f=\rho_{\gamma}^{-1} f
\end{equation}
with respect to the monodromy representation $\rho\colon \pi_1(\Sigma)\to \SU_3$ of $d_V$.

The immersion $f\colon \tilde{\Sigma}\to\CP^2$ descends to $\Sigma$ if and only if the monodromy representation $\rho$ of the flat connection $d_V$ is projectively trivial, that is $\rho=c\, \Id$ is a multiple of the trivial representation. The Abelian representation $c\colon \pi_1(\Sigma) \to \S^1$ has to satisfy $c^3=1$ due to $L^3=\underline{\C}$. We will utilize this projective freedom and assume for the remainder of the paper that $L=\underline{\C}$ is the trivial bundle. We summarize the discussion so far in the form of a ``fundamental theorem for minimal Lagrangian surfaces''.
\begin{The}\label{thm:reconstruct1}
Let $\Sigma$ be a Riemann surface with a conformal metric $h$,
and let $Q\in\Gamma(K^3)$ be a 
cubic differential. Consider the  holomorphic rank three bundle 
$V=K\oplus\underline{\C}\oplus K^{-1}$
over $\Sigma$ equipped with the diagonal Hermitian metric $h^{-1}\oplus h_{\C}\oplus h$, where $h_{\C}$ denotes the standard Hermitian structure on $\C$, and diagonal Hermitian connection $D=\nabla^{K}\oplus d_{\C}\oplus\nabla^{K^{-1}}$, which is the Chern connection for the holomorphic and Hermitian structures on $V$. Let $\Phi\in\Gamma(K{\mathfrak{sl}}(V))$ and its adjoint $\Phi^{\dagger}\in \Gamma(\bar{K}{\mathfrak{sl}}(V))$ be defined as in \eqref{eq:Higgsfield} and \eqref{eq:Higgsadj}. Then we have the following equivalent statements:
\begin{enumerate}[(i)]
\item
$f\colon \tilde{\Sigma}\to \CP^2$ is a minimal Lagrangian immersion with induced metric $\pi^*h$ and holomorphic cubic differential $\pi^*Q\in H^0(K^3)$ on the universal covering $\pi\colon \tilde{\Sigma}\to \Sigma$ equivariant 
with respect to a representation $\rho\colon \pi_1(\Sigma)\to \SU_{3}$, that is, $\gamma^* f=\rho_{\gamma}^{-1}f$.
\item
The Hermitian connection $d_V=D+\Phi-\Phi^{\dagger}$ on $V\to \Sigma$ satisfies the signed self-duality equations \eqref{eq:signedSD}---in particular is flat---with monodromy representation $\rho\colon \pi_1(\Sigma)\to \SU_3$.
\end{enumerate}
The minimal Lagrangian immersion {\em satisfies the extrinsic closing condition}, in other words, descends to a minimal Lagrangian immersion $f\colon \Sigma\to \CP^2$ if and only if $\rho=c\, \Id$ with the Abelian representation $c \colon \pi_1(\Sigma)\to \S^1$  satisfying $c^3=1$. 
Furthermore, $c$ is trivial, equivalently $c\equiv 1$, if and only if the minimal Lagrangian immersion $f\colon \Sigma\to \CP^2$ lifts to a minimal Legendrian immersion $\hat{f}\colon \Sigma\to \S^5$.
 \end{The}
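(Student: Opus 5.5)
The equivalence (i)$\Leftrightarrow$(ii) has two directions, which I treat separately; the closing statements follow from the equivariance formula.

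\emph{(i)$\Rightarrow$(ii).} Start from a minimal Lagrangian immersion $f\colon\tilde\Sigma\to\CP^2$. Lemma~\ref{lem:mLag} and the discussion after it give the splitting $\underline{\C}^3=L_-\oplus L\oplus L_+$. Via \eqref{eq:isomorph} this splitting is identified isometrically and holomorphically with $K L\oplus L\oplus K^{-1}L$. In this identification the trivial connection takes the form $D+\Phi-\Phi^\dagger$ as in \eqref{eq:d-cyclic}, with $D$ the Chern connection and $\Phi$ the cyclic Higgs field \eqref{eq:Higgsfield}. Its flatness is exactly \eqref{eq:zero-curvature}, and this yields \eqref{eq:signedSD}.

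Next I reduce to the trivial bundle $L$. Since $\nabla$ on $L$ is flat with $L^3$ trivial, I can twist by $L^{-1}$ (a flat unitary line bundle). This changes $f$ only by the projective class, so I may take $L=\underline{\C}$. Then pulling back along $\pi$ and using equivariance of $f$ shows that the data $(h,Q)$ descend to $\Sigma$, and that the resulting flat $d_V$ on $V\to\Sigma$ has monodromy $\rho$. Here one should check that the equivariance $\gamma^*f=\rho_\gamma^{-1}f$ induces the equivariance of the frame $\Upsilon$, up to the scalar ambiguity, which is absorbed in the choice of lift of $\rho$ to $\SU_3$.

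\emph{(ii)$\Rightarrow$(i).} Trivialize $d_V$ on $\tilde\Sigma$ by $\Upsilon$ with $d.\Upsilon=d_V$, which is equivariant as in \eqref{eq:equivara}. Define $f$ by \eqref{eq:f-recon}. Then $df$ is the off-diagonal part $\Upsilon(\Phi-\Phi^\dagger)\Upsilon^{-1}$ restricted to $L$, so $\del f$ and $\dbar f$ are the two entries $\bf 1$ and $-\tfrac12 h$ (up to adjoints). The key steps, in order, are:
\begin{enumerate}[(a)]
\item $h_{\CP^2}(\dbar f,\del f)=0$, because these land in the orthogonal summands $K^{-1}$ and $K$. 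This gives conformality.
\item $|\del f|^2=|\dbar f|^2$, by the symmetric placement of the entries $\tfrac12 h$. This gives the Lagrangian property via \eqref{2.4b}.
\item $\del f$ is holomorphic, because $\dbar^{\,D}\Phi=0$. This gives harmonicity, hence minimality.
\item The entries are nowhere zero, so $f$ is an immersion with induced metric $h$. The normalizations $\tfrac12$ in \eqref{eq:Higgsadj} are exactly those making $f^*g_{\CP^2}=h$.
\end{enumerate}
The cubic differential of $f$ is then read off from \eqref{eq:cubic} as $\tr(\del f_+\del f\del f_-)=Q$, and equivariance follows from \eqref{eq:equivara}.

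\emph{Closing conditions.} $f$ descends to $\Sigma$ iff $\rho_\gamma f=f$ for all $\gamma$. Since the lines $\Upsilon(\tilde L)$ together with their harmonic sequence span $\C^3$ generically, this holds iff $\rho_\gamma$ is scalar. Determinant one forces $c^3=1$. The Legendrian lift is the parallel unit section $\Upsilon(1)$ of $\underline{\C}\subset V$, which is Legendrian by horizontality. It transforms by $\rho_\gamma^{-1}=c_\gamma^{-1}$, so it closes iff $c\equiv1$.

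\emph{Main obstacle.} I expect the hardest step to be the rigorous justification that projective triviality of $\rho$ is equivalent to $\rho$ being scalar. This requires that no nonscalar unitary fixes the whole image line field. The argument will rely on the full (superconformal/immersed) harmonic sequence spanning, or on irreducibility of the frame. A secondary difficulty is the bookkeeping of constants in the identification $f^*g_{\CP^2}=h$.
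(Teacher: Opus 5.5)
Your proposal is correct and follows essentially the same route as the paper. The paper states this theorem as a summary of the preceding discussion: (i)$\Rightarrow$(ii) via the harmonic-sequence splitting identified with $K\oplus\underline{\C}\oplus K^{-1}$ through \eqref{eq:isomorph}, (ii)$\Rightarrow$(i) via the trivialization $\Upsilon$ and $f=\Upsilon(\tilde L)$ as in \eqref{eq:f-recon}, and the Legendrian lift as a parallel unit section of $L$. Your extra details fill in points the paper passes over quickly: the cube-root-of-unity ambiguity in lifting $\rho$ to $\SU_3$, and the spanning argument showing that a unitary fixing every line $f(p)$ must be scalar.
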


\subsection{The associated family of flat connections}\label{sec:ass-family}
Our approach for solving the signed self-duality equations \eqref{eq:signedSD} is based on the well-known observation that 
the signed self-duality equations are equivalent to the flatness of a $\C^{\times}$-family of connections. These connections enjoy certain symmetry properties which in turn characterize them. 

As in Theorem~\ref{thm:reconstruct1}, we work on the bundle $V=K\oplus\underline{\C}\oplus K^{-1}$ over a Riemann surface $\Sigma$ with conformal metric $h$ and cubic differential $Q\in\Gamma(K^3)$. These data equip $V$ with a Hermitian bundle metric, the Chern connection $D$, and the Higgs field $\Phi\in\Gamma(K{ \mathfrak{sl}}(V))$ together with its adjoint $\Phi^{\dagger}\in \Gamma(\bar{K}{\mathfrak{sl}}(V))$ as given in  
\eqref{eq:Higgsfield} and \eqref{eq:Higgsadj}. 
\begin{Lem}\label{lem:loop}
The 
 $\SU_3$-connection $d=D+\Phi-\Phi^{\dagger}$ on $V$ solves the signed self-duality equations \eqref{eq:signedSD} if and only if the $\C^{\times}$-family of ${SL}_3(\C)$-connections
\begin{equation}\label{eq:loop}
d_{\lambda}=D+\lambda^{-1}\Phi-\lambda\Phi^{\dagger}
\end{equation}
on $V$ is flat for all $\lambda\in \C^{\times}$.

The family of flat connections  $d_{\lambda}$ has the following properties:
\begin{enumerate}[(i)]
\item
$d_{1/\bar{\lambda}}=d_{\lambda}^{\dagger}$, in other words, $d_{\lambda}$ is an $\SU_3$-connection for unimodular $\lambda\in \S^1$.
\item
$d_{\lambda}$ has a simple pole at $\lambda=0$ with residue $\Res_{0} d_{\lambda}=\Phi\in\Gamma(K{\mathfrak{sl}}(V))$. By (i) this is equivalent to $d_{\lambda}$ having a simple pole at $\lambda=\infty$ with $\Res_{\infty} d_{\lambda}=-\Phi^{\dagger}\in\Gamma(\bar{K}{\mathfrak{sl}}(V))$.
\item
$\Phi^2\in\Gamma(K^2{\mathfrak{sl}}(V))$ is nowhere vanishing. 
\item
$d_{\zeta\lambda}=d_{\lambda}. \diag(\zeta^2,1,\zeta)$ for the cube root of unity $\zeta=\exp(2\pi i/3)$. 
\item
$d_{-\lambda}=d_{\lambda}^{*}. b$ for the orthogonal structure  $b\colon V\to V^*$, $b^{*}=b$  given by
$b=\begin{psmallmatrix}0&0&{1}\\0&{1}&0\\{1}&0&0\end{psmallmatrix}$.
\end{enumerate}
For unimodular $\lambda\in \S^1$ the flat connection $d_{\lambda}$ defines a minimal Lagrangian immersion $f_{\lambda}\colon \tilde{\Sigma}\to \CP^2$ on the universal cover $\pi\colon \tilde{\Sigma}\to \Sigma$ equivariant \eqref{eq:equiv} with respect to the monodromy representation $\rho_{\lambda}\colon \pi_1(\Sigma)\to \SU_3$ of $d_{\lambda}$. The immersion $f_{\lambda}$ induces the metric $\pi^*h$  and holomorphic cubic differential $\lambda^{-3} \pi^*Q$. In particular, $\rho_{\lambda_0}=c\, \Id$ for $\lambda_0\in \S^1$ is projectively trivial  with $c\colon \pi_1(\Sigma)\to \S^1$ satisfying $c^3=1$, if and only if the corresponding minimal Lagrangian immersion $f_{\lambda_0}\colon \Sigma\to \CP^2$ is defined on $\Sigma$ (rather than just on the universal cover $\tilde{\Sigma}$). Furthermore, $\rho_{\lambda_0}=\Id$ if and only if  the minimal Lagrangian immersion $f_{\lambda_0}\colon \Sigma\to \CP^2$ lifts to  a minimal Legendrian immersion $\hat{f}_{\lambda_0}\colon \Sigma\to \S^5$.
\end{Lem}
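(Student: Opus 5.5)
The proof expands the curvature of $d_\lambda$ as a Laurent polynomial in $\lambda$ and then checks the listed properties (i)--(v) directly from the explicit form of $\Phi$ and $\Phi^\dagger$. The final geometric statement will follow from Theorem~\ref{thm:reconstruct1} applied to the data $(h,\lambda^{-1}\Phi)$.

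For the equivalence, write
\[
F^{d_\lambda}=F^D+\lambda^{-1}d^D\Phi-\lambda d^D\Phi^\dagger-[\Phi\wedge\Phi^\dagger]+\lambda^{-2}\tfrac12[\Phi\wedge\Phi]+\lambda^2\tfrac12[\Phi^\dagger\wedge\Phi^\dagger].
\]
On a Riemann surface, $\Phi\wedge\Phi$ and $\Phi^\dagger\wedge\Phi^\dagger$ vanish because they are $(2,0)$ and $(0,2)$ forms. The terms $d^D\Phi$ and $d^D\Phi^\dagger$ reduce to $\dbar^D\Phi$ and $\del^D\Phi^\dagger=-(\dbar^D\Phi)^\dagger$, since $D$ is Hermitian. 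Hence flatness for all $\lambda$ is equivalent to the vanishing of the $\lambda^0$ and $\lambda^{\pm1}$ coefficients. These vanishings are exactly $F^D=[\Phi\wedge\Phi^\dagger]$ and $\dbar^D\Phi=0$, which is \eqref{eq:signedSD}. Conversely, \eqref{eq:signedSD} implies flatness at $\lambda=1$ and indeed for every $\lambda$. Here one must keep track of the sign convention in $[\Phi\wedge\Phi^\dagger]$; it will be fixed by comparing with \eqref{eq:zero-curvature}.

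The properties are then checked one at a time.
\begin{enumerate}[(i)]
\item For $\lambda\in\S^1$ we have $\overline{\lambda^{-1}}=\lambda$. Hence $(\lambda^{-1}\Phi)^\dagger=\lambda\Phi^\dagger$, so the $1$-form part $\lambda^{-1}\Phi-\lambda\Phi^\dagger$ is skew-Hermitian and $d_\lambda$ is unitary. The identity $d_{1/\bar\lambda}=d_\lambda^\dagger$ is the same computation carried out for general $\lambda$.
\item This is read off from the formula for $d_\lambda$.
\item From \eqref{eq:Higgsfield}, $\Phi^2$ has entries ${\bf 1}$ and $Q$ (up to placement). It is therefore nowhere zero because of the constant entry ${\bf 1}$. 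In the geometric situation this reflects the immersion property.
\item Conjugating by $g=\diag(\zeta^2,1,\zeta)$ leaves $D$ unchanged, since $g$ is diagonal and constant. It multiplies the $(i,j)$ entry of a matrix by $g_i^{-1}g_j$ or $g_ig_j^{-1}$, depending on the gauge convention. The entries of $\Phi$ lie in positions $(2,1)$, $(3,2)$ and $(1,3)$, and each of them picks up the same cube root of unity; $\Phi^\dagger$ picks up the inverse root. The matching of exponents will be verified entrywise, which yields $d_{\zeta\lambda}=d_\lambda.g$.
\item We use the dual connection on $V^*=K^{-1}\oplus\C\oplus K$ and pull it back by $b$, which swaps the first and third slots. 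The claim reduces to $b^{-1}(-\Phi^T)b=-\Phi$, and likewise for $\Phi^\dagger$. Anti-diagonal transposition maps the cyclic pattern to itself with the entries ${\bf 1}$ fixed and $Q$ fixed, so this holds; the same symmetry applies to $\Phi^\dagger$. The diagonal part $D^*$ corresponds under $b$ to $D$, because $b$ identifies $(K^{-1})^*$ with $K$.
\end{enumerate}

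For the final statement, note that $d_\lambda=D+(\lambda^{-1}\Phi)-(\lambda^{-1}\Phi)^\dagger$ for $\lambda\in\S^1$. Moreover, $\lambda^{-1}\Phi$ is gauge equivalent via a constant diagonal gauge to a cyclic Higgs field of the form \eqref{eq:Higgsfield} with cubic differential $\lambda^{-3}Q$. This is realized by $\diag(\lambda^{-1},1,\lambda)$ (or its inverse), which is unitary and diagonal and therefore preserves $D$ and the metric. The new Higgs field still satisfies \eqref{eq:signedSD}, so Theorem~\ref{thm:reconstruct1} applies and gives the immersion $f_\lambda$ with metric $\pi^*h$, cubic differential $\lambda^{-3}\pi^*Q$, and the stated equivariance and closing criteria.

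The main obstacle is bookkeeping rather than any conceptual step: getting the gauge and action conventions consistent in (iv), (v) and in the $\lambda^{-3}Q$ gauge. I would fix the convention $d.g=g^{-1}\circ d\circ g$ once and verify each symmetry entrywise.
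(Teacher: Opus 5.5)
Your proposal is correct and follows essentially the same route as the paper: you expand $F^{d_\lambda}$ in powers of $\lambda$, verify (i)--(v) by direct computation (with (v) via the dual connection $D^*-\lambda^{-1}\Phi^*+\lambda(\Phi^\dagger)^*$ conjugated by $b$), and obtain the geometric statements by gauging with the unitary diagonal $\diag(\lambda^{-1},1,\lambda)$ to a cyclic Higgs field with cubic differential $\lambda^{-3}Q$ and then applying Theorem~\ref{thm:reconstruct1}. Two small points: you should state explicitly that this diagonal gauge preserves the line subbundle $\underline{\C}\subset V$, so the immersion $f_\lambda=\Upsilon_\lambda(\underline{\C})$ is unchanged; and the sign in $\del^D\Phi^\dagger=-(\dbar^D\Phi)^\dagger$ depends on conventions and is irrelevant, since only the vanishing of these terms is used.
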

\begin{proof}
The first assertion  follows by expanding the flatness condition of $d_{\lambda}$ in powers of $\lambda$. Properties (i)-(iii) are seen by inspection and (iv) follows from a direct calculation. For property (v) note that $V^*=K^{-1}\oplus \underline{\C}\oplus K$ so that the orthogonal structure $b$ is well-defined with ${\bf 1}$ denoting the identity maps on the respective line bundles. But  $d_{\lambda}^*=D^*-\lambda^{-1}\Phi^*+ \lambda (\Phi^{\dagger})^*$ with 
$\Phi^*=\begin{psmallmatrix} 0 &{\bf 1}&0\\ 0&0&{\bf 1} \\ Q&0& 0\end{psmallmatrix}\in \Gamma(K{\mathfrak{sl}}(V^*)) 
$ and likewise for $(\Phi^{\dagger})^*$, whence we have (v). 

The remaining statements 
follow from the discussion leading up to Theorem~\ref{thm:reconstruct1}: the flat connection $d_{\lambda}$, for unimodular $\lambda\in \S^1$, can be trivialized by an isometric bundle isomorphism 
\begin{equation}\label{Upsiloneq}
\Upsilon_{\lambda}\colon (V,d_{\lambda})\to(\underline{\C}^3, d)
\end{equation}
on  the universal cover $\tilde{\Sigma}$ giving rise, via \eqref{eq:f-recon}, to the immersion
\begin{equation}\label{lambda-recon}
f_{\lambda}=\Upsilon_{\lambda}(\underline{\C})\colon\tilde{\Sigma}\to\CP^2\,
\end{equation}
equivariant with respect to the monodromy representation $\rho_{\lambda}\colon \pi_1(\Sigma)\to \SU_3$ of $d_{\lambda}$ on $V$.
But $d_{\lambda} =\tilde{d}_{\lambda}. \diag(\lambda^{-1},1,\lambda)$ is gauge equivalent to  
\[
\tilde{d}_{\lambda}=D+\tilde{\Phi}-\tilde{\Phi}^{\dagger}\,,\quad 
\tilde{\Phi}=\begin{psmallmatrix} 0&0&\lambda^{-3} Q\\{\bf 1}&0&0\\0&{\bf 1}&0\end{psmallmatrix}
\]
via the $\SU_3$-gauge $\diag(\lambda^{-1},1,\lambda)$ which leaves  $f_{\lambda}= \Upsilon_{\lambda}(\underline{\C})$ unchanged.
Theorem~\ref{thm:reconstruct1} then  implies that  $f_{\lambda}$ is a minimal Lagrangian immersion with induced metric $\pi^*h$ and cubic differential $\lambda^{-3} \pi^*Q$. The condition for $f_{\lambda_0}$ to be defined on $\Sigma$ (rather than on $\tilde{\Sigma}$), namely $\rho_{\lambda_0}=c\,\Id$ with 
$c\colon \pi_1(\Sigma)\to \S^1$ satisfying $c^3=1$, follows from the last part of Theorem~\ref{thm:reconstruct1} as does the assertion about the minimal Legendrian lift into $\S^5$ in the case $c=1$. 
\end{proof}
The next lemma shows that any $\C^{\times}$-family of flat connections $d_{\lambda}$ satisfying  properties (i)-(v) on a  Hermitian rank three bundle $V\to \Sigma$ with trivial determinant $\Lambda^3 V=\underline{\C}$ gives rise to a minimal Lagrangian immersion. To formulate the symmetries (iv), (v) of Lemma~\ref{lem:loop}, we assume the existence of bundle isomorphisms  $g\colon \Sigma\to\SU(V)$, $g\neq \mu \Id\,$ and $b\colon V\to V^*$, $b=b^*$, $\det b=1$ satisfying the compatibility $g^{*}bg=b$. 

Then $V$ splits orthogonally into the eigenline bundles
$
V=E_{-1}\oplus E_0\oplus E_1
$
of $g$, and with respect to this splitting one has
$
g=\diag(\zeta^{-1},1,\zeta),
$
where $\zeta=\exp(2\pi i/3)$.
Using the compatibility condition, $b=\begin{psmallmatrix}0&0&a\\0&{a_0}&0\\a^*&0&0\end{psmallmatrix}$ and thus $E_0=E_0^{*}$ via $a_0$ and $E_{-1}=E_{1}^*$ via $a$. 
Since $E_0\simeq E_0^*$ and
$
\Lambda^3V=E_{-1}\otimes E_0\otimes E_1\simeq E_0
$
is trivial, we conclude that $E_0=\underline{\C}$.
Thus $b=\begin{psmallmatrix}0&0&{1}\\0&{1}&0\\{1}&0&0\end{psmallmatrix}$ on $V=E\oplus \C\oplus E^{-1}$.
\begin{Lem}\label{lem:loop2mLag}
Let $V\to \Sigma$ be a Hermitian rank three bundle with trivial determinant bundle $\Lambda^3 V=\underline{\C}$ and an orthogonal splitting into line bundles $V=E\oplus \underline{\C} \oplus E^{-1}$. Let $d_{\lambda}$ be a $\C^{\times}$-family of flat $\SL(V)$-connections satisfying the properties (i)-(v) of Lemma~\ref{lem:loop}. 
Then $E\cong K$ holomorphically, so that
$
V=K\oplus \underline{\C}\oplus K^{-1},
$
and
\[
d_{\lambda}=D+\lambda^{-1}\Phi-\lambda\Phi^{\dagger},
\]
where $D$ is a diagonal Hermitian connection and $\Phi\in\Gamma(K\mathfrak{sl}(V))$ has the form \eqref{eq:Higgsfield}. Moreover, $d_{\lambda}$ defines an $\S^1$-family of minimal Lagrangian immersions on the universal cover $\pi\colon\tilde\Sigma\to\Sigma$
with induced metric $\pi^*h$ and holomorphic cubic differential $\lambda^{-3}\pi^*Q$, where $h$ is the induced metric on $K^{-1}$ and $Q=\det\Phi\in H^0(K^3)$.
The immersion $f_{\lambda}$ is equivariant \eqref{eq:equiv} with respect to the monodromy representation $\rho_{\lambda}$ of $d_{\lambda}$. If $\rho_{\lambda_0}=\Id$ for some $\lambda_0\in \S^1$ then $f_{\lambda_0}\colon \Sigma\to\CP^2$ and its Legendrian lift $\hat{f}_{\lambda_0}\colon \Sigma\to \S^5$ are both defined on $\Sigma$.  
\end{Lem}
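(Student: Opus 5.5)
By (ii), write $d_\lambda=\lambda^{-1}\Phi+D+\lambda\Psi$. Then $\Phi\in\Gamma(K\mathfrak{sl}(V))$, $D$ is a connection, and $\Psi$ is a $\bar K$-valued endomorphism. The Laurent expansion has exactly these three terms. The only singularities on $\C\cup\{\infty\}$ are simple poles at $0$ and $\infty$, so $d_\lambda-\lambda^{-1}\Phi$ is a connection depending holomorphically on $\lambda\in\C$ with at most linear growth at $\infty$. By (i) the $\lambda^1$-coefficient is $\Psi=-\Phi^\dagger$ and the constant term $D$ is unitary.

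\textbf{Decomposing the coefficients by the symmetries.} Write $g=\diag(\zeta^{-1},1,\zeta)$ in the splitting $V=E\oplus\underline{\C}\oplus E^{-1}$ (matching (iv), possibly after relabelling). Compare coefficients of $\lambda^{j}$ in $d_{\zeta\lambda}=d_\lambda.g$. The $\lambda^0$ part gives $D.g=D$, so $D$ preserves the eigenlines of $g$ and is diagonal. Since $D$ is Hermitian, each block is a Hermitian connection, and the middle block on $\underline{\C}$ is (gauge-equivalent to) trivial because $\Lambda^3V$ is trivial. The $\lambda^{-1}$ part gives $\zeta^{-1}\Phi=g^{-1}\Phi g$, so $\Phi$ maps the $\zeta^{k}$-eigenline to the $\zeta^{k-1}$-eigenline. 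It therefore has the cyclic shape \eqref{eq:Higgsfield}, with entries $\alpha\in\Gamma(K\Hom(E,\underline{\C}))$, $\beta\in\Gamma(K\Hom(\underline{\C},E^{-1}))$ and $Q\in\Gamma(K\Hom(E^{-1},E))$.

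Next, (v) with $b$ antidiagonal forces $\Phi^*=-b\Phi b^{-1}$ in the appropriate sense, which identifies $\beta$ with $\alpha$ under $E^{-1}\cong E^*$. Flatness of $d_\lambda$ at order $\lambda^{-1}$ with $(0,1)$-parts gives $\dbar^D\Phi=0$, so $\alpha,\beta,Q$ are holomorphic. By (iii), $\Phi^2$ is nowhere zero, and $\Phi^2$ has entries $\alpha\beta$, $Q\alpha$, $\beta Q$. It follows that $\alpha$ and $\beta$ are nowhere vanishing. Indeed, if $\alpha$ vanished at $p$, then $\Phi^2(p)$ would reduce to the $\beta Q$ entry. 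The key point is that the $\alpha\beta$ entry is needed, and I would argue more carefully using the symmetry $\alpha\simeq\beta$ from (v): a zero of $\alpha$ is then a zero of $\beta$, so all entries of $\Phi^2$ vanish there. Hence $\alpha\colon E\to K\otimes\underline{\C}$ is a nowhere vanishing holomorphic section of $KE^{-1}$, and $E\cong K$ holomorphically, where $E$ carries the holomorphic structure $\dbar^{D}$. Using this isomorphism and the identification of $\beta$ with $\alpha$, we may normalise $\alpha=\beta={\bf 1}$. Then $Q\in H^0(K^3)$, and a direct computation gives $Q=\det\Phi$.

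\textbf{Identifying the geometry.} $D$ is now the Chern connection of the diagonal metric induced on $K\oplus\underline{\C}\oplus K^{-1}$. Let $h$ be the metric on $K^{-1}$. Computing the adjoint of $\Phi$ in this metric shows that $\Phi^\dagger$ has exactly the form \eqref{eq:Higgsadj}. Hence $d_\lambda=D+\lambda^{-1}\Phi-\lambda\Phi^\dagger$ is exactly the family of Lemma~\ref{lem:loop}, and its flatness gives the signed self-duality equations. The remaining assertions then follow verbatim from Lemma~\ref{lem:loop} and Theorem~\ref{thm:reconstruct1}:
\begin{itemize}
\item for each $\lambda\in\S^1$ we get an immersion $f_\lambda$ with metric $\pi^*h$ and cubic differential $\lambda^{-3}\pi^*Q$;
\item $f_\lambda$ is equivariant under $\rho_\lambda$;
\item when $\rho_{\lambda_0}=\Id$, both $f_{\lambda_0}$ and its Legendrian lift descend to $\Sigma$.
\end{itemize}

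\textbf{Main obstacle.} I expect the main obstacle to be the step $E\cong K$. It requires extracting from (iii) and (v) that both off-diagonal entries are nowhere vanishing and mutually compatible, so that a single isomorphism normalises both to ${\bf 1}$. One must also check that this normalisation is compatible with the Hermitian structure, so that $D$ remains the Chern connection. I would first verify concretely how (v) relates $\alpha$ and $\beta$ through $b$. If (v) only gives $\beta=\alpha^t$ up to sign, I would absorb the sign by a constant diagonal gauge that commutes with $g$ and preserves $b$.
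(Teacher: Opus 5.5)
Your proposal is correct and follows essentially the same route as the paper's proof. In both, (i)--(ii) give the three-term form $D+\lambda^{-1}\Phi-\lambda\Phi^{\dagger}$, (iv) makes $D$ diagonal and $\Phi$ cyclic, (v) gives $\alpha=\beta$, flatness at order $\lambda^{-1}$ gives $\dbar^D\Phi=0$, and (iii) forces $\alpha$ to be nowhere vanishing so that $E\cong K$, after which one reconstructs as in Lemma~\ref{lem:loop}.

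Two small slips. First, from $g^{-1}\Phi g=\zeta^{-1}\Phi$ the field $\Phi$ sends the $\zeta^{k}$-eigenline to the $\zeta^{k+1}$-eigenline, not the $\zeta^{k-1}$-eigenline; the cyclic shape you then write down is nevertheless the correct one. Second, the triviality of the middle block of $D$ comes from the $\lambda^0$-part of (v), which makes $b$ $D$-parallel. It does not follow from $\Lambda^3V=\underline{\C}$ alone.
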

\begin{proof}
From property (i) of Lemma~\ref{lem:loop}, we know that $d_{\lambda}$ are $\SU(V)$-connections for $\lambda\in \S^1$. Let  $D$  be the diagonal Hermitian  connection obtained from $d_{1}$ via orthogonal projection onto the line bundles 
$V=E\oplus \underline{\C}\oplus E^{-1}$. Then property (ii) gives
\[
d_{\lambda}=D+\lambda^{-1}\Phi-\lambda\Phi^{\dagger}
\]
with $\Phi\in\Gamma(K\mathfrak{sl}(V))$.  
The gauge $g=\diag(\zeta^2,1,\zeta)$ is $D$-parallel and property (iv) yields
$
\Ad g\,  \Phi =\zeta \Phi
$
which implies
\[
\Phi=\begin{psmallmatrix} 0&0&\gamma\\\alpha&0&0\\0&\beta&0\end{psmallmatrix}.
\]
Property (v) gives us $\alpha=\beta\in\Gamma(KE^{-1})$ and thus $\det\Phi=\alpha^2\gamma$. 
The flatness of the family $d_{\lambda}$ implies that $\Phi$ is $\dbar^D$-holomorphic, hence
$
\Phi\in H^0(K\mathfrak{sl}(V)).
$
 Condition (iii) then shows that $\alpha\in H^0(KE^{-1})$ is nowhere vanishing,  thus $E=K$ holomorphically, and $\det\Phi\in H^0(K^3)$ is a holomorphic cubic differential. 

The $\S^1$-family of minimal Lagrangian immersions $f_{\lambda}$ can now be constructed as in the proof of Lemma~\ref{lem:loop}.
\end{proof}
\begin{Rem}\label{rem:mLag=loop}
Lemmas~\ref{lem:loop} and \ref{lem:loop2mLag} show that the construction of a minimal Lagrangian immersion $f\colon \Sigma\to \CP^2$ is 
equivalent to the construction of a $\C^{\times}$-family of flat connections $d_{\lambda}$ on the Hermitian rank three bundle $V=K\oplus\C\oplus K^{-1}$ satisfying the  properties (i)-(v).
\end{Rem}

The geometric origin of the family of flat connections $d_{\lambda}$ stems from the fact that the Tzitz{\'e}ica equation \eqref{eq:Tzitz} is invariant under a scaling of the cubic differential $Q$ by a unimodular factor. As we showed in the previous Lemma, scaling $Q$ by $\lambda^{-3}$ is equivalent to scaling the Higgs field $\Phi$ by $\lambda$. The resulting $\S^1$-family of flat connections $d_{\lambda}$---or  the $\rho_{\lambda}$-equivariant minimal Lagrangian immersions $f_{\lambda}$---is traditionally called the {\em associated family}. A point $\lambda_0\in \S^1$ at which $\rho_{\lambda_0}=c\, \Id$ with $c^3=1$ is called a {\em Sym point}. 
The condition $\rho_{\lambda_0}=c\, \Id$ with $c^3=1$ is, in our context,  frequently referred to as {\em extrinsic closing}, since it implies that the minimal Lagrangian immersion $f_{\lambda_0}$ is defined on $\Sigma$, that is, the  ``periods of $f_{\lambda_0}$ in $\CP^2$ {\em close}''.  If $\rho_{\lambda_0}= \Id$ is trivial, also the periods of the Legendrian lift $\hat{f}_{\lambda_0}$ in $\S^5$ close. 

Finally, the $\Z_3$-symmetry in Lemma~\ref{lem:loop} (iv) is already present \cite{Burstall}  in the case of minimal immersions $f\colon \Sigma\to\CP^2$ reflecting the periodicity of its harmonic sequence \eqref{eq:harmseq}. The additional $\Z_2$-symmetry in Lemma~\ref{lem:loop} (v) comes from the Lagrangian condition. Since the two symmetries commute, they combine to the $\Z_6$-symmetry of the associated family of flat connections
\begin{equation}\label{eq:order6}
d_{\xi\lambda}=d_{\lambda}^*. a\,, \qquad a=\begin{psmallmatrix}0&0&\zeta^2\\0& {1}&0\\\zeta&0&0\end{psmallmatrix}\colon V\to V^*\,,\qquad \xi=\exp(\pi i/3)\,,\quad \xi^2=\zeta
\end{equation}
for a minimal Lagrangian immersion $f\colon \Sigma\to\CP^2$ \cite{McI2, DM}.
These symmetries have been used extensively (see for example \cite{MM, Ha,CMcI}) in the construction of minimal (Lagrangian) tori in $\CP^2$ for which $Q=dz^3$, 
since they allow one to lift into associated twistor spaces and apply the integrable-systems machinery of \cite{BFPP}.

\section{Loop groups and the global DPW construction}\label{sec:loop-groups}
In Section~\ref{sec:ass-family} we saw that constructing minimal Lagrangian immersions into $\CP^2$ is equivalent to constructing flat $\C^\times$-families of connections of the form \eqref{eq:loop} satisfying the reality, symmetry, and asymptotic properties described in Lemma~\ref{lem:loop}; equivalently, one seeks families carrying the $\Z_6$-symmetry \eqref{eq:order6} together with the prescribed behaviour at $\lambda=0$ and $\lambda=\infty$. The local DPW construction \cite{DPW} shows that, on simply connected domains, such families can be generated from meromorphic $\lambda$-connections $D_\lambda$. We next adapt this method to the global setting.

A natural framework for treating $\lambda$-families of connections is that of loop groups: one packages such a family as a single connection with values in a loop algebra, or equivalently as a connection with loop-group structure group. 

\subsection{Loop groups}
Let $W$ be a finite-dimensional Hermitian vector space, $\tau\colon \mathrm{SL}(W)\to \mathrm{SL}(W)$ an order $r$ Lie group automorphism preserving the Hermitian structure,  and $\xi=\exp(2\pi i/r)$ a primitive $r$-th root of unity.  The ($\tau$-twisted) loop group  
\begin{equation}\label{eq:loopgroup}
{\bf\SL}^{\tau}(\S^1)=\{g\colon \S^1 \to \mathrm{SL}(W) \,;\, \text{$g$ real analytic},\,\tau g (\lambda)=g(\xi \lambda)\}
\end{equation}
is a smooth Banach Lie group via the norm
\(
||g||=\sum_{k\in\Z} |g^{(k)}|\,.
\)
Here $g=\sum_{k\in\Z} g^{(k)}\lambda^k$ is the Fourier decomposition of $g$ and $|g^{(k)}|$ is a matrix norm for $g^{(k)}\in\mathfrak{gl}(W)$. The loop group ${\bf\SL}^{\tau}(\S^1)$ has several Banach Lie subgroups relevant to our constructions: the group of {\em unitary loops}
\begin{equation}\label{eq:realloop}
{\bf\SU}^{\tau}(\S^1)=\{g\in {\bf\SL}^{\tau}(\S^1)\,;\, g(\lambda)^{-1}=g(1/{\bar{\lambda}})^{\dagger}\}, 
\end{equation}
where  $g(\lambda)\in \SU(W)$, 
the group of {\em positive loops}
\begin{equation}\label{eq:posloops}
{\bf\SL}^{\tau}(\mathbb{D})=\{g\in {\bf\SL}^{\tau}(\S^1)\,;\, \text{$g$ extends holomorphically to $\mathbb{D}$}\}
\end{equation}
where  $g$ extends holomorphically into the unit disk $\mathbb{D}=\{\lambda\in\C\,;\, |\lambda|<1\}$, 
and the group 
\[{\bf\SL}^{\tau}(\mathbb{D}^{\times})=\{g\in {\bf\SL}^{\tau}(\S^1)\,;\, \text{$g$ extends holomorphically to $\mathbb{D}^\times$}\}\] of loops extending holomorphically into the punctured unit disk $\mathbb{D}^{\times}$. For the trivial automorphism $\tau=\Id$, we write  ${\bf\SL}^{\tau}(\S^1)={\bf\SL}(\S^1)$ and correspondingly so for the various subgroups. 

 The basic ingredient of the DPW method is the loop Iwasawa decomposition \cite{PS,DPW}. To guarantee the uniqueness of this decomposition, we choose a solvable subgroup $B\subset \mathrm{SL}(W)^{\tau}$ in the fixed point subgroup under $\tau$ such that $\mathrm{SL}(W)^{\tau}=\SU(W)^{\tau}B$ and $B\cap \SU(W)^{\tau}=\{\Id\}$. Define
 \begin{equation}\label{eq:basedloops}
 {\bf\SL}^{\tau}_B(\mathbb{D}) :=\{ g\in {\bf\SL}^{\tau}(\mathbb{D})\,;\, g(0)\in B\}\subset {\bf\SL}^{\tau}(\mathbb{D}) 
 \end{equation}
the Banach Lie subgroup of loops $g$ extending holomorphically to the disk $\mathbb{D}$ with $g(0)\in B$. 
\begin{The}\label{thm:Iwasawa}
The multiplication maps
\[
{\bf\SL}^{\tau}_B(\mathbb{D})\times {\bf\SU}^{\tau}(\S^1)\to {\bf\SL}^{\tau}(\S^1)\quad\text{and}\quad
{\bf\SU}^{\tau}(\S^1)\times {\bf\SL}^{\tau}_B(\mathbb{D})\to {\bf\SL}^{\tau}(\S^1)
\]
are smooth diffeomorphisms. We can thus decompose every loop $g\in {\bf\SL}^{\tau}(\S^1)$ uniquely as
\[
g=g_{+} g_{u}
\]
with $g_{+}\in {\bf\SL}^{\tau}_B(\mathbb{D})$ a positive loop with $g(0)\in B$ and $g_{u} \in {\bf\SU}^{\tau}(\S^1)$ a unitary loop, and analogously for the reversed order.
\end{The}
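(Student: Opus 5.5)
The plan is to deduce the twisted statement from the classical untwisted loop Iwasawa decomposition of Pressley--Segal \cite{PS} for ${\bf\SL}(\S^1)$, and then to check that the factorization is compatible with the twisting $\tau$. The argument therefore has three parts: first the untwisted factorization with a suitable normalization at $\lambda=0$, then the descent to $\tau$-fixed loops, and finally the smoothness and diffeomorphism property in the Banach topology.

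\textbf{Untwisted case.} For $g\in{\bf\SL}(\S^1)$ we consider the real-analytic Hermitian loop $\lambda\mapsto g(\lambda)g(1/\bar\lambda)^\dagger$ on $\S^1$. This loop is positive definite there, so Wiener--Hopf/Birkhoff factorization of positive matrix functions (Gohberg--Krein) gives
\[
g\,g^{*}=g_{+}\,g_{+}^{*},\qquad g_+\in{\bf\SL}(\mathbb{D}),
\]
where $g^{*}(\lambda):=g(1/\bar\lambda)^\dagger$. Positivity guarantees that the Birkhoff middle term is trivial. Setting $g_u:=g_+^{-1}g$, we get $g_u g_u^{*}=\Id$, so $g_u$ is a unitary loop. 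The factor $g_+$ is unique up to right multiplication by constant unitary matrices. We fix this freedom by requiring $g_+(0)\in B_0$, using the ordinary Iwasawa decomposition $\SL(W)=\SU(W)B_0$.

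\textbf{Twisted case.} For $g$ in the twisted loop group we have $\tau g(\lambda)=g(\xi\lambda)$. Write $\hat\tau$ for the induced action $g\mapsto \tau^{-1}g(\xi\,\cdot)$ on loops. Since $\tau$ preserves the Hermitian structure and $|\xi|=1$, $\hat\tau$ preserves each of the subgroups ${\bf\SU}$, ${\bf\SL}(\mathbb{D})$ and ${\bf\SL}^\tau$. Applying $\hat\tau$ to $g=g_+g_u$ produces a second factorization. Uniqueness would then force $\hat\tau g_+=g_+$, provided the normalization is $\tau$-compatible, that is $\tau(g_+(0))\in B_0$. Instead of insisting on this, we use the given $B$ directly. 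We take any untwisted factorization $g=g_+g_u$. The element $\hat\tau g_+$ differs from $g_+$ by a constant unitary $u$, and $u$ satisfies a cocycle condition of order $r$. To remove it, the cleanest route is to rerun the Wiener--Hopf step inside the fixed-point algebra: the positive loop $gg^*$ is itself $\hat\tau$-invariant, and $\tau$-equivariant spectral factorization yields a $\tau$-invariant $g_+$. This equivariant version holds because the factorization can be produced by solving a linear Riemann--Hilbert problem whose operator commutes with the finite group $\Z_r$, so averaging preserves solutions. Having obtained a $\tau$-invariant $g_+$, we have $g_+(0)\in\SL(W)^\tau=\SU(W)^\tau B$. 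Right-multiplying $g_+$ by a constant element of $\SU(W)^\tau$ then gives $g_+(0)\in B$. Uniqueness follows because ${\bf\SL}^\tau_B(\mathbb{D})\cap{\bf\SU}^\tau(\S^1)$ consists of loops that are holomorphic on $\mathbb{D}$ and, by the reality condition, also on $\C\cup\{\infty\}\setminus\mathbb{D}$. Such loops are constant, so they lie in $\SU(W)^\tau\cap B=\{\Id\}$.

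\textbf{Smoothness.} For the diffeomorphism statement we show that the multiplication map is a smooth bijection with invertible differential and apply the inverse function theorem on Banach manifolds. The differential at $(\Id,\Id)$ is the sum map $\mathrm{Lie}({\bf\SL}^\tau_B(\mathbb{D}))\oplus\mathrm{Lie}({\bf\SU}^\tau)\to\mathrm{Lie}({\bf\SL}^\tau)$. It is a topological isomorphism because a Wiener-algebra loop splits into its nonnegative and negative Fourier parts, and the constant term is decomposed via $\mathfrak{sl}(W)^\tau=\mathfrak{su}(W)^\tau\oplus\mathfrak b$. Equivariance under left and right translations transports this to every point. The reversed order follows by applying inversion $g\mapsto g^{-1}$.

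I expect the main obstacle to be surjectivity, namely the existence of the positive-definite Birkhoff factorization in the real-analytic/Wiener setting together with its $\tau$-equivariance. For this step I would cite \cite{PS,DPW} rather than reprove it.
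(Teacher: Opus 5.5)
The paper does not prove this statement; it quotes it from \cite{PS,DPW}. Your untwisted argument is a sound self-contained substitute for the Pressley--Segal Grassmannian proof. It consists of the spectral factorization of the positive loop $gg^*$ (whose partial indices vanish), uniqueness up to a constant unitary by Liouville, the differential computed as the Fourier splitting at the identity and transported by translations, and inversion for the reversed order. There is, however, a genuine gap in the twisted existence step.

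\textbf{Why the averaging justification fails.} You argue that the factorization comes from a linear Riemann--Hilbert problem whose operator commutes with $\Z_r$, so averaging preserves solutions. The automorphism relevant here, $\tau(g)=\mathfrak a^{-1}(g^{-1})^t\mathfrak a$, is outer. It acts on matrix-valued loops through inversion and transposition, so there is no linear $\Z_r$-action on the Riemann--Hilbert problem to average over. Averaging the factor $g_+$ itself is not an option either, since $g_+g_+^*=gg^*$ is quadratic in $g_+$.

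\textbf{Why the cocycle route does not close as written.} Using your loop action $\hat\tau$ (not the paper's $\hat\tau$), the relation $\hat\tau(g_+)=g_+u$ only gives $u\,\tau^{-1}(u)\cdots\tau^{-(r-1)}(u)=\Id$. Such cocycles need not be coboundaries in $\SU(W)$: for $\tau=\Id$, every $u$ with $u^r=\Id$ is a cocycle, but only $\Id$ is a coboundary.

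\textbf{The missing idea: evaluate at $\lambda=0$.} This gives $\tau^{-1}(g_+(0))=g_+(0)\,u$, so $u$ is already a coboundary in $\SL(W)$. Write the polar decomposition $g_+(0)=p\,w$ with $p$ positive Hermitian and $w\in\SU(W)$. Since $\tau$ preserves the Hermitian structure, it sends positive Hermitian elements to positive Hermitian ones and unitaries to unitaries. Uniqueness of the polar decomposition then gives $\tau^{-1}(p)=p$ and $\tau^{-1}(w)=w\,u$. Hence
\[
\hat\tau(g_+w^{-1})=g_+u\,\tau^{-1}(w)^{-1}=g_+w^{-1},
\]
and $g=(g_+w^{-1})(w\,g_u)$ is a factorization with both factors twisted. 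Your normalization via $\SL(W)^\tau=B\,\SU(W)^\tau$ then applies.

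An equivalent fix is to normalize the Birkhoff factorization $gg^*=G_+G_-$ by $G_-(\infty)=\Id$. This factorization is unique and $\hat\tau$ preserves the normalization, so $G_\pm$ are twisted, and $g_+=G_-^*\,G_+(0)^{1/2}$ is twisted as well. Either way, the working mechanism is uniqueness under a $\tau$-stable normalization, not averaging. This is the same device the paper uses in Lemma~\ref{lem:tau-unitarize}. With this repair your proof is complete.
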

Replacing $\mathrm{SL}(W)$ by its Lie algebra $\mathfrak{sl}(W)$, we obtain the loop Lie algebras of the various loop groups under consideration. For example, 
\[
{\bf sl}^{\tau}(\S^1)=\{x\colon \S^1\to \mathfrak{sl}(W)\,;\, \text{$x$ real analytic}, \, \tau x(\lambda)=x(\xi\lambda)\}
\]
is the Banach loop Lie algebra of ${\bf\SL}^{\tau}(\S^1)$. Let $\mathfrak{g}_j\subset\mathfrak{sl}(W)$ denote  the $\xi^j$-eigenspace of $\tau$ with commutator relations $[\mathfrak{g}_j,\mathfrak{g}_l]\subset \mathfrak{g}_{j+l}$ where  $j,l\in\Z_r$. The condition $ \tau x(\lambda)=x(\xi\lambda)$ then translates to  $x^{(k)}\in \mathfrak{g}_k$ where $k\in\Z_r$ and $x=\sum_{k\in\Z} x^{(k)} \lambda^k$.  Furthermore, the loop $x\in {\bf su}^{\tau}(\S^1)$ is unitary if and only if  
$x^{(-k)}=-(x^{(k)})^{\dagger}\in \mathfrak{g}_{-k}$.

\begin{Exa}\label{ex:order6}
To define the twisted loop groups relevant for minimal Lagrangian surfaces, consider
  $W=\C^3$ and identify $W\cong W^*$ via the standard complex bilinear inner product so that the dual of  $A\in \mathfrak{sl}_3(\C)$ is given by $A^*=-A^t$. 
The order 6 automorphism corresponding to the $\Z_6$-symmetry \eqref{eq:order6} is given by
 $ \tau(g)=\mathfrak a^{-1} (g^{-1})^t \mathfrak a
 $
 with
 $
 \mathfrak a=\begin{psmallmatrix}0&0&\zeta^2\\0& {1}&0\\\zeta&0&0\end{psmallmatrix}\,,\qquad  \zeta=\exp(2\pi i/3)\,.
$
The corresponding Lie algebra automorphism $\tau\in \mathrm{Aut}(\mathfrak{sl}_3(\C))$ is given by 
\(
\tau(A)= -\mathfrak a^{-1}A^t \mathfrak a\,.
\)

For computational purposes in Section \ref{sec:em}, we rearrange the line bundle decomposition \eqref{eq:cyclic-bundle} to
\begin{equation}\label{eq:decoagain}
L\oplus KL\oplus K^{-1}L,
\end{equation}
which amounts to conjugating $\tau$ by the inner automorphism  $\varphi(g)=c^{-1}gc$ with 
$c=\begin{psmallmatrix}0&i&0\\i& 0&0\\0&0&-1\end{psmallmatrix}$. The factors of $i$ are chosen so that the natural Legendrian lift $\hat{f}\colon \Sigma\to \S^5$ of a minimal Lagrangian immersion $f\colon \Sigma\to \CP^2$ is {\em special} Legendrian (rather than just minimal Legendrian). The resulting order $r=6$ automorphism $\hat\tau=\varphi^{-1}\circ \tau\circ \varphi$  is then given by
\begin{equation}\label{def.mTta}
\hat\tau(g)=\mathfrak{\hat a}^{-1}(g^{-1})^t\mathfrak{\hat a}\,,\qquad \mathfrak{\hat a}=
\begin{psmallmatrix}-1&0&0\\0& 0&i\zeta^2\\0&i\zeta&0\end{psmallmatrix}.
\end{equation}
The $\xi^k=\exp(2\pi i k/6)$ eigenspaces ${\mathfrak{\hat g}}_k$  under the action of ${\hat \tau}$ on the Lie algebra $\mathfrak{sl}_3( \C)$ are 
\begin{align}\label{eq:Tau-es}
{\mathfrak{\hat g}}_{-1}&=\{ \begin{psmallmatrix}0&a&0\\0& 0&b\\ia&0&0\end{psmallmatrix}\,;\, a,b\in\C\},
\quad {\mathfrak{\hat g}}_{0}=\{ \begin{psmallmatrix}0&0&0\\0& a&0\\0&0&-a\end{psmallmatrix}\,;\, a\in\C\},
\quad  {\mathfrak{\hat g}}_{1}=\{ \begin{psmallmatrix}0&0&ia\\-a& 0&0\\0&b&0\end{psmallmatrix}\,;\, a,b\in\C\}\\
{\mathfrak{\hat g}}_2&=\{ \begin{psmallmatrix}0&a&0\\0& 0&0\\-i a&0&0\end{psmallmatrix}\,;\, a\in\C\},
\quad {\mathfrak{\hat g} }_{3}=\{ \begin{psmallmatrix}-2a&0&0\\0& a&0\\0&0&a\end{psmallmatrix}\,;\, a\in\C\},
\quad  {\mathfrak{\hat g}}_{4}=\{ \begin{psmallmatrix}0&0&a\\-ia& 0&0\\0&0&0\end{psmallmatrix}\,;\, a\in\C\}.\nonumber
\end{align}
\end{Exa}

\subsection{Loop connections}

\begin{Def}\label{def:loopconnection}
Let $V=N\times W$ be the trivial Hermitian bundle over a Riemann surface $N$. An {\em ${\bf\SL}^{\tau}(\S^1)$-connection} is given by
\begin{equation}\label{eq:loopconnection}
D_{\lambda}=d+\eta_{\lambda}
\end{equation}
with $d$ the trivial connection on $V$  and $\eta_{\lambda}=\sum_{k\in\Z}\eta^{(k)}\lambda^k\in\Omega^1(N,{\bf sl}^{\tau}(\S^1))$, that is, $\eta_{\xi\lambda}=\tau\eta_{\lambda}$ equivalently $\eta^{(k)}\in\Omega^1(N,\mathfrak{g}_k)$.  Note that the connection $D=d+\eta^{(0)}$ is an $\mathrm{SL}(W)^{\tau}$-connection.

A smooth map $g_{\lambda}\colon N\to {\bf\SL}^{\tau}(\S^1)$ is called an {\em ${\bf\SL}^{\tau}(\S^1)$-gauge.} 

The corresponding notations are used for the various subgroups of ${\bf\SL}^{\tau}(\S^1)$ introduced above. In particular,  an ${\bf\SU}^{\tau}(\S^1)$-connection $D_{\lambda}$ has $D=d+\eta^{(0)}$ as an $\SU(W)^{\tau}$-connection and $\eta^{(-k)}=-(\eta^{(k)})^{\dagger}\in \Omega^1(N,\mathfrak{g}_{-k})$ for all $k\neq0$. We often refer to $\eta_{\lambda}$ as the {\em potential} of the connection $D_{\lambda}$.

An ${\bf\SL}^{\tau}(\S^1)$-connection $D_{\lambda}=d+\eta_{\lambda}$ is called a ($\tau$-twisted) {\em DPW-connection} if $D_{\lambda}$ is a flat ${\bf\SL}^{\tau}(\mathbb{D}^{\times})$-connection with a simple pole at $\lambda=0$. In other words, the potential $\eta_{\lambda}=\sum_{k\geq -1}\eta^{(k)}\lambda^k\in\Omega^1(N,{\bf sl}^{\tau}(\mathbb{D}^{\times}))$  with residue $\eta^{(-1)}\in \Omega^{(1,0)}(\mathfrak{g}_{-1})$ and $d\eta_{\lambda}+\eta_{\lambda}\wedge \eta_{\lambda}=0$.
\end{Def}
\begin{Rem}\label{rem:tauparallel}
Since $\tau$ is parallel with respect to the trivial connection $d$, an ${\bf\SL}^{\tau}(\S^1)$-connection $D_{\lambda}=d+\eta_{\lambda}$ may also be characterized by the relation
$
D_{\xi\lambda}=\tau D_{\lambda}\,.
$
where $\tau D_{\lambda}: =d+\tau(\eta_{\lambda})$.
\end{Rem}
\begin{Rem}\label{rem:DPW-connection}
DPW-connections $D_{\lambda}=d+\sum_{k\geq -1}\eta^{(k)}\lambda^k$ are preserved under ${\bf\SL}^{\tau}(\mathbb{D})$-gauges $b_{\lambda}$:
\[
D_{\lambda}. b_{\lambda}=d+b_{\lambda}^{-1} d\,b_{\lambda}+\Ad b^{-1}_{\lambda}\eta_{\lambda}=d+\sum_{k\geq -1}\tilde{\eta}^{(k)}\lambda^k
\]
with $\tilde{\eta}^{(-1)}=\Ad (b^{(0)})^{-1} \eta^{(-1)}$.
In particular, a DPW connection $D_{\lambda}$ is an ${\bf\SU}^{\tau}(\S^1)$-connection if and only if 
\[
D_{\lambda}=D+\lambda^{-1}\eta^{(-1)}-\lambda (\eta^{(-1)})^{\dagger}
\]
where $D=d+\eta^{(0)}$ is an $\SU(W)^{\tau}$-connection. 
\end{Rem}

Since the families of flat connections \eqref{eq:loop} that we seek are unitary along $\S^1$, we first need to understand when an ${\bf SL}^{\tau}(\S^1)$-connection can be unitarized, that is, when is there an ${\bf\SL}^{\tau}(\S^1)$-gauge $g_{\lambda}$ such that $D_{\lambda}. g_{\lambda}$ is an ${\bf\SU}^{\tau}(\S^1)$-connection.

\begin{Lem}\label{lem:tau-unitarize}
Let $D_{\lambda}$ be a flat ${\bf\SL}^{\tau}(\S^1)$-connection on $V=N\times W$ and assume that its monodromy representation $\rho_{\lambda}\colon \pi_1(N,q_0)\to \mathrm{SL}(W)$ is irreducible and unitarizable for every $\lambda\in \S^1$ where $q_0\in N$ is some base point. 
Additionally, assume that there exists a subgroup $\hat B\subset\SL(W)$ satisfying
 $\tau(\hat B)\subset\hat B,$ 
$
\mathrm{SL}(W)=\SU(W)\hat B$ and $\SU(W)\cap \hat B=\{\Id\}.
$
Then there exists an ${\bf\SL}^{\tau}(\mathbb{D})$-gauge $b_{\lambda}$ such that $D_{\lambda}. b_{\lambda}$ is a flat  ${\bf\SU}^{\tau}(\S^1)$-connection.

In particular, if $D_{\lambda}=d+\sum_{k\geq -1}\eta^{(k)}\lambda^k$ is a DPW-connection, then 
\[
D_{\lambda}. b_{\lambda}=D+\lambda^{-1} \Psi-\lambda \Psi^{\dagger}
\] 
with $D$ an $\SU(W)^{\tau}$-connection and $\Psi= \Ad (b^{(0)})^{-1}\, \eta^{(-1)} \in  \Omega^{(1,0)}(\mathfrak{g}_{-1} )$ where $\mathfrak{sl}(W)=\sum_{j\in\Z_r}\mathfrak{g_j}$ is the $\tau$-eigenspace decomposition.
\end{Lem}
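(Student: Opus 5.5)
Fix a base point $q_0\in N$ and an initial frame: let $\Psi_\lambda$ be the unique parallel frame of $D_\lambda$ on the universal cover $\tilde N$ with $\Psi_\lambda(\tilde q_0)=\Id$. Since $\eta_\lambda$ is real analytic in $\lambda\in\S^1$ and $D_{\xi\lambda}=\tau D_\lambda$ (Remark~\ref{rem:tauparallel}), uniqueness gives $\Psi_{\xi\lambda}=\tau\Psi_\lambda$. Hence $\Psi_\lambda\colon\tilde N\to{\bf\SL}^{\tau}(\S^1)$, and the monodromy satisfies $\rho_{\xi\lambda}=\tau\rho_\lambda$.

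The first step is to produce a unitarizing conjugator that is itself a twisted loop. For each $\lambda\in\S^1$, irreducibility and unitarizability give a positive definite Hermitian form $H_\lambda$ on $W$, unique up to a positive scalar, such that $\rho_\lambda$ is $H_\lambda$-unitary. Normalize by $\det H_\lambda=1$ to make it unique. I will argue that $\lambda\mapsto H_\lambda$ is real analytic. The invariance equations $\rho_\lambda(\gamma)^\dagger H\rho_\lambda(\gamma)=H$ form a real-analytic family of real-linear systems whose solution space is one-dimensional for every $\lambda$, since an irreducible representation fixes at most one Hermitian form up to scale (Schur). So the solution line is an analytic line bundle over $\S^1$, and the normalized positive solution is analytic. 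Write $H_\lambda=p_\lambda^\dagger p_\lambda$ with $p_\lambda\in\hat B$, using the global decomposition $\SL(W)=\SU(W)\hat B$. A Cholesky-type decomposition depends analytically on $H_\lambda$, so $p_\lambda$ is analytic. Then $p_\lambda\rho_\lambda p_\lambda^{-1}\in\SU(W)$.

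The second step, which I expect to be the main obstacle, is checking the twisting: I need $p_{\xi\lambda}=\tau(p_\lambda)$. Because $\tau$ preserves the Hermitian structure, it commutes with $\dagger$ and maps $\SU(W)$ to itself. Hence $\tau(p_\lambda)$ conjugates $\tau\rho_\lambda=\rho_{\xi\lambda}$ into $\SU(W)$, and $\tau(p_\lambda)\in\hat B$ because $\tau(\hat B)\subset\hat B$. Uniqueness now does the work. Suppose $u p$ and $p'$ both unitarize the same irreducible representation, with $p,p'\in\hat B$ of determinant one. Then $p'p^{-1}$ carries one invariant form to another, so by Schur $H$ is determined up to scale and $p'=up$ with $u\in\SU(W)$. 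The condition $\SU(W)\cap\hat B=\{\Id\}$ forces $u=\Id$. Therefore $p_\lambda$ defines an element of ${\bf\SL}^{\tau}(\S^1)$, where any scalar ambiguity is removed by the $\SL$ condition.

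The third step makes the gauge global and unitary. The loop $F_\lambda:=\Psi_\lambda p_\lambda^{-1}$ on $\tilde N$ has unitary monodromy, since $\gamma^*F=\rho_\gamma^{-1}\Psi p^{-1}$ up to the paper's convention and $p\rho p^{-1}$ is unitary. Iwasawa-factorize pointwise with Theorem~\ref{thm:Iwasawa} (order ${\bf\SU}^{\tau}\times{\bf\SL}^{\tau}_B$ as appropriate) to get $F_\lambda=b_\lambda\,F^u_\lambda$ with $b_\lambda\in{\bf\SL}^{\tau}_B(\mathbb{D})$ and $F^u_\lambda$ unitary. Unitary monodromy acting on the unitary factor, together with uniqueness of the factorization, shows that $b_\lambda$ is invariant under deck transformations, so it descends to $N$. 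The factorization is smooth by Theorem~\ref{thm:Iwasawa}. Then $D_\lambda.b_\lambda$ has parallel frame $F^u_\lambda$ (after the frame/gauge conventions), so it is a flat ${\bf\SU}^{\tau}(\S^1)$-connection.

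The final claim follows from Remark~\ref{rem:DPW-connection}. Since $b_\lambda$ is a positive loop, the gauged connection $D_\lambda.b_\lambda$ is still a DPW-connection with residue $\Ad(b^{(0)})^{-1}\eta^{(-1)}\in\Omega^{(1,0)}(\mathfrak g_{-1})$. Being unitary as well, it has the form $D+\lambda^{-1}\Psi-\lambda\Psi^\dagger$ with $D$ an $\SU(W)^\tau$-connection.
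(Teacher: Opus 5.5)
Your proposal is correct. The first half follows the paper's first step: the unique $\hat B$-valued unitarizer $p_\lambda$ with $p_\lambda^\dagger p_\lambda=H_\lambda$, its real-analyticity, and the twisting $p_{\xi\lambda}=\tau(p_\lambda)$ obtained from $\tau(\hat B)\subset\hat B$ and $\SU(W)\cap\hat B=\{\Id\}$. Your $p_\lambda$ is the paper's $a_\lambda$; you simply phrase it through the invariant Hermitian form.

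The second half takes a genuinely different route.
\begin{itemize}
\item \emph{The paper's route.} It gauges by the constant loop $a_\lambda$ so that the monodromy becomes unitary. It then parallel-transports the standard metric to a well-defined metric $\hat H_\lambda$ on $N$ and takes its $\hat B$-valued square root $g_\lambda$ pointwise on $N$. A second irreducibility argument shows $g_\lambda$ is twisted, and only then does it Iwasawa-split $g_\lambda$ and keep the positive factor.
\item \emph{Your route.} You Iwasawa-split the twisted parallel frame $F_\lambda=\Psi_\lambda p_\lambda^{-1}$ on $\tilde N$ directly, and descend the positive factor by uniqueness of the splitting. This is the same mechanism the paper uses later in Corollary~\ref{recon-formula}.
\end{itemize}
Your version gets the twisting of $b_\lambda$ for free from Theorem~\ref{thm:Iwasawa}. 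It needs no second Schur argument and uses the finite-dimensional $\hat B$-normalization only once, at the base point. It also exhibits the unitary frame of $D_\lambda.b_\lambda$ directly. The paper's version stays on $N$ and makes the role of the parallel-transported Hermitian metric explicit.

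One slip to correct. With $d\Psi_\lambda+\eta_\lambda\Psi_\lambda=0$ the monodromy acts on the right, $\gamma^*\Psi_\lambda=\Psi_\lambda M_\gamma(\lambda)$. Hence $\gamma^*F_\lambda=F_\lambda\,\bigl(p_\lambda M_\gamma(\lambda)p_\lambda^{-1}\bigr)$, not $\rho_\gamma^{-1}\Psi_\lambda p_\lambda^{-1}$. It is exactly this right multiplication by a twisted unitary loop that is absorbed into $F^u_\lambda$ and gives $\gamma^*b_\lambda=b_\lambda$. Similarly, the parallel frame of $D_\lambda.b_\lambda$ is $b_\lambda^{-1}\Psi_\lambda=F^u_\lambda p_\lambda$. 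Since $p_\lambda$ is constant on $N$, $F^u_\lambda$ is itself parallel, which is what shows that $D_\lambda.b_\lambda$ is an ${\bf SU}^{\tau}(\S^1)$-connection.
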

\begin{proof}
As $\rho_{\lambda}$ is unitarizable for each $\lambda\in \S^1$, there exists $a_{\lambda}\in \mathrm{SL}(W)$ such that
\[
a_{\lambda}\rho_{\lambda}a_{\lambda}^{-1}\colon \pi_1(N,q_0)\to \SU(W)
\]
is a unitary representation. Since $\rho_{\lambda}$ is irreducible for all $\lambda\in \S^1$, the unitarizer $a_{\lambda}$ is unique up to left multiplication by $\SU(W)$. Consider the real-analytic finite-dimensional Iwasawa splitting $
\mathrm{SL}(W)=\SU(W)\hat B$.
Thus, demanding that $a_{\lambda}\in \hat B$ for all $\lambda\in \S^1$ provides a unique choice of unitarizer.  
Since $\rho_\lambda$ depends real-analytically on $\lambda$, and the normalization $a_\lambda\in \hat B$ makes the unitarizer unique, the map $\lambda\mapsto a_\lambda$ is real analytic; hence $a_\lambda\in {\bf\SL}(\S^1)$. To verify that $a_{\lambda} \in {\bf\SL}^{\tau}(\S^1)$ is twisted, we use that $D_{\lambda}$ is an ${\bf\SL}^{\tau}(\S^1)$-connection and thus $\rho_{\xi\lambda}=\tau\rho_{\lambda}$. Therefore 
\[
a_{\xi\lambda}^{-1}\tau a_{\lambda}\rho_{\xi\lambda}(a_{\xi\lambda}^{-1}\tau a_{\lambda})^{-1}=\rho_{\xi\lambda}
\]
and, by irreducibility of $\rho_{\xi\lambda}$, we obtain
\[
a_{\xi\lambda}^{-1}\tau a_{\lambda}=c\,\Id\,,\quad c^{\dim W}=1\,.
\]
Since $\tau(\hat B)\subset \hat B$ the left hand side lies in $\hat B$ whereas the right hand side lies in $\SU(W)$, hence 
$c=1$. Hence $a_{\xi\lambda}=\tau a_{\lambda}$, i.e. $a\in{\bf\SL}^{\tau}(\S^1)$.

Let $H\colon V\times V\to\C$ be the Hermitian bundle metric on $V=N\times W$ given by the Hermitian structure on $W$. 
We may assume --- after gauging by the constant-in-$N$ twisted loop $a_\lambda$ --- that $\rho_\lambda$ is unitary.
Let $\hat{H}_{\lambda}\colon V\times V\to\C$ be the Hermitian bundle metric obtained via parallel transport of $H$ by the connection $D_{\lambda}$ for $\lambda\in \S^1$. Since $\rho_{\lambda}$ is unitary, $\hat{H}_{\lambda}$ is well-defined on $N$, that is, has trivial monodromy. Thus there is a smooth map $g_{\lambda}\colon N\to \mathrm{SL}(W)$ with $g_{\lambda}^{*}H=\hat{H}_{\lambda}$ for $\lambda \in \S^1$. Utilizing the unique splitting $\mathrm{SL}(W)=\SU(W)\hat B$, we also have $(g_{\lambda})_{\hat B}^{*}H=\hat{H}_{\lambda}$. This implies that we can choose $g_{\lambda}$ uniquely by requiring it to take values in $\hat B$ 
for all $\lambda\in \mathbb S^1.$
Then, $g_{\lambda}$ again depends real-analytically on $\lambda$.  In other words, $g_{\lambda}$ is an ${\bf\SL}(\S^1)$-gauge and, by construction,  $D_{\lambda}. g_{\lambda}$ is an ${\bf\SU}(\S^1)$-connection. 

It remains to show that $g_{\xi\lambda}=\tau g_{\lambda}$ is twisted. By construction, $g_{\lambda}$ is the unique real analytic $\hat B$-valued map such that  $D_{\lambda}. g_{\lambda}$ is an ${\bf\SU}(\S^1)$-connection. Using the irreducibility of the 
${\bf\SL}^{\tau}(\S^1)$-connection
$D_{\lambda}$ once more, we obtain that
\[D_{\xi\lambda}.g_{\xi\lambda}=(\tau D_\lambda).g_{\xi\lambda}\quad\text{and}\quad(\tau D_\lambda).(\tau g_\lambda)\] are both ${\bf\SU}(\S^1)$-connections. By irreducibility, this implies
 $\tau g_\lambda=g_{\xi\lambda}\, c\,\Id\,,$ $ c^{\dim W}=1$. As before, we obtain $c=1$ and therefore $\tau g_\lambda=g_{\xi\lambda},$
i.e. $g_{\lambda}$ is twisted.

We can now apply Theorem~\ref{thm:Iwasawa} to factorize 
$
g_{\lambda}=b_{\lambda}u_{\lambda}
$
pointwise on $N$ into a positive $\tau$-twisted and unitary $\tau$-twisted loop. 
Then also $D_{\lambda}. b_{\lambda}$ is an ${\bf\SU}^{\tau}(\S^1)$-connection via the ${\bf\SL}^{\tau}_{B}(\mathbb{D})$-gauge $b_{\lambda}$.
Finally, if $D_{\lambda}=d+\eta_{\lambda}$ is a DPW-connection, then  Remark~\ref{rem:DPW-connection} implies that 
$
D_{\lambda}. b_{\lambda}=D+\lambda^{-1}\Psi-\lambda \Psi^{\dagger}
$
is a flat ${\bf\SU}^{\tau}(\S^1)$-connection with $\Psi= \Ad (b^{(0)})^{-1}\, \eta^{(-1)}$.
\end{proof}
\begin{Rem}
In the minimal Lagrangian case of Example~\ref{ex:order6}, one may take $\hat B\subset \SL_3(\C)$ to be the subgroup of upper triangular matrices with positive real diagonal entries, and $B\subset \SL_3(\C)^\tau$ the corresponding solvable subgroup appearing in Theorem~\ref{thm:Iwasawa}. Then
\[
\SL_3(\C)=\SU_3\,\hat B,\qquad \SU_3\cap \hat B=\{\Id\},\qquad \tau(\hat B)=\hat B.
\]
Hence the hypotheses of Lemma~\ref{lem:tau-unitarize} are satisfied. The same is true for the conjugated automorphism $\hat\tau$ of Example~\ref{ex:order6}, after conjugating $\hat B$ by the matrix $c$ in \eqref{def.mTta}.
\end{Rem}

\begin{Cor}\label{cor:unitarize}
Let $\tilde{D}_{\lambda}$ be a  flat, not necessarily $\tau$-twisted, ${\bf\SL}(\S^1)$-connection on $V=N\times W$ and assume that its monodromy representation $\tilde{\rho}_{\lambda}\colon \pi_1(N,q_0)\to \mathrm{SL}(W)$ is irreducible and unitarizable for every $\lambda\in \S^1$. Assume that there exists an ${\bf\SL}(\S^1)$-gauge
$g_{\lambda}$ such that 
\(
D_{\lambda}:=\tilde{D}_{\lambda}. g_{\lambda}=d+\eta_{\lambda}
\)
is a $\tau$-twisted DPW-connection. Then there exists  
an ${\bf\SL}^{\tau}(\mathbb{D})$-gauge $b_{\lambda}$ so that  
\[
D_{\lambda}. b_{\lambda}=D+\lambda^{-1} \Psi-\lambda \Psi^{\dagger}
\] 
is a flat ${\bf\SU}^{\tau}(\S^1)$ connection with $\Psi= \Ad (b^{(0)})^{-1}\, \eta^{(-1)} \in  \Omega^{(1,0)}(N,\mathfrak{g}_{-1} )$ and $D$ an $\SU(W)^{\tau}$-connection. 
\end{Cor}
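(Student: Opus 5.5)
The plan is to reduce Corollary~\ref{cor:unitarize} directly to Lemma~\ref{lem:tau-unitarize}, applied to the gauged connection $D_{\lambda}=\tilde{D}_{\lambda}.g_{\lambda}$. That lemma needs three inputs: $D_{\lambda}$ is a flat ${\bf\SL}^{\tau}(\S^1)$-connection, its monodromy is irreducible and unitarizable for every $\lambda\in\S^1$, and a subgroup $\hat B$ with the stated properties exists. The first holds by assumption, since $D_{\lambda}$ is a $\tau$-twisted DPW-connection and hence in particular a flat ${\bf\SL}^{\tau}(\S^1)$-connection. For the third, I would invoke the preceding remark, which provides $\hat B$ (upper triangular matrices with positive diagonal, conjugated appropriately for $\hat\tau$). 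So the only real work is transferring the monodromy hypotheses from $\tilde D_{\lambda}$ to $D_{\lambda}$.

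For this, I fix the base point $q_0$ and note that gauging by $g_{\lambda}\colon N\to{\bf\SL}(\S^1)$ changes monodromy by conjugation. Concretely, if $\tilde\Psi_{\lambda}$ is a fundamental solution of $\tilde D_{\lambda}$ on the universal cover, then $g_{\lambda}^{-1}\tilde\Psi_{\lambda}$ is a fundamental solution of $D_{\lambda}$. Because $g_{\lambda}$ is single-valued on $N$, the two monodromies satisfy
\[
\rho_{\lambda}=g_{\lambda}(q_0)^{-1}\,\tilde\rho_{\lambda}\,g_{\lambda}(q_0)
\]
for each $\lambda\in\S^1$, up to the convention of left versus right action. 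Irreducibility and unitarizability are invariant under conjugation by a fixed element of $\mathrm{SL}(W)$: if $a_{\lambda}$ unitarizes $\tilde\rho_{\lambda}$, then $a_{\lambda}g_{\lambda}(q_0)$ unitarizes $\rho_{\lambda}$. Hence $\rho_{\lambda}$ satisfies the hypotheses of Lemma~\ref{lem:tau-unitarize} for all $\lambda\in\S^1$.

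With this in hand, the "in particular" part of Lemma~\ref{lem:tau-unitarize} applies to the DPW-connection $D_{\lambda}$. It produces an ${\bf\SL}^{\tau}(\mathbb{D})$-gauge $b_{\lambda}$ with $D_{\lambda}.b_{\lambda}=D+\lambda^{-1}\Psi-\lambda\Psi^{\dagger}$, where $\Psi=\Ad(b^{(0)})^{-1}\eta^{(-1)}\in\Omega^{(1,0)}(N,\mathfrak g_{-1})$ and $D$ is an $\SU(W)^{\tau}$-connection. Flatness is preserved because gauging preserves flatness. This is exactly the claimed statement.

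I do not expect a serious obstacle; the corollary is essentially a bookkeeping consequence of the lemma. The one point needing care is that $g_{\lambda}$ is only an untwisted ${\bf\SL}(\S^1)$-gauge. The $\tau$-twisting of the unitarizer and of the gauge in Lemma~\ref{lem:tau-unitarize} is therefore derived internally from the twistedness of $D_{\lambda}$ together with irreducibility, and not from $g_{\lambda}$. I would check explicitly that the proof of Lemma~\ref{lem:tau-unitarize} uses only $\rho_{\xi\lambda}=\tau\rho_{\lambda}$ for the monodromy of $D_{\lambda}$ itself. This holds because $D_{\xi\lambda}=\tau D_{\lambda}$ by Remark~\ref{rem:tauparallel}, and it needs nothing about $\tilde D_{\lambda}$.
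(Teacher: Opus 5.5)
Your proposal is correct and matches the paper's proof: the paper also transfers irreducibility and unitarizability from $\tilde D_\lambda$ to $D_\lambda$ through the gauge equivalence, where the monodromy changes by conjugation with $g_\lambda(q_0)$, and then applies Lemma~\ref{lem:tau-unitarize}. One caveat: the remark you cite supplies $\hat B$ only for the automorphisms $\tau,\hat\tau$ of Example~\ref{ex:order6}, so for a general $\tau$ the existence of $\hat B$ is an implicit standing hypothesis, which the paper's one-line proof also takes for granted.
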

\begin{proof}
Because $D_{\lambda}$ is ${\bf\SL}(\S^1)$-gauge equivalent to $\tilde{D}_{\lambda}$, the monodromy  
of $D_{\lambda}$ is irreducible and unitarizable for every $\lambda\in \S^1$. Applying the  previous Lemma proves the claim.
\end{proof}

\subsection{$\tau$-structures}
As observed in Remark~\ref{rem:tauparallel}, a $\tau$-twisted connection
$
D_\lambda=d+\eta_\lambda
$
on a trivial bundle $N\times W$ is defined using a fixed automorphism $\tau$ that is parallel with respect to the trivial connection $d$. This formulation is inherently local. On a compact Riemann surface, the associated family of a harmonic map typically lives on a nontrivial bundle, and the existence of a global trivialization in which the twisting automorphism is constant would force the underlying $\SU(W)^\tau$-bundle
 to be trivial.

This is already too restrictive in the minimal Lagrangian case. There the natural bundle is
$
V=K\oplus \underline{\C}\oplus K^{-1}.
$
Although $V$ is topologically trivial as a complex rank-three bundle, its $\SU(\C^3)^\tau$-reduction involves the canonical bundle $K$ and is therefore nontrivial unless $K$ itself is trivial. On a compact Riemann surface of genus $g\neq 1$ one 
cannot
expect a global $\tau$-twisted DPW potential on the surface itself. This is the basic reason for introducing $\tau$-structures: they provide the intrinsic global replacement for a fixed twisted trivialization.

The passage from local twisted potentials to global objects has been carried out in special cases, such as minimal and CMC surfaces in the $3$-sphere (see for example \cite{HeC}), but a general framework for more complicated target spaces does not seem to be available in the literature. 

\begin{Def}
Let $W$ be a finite-dimensional Hermitian vector space, $\tau\colon \mathrm{SL}(W)\to \mathrm{SL}(W)$ an order $r$ Lie group automorphism preserving the Hermitian structure,  and $\xi=\exp(2\pi i/r)$ a primitive $r$-th root of unity. A Hermitian vector bundle $V\to M$ is an $\SU(W)^{\tau}$-bundle (or equivalently admits a $\tau$-structure)
 provided there exist isometric trivializations whose transition functions take values in $\SU(W)^{\tau}$. An $\S^1$-family $\nabla_{\lambda}$ on an $\SU(W)^{\tau}$-bundle $V$ is called an ${\bf\SU}^{\tau}(\S^1)$-connection if $\nabla_{\lambda}$ is an  ${\bf\SU}^{\tau}(\S^1)$-connection in every $\tau$-trivialization. 
 The same definition applies with ${\bf SU}^{\tau}(\S^1)$ replaced by any of the twisted loop groups introduced in Section~\ref{sec:loop-groups}.
\end{Def}

\begin{Rem}
An $\SU(W)^\tau$-bundle is the global replacement for a trivial bundle endowed with a fixed $d$-parallel twisting automorphism $\tau$. In the minimal Lagrangian case, the corresponding $\SU(\C^3)^\tau$-bundle is $K\oplus\underline{\C}\oplus K^{-1}$; it is trivial as an $\SU(\C^3)^\tau$-bundle only if $K$ is trivial.
\end{Rem}

\begin{Exa}
We recall from Lemma~\ref{lem:loop2mLag} that given a Riemann surface $M$ every minimal Lagrangian immersion $f\colon M\to\CP^2$ 
is obtained from a $\C^{\times}$-family of flat connections $d_{\lambda}$ on the Hermitian rank three bundle $V=K\oplus \underline{\C}\oplus K^{-1}$ satisfying the properties (i)-(v) in Lemma~\ref{lem:loop}.

The Hermitian bundle $V=K\oplus \underline{\C}\oplus K^{-1}$ with diagonal metric gives rise to an $\SU(W)^{\tau}$-structure with $W=\C^3$ and $\tau$ as in Example~\ref{ex:order6}. Choosing a local trivialization $\psi$ of $K$ and  corresponding trivialization $\psi^{-1}$ of $K^{-1}$, we locally trivialize $V$ by 
\begin{equation}\label{eq:tautriv}
(\psi,1,\psi^{-1})\mapsto (e_1,e_2,e_3)
\end{equation}
where $e_k$ denote the standard basis in $\C^3$. These trivializations are isometric and their transition functions take values in $\SU(\C^3)^{\tau}$.
Moreover, the family of flat connections $d_{\lambda}=D+\lambda^{-1}\Phi-\lambda\Phi^{\dagger}$  is an ${\bf\SU}^{\tau}(\S^1)$-connection on $V$.
The local trivializations  \eqref{eq:tautriv} map $d_{\lambda}$ to $d+\omega^{(0)}+\lambda^{-1}\omega-\lambda \omega^{\dagger}$ and $D$ to $d+\omega^{(0)}$ with $\omega^{(0)}\in \mathfrak{g}_0$, $\omega\in \mathfrak{g}_{-1}$,  $\omega^{\dagger}\in \mathfrak{g}_{1}$ as required for an ${\bf\SU}^{\tau}(\S^1)$-connection on $W$.  
\end{Exa}

In fact, every  $\SU(\C^3)^{\tau}$-bundle,
with $\tau$ as in Example~\ref{ex:order6},
 splits orthogonally as $V=E\oplus\underline{\C}\oplus E^{-1}$ for a unique line bundle $E\to M$. 
For a $\tau$-trivialization $F\colon V\to \underline{\C}^3$, we locally define an orthogonal decomposition $V=E_1\oplus E_2\oplus E_{3}$ via $F(E_k)=\C e_k$.  Since the transition functions of such trivializations take values in $\SU(\C^3)^{\tau}$ they are of the form $\diag(a,1,a^{-1})$. Therefore, the sub-bundles $E_k\subset V$ are globally well-defined. Moreover, $E_3=E_1^{-1}$  and $E_2=\underline{\C}$.

These observations provide a useful variant of Lemma~\ref{lem:loop2mLag}.
\begin{Lem}\label{lem:loop2mLag2}
Let $M$ be a Riemann surface, $V\to M$ a rank three Hermitian $\SU(\C^3)^{\tau}$-bundle with trivial determinant bundle, and let $d_{\lambda}=D+\lambda^{-1}\Phi-\lambda\Phi^{\dagger}$ be a flat ${\bf\SU}^{\tau}(\S^1)$-connection with $\Phi^2\in\Gamma(M,K^2\mathfrak{sl}(V))$ nowhere vanishing.
Then $V=K\oplus \underline{\C}\oplus K^{-1}$, $D$ is the diagonal connection, the Higgs field $\Phi\in\Gamma(K\mathfrak{sl}(V))$ is of the form \eqref{eq:Higgsfield} 
with respect to the induced splitting.
Moreover, $d_{\lambda}$ describes an $\S^1$-family of minimal Lagrangian immersions $f_{\lambda}\colon \tilde{M}\to \CP^2$ equivariant \eqref{eq:equiv} with respect to the monodromy $\rho_{\lambda}$ of $d_{\lambda}$. If  for some $\lambda_0\in \S^1$  the representation $\rho_{\lambda_0}=\Id$ is trivial, then $f_{\lambda_0}\colon M\to \CP^2$ and its special Legendrian lift $\hat{f}_{\lambda_0}\colon M\to \S^5$ descend to $M$. 
\end{Lem}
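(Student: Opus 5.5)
The plan is to reduce the statement to Lemma~\ref{lem:loop2mLag} by checking that the given family $d_\lambda$ satisfies properties (i)--(v) of Lemma~\ref{lem:loop}, using the orthogonal splitting $V=E\oplus\underline{\C}\oplus E^{-1}$ that every $\SU(\C^3)^\tau$-bundle carries (established just before the statement). First I would work in a local $\tau$-trivialization \eqref{eq:tautriv}. There $d_\lambda=d+\omega^{(0)}+\lambda^{-1}\omega-\lambda\omega^\dagger$, and since $d_\lambda$ is an ${\bf\SU}^\tau(\S^1)$-connection, $\omega^{(0)}\in\mathfrak g_0$ and $\omega\in\mathfrak g_{-1}$ for the eigenspace decomposition of $\tau$ in Example~\ref{ex:order6}. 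Property (i) is immediate from unitarity on $\S^1$. Property (ii) holds because $d_\lambda$ has a simple pole with residue $\Phi$; one must also check that $\Phi$ is of type $(1,0)$, which is built into the form $D+\lambda^{-1}\Phi-\lambda\Phi^\dagger$ with $\Phi\in\Gamma(K\mathfrak{sl}(V))$. Property (iii) is a hypothesis.

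The symmetries (iv) and (v) come from the twisting relation $d_{\xi\lambda}=\tau d_\lambda$ (Remark~\ref{rem:tauparallel}). Applying it twice gives $d_{\zeta\lambda}=\tau^2 d_\lambda$. Applying it three times gives $d_{-\lambda}=\tau^3 d_\lambda$. I will compute $\tau^2$ and $\tau^3$ explicitly from $\tau(g)=\mathfrak a^{-1}(g^{-1})^t\mathfrak a$:
\begin{itemize}
\item $\tau^2(g)=(\mathfrak a^{-1}\mathfrak a^t)\,g\,(\mathfrak a^{t})^{-1}\mathfrak a$ is conjugation by a diagonal matrix, which should be $\diag(\zeta^2,1,\zeta)$ up to scalars.
\item $\tau^3$ is transpose-inverse conjugated by $b$.
\end{itemize}
These are constant matrices, and since the transition functions are diagonal of the form $\diag(a,1,a^{-1})$, they commute with them. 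Hence (iv) and (v) globalize with the diagonal gauge $g$ and the orthogonal structure $b$ on $V$ as in the discussion preceding Lemma~\ref{lem:loop2mLag}. The diagonal connection $D$ is the $\mathfrak g_0$ part, which is diagonal.

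With (i)--(v) verified, Lemma~\ref{lem:loop2mLag} yields:
\begin{itemize}
\item $E\cong K$ holomorphically;
\item $\Phi$ has the form \eqref{eq:Higgsfield};
\item the $\S^1$-family $f_\lambda$ of equivariant minimal Lagrangian immersions exists;
\item the descent to $M$ of $f_{\lambda_0}$ and its Legendrian lift when $\rho_{\lambda_0}=\Id$.
\end{itemize}

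Since the cubic differential is $Q=\det\Phi$ and $\Phi\in\mathfrak g_{-1}$, it can also be read off directly. The key fact is that $\Phi$ has entries $\alpha$, $\alpha$, $\gamma$ in the cyclic positions with equal lower entries; I get this directly from the $\mathfrak g_{-1}$ eigenspace description rather than via (v).

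It remains to show the lift is \emph{special}, not merely minimal, Legendrian. Here I would use the argument in Example~\ref{ex:order6}: the choice of the conjugating matrix $c$ with factors of $i$ fixes the phase of the parallel unit section of $\underline{\C}\subset V$, so that $\hat f^*\Omega_{\C^3}(X,\cdot)$ has constant phase $1$. Concretely, I compute the phase $\varphi$ in \eqref{eq:sLeg} from the frame $\Upsilon_{\lambda_0}$. The frame has columns corresponding to $K,\underline{\C},K^{-1}$, and $\det\Upsilon=1$. The tangent directions are given by $\Phi-\Phi^\dagger$ applied to the $\underline{\C}$-column.

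I expect this phase computation to be the main obstacle. It requires carefully tracking which normalization of $\tau$ (the original $\tau$ or $\hat\tau$) is in force, and the resulting powers of $i$ and $\zeta$. By contrast, the verification of (iv) and (v) is routine matrix algebra.
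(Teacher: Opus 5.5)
Your proposal is correct and is essentially the paper's argument: reduce to Lemma~\ref{lem:loop2mLag} by taking (i)--(iii) from the hypotheses and reading off (iv), (v) in the $\tau$-trivializations. Your identities $\tau^2=\Ad\bigl(\diag(\zeta,1,\zeta^2)\bigr)$ and $\tau^3(A)=-bA^tb$ make explicit what the paper only asserts; for $b$ the relevant fact is $h^tbh=b$ for the transition functions $h=\diag(a,1,a^{-1})$, not commutation.

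The special-Legendrian step is not a real obstacle, and the paper does no separate computation for it. When $\rho_{\lambda_0}=\Id$ the horizontal lift is single-valued with constant phase, and multiplying it by a constant $e^{i\theta}$ multiplies the phase by $e^{3i\theta}$ (Proposition~\ref{pro:upanddown}). For the unconjugated $\tau$ the natural lift $\Upsilon(e_2)$ in fact has phase $i$, and passing to $\hat\tau$ via the matrix $c$ of Example~\ref{ex:order6} is exactly such a constant rotation.
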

\begin{proof}
We have already seen in the discussion above  that the $\SU(\C^3)^{\tau}$-bundle splits orthogonally as $V=E\oplus \underline{\C}\oplus E^{-1}$ for a line bundle $E\to M$. If  we can verify the properties (i)-(v) of Lemma~\ref{lem:loop} then Lemma~\ref{lem:loop2mLag} proves our claims. Now (i), (ii), and (iii) hold by assumption. The discussion above also shows that 
in any local $\tau$-trivialization $F\colon V\to \underline{\C}^3$
the ${\bf\SU}^{\tau}(\S^1)$-connection $d_{\lambda}$ takes the form $d+\omega^{(0)}+\lambda^{-1}\omega-\lambda \omega^{\dagger}$ with $\omega_0\in \mathfrak{g}_0$, $\omega\in \mathfrak{g}_{-1}$ and $\omega^{\dagger}\in \mathfrak{g}_{1}$. In particular, $\omega^{(0)}$ preserves the orthogonal decomposition $\C^3=\C e_1\oplus \C e_2\oplus \C e_3$ and thus $D$, whose local expression is $d+\omega^{(0)}$, preserves the decomposition $V=E\oplus \underline{\C}\oplus E^{-1}$. 
Define the $D$-parallel gauges $g=\diag(\zeta^2,1,\zeta)$ for the cube root  $\zeta=\exp(2\pi i/3)$  and $b=\begin{psmallmatrix} 0&0&1\\0&1&0\\1&0&0\end{psmallmatrix}\colon V\to V^*$
which satisfy properties (iv) and (v) of Lemma~\ref{lem:loop} since they do so in the $\tau$-trivializations. We can now apply Lemma~\ref{lem:loop2mLag}.
\end{proof}

\begin{Rem}\label{Rem:sllift0}
The  minimal Lagrangian immersion $f_{\lambda_0}\colon M\to \CP^2$ from the previous Lemma can be viewed as the line subbundle $\underline{\C}\subset V$ in the decomposition $V=K\oplus \underline{\C}\oplus K^{-1}$ with respect to  the trivial background connection $d_{\lambda_0}$ as explained in \eqref{eq:f-recon}. Thus, in every  $\tau$-trivialization $\Upsilon\colon V\to \underline{\C}^3$, the minimal Lagrangian immersion is given as the preimage $f_{\lambda_0}=\Upsilon(\C e_2)$ of the line $\C e_2\subset \C^3$. Moreover, $\hat{f}_{\lambda_0}=\Upsilon(e_2)$ is a minimal Legendrian lift into $\S^5$.
 \end{Rem}
\begin{Rem}\label{Rem:sllift}
In later parts of the paper, we shall work with the conjugated order six symmetry $\hat{\tau}$ of Example~\ref{ex:order6}. In that case, the minimal Lagrangian immersion is given as $f_{\lambda_0}=\Upsilon(\C e_1)$ and the minimal Legendrian lift $$\hat{f}_{\lambda_0} = 
\Upsilon(e_1)=F^{-1}(e_1)$$ into $\S^5$ is  in fact {\em special Legendrian:} 
with $\omega_{-1}=\left(\begin{smallmatrix}0&a&0\\0&0&b\\ia&0&0\end{smallmatrix}\right)dz$ and $\omega_{1}=\left(\begin{smallmatrix}0&0&i\bar a\\-\bar a&0&0\\0&-\bar b&0\end{smallmatrix}\right)d\bar z$ for a holomorphic coordinate $z=x+iy$ and local functions $a,b$,  
the inverse of the $\hat \tau$-trivialization $F=\Upsilon^{-1}$ satisfies
\[
-dF F^{-1}=\omega_{\lambda_0}=\lambda_0^{-1}\omega^{(-1)}+\omega^{(0)}+\lambda_0 \omega^{(1)}\,.
\]
 Thus, the holomorphic volume form evaluates to 
 \[
 \Omega(\tfrac12F^{-1}e_1,\tfrac{\partial}{\partial z}F^{-1}e_1,\tfrac{\partial}{\partial \bar{z}}F^{-1}e_1)dz\wedge d\bar z=\tfrac12\det(e_1,ia e_3,-\bar ae_2)dz\wedge d\bar z=  |a|^2dx\wedge dy >0.
 \] 
 Thus, the phase $\varphi$ in \eqref{eq:sLeg} is a real positive constant. As $\varphi$ is by construction of length $1$, we obtain 
 $\varphi=1$
and the surface $F^{-1}(e_1)$ is special Legendrian.
 \end{Rem}

\begin{Rem*}
Our focus lies on the construction of minimal Lagrangian surfaces, and we do not aim to discuss further the general theory for  harmonic maps into compact symmetric spaces. For the case of negatively curved symmetric spaces, a general theory has been developed, see 
e.g. \cite{GP2}. 
\end{Rem*}

\subsection{Removable singularities}
We now pass to compact Riemann surfaces with punctures. The aim is to remove 
the singularities of meromorphic DPW-connections in a way compatible with the underlying $\tau$-structure.
\begin{Def}
An ${\bf\SL}^{\tau}(\S^1)$-connection $D_{\lambda}=d+\eta_{\lambda}$ is {\em meromorphic} if $\eta_{\lambda}$ is an 
${\bf sl}^{\tau}(\S^1)$-valued meromorphic 1-form on $N$ with poles contained in the punctures $\{q_1,\dots, q_n\}$. Thus, 
if $\eta_{\lambda}=\sum_{k\in\Z} \eta^{(k)} \lambda^k$ then $\eta^{(k)}$ are meromorphic $\mathfrak{g}_k$-valued 1-forms on $N$ whose poles are contained in the punctures. Since each coefficient $\eta^{(k)}$ is meromorphic, hence of type $(1,0)$, such a connection is automatically flat on $\mathring N$.

An ${\bf\SL}^{\tau}(\S^1)$-gauge $g_{\lambda}$  is {\em meromorphic} if $g_{\lambda}\colon N\to {\bf\SL}^{\tau}(\S^1)$ is meromorphic on $N$ with poles contained in the punctures $\{q_1,\dots, q_n\}$. In other words, if $g_{\lambda}=\sum_{k\in\Z} g^{(k)}\lambda^k$ then $g^{(k)}\colon N\to \mathfrak{gl}(W)$ are meromorphic with poles contained in the punctures.
\end{Def}

\begin{Def}
Let $\mathring{M}=M\setminus\{p_0\}$ be a punctured surface and $D_{\lambda}$ an ${\bf\SL}^{\tau}(\S^1)$-connection on $\mathring{M}\times W$.
We call $p_0\in M$  an ${\bf\SL}^{\tau}(\D)$-{\em apparent singularity} if there exists a gauge $g_{\lambda}\colon \mathring{M}\to {\bf\SL}^{\tau}(\D)$ such that the  ${\bf\SL}^{\tau}(\S^1)$-connection 
$D_{\lambda}. g_{\lambda}$ extends smoothly into $p_0\in M$. 
\end{Def}
The following lemma links the  removal of singularities with $\SU(W)^{\tau}$-bundles. 
\begin{Lem}\label{lem:apparent}
Let $\mathring{M}=M\setminus\{p_0\}$, and let $\mathring{\nabla}_{\lambda}$ be an ${\bf\SU}^{\tau}(\S^1)$-connection on $\mathring{M}\times W$, and $\tilde{\nabla}_{\lambda}$ an ${\bf\SU}^{\tau}(\S^1)$-connection on $M\times W$. Assume there exists an ${\bf\SL}^{\tau}(\mathbb{D})$-gauge $h_{\lambda}\colon \mathring{M}\to {\bf\SL}^{\tau}(\mathbb{D})$ with $h_{\lambda}(p_1)\in\SU(W)^{\tau}$ for $p_1\in \mathring{M}$ such that $\mathring{\nabla}_{\lambda}=\tilde{\nabla}_{\lambda}. h_{\lambda}$ are gauge equivalent over $\mathring{M}$.  
Then 
\begin{enumerate}[(i)]
\item
$h_{\lambda}=h_0\colon \mathring{M}\to\SU(W)^{\tau}$. 
\item
There exists an $\SU(W)^{\tau}$-bundle $V\to M$ with an ${\bf\SU}^{\tau}(\S^1)$-connection $\nabla_{\lambda}$ for which 
\[
\nabla_{\lambda}=\mathring{\nabla}_{\lambda}. \Upsilon
\]
in a $\tau$-trivialization $\Upsilon\colon V_{| \mathring{M}}\to \mathring{M}\times W$. 
\end{enumerate}
\end{Lem}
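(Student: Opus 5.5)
The plan is to use the uniqueness part of the Iwasawa decomposition (Theorem~\ref{thm:Iwasawa}) on the gauge relation. For (i) we set up the equation satisfied by $h_\lambda$. Writing $\tilde\nabla_\lambda=d+\tilde\omega_\lambda$ and $\mathring\nabla_\lambda=d+\mathring\omega_\lambda$, the relation $\mathring\nabla_\lambda=\tilde\nabla_\lambda.h_\lambda$ reads
\[
dh_\lambda=h_\lambda\mathring\omega_\lambda-\tilde\omega_\lambda h_\lambda\,.
\]
Both potentials are ${\bf su}^{\tau}(\S^1)$-valued. Let $\mathring F_\lambda$ and $\tilde F_\lambda$ be the parallel frames (fundamental solutions) of $\mathring\nabla_\lambda$ and $\tilde\nabla_\lambda$ on the universal cover of $\mathring M$, normalized at $p_1$ by $\mathring F_\lambda(p_1)=\Id$ and $\tilde F_\lambda(p_1)=h_\lambda(p_1)$. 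Since the connections are unitary along $\S^1$, both frames take values in ${\bf\SU}^{\tau}(\S^1)$; the second uses $h_\lambda(p_1)\in\SU(W)^\tau$. The gauge relation then gives $h_\lambda=\tilde F_\lambda\mathring F_\lambda^{-1}$, since both sides solve the same first-order ODE with the same initial value at $p_1$. The right-hand side is a unitary loop at each point, while $h_\lambda$ is a positive loop. So $h_\lambda$ lies in ${\bf\SU}^{\tau}(\S^1)\cap{\bf\SL}^{\tau}(\D)$.

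The key step is to show that this intersection consists of constant loops with values in $\SU(W)^\tau$. Take $g$ in the intersection. It extends holomorphically into $\D$. The reality condition $g(\lambda)^{-1}=g(1/\bar\lambda)^\dagger$ shows that $g^{-1}$, and hence $g$, extends holomorphically to $\CP^1\setminus\overline{\D}$ including $\infty$. Thus $g$ is a holomorphic map $\CP^1\to\mathrm{SL}(W)\subset\mathfrak{gl}(W)$, so it is constant by Liouville. Its constant value is unitary and, by the twisting condition, $\tau$-fixed. Hence $h_\lambda=h_0\colon\mathring M\to\SU(W)^\tau$. Alternatively one can use $g=g\cdot\Id=\Id\cdot g$ with a $B$-normalization, but the Liouville argument avoids needing $h(p)(0)\in B$. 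This proves (i). The main subtlety is to check that $h_\lambda$ really equals $\tilde F_\lambda\mathring F_\lambda^{-1}$ pointwise on $\mathring M$ rather than only on the cover. This holds because $h_\lambda$ is single valued and the identity holds locally along every path from $p_1$.

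For (ii) I will glue. Cover $M$ by $U_0=\mathring M$ and a small disc $U_1$ around $p_0$. Define $V$ by identifying $U_0\times W$ and $U_1\times W$ over the annulus $U_0\cap U_1$ via the transition function $h_0|_{U_0\cap U_1}$, which is $\SU(W)^\tau$-valued. Then $V$ is a Hermitian $\SU(W)^\tau$-bundle. On $V$ set $\nabla_\lambda=\mathring\nabla_\lambda$ in the chart over $U_0$ and $\nabla_\lambda=\tilde\nabla_\lambda$ in the chart over $U_1$. These agree on the overlap precisely because $\mathring\nabla_\lambda=\tilde\nabla_\lambda.h_0$, and in each $\tau$-trivialization they are ${\bf\SU}^{\tau}(\S^1)$-connections. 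Taking $\Upsilon$ to be the $U_0$-chart of $V|_{\mathring M}$ gives $\nabla_\lambda=\mathring\nabla_\lambda.\Upsilon$, with $\Upsilon$ equal to the identity in that chart. I expect no real obstacle in this gluing step.
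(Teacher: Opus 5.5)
Your proposal is correct and follows the paper's proof. You show that $h_\lambda$ is unitary along $\S^1$ by uniqueness for the linear ODE $dh_\lambda=h_\lambda\mathring\omega_\lambda-\tilde\omega_\lambda h_\lambda$ with initial value at $p_1$; the paper compares $h_\lambda^{-1}$ with $h_{1/\bar\lambda}^{\dagger}$ directly instead of using frames. You then use ${\bf\SL}^{\tau}(\mathbb{D})\cap{\bf\SU}^{\tau}(\S^1)=\SU(W)^\tau$, which the paper only asserts and you justify via Liouville, and finally glue with transition function $h_0$. One small point: parallel frames on the universal cover presuppose flatness, which the statement does not assume, so state the unitarity step via parallel transport along paths from $p_1$ (as your closing remark in (i) effectively does).
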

\begin{proof}
Since $\mathring{\nabla}_{\lambda}=d+\mathring{\omega}_{\lambda}$ with $\mathring{\omega}_{\lambda}\in\Omega^1(\mathring{M}, {\bf su}^{\tau}(\S^1))$ and  $\tilde{\nabla}_{\lambda}=d+\tilde{\omega}_{\lambda}$ with $\tilde{\omega}_{\lambda}\in\Omega^1(M, {\bf su}^{\tau}(\S^1))$, the gauge $h_{\lambda}$ satisfies
\[
dh_{\lambda}=h_{\lambda}\mathring{\omega}_{\lambda}-\tilde{\omega}_{\lambda}h_{\lambda}.
\]
Using $\mathring{\omega}_{\lambda}^{\dagger}=-\mathring{\omega}_{1/\bar{\lambda}}$ and likewise for $\tilde{\omega}_{\lambda}$, we see that  $h_{\lambda}^{-1}$ and $h_{1/\bar{\lambda}}^{\dagger}$ satisfy the same linear differential equation with the same initial condition at $p_1\in \mathring{M}$. Therefore, $h_{\lambda}^{-1}=h_{1/\bar{\lambda}}^{\dagger}$ and since $h_{\lambda}$ is an ${\bf\SL}^{\tau}(\mathbb{D})$-gauge, we conclude $h_{\lambda}=h_0$ is independent of $\lambda$ and thus takes values in $\SU(W)^\tau$. 
We now glue the trivial $W$-bundles over $\mathring{M}$ and $M$ using $h_0$ as the transition function to obtain the $\SU(W)^{\tau}$-bundle $V\to M$ with ${\bf\SU}^{\tau}(\S^1)$-connection $\nabla_{\lambda}$ as stated.
\end{proof}
With this at hand, we have the following  removable singularity result:
\begin{Lem}\label{lem:apparent1}
Let $\mathring{M}=M\setminus\{p_0\}$ with $M$ simply connected and $D_{\lambda}$ a flat ${\bf\SL}^{\tau}(\S^1)$-connection on $\mathring{M}\times W$ for which $p_0\in M$ is an ${\bf\SL}^{\tau}(\D)$-apparent singularity. Further assume there exists an  ${\bf\SL}^{\tau}(\D)$ gauge $b_{\lambda}\colon \mathring{M}\to {\bf\SL}^{\tau}(\D)$ so that $D_{\lambda}. b_{\lambda}$ is a flat ${\bf\SU}^{\tau}(\S^1)$-connection on $\mathring{M}\times W$. Then there exists an $\SU(W)^{\tau}$-bundle $V\to M$ and a  flat ${\bf\SU}^{\tau}(\S^1)$-connection $\nabla_{\lambda}$ on $V\to M$ such that 
\[
\nabla_{\lambda}=(D_{\lambda}. b_{\lambda}). \Upsilon
\]
for a $\tau$-trivialization $\Upsilon\colon V_{|\mathring{M}}\to \mathring{M}\times W$.
\end{Lem}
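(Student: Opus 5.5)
The plan is to reduce the statement to Lemma~\ref{lem:apparent}. To apply it, we need a flat ${\bf\SU}^{\tau}(\S^1)$-connection $\tilde\nabla_\lambda$ defined on all of $M\times W$, and a gauge in ${\bf\SL}^{\tau}(\D)$ relating it to $\mathring\nabla_\lambda:=D_\lambda. b_\lambda$ over $\mathring M$.

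Because $p_0$ is an ${\bf\SL}^{\tau}(\D)$-apparent singularity, there is a gauge $g_\lambda\colon\mathring M\to{\bf\SL}^{\tau}(\D)$ for which $\hat D_\lambda:=D_\lambda. g_\lambda$ extends smoothly over $p_0$. Flatness holds on $\mathring M$ and is a closed condition, so the extension is a flat ${\bf\SL}^{\tau}(\S^1)$-connection on $M\times W$.

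Since $M$ is simply connected, $\hat D_\lambda$ has trivial monodromy for every $\lambda$. We can therefore write $\hat D_\lambda=d. \Phi_\lambda$ with a parallel frame $\Phi_\lambda\colon M\to{\bf\SL}(\S^1)$, normalized by $\Phi_\lambda(p_1)=\Id$ at a base point $p_1\in\mathring M$. Because $\hat D_{\xi\lambda}=\tau\hat D_\lambda$ and $\tau$ fixes $\Id$, uniqueness of the normalized frame gives $\Phi_{\xi\lambda}=\tau\Phi_\lambda$, so $\Phi_\lambda$ is $\tau$-twisted. Real-analyticity in $\lambda$ follows from smooth dependence of ODE solutions on parameters.

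Next we apply the Iwasawa factorization of Theorem~\ref{thm:Iwasawa} pointwise on $M$, in the order ${\bf\SU}^{\tau}(\S^1)\times{\bf\SL}^{\tau}_B(\D)$:
\[
\Phi_\lambda=u_\lambda\,\beta_\lambda,\qquad u_\lambda\in{\bf\SU}^{\tau}(\S^1),\quad \beta_\lambda\in{\bf\SL}^{\tau}_B(\D).
\]
Set $\tilde\nabla_\lambda:=\hat D_\lambda. \beta_\lambda^{-1}=d. u_\lambda$. This is a flat ${\bf\SU}^{\tau}(\S^1)$-connection, and it is smooth on all of $M$ because the Iwasawa map is a diffeomorphism. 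Over $\mathring M$ we then have
\[
\mathring\nabla_\lambda=D_\lambda. b_\lambda=\tilde\nabla_\lambda. h_\lambda,\qquad h_\lambda:=\beta_\lambda\, g_\lambda^{-1} b_\lambda .
\]
Here $h_\lambda$ takes values in ${\bf\SL}^{\tau}(\D)$, since that group is closed under products and inverses.

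The remaining hypothesis of Lemma~\ref{lem:apparent} is $h_\lambda(p_1)\in\SU(W)^\tau$. The frame normalization gives $\beta_\lambda(p_1)=\Id$, but $g_\lambda^{-1}b_\lambda(p_1)$ need not be unitary, and this is the step I expect to need care. The freedom to use is a constant gauge. Factor the constant loop $c_\lambda:=g_\lambda(p_1)^{-1}b_\lambda(p_1)\in{\bf\SL}^{\tau}(\D)$ as $c_\lambda=v_\lambda k_\lambda$ with $v_\lambda\in{\bf\SU}^{\tau}(\S^1)$ and $k_\lambda\in{\bf\SL}^{\tau}_B(\D)$. Then replace $\tilde\nabla_\lambda$ by $\tilde\nabla_\lambda. (\beta_\lambda^{-1}v_\lambda)$ restricted to\ldots; this does not quite work as written. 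The cleaner route is to renormalize the frame instead: set $\Phi_\lambda(p_1)=c_\lambda^{-1}$, i.e. use $\Phi_\lambda c_\lambda^{-1}$ in place of $\Phi_\lambda$, and redo the factorization. This version also runs into a problem, because $\tilde\nabla_\lambda$ would change by a non-unitary constant gauge.

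Instead, observe that $\tilde\nabla_\lambda. v_\lambda$ is still a smooth flat ${\bf\SU}^{\tau}(\S^1)$-connection on $M$, since $v_\lambda$ is constant and unitary. With it, $h_\lambda$ becomes $v_\lambda^{-1}\beta_\lambda g_\lambda^{-1}b_\lambda$. This no longer lies in ${\bf\SL}^{\tau}(\D)$, so the correct fix is to choose the frame normalization $\Phi_\lambda(p_1)=c_\lambda^{-1}$ from the start. With that choice, factoring $c_\lambda^{-1}=u_\lambda(p_1)\beta_\lambda(p_1)$ and arranging $\beta_\lambda(p_1)=c_\lambda^{-1}$ up to a unitary factor gives $h_\lambda(p_1)=\beta_\lambda(p_1)c_\lambda\in{\bf\SU}^{\tau}(\S^1)\cap{\bf\SL}^{\tau}(\D)$. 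That intersection consists of constant loops, so $h_\lambda(p_1)\in\SU(W)^\tau$, which is the required hypothesis.

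Once this normalization is in place, Lemma~\ref{lem:apparent}(ii) yields the $\SU(W)^\tau$-bundle $V\to M$ with $\nabla_\lambda=(D_\lambda. b_\lambda). \Upsilon$. Flatness of $\nabla_\lambda$ is inherited from $\mathring\nabla_\lambda$ on $\mathring M$ and, by continuity, extends over $p_0$.
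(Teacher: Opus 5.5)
Your final argument is correct and is essentially the paper's proof. You normalize the frame of $D_\lambda. g_\lambda$ by $\Phi_\lambda(p_1)=b_\lambda(p_1)^{-1}g_\lambda(p_1)$, Iwasawa-factor $\Phi_\lambda=u_\lambda\beta_\lambda$, and set $h_\lambda=\beta_\lambda g_\lambda^{-1}b_\lambda$; then $h_\lambda(p_1)=u_\lambda(p_1)^{-1}\in{\bf\SU}^{\tau}(\S^1)\cap{\bf\SL}^{\tau}(\D)=\SU(W)^\tau$ holds automatically, with nothing further to arrange, and Lemma~\ref{lem:apparent} applies. The detours can be deleted: $d.\Phi_\lambda$ is unchanged under $\Phi_\lambda\mapsto C\Phi_\lambda$ for constant $C$, so the renormalization is $c_\lambda^{-1}\Phi_\lambda$ (left multiplication, which leaves $D_\lambda. g_\lambda$ unchanged), not $\Phi_\lambda c_\lambda^{-1}$.
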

\begin{proof}
Since $p_0\in M$ is an ${\bf\SL}^{\tau}(\D)$-apparent singularity, there is a gauge $g_{\lambda}\colon \mathring{M}\to {\bf\SL}^{\tau}(\D)$ with
$D_{\lambda}. g_{\lambda}$ defined on $M$. Now $M$ is simply connected and thus we have  a gauge $k_{\lambda}\colon M\to {\bf\SL}^{\tau}(\S^1)$ such that 
$
D_{\lambda}. g_{\lambda}=d. k_{\lambda}
$
with $d$ the trivial connection on $M\times W$. 
Thus,
\[
D_{\lambda}. b_{\lambda}=(D_{\lambda}. g_{\lambda}). (g_{\lambda}^{-1}b_{\lambda})= d. (k_{\lambda}g_{\lambda}^{-1}b_{\lambda})\,.
\]
Applying the Iwasawa decomposition in Theorem~\ref{thm:Iwasawa}, we factorize $k_{\lambda}=(k_{\lambda})_u (k_{\lambda})_{+}$ with 
$(k_{\lambda})_{+}\colon M\to {\bf\SL}_B^{\tau}(\D)$ and $(k_{\lambda})_u\colon M\to {\bf\SU}^{\tau}(\S^1)$. Then
\[
D_{\lambda}. b_{\lambda}=(d. (k_{\lambda})_u). h_{\lambda}
\]
with $h_{\lambda}:=(k_{\lambda})_{+} g_{\lambda}^{-1}b_{\lambda}\colon \mathring{M}\to {\bf\SL}^{\tau}(\D)$. Since the gauge $k_{\lambda}$ can be chosen to satisfy $k_{\lambda}(p_1)=b_{\lambda}^{-1}(p_1)g_{\lambda}(p_1)$ at some  fixed $p_1\in\mathring{M}$, we have 
\[
1=k_{\lambda}(p_1)g_{\lambda}^{-1}(p_1)b_{\lambda}(p_1)=(k_{\lambda})_u(p_1) (k_{\lambda})_{+}(p_1) g_{\lambda}^{-1}(p_1)b_{\lambda}(p_1)
\]
with $(k_{\lambda})_u(p_1)\in {\bf\SL}^{\tau}(\D)\cap  {\bf\SU}^{\tau}(\S^1)=SU(W)^{\tau}$. Thus $h_{\lambda}(p_1)\in\SU(W)^{\tau}$ and we can apply Lemma~\ref{lem:apparent} to $\mathring{\nabla}_{\lambda}=D_{\lambda}. b_{\lambda}$ and $\tilde{\nabla}_{\lambda}=d. (k_{\lambda})_u$ to obtain the desired result.
\end{proof}
In the setting of this paper, we will have to pass to a suitable covering in order to be able to remove singularities. 
\begin{The}\label{the:sing-cover}
Let $\mathring{N}=N\setminus\{q_1,\dots,q_n\}$ be a punctured Riemann surface and $D_{\lambda}=d+\eta_{\lambda}$ a DPW-connection on $N\times W$ with poles $q_j\in N$. Assume there exists an  ${\bf\SL}^{\tau}(\D)$-gauge $b_{\lambda}\colon \mathring{N}\to {\bf\SL}^{\tau}(\D)$ so that $D_{\lambda}. b_{\lambda}$ is a flat ${\bf\SU}^{\tau}(\S^1)$-connection on $\mathring{N}\times W$ necessarily of the form
\[
D_{\lambda}. b_{\lambda}=D+\lambda^{-1}\Psi-\lambda\Psi^{\dagger}
\]
with $\Psi=\Ad b_0^{-1} \eta_{-1}\in\Omega^{1,0}(\mathring{N},\mathfrak{g}_{-1})$.  

If there exists a Riemann surface $M$ and a holomorphic covering $\pi\colon M\to N$ branched over  $\{q_1,\dots,q_n\}$ all of whose branch points are ${\bf\SL}^{\tau}(\D)$-apparent for $\pi^{*}D_{\lambda}$, then there exists an $\SU(W)^{\tau}$-bundle $V\to M$ and a flat ${\bf\SU}^{\tau}(\S^1)$-connection 
\[
\nabla_{\lambda}=\nabla +\lambda^{-1}\Phi-\lambda\Phi^{\dagger}
\qquad\text{
with}
\qquad
\nabla_{\lambda}=\pi^*(D_{\lambda}. b_{\lambda}). \Upsilon
\]
for a $\tau$-trivialization $\Upsilon\colon V_{|\mathring{M}}\to \mathring{M}\times W$.
\end{The}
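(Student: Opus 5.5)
The plan is to reduce to the local removable singularity result, Lemma~\ref{lem:apparent1}, applied once at each branch point of $\pi$. The global bundle is then assembled by gluing the local $\SU(W)^{\tau}$-bundles via Lemma~\ref{lem:apparent}.

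First, I pull everything back along $\pi$. On $\mathring{M}=\pi^{-1}(\mathring{N})$ the pulled-back gauge $\pi^*b_{\lambda}$ is still an ${\bf\SL}^{\tau}(\D)$-gauge. Moreover,
\[
\pi^*(D_{\lambda}. b_{\lambda})=\pi^*D+\lambda^{-1}\pi^*\Psi-\lambda\pi^*\Psi^{\dagger}
\]
is a flat ${\bf\SU}^{\tau}(\S^1)$-connection on $\mathring{M}\times W$. Since pullback by a holomorphic map preserves type, $\pi^*\Psi$ is still of type $(1,0)$ with values in $\mathfrak{g}_{-1}$. Away from the branch points the trivial bundle $\mathring{M}\times W$ with its constant $\tau$ already provides a $\tau$-structure.

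Next, around each point $p\in\pi^{-1}\{q_1,\dots,q_n\}$ I choose a small coordinate disk $U_p\subset M$, which is simply connected, with $U_p\cap \pi^{-1}\{q_j\}=\{p\}$. By hypothesis $p$ is an ${\bf\SL}^{\tau}(\D)$-apparent singularity of $\pi^*D_{\lambda}$, and $\pi^*b_{\lambda}$ unitarizes $\pi^*D_{\lambda}$ on $U_p\setminus\{p\}$. Lemma~\ref{lem:apparent1} therefore applies with $M$ replaced by $U_p$. It yields an $\SU(W)^{\tau}$-bundle $V_p\to U_p$ carrying a flat ${\bf\SU}^{\tau}(\S^1)$-connection, together with a $\tau$-trivialization $\Upsilon_p$ over $U_p\setminus\{p\}$ that intertwines this connection with $\pi^*(D_{\lambda}. b_{\lambda})$.

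By Lemma~\ref{lem:apparent}(i), the comparison gauge is $\lambda$-independent and takes values in $\SU(W)^{\tau}$. I then define $V$ by gluing $\mathring{M}\times W$ to the bundles $V_p$ over $U_p\setminus\{p\}$, using the $\Upsilon_p$ as transition functions. These transition functions are $\SU(W)^{\tau}$-valued, so $V$ is an $\SU(W)^{\tau}$-bundle. The connections agree on the overlaps by construction, so they define a global flat ${\bf\SU}^{\tau}(\S^1)$-connection $\nabla_{\lambda}$ with $\nabla_{\lambda}=\pi^*(D_{\lambda}. b_{\lambda}). \Upsilon$. Here $\Upsilon$ is the $\tau$-trivialization over $\mathring{M}$ given by the identity on $\mathring{M}\times W$.

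Finally, the form $\nabla_{\lambda}=\nabla+\lambda^{-1}\Phi-\lambda\Phi^{\dagger}$ has to be read off in each $\tau$-trivialization. Over $\mathring{M}$ this is immediate from the displayed formula above. Over each $U_p$, the transition functions are $\lambda$-independent elements of $\SU(W)^{\tau}$, and conjugation by these preserves the $\lambda$-Fourier modes and the eigenspaces $\mathfrak{g}_k$. Hence the local connection form on $U_p\setminus\{p\}$ has only the modes $\lambda^{-1},\lambda^0,\lambda$, with $\lambda^{-1}$-coefficient of type $(1,0)$ in $\mathfrak{g}_{-1}$. Because the connection extends smoothly across $p$, the extension keeps this shape by continuity, and the $\lambda$-coefficient is minus the adjoint of the $\lambda^{-1}$-coefficient by the unitarity along $\S^1$. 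This gives $\Phi\in\Gamma(K\mathfrak{g}_{-1})$ globally and $\nabla=\nabla_{\lambda}^{(0)}$ an $\SU(W)^{\tau}$-connection.

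The main obstacle I expect is the step that makes the local application of Lemma~\ref{lem:apparent1} match the globally given unitarizing gauge. That lemma is stated for an arbitrary unitarizing ${\bf\SL}^{\tau}(\D)$-gauge on the punctured simply connected domain, and here it is $\pi^*b_{\lambda}$ restricted to $U_p\setminus\{p\}$. I would check carefully that the normalization point $p_1$ in its proof can be chosen inside $U_p\setminus\{p\}$. I would also check that no monodromy of $\pi^*(D_{\lambda}. b_{\lambda})$ around $p$ obstructs the argument; it should not, since the punctured disk's loop bounds in $U_p$ after the apparent-singularity gauge.
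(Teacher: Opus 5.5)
Your proposal is correct and is essentially the paper's proof. The paper likewise applies Lemma~\ref{lem:apparent1} on small punctured disks around each branch point of $\pi$, using the pulled-back unitarizing gauge $\pi^*b_\lambda$, and glues the resulting local $\SU(W)^{\tau}$-bundles to $\mathring{M}\times W$ by the $\lambda$-independent $\SU(W)^{\tau}$-valued transition functions supplied by Lemma~\ref{lem:apparent}. Your extra check that these transition functions preserve the form $\nabla+\lambda^{-1}\Phi-\lambda\Phi^{\dagger}$ across the branch points is also correct; the paper leaves this step implicit.
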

\begin{proof}
Apply Lemma~\ref{lem:apparent1} on punctured disks around the branch points of $\pi$, and glue the resulting local $\SU(W)^\tau$-bundles by the induced $\SU(W)^\tau$-valued transition functions to obtain $V$ and $\nabla_{\lambda}$ on $M$.
\end{proof}

The following definition specifies under which condition we can apply Theorem \ref{the:sing-cover} on a covering.
\begin{Def}
Let $D_{\lambda}=d+\eta_{\lambda}$ be a meromorphic DPW-connection on $V=N\times W$ with singularity at $q.$ Assume there exists
a local branched covering $\pi\colon U\to N$ of degree $k$ totally branched over $q$. Assume that there exists an 
${\bf\SL}^{\tau}(\mathbb{D})$-gauge $G$ on $U\setminus\{\pi^{-1}(q)\}$
such that $(\pi^*D_{\lambda}).G$ extends smoothly to $\pi^{-1}(q)$. We then call  $q$ a $k$-{\em removable} singularity.
\end{Def}

\subsection{Minimal Lagrangian surfaces from meromorphic potentials}
From now on we restrict to the minimal Lagrangian case, so that $W=\C^3$ and $\tau$ is one of the two order-six automorphisms from Example~\ref{ex:order6}. 
The following definition specifies the conditions to get an immersion from a meromorphic DPW-connection with removable singularity.
\begin{Def}
Let $M$ be a Riemann surface. Then $\Phi\in H^0(M, K\mathfrak{sl}(3,\C))$ is called {\em regular} at $p\in M$ if $\Phi_p^2\neq0.$ It is called {\em regular} if it is regular at all points $p\in M.$

Let $D_{\lambda}=d+\eta_{\lambda}$ be a meromorphic DPW-connection on $V=N\times W$ with singular points at $q_1,\dots,q_n.$
Assume that for all $j=1,\dots,n$ there is $k_j\in\mathbb N^{>0}$
such that $q_j$ is a $k_j$-removable singularity. Assume that $\mathrm{Res}_{\lambda=0}\eta_{\lambda}\in H^0(\mathring N,K\mathfrak{sl}(3,\C))$ is regular on $\mathring N$.
Further, assume that  $\mathrm{Res}_{\lambda=0} (\mathrm{Ad}(G_j^{-1})\eta_{\lambda})$ is regular at $\pi_j^{-1}(q_j)$, 
where $G_j$ and the local covering $\pi_j$ are as in the definition of a $k_j$-removable singularity.
Then, $\eta_{\lambda}$ is called {\em immersive}.
\end{Def}

A priori, being immersive (at the points $\pi_j^{-1}(q_j)$) depends on the choice of  $G_j$. That this property is actually independent  of the choice 
can be shown similarly to the proof of Lemma \ref{lem:apparent1}.

To construct compact minimal Lagrangian immersions, we will use the following theorem.
The statement holds literally in the same way when replacing $\tau$ by $\hat\tau.$
\begin{The}\label{THM:MLrecon}
Let $D_{\lambda}=d+\eta_{\lambda}$ be a $\tau$-twisted meromorphic DPW-connection on $V=N\times W$.
Assume that 
\begin{enumerate}
\item each singular point $q_j$ is a $k_j$-removable singularity for some $k_j\in\N^{>0};$ 
\item for all $\lambda\in \S^1$ the monodromy $\rho_\lambda$ of $D_{\lambda}$ is unitary up to conjugation and irreducible;
\item there is $\lambda_0\in\S^1$ such that the monodromy $\rho_{\lambda_0}$
 has finite image;
\item the local monodromies around the singular points $q_j$ are of order 
$k_j;$
\item $\eta_{\lambda}$ is immersive.
\end{enumerate}

Then, there
 exists a finite degree holomorphic covering $\pi\colon M\to N$ such that $\pi^*D_{\lambda}$ is gauge equivalent on $\pi^{-1}(\mathring N)$ to the associated family $d_\lambda$ of a  minimal Lagrangian immersion $f\colon M\to \CP^2.$
Furthermore, $\Gamma:=\pi_1(\mathring N,z_0)/\ker(\rho_{\lambda_0})$ acts on $M$ by deck transformations, and $f$ is $\Gamma$-equivariant, i.e. $\rho_{\lambda_0}(\gamma^{-1})(f)=\gamma^*f$ for all $\gamma\in\Gamma.$ 
\end{The}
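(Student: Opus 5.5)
The plan is to build $M$ as the covering of $N$ determined by $\ker\rho_{\lambda_0}$, and then assemble the associated family from the earlier removable-singularity results.

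\textbf{Step 1: the covering.} Since $\rho_{\lambda_0}$ has finite image, $\ker\rho_{\lambda_0}\subset\pi_1(\mathring N,z_0)$ is a normal subgroup of finite index. It defines a finite unbranched Galois covering $\mathring M\to\mathring N$ with deck group $\Gamma=\pi_1(\mathring N,z_0)/\ker(\rho_{\lambda_0})$. By assumption (4), the local monodromy $\rho_{\lambda_0}(\gamma_j)$ around $q_j$ has order exactly $k_j$. Hence each preimage of a small punctured disk around $q_j$ is a union of punctured disks, each mapping by $z\mapsto z^{k_j}$. Filling in these punctures gives a compact Riemann surface $M$ and a branched covering $\pi\colon M\to N$ that is locally totally branched of order $k_j$ over $q_j$. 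These are exactly the local coverings $\pi_j$ in the definition of a $k_j$-removable singularity. By assumption (1), every branch point of $\pi$ is therefore ${\bf\SL}^{\tau}(\D)$-apparent for $\pi^*D_\lambda$.

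\textbf{Step 2: unitarization and removal of singularities.} By assumption (2), the monodromy of $D_\lambda$ on $\mathring N$ is irreducible and unitarizable for all $\lambda\in\S^1$. Lemma~\ref{lem:tau-unitarize} then gives an ${\bf\SL}^{\tau}(\D)$-gauge $b_\lambda$ on $\mathring N$ with $D_\lambda.b_\lambda=D+\lambda^{-1}\Psi-\lambda\Psi^\dagger$. Strictly, the lemma is stated for trivial bundles over a Riemann surface, and it applies verbatim on $\mathring N$. Theorem~\ref{the:sing-cover} now produces an $\SU(W)^\tau$-bundle $V\to M$ with a flat ${\bf\SU}^\tau(\S^1)$-connection
\[
\nabla_\lambda=\nabla+\lambda^{-1}\Phi-\lambda\Phi^\dagger=\pi^*(D_\lambda.b_\lambda).\Upsilon .
\]

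\textbf{Step 3: regularity of $\Phi$ and the immersion.}
\begin{itemize}
\item On $\pi^{-1}(\mathring N)$ we have $\Phi=\Ad(b_0\Upsilon)^{-1}\pi^*\eta^{(-1)}$. Conjugation does not affect the vanishing of the square, and $\pi$ is unbranched there, so $\Phi^2\neq0$ follows from regularity of $\operatorname{Res}_{\lambda=0}\eta_\lambda$ on $\mathring N$.
\item At a branch point, the extension is obtained via the gauge $G_j$. There $\Phi$ is conjugate to $\operatorname{Res}_{\lambda=0}(\Ad G_j^{-1}\eta_\lambda)$, up to the $\lambda$-independent $\SU(W)^\tau$-valued transition function $h_0$ from Lemma~\ref{lem:apparent}. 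Regularity then follows from assumption (5). The independence of $G_j$ noted after the definition of immersive potentials is what justifies this comparison.
\item $\Lambda^3V$ is trivial because the transition functions are $\SU$-valued.
\end{itemize}
Lemma~\ref{lem:loop2mLag2} therefore applies. It gives the splitting $V=K\oplus\underline{\C}\oplus K^{-1}$ and identifies $\nabla_\lambda$ as the associated family of minimal Lagrangian immersions $f_\lambda$. The monodromy of $\nabla_{\lambda_0}$ on $M$ is $\rho_{\lambda_0}$ restricted to $\pi_1(M)$. This restriction is trivial on $\pi_1(\mathring M)\subset\ker\rho_{\lambda_0}$, and filling in punctures only takes quotients. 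So $f=f_{\lambda_0}$ is defined on $M$, and so is its Legendrian lift.

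\textbf{Step 4: equivariance.} Take $\gamma\in\Gamma$. The connection $\pi^*(D_\lambda.b_\lambda)$ is $\gamma$-invariant because it is pulled back from $N$. Consequently the trivializing frame of $\nabla_{\lambda_0}$ satisfies the analogue of \eqref{eq:equivara}, transforming by $\rho_{\lambda_0}(\gamma)^{\mp1}$. Applying this to \eqref{eq:f-recon} gives $\gamma^*f=\rho_{\lambda_0}(\gamma^{-1})f$. Care is needed with the base-point conventions, and with checking that the gluing $\Upsilon$ is compatible with deck transformations. The latter holds because the gluing only modifies the line subbundles by $\SU(W)^\tau$-valued diagonal transition functions, which preserve $\underline{\C}$.

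\textbf{Main obstacle.} I expect the hardest step to be the regularity of $\Phi$ at the branch points. One must check that the Higgs field of the glued bundle agrees, up to conjugation, with the residue in the $G_j$-gauge. This means tracking the $b_\lambda$ factor through Lemma~\ref{lem:apparent1}: there $h_\lambda$ is shown to be $\lambda$-independent and unitary, so the residue is only conjugated by $h_0\,(k_\lambda)_+(0)$, and the latter lies in $B$.
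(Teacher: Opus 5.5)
Your proposal is correct and follows the paper's route: the covering defined by $\ker\rho_{\lambda_0}$ with branching fixed by (4), unitarization via Lemma~\ref{lem:tau-unitarize} (the paper cites Corollary~\ref{cor:unitarize}), desingularization by Theorem~\ref{the:sing-cover} using (1), and Lemma~\ref{lem:loop2mLag2} with (5) giving $\Phi^2\neq0$. The small differences are that you spell out the regularity of $\Phi$ at the branch points, which the paper states in one line, and that for equivariance the paper applies Lemma~\ref{lem:loop2mLag2} directly to $D_\lambda.b_\lambda$ on $\mathring N$ and compares with $f\circ\tilde\pi$ on the universal cover, whereas you argue via $\Gamma$-invariance of the pulled-back connection on $\mathring M$.
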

\begin{proof}
By (2), we can apply Corollary~\ref{cor:unitarize} on the punctured surface $\mathring N$. We thus obtain a ${\bf SL}^{\tau}(\mathbb D)$-gauge $b_\lambda$ such that
\[
\widetilde D_\lambda:=D_\lambda. b_\lambda = D+\lambda^{-1}\Psi-\lambda\Psi^\dagger
\]
is a flat ${\bf SU}^{\tau}(\S^1)$-connection on $\mathring N\times W$; since $D_\lambda$ is meromorphic DPW, the form $\eta^{(-1)}$ is holomorphic on $\mathring N$, hence $\Psi=\operatorname{Ad}(b^{(0)})^{-1})\eta^{(-1)}\in \Omega^{1,0}(\mathring N,\mathfrak g_{-1})$ is smooth.

Consider the branched holomorphic covering $\pi\colon M\to N$ specified by $\ker(\rho_{\lambda_0})\subset \pi_1(\mathring N,z_0)$ for some fixed $z_0\in\mathring N.$ In particular, by point (3), the branched covering $\pi$ has finite degree. In fact, the degree is the order of the finite group $\pi_1(\mathring N,z_0)/\ker(\rho_{\lambda_0})$. 
The map $\pi$
is branched exactly over the singular points $q_j$. By point (4),  the branch order of points lying over $q_j$ is $k_j-1$. 
Set $\mathring M=\pi^{-1}(\mathring N)$.
By the definition of the covering, the monodromy of the ${\bf\SU}^{\tau}(\S^1)$ connection  $\pi^*\widetilde D_\lambda$  on $\mathring M$ is trivial for $\lambda=\lambda_0.$ 

By Conditions (1) and (4), each branch point of $\pi$ is ${\bf SL}^{\tau}(\mathbb D)$-apparent for $\pi^*D_\lambda$, so Theorem~\ref{the:sing-cover} applies.
We thus obtain 
the existence of a
${\bf\SU}^{\tau}(\S^1)$-connection $\nabla_{\lambda}$ on a $\SU(W)^{\tau}$-bundle $V\to M$ and
 $\tau$-trivialization $\Upsilon\colon V_{|\mathring{M}}\to \mathring{M}\times W$ such that
 \begin{equation}\label{eq:Fgage}\nabla_\lambda=\pi^*\widetilde D_\lambda.\Upsilon.\end{equation}
 
 Then $\nabla_{\lambda_0}$ has trivial monodromy as well.
By point (5), the Higgs field $\Phi$ of $\nabla_\lambda=\lambda^{-1}\Phi+D-\lambda\Phi^\dagger$ satisfies $\Phi_p^2\neq0$ for all $p\in M.$
Thus, by Lemma \ref{lem:loop2mLag2}  we obtain a well-defined minimal Lagrangian immersion $f_{\lambda_0}\colon M\to \CP^2$
which admits a special Legendrian  lift $\hat f \colon M\to\S^5.$

 It remains to show that $f=f_{\lambda_0}$ is $\Gamma$-equivariant. 
To show this, apply  Lemma \ref{lem:loop2mLag2} directly to $\widetilde D_\lambda$ on $\mathring N$ to
obtain a $\tilde\rho_{\lambda_0}$-equivariant minimal Lagrangian immersion $\tilde f\colon \widetilde{\mathring N}\to\mathbb CP^2$.
Because of \eqref{eq:Fgage}, we obtain,  
up to an $\mathrm\SU_3$-transformation, $\tilde f=f\circ \tilde \pi,$ where 
\begin{equation}\label{uni-topo-cov}\tilde \pi\colon  \widetilde{\mathring N}\to \mathring M=\widetilde{\mathring N}/ \ker(\tilde\rho_{\lambda_0})\subset M
\end{equation}
is the induced covering. Since $\ker(\tilde\rho_{\lambda_0})\subset \pi_1(\mathring N,z_0)$ is normal, the result follows.
\end{proof}
\begin{Rem}\label{rem:general-tau-recon}
Theorem~\ref{THM:MLrecon} is largely independent of the specific finite order automorphism $\tau$ or $\hat\tau$ relevant for minimal Lagrangian surfaces. More generally, the same bundle-theoretic argument applies to meromorphic DPW-connections twisted by an arbitrary finite-order automorphism $\tau$, provided one has a reconstruction result identifying the resulting ${\bf SU}^{\tau}(\S^1)$-family with a primitive harmonic map into the corresponding $r$-symmetric space.

In geometric situations where this primitive harmonic map encodes a distinguished surface class -- such as minimal Lagrangian surfaces in the present paper, or hyperbolic affine spheres as in \cite{HOP} -- one must in addition impose the appropriate nondegeneracy condition ensuring that the resulting map is an immersion, equivalently that no branch points occur. In the present setting this is precisely the regularity condition $\Phi^2\neq 0$.
\end{Rem}

The following corollary gives a convenient reconstruction formula for later use.

\begin{Cor}\label{recon-formula}
Let $D_{\lambda}=d+\eta_{\lambda}$ be a $\tau$-twisted meromorphic DPW-connection on $N\times\C^{3}$ satisfying the hypotheses of Theorem~\ref{THM:MLrecon}, and set
$
\mathring N:=N\setminus\{q_1,\dots,q_n\}.
$
Let
$
b_{\lambda}\colon \mathring N\to {\bf SL}^{\tau}(\mathbb D)
$
be the positive gauge given by Corollary~\ref{cor:unitarize}, so that $D_{\lambda}.b_{\lambda}$ is a flat ${\bf SU}^{\tau}(\S^1)$-connection on $\mathring N$.
Fix a lift $\hat z_0\in \widetilde{\mathring N}$ of $z_0\in\mathring N$, and let
$
\Omega_{\lambda}\colon \widetilde{\mathring N}\to {\bf SL}^{\tau}(\mathbb D^{\times})
$
be the $\tau$-twisted parallel frame of $D_{\lambda}$ normalized by
$
\Omega_{\lambda}(\hat z_0)=b_{\lambda}(z_0).
$
Write its Iwasawa decomposition as
\[
\Omega_{\lambda}=B_{\lambda}F_{\lambda},
\qquad
B_{\lambda}\colon \widetilde{\mathring N}\to {\bf SL}^{\tau}_{B}(\mathbb D),\quad
F_{\lambda}\colon \widetilde{\mathring N}\to {\bf SU}^{\tau}(\S^1).
\]
Then $B_{\lambda}$ is single-valued on $\mathring N$ (more precisely, after pullback to $\widetilde{\mathring N}$ one has $B_{\lambda}=b_{\lambda}$).
Moreover, if $\pi\colon M\to N$ is the branched covering from Theorem~\ref{THM:MLrecon} and $\mathring M:=\pi^{-1}(\mathring N)$, then $F_{\lambda_0}$ has trivial monodromy on $\mathring M$. The reconstructed minimal Lagrangian immersion is given, up to an ambient $\SU_3$-transformation, by
$
(f_{\lambda_0})_{|\mathring M}=F_{\lambda_0}^{-1}(\C e_2).$
If one works with the conjugated symmetry $\hat\tau$, then $\C e_2$ is replaced by $\C e_1$; in particular, the special Legendrian lift is
\begin{equation}\label{eq:recon-formula}
\hat f_{\lambda_0}=F_{\lambda_0}^{-1}(e_1).
\end{equation}
\end{Cor}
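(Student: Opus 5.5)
The plan is to compare two unitarizing gauges and use the uniqueness of the Iwasawa factorization. Let $\hat\Phi_\lambda$ denote the parallel frame of $D_\lambda$ on $\widetilde{\mathring N}$ with $\hat\Phi_\lambda(\hat z_0)=\Id$, so that $\Omega_\lambda=\hat\Phi_\lambda\,$-type frame normalized by $\Omega_\lambda(\hat z_0)=b_\lambda(z_0)$. Convention: $\Omega_\lambda$ satisfies $D_\lambda.\Omega_\lambda^{-1}=d$, i.e. $d\Omega_\lambda=\Omega_\lambda\eta_\lambda$, consistent with $-dF F^{-1}=\omega$ in Remark~\ref{Rem:sllift}. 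The unitarized connection $\widetilde D_\lambda=D_\lambda.b_\lambda=d+\tilde\omega_\lambda$ is a flat ${\bf SU}^{\tau}(\S^1)$-connection on $\mathring N$. I would take its parallel frame $\tilde F_\lambda\colon\widetilde{\mathring N}\to{\bf SU}^{\tau}(\S^1)$ with $d\tilde F_\lambda=\tilde F_\lambda\tilde\omega_\lambda$ and $\tilde F_\lambda(\hat z_0)=\Id$. It is unitary because $\tilde\omega_\lambda$ is ${\bf su}^\tau$-valued and the initial value is unitary, and it is $\tau$-twisted by uniqueness of solutions together with $\tau$-equivariance of $\tilde\omega_\lambda$.

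Next I would observe that $\Omega_\lambda$ and $b_\lambda\tilde F_\lambda$ satisfy the same ODE. Indeed $\eta_\lambda=b_\lambda\tilde\omega_\lambda b_\lambda^{-1}-db_\lambda\,b_\lambda^{-1}$ up to the gauge convention, so $d(b_\lambda\tilde F_\lambda)=(b_\lambda\tilde F_\lambda)\eta_\lambda$ after checking signs. Both frames take the value $b_\lambda(z_0)$ at $\hat z_0$. Hence
\[
\Omega_\lambda=(\pi^*b_\lambda)\,\tilde F_\lambda
\]
on $\widetilde{\mathring N}$. The normalization $b_\lambda(0)\in B$ from Corollary~\ref{cor:unitarize}, via Theorem~\ref{thm:Iwasawa} in the proof of Lemma~\ref{lem:tau-unitarize}, together with the uniqueness in Theorem~\ref{thm:Iwasawa}, gives $B_\lambda=\pi^*b_\lambda$ and $F_\lambda=\tilde F_\lambda$. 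In particular $B_\lambda$ is single-valued on $\mathring N$. The main point needing care is bookkeeping of left/right gauge conventions; I expect no real obstacle there.

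For the monodromy, deck transformations act on $\tilde F_\lambda$ by the monodromy of $\widetilde D_\lambda$: $\gamma^*\tilde F_\lambda=\tilde\rho_\lambda(\gamma)\tilde F_\lambda$. By construction of $\pi\colon M\to N$ in Theorem~\ref{THM:MLrecon}, which uses the kernel of $\rho_{\lambda_0}$, and since $\tilde\rho_{\lambda_0}$ is conjugate to $\rho_{\lambda_0}$ via $b_{\lambda_0}(z_0)$, the elements of $\pi_1(\mathring M)$ act trivially. Thus $F_{\lambda_0}$ descends to $\mathring M$.

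Finally, $F_{\lambda_0}^{-1}$ trivializes $\pi^*\widetilde D_{\lambda_0}$, so it is a $\tau$-trivialization $\Upsilon$ as in \eqref{eq:Fgage}, up to the constant $\SU_3$ ambiguity coming from the choice of base point and the transition $\Upsilon$. Remark~\ref{Rem:sllift0} then gives $f_{\lambda_0}=F_{\lambda_0}^{-1}(\C e_2)$ on $\mathring M$. For $\hat\tau$, Remark~\ref{Rem:sllift} gives $\C e_1$ and the special Legendrian lift \eqref{eq:recon-formula}. The only subtlety is that $\Upsilon$ over $\mathring M$ differs from $F_{\lambda_0}^{-1}$ by an $\SU(W)^\tau$-valued, $\lambda$-independent gauge (Lemma~\ref{lem:apparent}(i)). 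Because this gauge is diagonal, it preserves $\C e_1$ and $\C e_2$, and it fixes $e_1$ up to a unit factor whose $\hat\tau$-invariance forces it to be trivial. This leaves the immersion and the lift unchanged up to an ambient transformation.
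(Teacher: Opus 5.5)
Your strategy is the paper's. The paper sets $U_\lambda:=b_\lambda^{-1}\Omega_\lambda$ and observes that it is the parallel frame of the unitary connection $D_\lambda.b_\lambda$ with $U_\lambda(\hat z_0)=\Id$, hence unitary along $\S^1$. It then reads off $B_\lambda=b_\lambda$ and $F_\lambda=U_\lambda$ from uniqueness of the Iwasawa splitting. Building $\tilde F_\lambda$ first and invoking ODE uniqueness is the same argument. However, the frame convention you fix is the wrong one, and under it your key identity $\Omega_\lambda=b_\lambda\tilde F_\lambda$ fails.

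\textbf{The convention.} You take $d\Omega_\lambda=\Omega_\lambda\eta_\lambda$ and say this matches Remark~\ref{Rem:sllift}. It does not: $-dF\,F^{-1}=\omega$ means $dF+\omega F=0$. That is the left convention used throughout the paper (compare \eqref{eq:odes}), equivalently $D_\lambda.\Omega_\lambda=d$ for the action $D.g=d+g^{-1}dg+g^{-1}\eta g$.

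\textbf{Why the key step fails.} Keep your conventions $d\tilde F=\tilde F\tilde\omega$ and $\tilde\omega=b^{-1}db+b^{-1}\eta b$. Then $d(b\tilde F)=db\,\tilde F+b\tilde F\tilde\omega$, whereas $(b\tilde F)\eta=b\tilde F\,(b\tilde\omega b^{-1}-db\,b^{-1})$. These differ in general, for instance already when $\tilde\omega=0$ and $\tilde F=\Id$. What actually solves the $\widetilde D_\lambda$-equation in your convention is $\Omega_\lambda b_\lambda$. So $\Omega_\lambda$ would have the form $C\,\tilde F_\lambda b_\lambda^{-1}$, with the positive factor on the wrong side. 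The normalization $\Omega_\lambda(\hat z_0)=b_\lambda(z_0)$ would then not produce $\Omega_\lambda=B_\lambda F_\lambda$ with $B_\lambda=b_\lambda$.

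\textbf{The fix.} Use $d\Omega_\lambda+\eta_\lambda\Omega_\lambda=0$ and $d\tilde F_\lambda+\tilde\omega_\lambda\tilde F_\lambda=0$. Then $d(b\tilde F)=(db\,b^{-1}-b\tilde\omega b^{-1})\,b\tilde F=-\eta\,b\tilde F$, both sides equal $b_\lambda(z_0)$ at $\hat z_0$, and the rest of your argument goes through. In this convention deck transformations act on the right, $\gamma^*\tilde F_\lambda=\tilde F_\lambda\,\tilde\rho_\lambda(\gamma)$. This does not affect your kernel argument for trivial monodromy of $F_{\lambda_0}$ on $\mathring M$.

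\textbf{Your last paragraph.} $F_{\lambda_0}^{-1}$ is not a $\tau$-trivialization in the sense of \eqref{eq:Fgage}. It is the flat trivialization of Remark~\ref{Rem:sllift0} (with $F=\Upsilon^{-1}$ there). The $\Upsilon$ of \eqref{eq:Fgage} is a bundle chart with $\SU(W)^{\tau}$-valued transitions. The formula follows by applying Remark~\ref{Rem:sllift0} (resp.\ Remark~\ref{Rem:sllift}) in the chart of \eqref{eq:Fgage}. In that chart $\nabla_\lambda$ is $\pi^*(D_\lambda.b_\lambda)$, so its parallel frame at $\lambda_0$ is $F_{\lambda_0}$. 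Changing the $\hat\tau$-chart multiplies the frame on the left by an element of $\SU(W)^{\hat\tau}=\{\diag(1,a,a^{-1}) : |a|=1\}$, which fixes $e_1$ exactly. So no unit-factor argument and no appeal to Lemma~\ref{lem:apparent}(i) is needed.
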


\begin{proof}
We identify $b_{\lambda}$ with its pullback to $\widetilde{\mathring N}$, and set
\[
U_{\lambda}:=b_{\lambda}^{-1}\Omega_{\lambda}.
\]
Then $U_{\lambda}$ is a parallel ${\bf SL}^{\tau}(\D^\times)$-frame of the flat ${\bf SU}^{\tau}(\S^1)$-connection
$
D_{\lambda}.b_{\lambda}
$
on $\widetilde{\mathring N}$, and by the normalization of $\Omega_{\lambda}$ we have
$
U_{\lambda}(\hat z_0)=\Id.
$
Since $D_{\lambda}.b_{\lambda}$ is unitary, it follows that $U_{\lambda}\in {\bf SU}^{\tau}(\S^1)$ pointwise on $\widetilde{\mathring N}$. Hence
\[
\Omega_{\lambda}=b_{\lambda}U_{\lambda}
\]
is already an Iwasawa decomposition of $\Omega_{\lambda}$. By uniqueness of the Iwasawa decomposition,
i.e.
$B_{\lambda}=b_{\lambda},$ and $
F_{\lambda}=U_{\lambda}.
$
In particular, $B_{\lambda}$ is single-valued on $\mathring N$.
Since $B_{\lambda}$ is single-valued, $F_{\lambda}$ has the same monodromy as $\Omega_{\lambda}$, namely the monodromy of $D_{\lambda}$. By construction of the covering $\pi\colon M\to N$ in Theorem~\ref{THM:MLrecon}, this monodromy is trivial at $\lambda=\lambda_0$ on $\mathring M$. Thus $F_{\lambda_0}$ descends to $\mathring M$.

The reconstruction formula now follows from Remark~\ref{Rem:sllift0}; in the $\hat\tau$-twisted normalization one uses Remark~\ref{Rem:sllift}. This gives 
$
(f_{\lambda_0})_{|\mathring M}=F_{\lambda_0}^{-1}(\C e_2),$
and in the $\hat\tau$-case the special Legendrian lift is $F_{\lambda_0}^{-1}(e_1)$.
\end{proof}

\subsection{The energy of harmonic maps in terms of meromorphic potentials}\label{sec:energyresformula}
We conclude the general discussion by deriving a residue formula for the energy of a harmonic map from a Riemann surface into a symmetric space which is obtained from a meromorphic DPW-connection. 
Similar formulas have been obtained for  minimal CMC surfaces in $\S^3$, see \cite[Theorem 8]{H} and \cite[Corollary 4.3]{HHT0}, and for harmonic maps to
hyperbolic 3-space  \cite{BeHR,HHT1}. 
In the special case of the primitive harmonic map associated with a minimal Lagrangian immersion, this combines with \eqref{eq:areaML} to give an area formula in terms of the residues of the potential.

Let $\Sigma$ be a compact Riemann surface, and let 
\[
D_\lambda=d+\xi_\lambda,\qquad \xi_\lambda=\sum_{j\geq -1}\lambda^j\xi^{(j)}
\]
be a meromorphic DPW-connection on the trivial bundle $\Sigma\times W$ with singularities in $p_1,\dots,p_n\in \Sigma$. 
Set
$
\mathring\Sigma:=\Sigma\setminus\{p_1,\dots,p_n\}.
$
Assume that there exists a  gauge
$
B_\lambda\colon \mathring\Sigma\to {\bf SL}^{\tau}(\mathbb D)
$
such that
\[
\nabla_\lambda:=D_\lambda. B_\lambda
\]
is the associated family of flat connections of a (possibly equivariant) harmonic map $\psi$.  
In a $\tau$-trivialization of the corresponding $\SU(W)^\tau$-bundle, we may write
\[
\nabla_\lambda=d+\lambda^{-1}\omega^{(-1)}+\omega^{(0)}+\lambda\omega^{(1)},
\]
and obtain
\begin{equation}\label{eq:energy-density-general}
E(\psi)=i\int_\Sigma \tr(\omega^{(-1)}\wedge\omega^{(1)}).
\end{equation}

\begin{The}\label{thm:area-sing}
Assume that for every singular point $p_k$ there exists a gauge
\[
g_k=g_k^{(0)}+\lambda g_k^{(1)}+O(\lambda^2)\colon U_k\to {\bf SL}^{\tau}(\mathbb D)
\]
defined on a punctured neighbourhood $U_k$ of $p_k$ such that
$
D_\lambda. g_k
$
extends smoothly across $p_k$. Then
\begin{equation}\label{eq:en-form}
E(\psi)=2\pi\sum_{k=1}^n \Res_{p_k}\tr\bigl((g_k^{(1)}(g_k^{(0)})^{-1})\,\xi^{(-1)}\bigr).
\end{equation}
\end{The}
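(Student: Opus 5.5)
The plan is to express the energy density $i\tr(\omega^{(-1)}\wedge\omega^{(1)})$ as an exact form plus a term that depends only on the meromorphic potential, and then to apply Stokes' theorem on $\Sigma$ with small disks around the $p_k$ removed.

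First, on $\mathring\Sigma$ write $B_\lambda=B^{(0)}+\lambda B^{(1)}+O(\lambda^2)$. Expanding $\nabla_\lambda=D_\lambda.B_\lambda=d+B_\lambda^{-1}dB_\lambda+\Ad B_\lambda^{-1}\xi_\lambda$ in powers of $\lambda$ gives
\[
\omega^{(-1)}=\Ad (B^{(0)})^{-1}\xi^{(-1)},\qquad \omega^{(0)}=(B^{(0)})^{-1}dB^{(0)}+\Ad(B^{(0)})^{-1}\bigl(\xi^{(0)}+[\xi^{(-1)},B^{(1)}(B^{(0)})^{-1}]\bigr).
\]
Next, flatness of $\nabla_\lambda$ at order $\lambda^{0}$ reads $d\omega^{(0)}+\omega^{(0)}\wedge\omega^{(0)}+[\omega^{(-1)}\wedge\omega^{(1)}]=0$, and at order $\lambda^{-1}$ it gives $d^{\omega^{(0)}}\omega^{(-1)}=0$. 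Because $\omega^{(-1)}$ is of type $(1,0)$, unitarity forces $\omega^{(1)}=-(\omega^{(-1)})^\dagger$ to be of type $(0,1)$. Hence $\tr(\omega^{(-1)}\wedge\omega^{(1)})=\tr(\omega^{(-1)}\wedge\omega^{(0)+\lambda\text{-part}})$ should be rewritten using the $\lambda^0$-coefficient of $\nabla_\lambda$.

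The cleanest route is to exploit the fact that $\tr(\omega^{(-1)}\wedge(\nabla_\lambda-d))$ at order $\lambda^0$ is gauge covariant up to exact terms. Concretely, define the 1-form
\[
\theta:=\tr\bigl(B^{(1)}(B^{(0)})^{-1}\,\xi^{(-1)}\bigr),
\]
which is a $(1,0)$-form on $\mathring\Sigma$. Then compute $d\theta$, using $d\xi^{(-1)}+[\xi^{(0)}\wedge\xi^{(-1)}]=0$ (the flatness of $D_\lambda$ at order $\lambda^{-1}$) together with the expansion of $\nabla_\lambda$. This should produce $d\theta=\tr(\omega^{(-1)}\wedge\omega^{(1)})$ up to the sign and factor conventions, where the $(0,1)$-part of $\omega^{(0)}$ is exactly where $\dbar B^{(1)}$ enters. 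In particular, the identity $\tr(\omega^{(-1)}\wedge \omega^{(0)})$-terms cancel by $\tr(\xi^{(-1)}\wedge\xi^{(-1)})$-type symmetry, and because $\mathfrak g_{-1}$ is paired with $\mathfrak g_1$ only, $\tr(\mathfrak g_{-1}\mathfrak g_0)=0$ kills the remaining cross terms. So $E(\psi)=i\int_{\mathring\Sigma}d\theta$. This local computation is the key algebraic step, and I expect it to be the main obstacle: getting the signs and the factor $i$ right, and checking that only $B^{(1)}(B^{(0)})^{-1}$ survives.

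Stokes then gives $E(\psi)=-i\sum_k\oint_{\partial D_\epsilon(p_k)}\theta$ as $\epsilon\to0$. It remains to replace $B_\lambda$ by $g_k$ near $p_k$. Since both $D_\lambda.g_k$ and $\nabla_\lambda$ are smooth near $p_k$, the change of gauge $h=g_k^{-1}B_\lambda$ is an ${\bf SL}^\tau(\D)$-gauge between connections extending across the puncture, after passing to the $\SU(W)^\tau$-bundle of Lemma~\ref{lem:apparent1}. Moreover $\theta_B-\theta_{g_k}=\tr(h^{(1)}(h^{(0)})^{-1}\,\Ad(g_k^{(0)})^{-1}\xi^{(-1)})$ is built from smooth data, namely the residue of the smooth connection $D_\lambda.g_k$. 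Its contour integral therefore tends to $0$ as $\epsilon\to0$. The cocycle relation $(gh)^{(1)}(gh)^{(0)-1}=g^{(1)}g^{(0)-1}+\Ad g^{(0)}(h^{(1)}h^{(0)-1})$ is needed for this comparison. Finally, $\theta_{g_k}$ is meromorphic at $p_k$: the $\lambda^0$-part of the connection $D_\lambda.g_k$ forces $\dbar$ of $g_k^{(1)}(g_k^{(0)})^{-1}$ to be controlled, and one checks that the holomorphic data are meromorphic. Consequently $-i\oint\theta_{g_k}=2\pi\Res_{p_k}\tr(g_k^{(1)}(g_k^{(0)})^{-1}\xi^{(-1)})$, which yields \eqref{eq:en-form}. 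The orientation of the boundary circles supplies the sign.
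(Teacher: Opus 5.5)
Your overall architecture is the paper's. You write the energy density as $d$ of the $(1,0)$-form $\tr(X\xi^{(-1)})$, with $X$ the normalized first-order coefficient of a positive gauge, and then apply Stokes and the residue theorem. The difference is at the punctures.

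The paper glues the $g_k$ into one smooth positive gauge $g$ on $\mathring\Sigma$ and writes $B_\lambda=g_\lambda b_\lambda$ with $b_\lambda$ smooth on all of $\Sigma$. It then computes with $D_\lambda.g_\lambda$ directly, tacitly using that passing from $\nabla_\lambda$ to $\nabla_\lambda.b_\lambda^{-1}$ changes the density only by $d$ of a globally smooth 1-form.

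You instead compute with $B_\lambda$ itself and compare locally via $X_{g_kh}=X_{g_k}+\Ad(g_k^{(0)})X_h$. This is sound and makes the paper's tacit step explicit, provided you say why $X_h$ is smooth at $p_k$. Compose $h=g_k^{-1}B_\lambda$ with the $\lambda$-independent unitary change of trivialization, which leaves $X_h$ unchanged. The result is a gauge between two flat connections that are smooth on the whole disc. Since the punctured disc is connected, it has the form $k_1^{-1}Ck_2$ with smooth frames $k_i$ and a constant $C$, so it extends across $p_k$.

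The genuine weak point is the central identity. You state it correctly but justify it by a mechanism that fails. The $\tau$-grading cannot remove the cross terms, because $\xi^{(1)}$ and $[\xi^{(0)},X]$ lie in $\mathfrak g_1$, which pairs nondegenerately with $\mathfrak g_{-1}$. Also, $\dbar B^{(1)}$ enters through $\omega^{(1)}$, not $\omega^{(0)}$, and the $\lambda^0$ flatness equation is not needed. The missing idea is type. Writing $B_\lambda=(\Id+\lambda X+\lambda^2Y+O(\lambda^3))B^{(0)}$, one finds
\[
\omega^{(1)}=\Ad(B^{(0)})^{-1}\bigl(\xi^{(1)}+dX+[\xi^{(0)},X]+[\xi^{(-1)},Y]-X[\xi^{(-1)},X]\bigr).
\]
Every term other than $dX$ contains a meromorphic $(1,0)$-form $\xi^{(j)}$, whose wedge with $\xi^{(-1)}$ vanishes. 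Hence $\tr(\omega^{(-1)}\wedge\omega^{(1)})=\tr(\xi^{(-1)}\wedge\dbar X)=-d\theta$, using only $d\xi^{(-1)}=0$ on $\mathring\Sigma$.

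Two smaller points.
\begin{enumerate}
\item Meromorphy of $g_k$ cannot be ``checked'' from the smooth extendability of $D_\lambda.g_k$: right-multiplying $g_k$ by any smooth positive gauge on the full disc preserves that property. It has to be read as part of the hypothesis, as in the paper's applications, where the $g_k$ are explicit meromorphic gauges.
\item With $\omega^{(1)}=-(\omega^{(-1)})^\dagger$, the nonnegative density is $-i\tr(\omega^{(-1)}\wedge\omega^{(1)})=i\,d\theta$. This is the sign you used, and it produces the $+2\pi$. Since \eqref{eq:energy-density-general} is displayed with the opposite sign, fix this convention explicitly rather than leaving it ``up to sign''.
\end{enumerate}
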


\begin{proof}
Choose disjoint coordinate discs $U_k\subset\Sigma$ around the poles $p_k$. After shrinking  $U_k$ if necessary, choose a smooth gauge
$
g\colon \mathring\Sigma\to {\bf SL}^{\tau}(\mathbb D)
$
which agrees with $g_k$ near $p_k$ for every $k$ and is smooth on $\mathring\Sigma$ away from the poles. Write
$
B_\lambda=g_\lambda\,b_\lambda,
$
where $b_\lambda\colon \Sigma\to {\bf SL}^{\tau}(\mathbb D)$. Then
\[
D_\lambda. g_\lambda=\nabla_\lambda. b_\lambda^{-1}
\]
is  a DPW-connection on $\Sigma$.
Write
\[
g_\lambda=g^{(0)}+\lambda g^{(1)}+\lambda^2 g^{(2)}+O(\lambda^3)
\qquad\text{and}\qquad
g_\lambda=(\Id+\lambda X+\lambda^2Y+O(\lambda^3))\,g^{(0)},
\]
with
$
X=g^{(1)}(g^{(0)})^{-1}$ and $ Y=g^{(2)}(g^{(0)})^{-1}.
$
Then
\[
D_\lambda. g_\lambda=
\Bigl(
\lambda^{-1}\xi^{(-1)}
+d+\xi^{(0)}+[\xi^{(-1)},X]
+\lambda\bigl(\xi^{(1)}+dX+[\xi^{(0)},X]+[\xi^{(-1)},Y]\bigr)
+O(\lambda^2)
\Bigr). g^{(0)}
\]
from which we obtain
\[
\omega^{(-1)}=\Ad_{(g^{(0)})^{-1}}\xi^{(-1)}
\qquad\text{
and}
\qquad
\omega^{(1)}=\Ad_{(g^{(0)})^{-1}}\bigl(\xi^{(1)}+dX+[\xi^{(0)},X]+[\xi^{(-1)},Y]\bigr).
\]
Using invariance of the trace, the fact that $\xi^{(-1)}$ is a meromorphic $(1,0)$-form, and the coefficient of $\lambda^{-1}$ in the flatness equation
$
d\xi_\lambda+\xi_\lambda\wedge\xi_\lambda=0,
$
namely
$
d\xi^{(-1)}+[\xi^{(0)}\wedge\xi^{(-1)}]=0,
$
we obtain
\[
\begin{split}
\tr(\omega^{(-1)}\wedge\omega^{(1)})
&=\tr\bigl(\xi^{(-1)}\wedge(\xi^{(1)}+dX+[\xi^{(0)},X]+[\xi^{(-1)},Y])\bigr)=\tr(\xi^{(-1)}\wedge dX)=-\,d\,\tr(X\,\xi^{(-1)}).
\end{split}
\]
Therefore \eqref{eq:energy-density-general} gives
\[
 E(\psi)
=
-i\int_{\mathring\Sigma} d\,\tr\bigl(g^{(1)}(g^{(0)})^{-1}\xi^{(-1)}\bigr).
\]
Applying Stokes' theorem on $\Sigma\setminus\bigcup_k U_k$ and the residue theorem yields \eqref{eq:en-form}
as claimed.
\end{proof}

\section{The relative character variety}\label{sec:chava}

\subsection{Motivation}
In the previous section, we explained how to construct minimal Lagrangian immersions $f\colon \Sigma\to \CP^2$ from meromorphic ${\bf\SL}^{\tau}(\S^1)$-connections $d_{\lambda}= d+\eta_{\lambda}$ which are required to be 
unitarizable for each $\lambda\in\S^1$. To achieve this, we start with  highly symmetric Riemann surfaces, namely the Fermat curves $\Sigma_{k}$ given by the smooth plane algebraic curve  
\[\Sigma_k:=\{[x,y,z]\in\CP^2\mid x^k+\zeta y^k+\zeta^2 z^k=0\}\]
of genus $\frac{1}{2} (k-1)(k-2)$, where  $\zeta=\exp(2\pi i/3)$. Consider the automorphisms
 \begin{equation}\label{eq:autos}
 \begin{split}
\varphi_1&\colon [x,y,z]\mapsto [\exp(\tfrac{2\pi i}{k}) x,y,z]\\
\varphi_2&\colon  [x,y,z]\mapsto [x,\exp(\tfrac{2\pi i}{k}) y,z]\\
\sigma&\colon  [x,y,z]\mapsto [y,z,x].\\
\end{split}
\end{equation}
A standard Riemann--Hurwitz calculation shows the quotient map $\Sigma_{k} \to \Sigma_{k}/ \langle\varphi_{1}, \varphi_{2}\rangle \cong \CP^1$ is a $k^{2}$-fold covering with three branch values.
The automorphism $\sigma$ naturally acts on the quotient $\Sigma_k/\langle\varphi_1,\varphi_2\rangle$ and permutes the three branch values.
Thus, the full quotient $\Sigma_{k} \to  \CP^1$ is a $3k^{2}$-fold covering with three branch values at $0,1,\infty$ of branch orders $2,k-1,2$ respectively.

Let $t\in(0,\tfrac13)$. Consider the subgroup $\Gamma_t\subset \SU_3$ generated by 
\begin{equation}\label{eq:finite-mon}
\begin{split}
M_0&:=\begin{psmallmatrix}
 0 & 0 & 1 \\
 1 & 0 & 0 \\
 0 & 1& 0 \\
\end{psmallmatrix},\quad
M_1:=\begin{psmallmatrix}
 1 & 0 & 0 \\
 0 & e^{-2 i \pi  t} & 0 \\
 0 & 0 & e^{2 i \pi  t} \\
\end{psmallmatrix},\quad
M_\infty:=\begin{psmallmatrix}
 0 & e^{2 i \pi  t} &0\\
0& 0 &  e^{-2 i \pi  t}  \\
 1& 0 & 0 \\
\end{psmallmatrix}
\end{split}
\end{equation}

satisfying $M_\infty M_1M_0=\Id.$ For $t = 1/k$ their orders are $3, k,$ and $3$, respectively, and the subgroup $\Gamma=\Gamma_{1/k}$ is finite (see for example \cite{SSTYY}).
Let
$\Lambda$ be the kernel of the
representation
\begin{equation} \label{finite-group-rep1}
\rho\colon \pi_1(S,z_0)\to \SU_3\subset\SL_3(\C);\quad \gamma_0\mapsto M_0,\, \gamma_1\mapsto M_1
\end{equation}

of the fundamental group of the thrice-punctured sphere $S=\CP^1\setminus\{0,1,\infty\}$ with natural generators $\gamma_0, \gamma_1$ (see \eqref{eq:fggen} below).
Then \[\Gamma_{deck}\,\cong\,\pi_1(S,z_0)/\Lambda\cong\langle\varphi_1,\varphi_2,\sigma\rangle\] acts on  $\Sigma_k$ 
from the right
by holomorphic automorphisms such that \begin{equation}\label{quotientidentity}\pi\colon \Sigma_k\to\Sigma_k/\Gamma_{deck}=\CP^1\end{equation} is a branched covering 
with branch points at $0,1$ and $\infty$ and branch orders $2,k-1,2$, respectively.

By the Riemann existence theorem (see, e.g., \cite[Chapter 4]{Donald})
 the compact Riemann surface is uniquely specified by the fact that the (topological) covering 
\begin{equation*}
\pi\colon \mathring\Sigma_k:=\Sigma_k\setminus\pi^{-1}(\{0,1,\infty\})\to S 
\end{equation*}
has monodromy $\rho.$ Moreover, the fundamental group of $ \mathring\Sigma_k$ is isomorphic to $\Lambda$. 
Let $\hat z_0$ be a lift of $z_0,$ i.e. $\pi(\hat z_0)=z_0.$
There is a natural surjective homomorphism $i: \Lambda \to \pi_{1}(\Sigma_k, \hat z_0)$ induced by the inclusion $\mathring\Sigma_k \to \Sigma_k$. Its kernel is generated by the simple loops around the branch  points $\Sigma_k \setminus \mathring\Sigma_k$. We have the following classical fact, which we state as a lemma. 
\begin{Lem}
Let $\tilde{\rho}\colon \pi_1(S,z_0)\to \SL_3(\C)$ be a representation of the thrice-punctured sphere into $\SL_3(\C)$ such that the local conjugacy classes of  $\tilde{\rho}$ are the same as those of $\rho$. Then $\tilde{\rho}$ yields, up to conjugation, a unique representation
$
\hat{\rho} : \pi_{1}(\Sigma_k,\hat z_0) \to \SL_3(\C)
$
such that
\begin{equation}\label{eq:monext}
\hat{\rho} \circ i =\tilde \rho|_{\Lambda}.
\end{equation}
The representation of $\pi_{1}(\Sigma_k)$ corresponding to $\rho$ in \eqref{finite-group-rep1} is the trivial representation.
\end{Lem}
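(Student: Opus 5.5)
The approach is to show that $\tilde\rho|_{\Lambda}$ kills $\ker(i)$, so that it factors through the surjection $i\colon\Lambda\to\pi_1(\Sigma_k,\hat z_0)$. Uniqueness is then automatic: since $i$ is surjective, any $\hat\rho$ with $\hat\rho\circ i=\tilde\rho|_\Lambda$ is determined by $\tilde\rho$. Changing $\tilde\rho$ by conjugation, or changing the base point lift $\hat z_0$ (which changes $\Lambda$ by conjugation in $\pi_1(S,z_0)$), only conjugates $\hat\rho$. This gives the statement ``up to conjugation''.

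For the factorization, I use that $\ker(i)$ is the normal subgroup of $\Lambda$ generated by the simple loops around the points of $\pi^{-1}(\{0,1,\infty\})$. Because $\Lambda$ is normal in $\pi_1(S,z_0)$ (being a kernel), these loops are described explicitly. Take a point of $\Sigma_k$ over the branch value $p\in\{0,1,\infty\}$, where the local branching has order $m_p$ with $m_0=m_\infty=3$ and $m_1=k$. A simple loop around it based at $\hat z_0$ projects to an element of the form $\delta\,\gamma_p^{m_p}\,\delta^{-1}$ with $\delta\in\pi_1(S,z_0)$. Here $\gamma_\infty=(\gamma_1\gamma_0)^{-1}$, consistent with $M_\infty M_1M_0=\Id$. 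The exponent $m_p$ is exactly the order of $\rho(\gamma_p)$, so $\gamma_p^{m_p}\in\Lambda$ and its conjugates lie in $\Lambda$.

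It therefore suffices to check $\tilde\rho(\gamma_p)^{m_p}=\Id$. By hypothesis $\tilde\rho(\gamma_p)$ is conjugate to $M_p=\rho(\gamma_p)$. For $t=1/k$ the matrices $M_0$ and $M_\infty$ have order $3$, and $M_1$ has order $k$: each is diagonalizable with eigenvalues $m_p$-th roots of unity ($M_0$ and $M_\infty$ are permutation-like with $\det=1$ and cube equal to $\Id$). Hence $\tilde\rho(\gamma_p)^{m_p}=\Id$, and conjugates of these elements are also sent to $\Id$. So $\tilde\rho|_\Lambda$ is trivial on $\ker(i)$ and descends to $\hat\rho$ satisfying \eqref{eq:monext}.

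The last assertion follows because $\Lambda=\ker\rho$. Then $\rho|_\Lambda$ is trivial, and the induced $\hat\rho$ is trivial on $i(\Lambda)=\pi_1(\Sigma_k,\hat z_0)$.

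The main obstacle is the first step, the precise identification of $\ker(i)$ with the normal closure of the conjugates of $\gamma_p^{m_p}$. I would justify it by van Kampen, filling in each puncture of $\mathring\Sigma_k$ by a disc. The fact that the local branching order over $p$ equals the order of $\rho(\gamma_p)$ uses that $\mathring\Sigma_k\to S$ is the covering associated to $\Lambda=\ker\rho$, together with the branch orders stated in \eqref{quotientidentity}.
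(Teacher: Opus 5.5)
Your proof is correct and follows the paper's argument. The paper notes that $\tilde\rho|_\Lambda$ vanishes on the loops around the points of $\Sigma_k\setminus\mathring\Sigma_k$, so it descends through $i$, and that the final claim follows from $\Lambda=\ker\rho$; you additionally spell out why those loops die (they project to conjugates of $\gamma_p^{m_p}$, and $\tilde\rho(\gamma_p)$, being conjugate to $\rho(\gamma_p)$, satisfies $\tilde\rho(\gamma_p)^{m_p}=\Id$) and why $\hat\rho$ is unique (surjectivity of $i$), details the paper leaves as ``clear''.
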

\begin{proof}
Since $\tilde\rho|_{\Lambda}$ vanishes on the simple curves around the removed points $p \in \Sigma_k \setminus \mathring\Sigma_k$, the existence of $\hat{\rho}: \pi_{1}(\Sigma_k,\hat z_0) \to \SL_3(\C)$ satisfying \eqref{eq:monext} is clear.
The final statement, namely that the lift of the representation $\rho$ to $\pi_1(\Sigma_k)$ is trivial, follows directly from \eqref{quotientidentity}.
\end{proof}

\subsection{Relative character varieties}
Let $\Sigma_{g, n}$ be a surface of genus $g$ with $n$ punctures. If $n$ is at least one, then its fundamental group $\pi_{1} (\Sigma_{g,n})$ is a free group on $2g+n-1$ generators. Let $G =\SL_3(\C)$, and consider the representation variety Hom$(\pi_{1}( \Sigma_{g,n}), G)$ consisting of group homomorphisms of the fundamental group $\pi_{1}( \Sigma_{g,n})$ to the group $G$. 
This is identified with the product $G^{2g+n-1}$. 
For a fixed element $g \in G$, define $\mathcal C(g)$ to consist of all elements $h \in G$ so that the closures of the conjugacy orbits of $g$ and $h$ have nonempty intersection. If $\gamma_{i}$ denotes a peripheral curve oriented counterclockwise around the puncture $p_{i}$ and if $h_{i}$ is an element of $G$, then the relative representation variety is the subset  
\[\mathrm{Hom}^{rel}(\pi_{1}( \Sigma_{g,n}), G) := \{\rho \in \text{Hom} (\pi_{1}( \Sigma_{g,n}), G): \rho(\gamma_{i}) \in \mathcal C(h_{i}) \}.\] There is a natural action by conjugation on the space, whose quotient is not necessarily Hausdorff. The relative character variety then is
\[\chi^{rel}(\pi_{1}( \Sigma_{g,n}), G) : = \mathrm{Hom}^{rel}(\pi_{1}( \Sigma_{g,n}), G) \sslash G,\]
where $\sslash G$ is the so-called GIT quotient that identifies two representations $\rho$ and $\rho'$ whenever the closures of their orbits under conjugation intersect. The resulting space is thus the largest Hausdorff quotient of the relative representation variety. In the situations relevant here, the representations are generically irreducible, and hence their conjugation orbits are closed.

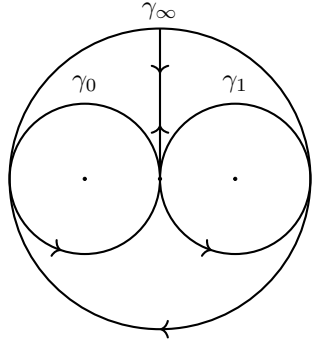
\begin{figure}[t]
  \centering
\begin{tikzpicture}[scale=2.]

\draw[thick,
  postaction={decorate},
  decoration={markings, mark=at position 0.7 with {\arrow{>}}}
] (0,0) circle (0.5);
\node at (0,0.63) {$\gamma_0$};
\filldraw (0,0) circle (0.3pt);

\draw[thick,
  postaction={decorate},
  decoration={markings, mark=at position 0.7 with {\arrow{>}}}
] (1,0) circle (0.5);
\node at (1,0.63) {$\gamma_1$};
\filldraw (1,0) circle (0.3pt);

\draw[thick,
  postaction={decorate},
  decoration={markings, mark=at position 0.25 with {\arrow{>}}}
] (1.5,0) arc (0:-360:1);
\node at (0.5,1.12) {$\gamma_\infty$};

\draw[thick,
  postaction={decorate},
  decoration={markings,
    mark=at position 0.3 with {\arrow{>}},
    mark=at position 0.7 with {\arrow{<}}
  }
] (0.5,1) -- (0.5,0);

\filldraw (0.5,0) circle (0.3pt);
\end{tikzpicture}
\caption{Curves representing the homotopy classes $\gamma_0$, $\gamma_1,$ and $\gamma_\infty.$}
\label{alphabetagamma}
\end{figure}

We take our underlying Riemann surface to be the thrice-punctured sphere $S = \CP^1 \setminus\{0, 1, \infty \} = \C \setminus \{ 0,1\}$. Choosing the base point $z_{0} = \tfrac{1}{2}$, 
we choose representatives $\gamma_j$ of the homotopy classes in $S$ by the following closed curves.

\begin{equation}\label{eq:fggen}
\begin{split}
&\gamma_0\colon [0,1)\mapsto \tfrac{1}{2}e^{2\pi is}\,, \qquad \gamma_1\colon [0,1)\mapsto 1-\tfrac{1}{2}e^{2\pi is}\,\\
&\gamma_\infty=\begin{cases}\tfrac{1}{2}+4 i s: \quad s\in[0,\tfrac{1}{4}] 
\\ \tfrac12+  e^{-4\pi is-\tfrac{\pi i}{2}}: \quad s\in[\tfrac{1}{4}, \tfrac{3}{4}]
\\ \tfrac{1}{2}+4i (1-s): \quad s\in[\tfrac{3}{4},1] \end{cases},\\
\end{split}
\end{equation}

so that $\gamma_\infty*\gamma_1*\gamma_0=\Id$ 
is the trivial loop, as shown in Figure \ref{alphabetagamma}.
We will use the convention that concatenation $(\beta*\alpha)(s)$ is given by $\alpha(2s)$ for $s\leq\tfrac{1}{2}$ and $\beta(2s-1)$ if $s\in[\tfrac{1}{2},1]$, so that the fundamental group acts from the right on the universal covering, and the monodromy of a connection is a representation of the fundamental group.

Motivated by the fact that \eqref{eq:finite-mon} describes the covering monodromy of the Fermat curves,
we fix the following three conjugacy classes for the local monodromies $M_0,M_1,M_\infty$ of $\gamma_{0}, \gamma_{1}, \gamma_{\infty}$, respectively, satisfying
\begin{equation}\label{local_conjugacy_classes}
\begin{split}
&M_1\in \mathcal C(\text{diag}(1, \exp(2\pi i t),\exp(-2 \pi i t))),\quad
 M_0,M_\infty\in \mathcal C(\text{diag}(\zeta,1,1/\zeta) )\\
 &M_\infty M_1M_0=\Id
 \end{split}
\end{equation}
 where $t\in(0,\tfrac{1}{3}).$ As each local monodromy has distinct eigenvalues, the conjugacy orbits are themselves closed.

Fixing the conjugacy classes of $M_0$, $M_1$, and $M_\infty$ --
which are determined by their traces, determinants, and the traces of their squares -- we consider their {\em global trace coordinates}
\begin{equation}\label{XYZdef}
\begin{split}
X&:=\tr(M_1 M_{0}^{-1}),\qquad
Y=\tr(M_1^{-1}M_{0}),\qquad\text{and}\qquad
Z=\tr(M_1 M_{0}M_{1}^{-1}M_{0}^{-1}).\\
\end{split}
\end{equation}
By applying the formulas of Lawton \cite{Law07,Law} with the conjugacy classes as in \eqref{local_conjugacy_classes}, the global traces 
satisfy $P(X,Y,Z)=0$, where $P$ is
 Lawton's polynomial
\begin{equation}\label{Def-characterpolynomial}
\begin{split}
P:= &
\,\, 2\, (2 \cos (2 \pi  t)+\cos (4 \pi  t)) (X Y-Z)+4 \cos (6 \pi  t)\\
&
+X^3-X Y (Z+3)+Y^3+Z^2+5.
\end{split}
\end{equation}
This equation can also be obtained by a case-by-case analysis using elementary linear algebra; see, e.g., \cite{HOP} for a related case on a three-punctured sphere with different local conjugacy classes. Lawton shows these global traces determine points on the relative character variety.
Thus
\[F=\{(X,Y,Z)\in\C^3\mid \;P(X,Y,Z)=0\}\]
is naturally identified with the relative character variety. In what follows, we need to determine the irreducible representations that are unitarizable. A first step is the following lemma.

\begin{Lem}\label{lem:tildeZ}
For $M_0,M_1,M_\infty$ and $X, Y$ as above, consider 
 the quadratic polynomial \eqref{Def-characterpolynomial}
in the variable $Z$. Its two  zeros are given by
\[Z=\tr(M_1M_0M_1^{-1}M_0^{-1})\quad\mathrm{and}\quad
\tilde Z=\tr(M_0M_1M_0^{-1}M_1^{-1}).\]
\end{Lem}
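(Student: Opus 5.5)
The plan is to use Vieta's formulas. As a polynomial in $Z$, the Lawton polynomial \eqref{Def-characterpolynomial} is monic of degree two:
\[
P=Z^2-\bigl(XY+2c_t\bigr)Z+\bigl(X^3+Y^3-3XY+2c_tXY+4\cos(6\pi t)+5\bigr),\qquad c_t:=2\cos(2\pi t)+\cos(4\pi t).
\]
Since $Z=\tr(M_1M_0M_1^{-1}M_0^{-1})$ is a root by Lawton's result, it suffices to prove the sum identity
\[
Z+\tilde Z=XY+2c_t .
\]
The second root is then $XY+2c_t-Z=\tilde Z$ automatically. As a cross-check, one could also verify the product identity $Z\tilde Z=X^3+Y^3-3XY+2c_tXY+4\cos(6\pi t)+5$, but the sum identity alone already gives the claim.

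To prove the sum identity, I would use the standard $\SL_3$ trace relation for the sum of the traces of the two commutators $[A,B]$ and $[B,A]$. Take $A=M_1$ and $B=M_0$. The classical identity of Lawton, which also underlies Lawton's derivation of $P$, states
\[
\tr(ABA^{-1}B^{-1})+\tr(BAB^{-1}A^{-1})
= \tr A\,\tr A^{-1}\,\tr B\,\tr B^{-1}+\tr(AB)\tr(A^{-1}B^{-1})+\tr(AB^{-1})\tr(A^{-1}B)-\dots-3,
\]
that is, a polynomial expression in the traces $\tr A^{\pm1}$, $\tr B^{\pm1}$, $\tr(A^{\pm1}B^{\pm1})$.

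If I prefer not to quote it, the identity can be derived from the Cayley--Hamilton theorem: $C^{-1}=C^2-(\tr C)C+(\tr C^{-1})\Id$ for $C\in\SL_3(\C)$. Applying this to expand $A^{-1}$ and $B^{-1}$ in $\tr(ABA^{-1}B^{-1})$ and in its swap, then symmetrizing, gives the relation. The antisymmetric part is the only piece not expressible in these traces, and it cancels in the sum.

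Next, I would insert the local data. From \eqref{local_conjugacy_classes}, $\tr M_0=\tr M_0^{-1}=0$, and $\tr M_1=\tr M_1^{-1}=1+2\cos(2\pi t)$. Also $M_1M_0=M_\infty^{-1}$, which has trace $0$, and $M_0^{-1}M_1^{-1}=M_\infty$, whose trace vanishes as well. The remaining mixed traces are $\tr(M_1M_0^{-1})=X$ and $\tr(M_1^{-1}M_0)=Y$. Substituting these values collapses the identity to $XY+\text{const}(t)$. The constant must then be checked to equal $2c_t$, using $\tr(M_1^2)$ and similar terms such as $(1+2\cos 2\pi t)^2-\dots$.

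The main obstacle is bookkeeping: obtaining the precise $\SL_3$ commutator-sum identity with correct constants. As an independent check, I would evaluate both sides on the explicit finite representation \eqref{eq:finite-mon}, computing $X,Y,Z,\tilde Z$ directly, and possibly on a second explicit point of $F$, to confirm the constant $2c_t$.
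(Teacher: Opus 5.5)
Your proposal is correct and follows the paper's own route. The paper simply cites Lawton's relation (his Equation 3.1.1), which expresses $\tr(M_1M_0M_1^{-1}M_0^{-1})+\tr(M_0M_1M_0^{-1}M_1^{-1})$ in terms of the remaining trace coordinates; that is exactly your Vieta sum identity. With $\tr M_0^{\pm1}=\tr(M_1M_0)=\tr(M_1^{-1}M_0^{-1})=0$, the surviving terms are $XY+\tr M_1\,\tr M_1^{-1}-3$, and $(1+2\cos 2\pi t)^2-3=2(2\cos 2\pi t+\cos 4\pi t)$ confirms your constant $2c_t$.
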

\begin{proof}
This follows by explicit  inspection of all possible cases, or by a direct computation using \cite[Equation 3.1.1]{Law}.
\end{proof}

\begin{Exa}\label{exa:finitemon}
Let $t=1/k$ for $k\in\N^{>3}.$
The representation $\rho$ given by \eqref{finite-group-rep1}
 with finite image $\Gamma$ has trace coordinates
\[X=Y=0\quad\text{and}\quad Z=2\exp(-2\pi i t)+\exp(4\pi it).\]
Note that the contragredient representation $\tilde\rho=(\rho^{-1})^T$ has trace coordinates $X=Y=0$ and $\tilde Z=2\exp(2\pi i t)+\exp(-4\pi it).$
It generates a finite group $\tilde\Gamma\leq\SU_3$ which is isomorphic to $\Gamma$ even though $\rho$ and $\tilde\rho$
are not conjugate. Furthermore
\[\ker(\rho\colon\pi_1(S,z_0)\to\SU_3)=\ker((\rho^{-1})^T\colon\pi_1(S,z_0)\to\SU_3),\]
and both representations become trivial on the same covering surface \eqref{quotientidentity}.
\end{Exa}

\subsection{Reducible and irreducible representations}\label{sec:redrep}
 Let  
$\rho$ be a reducible representation with
$M_0=\rho(\gamma_0)$ and $M_1=\rho(\gamma_1).$ 
We want to determine its trace coordinates.

The first case is that of a common eigenline of $M_0$ and $M_1$. Let  $0<t<\tfrac{1}{3}$.
Since $M_\infty M_1 M_0=\Id$, the product of the three eigenvalues along a common eigenline is 1.
This implies that the  eigenvalues on this eigenline are either all 1, or the eigenvalue of $M_{0}$ is $\zeta^{\pm 1}$, of $M_{1}$ is 1,  and of $M_{\infty}$ is $\zeta^{\mp 1},$ where $\zeta=\exp(\tfrac{2\pi i}{3}).$

We first diagonalize $M_{0}$. By a direct computation, up to conjugation, this leads in the first case to
\begin{equation*}
\begin{split}
M_{0}&=
\begin{psmallmatrix}
 1 & 0 & 0 \\
 0 & \zeta & 0 \\
 0 & 0 & \zeta^{-1} \\
\end{psmallmatrix}
\qquad\text{and}\quad
M_{1}=\begin{psmallmatrix}
 1 &b_{11}& b_{12} \\
 0 & \left(1-\frac{i}{\sqrt{3}}\right) \cos (2 \pi  t)+\frac{i}{\sqrt{3}} & \frac{1}{3} (-4) \sin ^2(\pi  t) (2 \cos (2 \pi  t)+1) \\
 0 & 1 & \left(1+\frac{i}{\sqrt{3}}\right) \cos (2 \pi  t)-\frac{i}{\sqrt{3}} \\
\end{psmallmatrix}
\end{split}
\end{equation*}
for some $b_{11},b_{12}\in\C$.
The trace coordinates are
\begin{equation}\label{singXYZ1}
\begin{split}
X_1&=4 \sin ^2(\pi  t),\qquad
Y_1=4 \sin ^2(\pi  t),\qquad\text{and}\qquad
Z_1=-2 \cos (2 \pi  t)+2 \cos (4 \pi  t)+3=
\tilde Z_1.
\\
\end{split}
\end{equation}

The other two cases yield the trace coordinates  $(X_2,Y_2,Z_2)$ and $(X_3,Y_3,Z_3)$ with
\begin{equation}\label{singXYZ2}
\begin{split}
X_2&=-2 i \left(\sqrt{3}-i\right) \sin ^2(\pi  t) =Y_3\\
Y_2&=2 i \left(\sqrt{3}+i\right) \sin ^2(\pi  t)=X_3\\
Z_2=\tilde Z_2&=-2 \cos (2 \pi  t)+2 \cos (4 \pi  t)+3=Z_3=\tilde Z_3.\\
\end{split}
\end{equation}

In the case of a two-dimensional invariant subspace of the representation generated by $M_0$ and $M_1$, we obtain analogously (or by looking at the inverse transpose representation)
the same possible traces as in the case of a 1-dimensional invariant subspace.

Note that the three points given by  \eqref{singXYZ1} and \eqref{singXYZ2} 
are precisely the singular points of the relative character variety $F.$ 
We will denote by \[F^*=F\setminus\{(X_1,Y_1,Z_1),(X_2,Y_2,Z_2),(X_3,Y_3,Z_3)\} \] the set of nonsingular points, so that $F^*$ corresponds to irreducible representations only.

The following  theorem is a special case of  \cite[Theorem 8]{Law07} (see also \cite{Law}).
It can be proved by elementary methods, e.g. as in \cite[Theorem 4 (2)]{HOP} for a different relative character variety for the three-punctured sphere.
Note that the ``only if'' part of the first part of the following theorem was just proven above. 
\begin{The}\cite[Theorem 8]{Law07}\label{irreps}
A representation with $M_{0},M_{1},M_{\infty}$ satisfying \eqref{local_conjugacy_classes} 
is reducible if and only if the trace coordinates are given by \eqref{singXYZ1} or \eqref{singXYZ2}.

For every $p=(X,Y,Z)\in F$, there exists a representation of the fundamental group of the thrice-punctured sphere
with prescribed local conjugacy classes \eqref{local_conjugacy_classes} and global traces $X,$ $Y,$ and $Z.$
Moreover, two irreducible representations have the same traces $X,Y,Z$
if and only if they are conjugate to each other.
\end{The}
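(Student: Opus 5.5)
The plan is to argue directly with normal forms in the spirit of \cite[Theorem 4 (2)]{HOP}, rather than rely on Lawton's general machinery. The statement has three parts: a reducibility criterion, existence over every point of $F$, and injectivity of the trace map on irreducibles. For the reducibility criterion, the ``only if'' direction was done in Section~\ref{sec:redrep}. For the converse, suppose $\rho$ is irreducible with prescribed local classes and assume its traces equal one of $(X_j,Y_j,Z_j)$. I would show that this forces a common eigenvector of $M_0$ and $M_1$ (or of their transposes), which is a contradiction. That will follow automatically once the normal form below is in hand: on the irreducible locus, the traces determine an explicit matrix pair. I will then check by direct substitution that at the three singular points the normal form degenerates, because one off-diagonal parameter is forced to vanish. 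Singularity of these points can be verified from $\nabla P=0$ and $P=0$.

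\textbf{Normal form.} Conjugate so that $M_0=\diag(1,\zeta,\zeta^{-1})$. The centralizer of $M_0$ is the diagonal torus $T$. Write $M_1=(m_{ij})$ subject to three constraints:
\begin{itemize}
\item $\tr M_1=1+2\cos 2\pi t$ and $\tr M_1^{-1}=\tr M_1$, which fix the class of $M_1$;
\item $\det M_1=1$;
\item $M_\infty=M_1^{-1}M_0^{-1}$ lies in $\mathcal C(\diag(\zeta,1,\zeta^{-1}))$, which amounts to two trace conditions.
\end{itemize}
The data $X=\tr(M_1M_0^{-1})$ and $Y=\tr(M_1^{-1}M_0)$ are linear in the diagonal entries $m_{ii}$ and in the diagonal entries of $M_1^{-1}$, that is, in the $2\times2$ cofactors. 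The conditions $\tr M_1$, $X$ and $\tr(M_1M_0)=\tr M_\infty^{-1}$ form a Vandermonde system in $(m_{11},m_{22},m_{33})$ with nodes $1,\zeta,\zeta^{-1}$, so the diagonal is determined linearly by $X$. Likewise $Y$ together with the class data of $M_1^{-1}$ and $M_\infty$ determines the diagonal cofactors. The products $m_{12}m_{21}$, $m_{13}m_{31}$ and $m_{23}m_{32}$ then come out as explicit functions of $(X,Y)$.

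The remaining $T$-invariants are the two cyclic products $m_{12}m_{23}m_{31}$ and $m_{13}m_{32}m_{21}$. Their sum is fixed by $\det M_1=1$. Their difference is a linear function of $Z-\tilde Z$, and I will obtain it by expanding the commutator trace. Lemma~\ref{lem:tildeZ} then identifies the two roots of $P$ in $Z$ with these two cyclic products, so the equation $P=0$ is exactly the consistency relation between them. This gives existence: for each point of $F$, choose off-diagonal entries realising the prescribed pair products and cyclic products. This is always possible when the pair products are nonzero. When some pair product vanishes, set one entry to zero and compute directly; these cases are precisely where reducibility can occur.

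\textbf{Uniqueness.} If $\rho$ is irreducible, no pair product can vanish in a way that creates an invariant line or plane. In the generic case all six off-diagonal entries are nonzero, and then $T$ acts on them with generic stabiliser the centre. The entries are therefore determined up to $T$-conjugation by the three pair products and one cyclic product, so the traces determine $\rho$ up to conjugacy.

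\textbf{Main obstacle.} I expect the hardest part to be the degenerate strata where some $m_{ij}=0$ but $\rho$ is still irreducible. There the invariant-theoretic argument breaks down, and I must check case by case that the remaining nonzero entries are still fixed up to $T$ by the traces. I also need to check that such strata never meet the three singular points. I would handle this by enumerating the zero patterns compatible with irreducibility. Such a pattern must be connected, in the sense that the graph on $\{1,2,3\}$ with an edge $i\to j$ for each $m_{ij}\neq0$ must be strongly connected, which leaves essentially a single $3$-cycle pattern. For that pattern I would verify uniqueness directly.
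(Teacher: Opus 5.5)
Your reduction is correct and gives a self-contained alternative to the paper, which only cites Lawton and carries out the reducible computations of Section~\ref{sec:redrep}. With $M_0=\diag(1,\zeta,\zeta^{-1})$ and $\theta:=1+2\cos 2\pi t$, one gets $m_{11}=\tfrac{\theta+X}{3}$, $m_{22}=\tfrac{\theta+\zeta X}{3}$ and $m_{33}=\tfrac{\theta+\zeta^{-1}X}{3}$. The diagonal of $M_1^{-1}=\mathrm{adj}\,M_1$ is $\tfrac13(\theta+Y,\theta+\zeta^{-1}Y,\theta+\zeta Y)$, which gives the pair products $q_{ij}:=m_{ij}m_{ji}$. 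For $r_+:=m_{12}m_{23}m_{31}$ and $r_-:=m_{13}m_{32}m_{21}$ one has $r_++r_-=1+\tfrac{2(\theta^3+X^3)}{27}-\tfrac{\theta^2+XY}{3}$ and $Z-\tilde Z=3(\zeta-\zeta^{-1})(r_+-r_-)$, subject to $r_+r_-=q_{12}q_{13}q_{23}$.

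\textbf{The gap: the degenerate strata.} Strong connectivity on three vertices does \emph{not} leave essentially one $3$-cycle pattern. Two $2$-cycles through a common vertex are also strongly connected, e.g.\ $m_{13}=m_{31}=0$ with the other four off-diagonal entries nonzero. Such a pair is irreducible, yet $r_+=r_-=0$. This stratum actually meets $F$. At $X=\zeta^2(2\theta+3)$, $Y=\zeta(\theta^2+2\theta+3)$ one computes $q_{13}=0$ and $r_++r_-=0$, hence $r_+=r_-=0$. On the other hand $q_{12}=\tfrac{\theta\zeta^2}{3}(3-\zeta^2\theta)\neq0$ and $q_{23}=\tfrac{\theta\zeta}{3}(3-\zeta\theta)\neq0$. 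So every $M_1$ with these traces has exactly this zero pattern, and your plan never proves uniqueness there. Two further problems are related. First, comparing entries pattern by pattern presupposes that two representations with equal traces have the same zero pattern, which you do not justify. Second, in the ``if'' direction a single vanishing entry never produces an invariant subspace.

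\textbf{The fix.} One observation repairs all three points.
Because $M_0$ is regular diagonal, $\rho$ is reducible iff some coordinate subspace is $M_1$-invariant. This holds iff the support digraph of $M_1$ is not strongly connected. On three vertices, strong connectivity holds iff $r_+\neq0$, or $r_-\neq0$, or at least two $q_{ij}\neq0$. This is a condition on the $T$-invariants, hence on $(X,Y,Z)$ alone.

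In the strongly connected case, every nonzero off-diagonal entry lies on a $2$- or $3$-cycle. So the support is read off from which of $q_{ij},r_\pm$ vanish. Use $T$ to normalize to $1$ two entries forming a spanning tree; the remaining entries are then determined by $q_{ij},r_\pm$. Equivalently, strongly connected support means the $T$-orbit is closed, and invariants separate closed orbits.

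For the ``if'' direction, the normal forms of Section~\ref{sec:redrep} show that at each singular point $r_+=r_-=0$ and two of the three $q_{ij}$ vanish. Hence no representation with those traces is irreducible.
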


As a consequence of the preceding Theorem, we do not distinguish between points in $F^*$ and conjugacy classes of irreducible representations.

\subsection{The unitary component}\label{sec:unirep}
To solve the intrinsic closing conditions for minimal Lagrangian surfaces, we derive conditions on $(X,Y,Z)\in F$ corresponding to unitarizable representations.

For any $A\in\mathrm{SL}(3,\C)$ with 1 as an eigenvalue, we have
$\tr(A)=\tr(A^{-1}).$

Thus, a  unitarizable representation given by $M_{0},M_{1},M_{\infty}\in\SU_3$ 
satisfying \eqref{local_conjugacy_classes}
also satisfies for all $k=1,2$ and $j = 0, 1, \infty$
\begin{equation*}
\begin{split}
\tr(M_{j}^k) =\tr(\bar M_{j}^k)=\tr((\bar M_{j}^T)^{-k}).\\
\end{split}
\end{equation*}
In particular, all the {\em local traces} are real.
Additionally, we have
\begin{Lem}\label{nec_un_con}
If a representation is unitarizable then its trace coordinates satisfy
\begin{equation}\label{eq:necun}\bar X=Y \quad\text{and} \quad \tilde Z=\bar Z\end{equation}
\end{Lem}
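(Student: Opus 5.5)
Since trace coordinates are conjugation invariant, we may conjugate the representation and assume that $M_0,M_1,M_\infty\in\SU_3$. For a unitary matrix $U$ we have $U^{-1}=\bar U^T$, so $\tr(U^{-1})=\overline{\tr U}$. The plan is to apply this identity to each of the words defining $X$, $Z$ and $\tilde Z$.

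First I would treat $X$ and $Y$. We have
\[
\bar X=\overline{\tr(M_1M_0^{-1})}=\tr\bigl((M_1M_0^{-1})^{-1}\bigr)=\tr(M_0M_1^{-1}).
\]
By cyclic invariance of the trace, $\tr(M_0M_1^{-1})=\tr(M_1^{-1}M_0)=Y$. This gives $\bar X=Y$.

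Next I would treat the commutator traces. With $C=M_1M_0M_1^{-1}M_0^{-1}$ we have $Z=\tr C$, and its inverse is $C^{-1}=M_0M_1M_0^{-1}M_1^{-1}$. Hence
\[
\bar Z=\overline{\tr C}=\tr(C^{-1})=\tr(M_0M_1M_0^{-1}M_1^{-1})=\tilde Z,
\]
using the description of $\tilde Z$ from Lemma~\ref{lem:tildeZ}.

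There is no real obstacle in this argument. The only step needing care is that $\tilde Z$ is the specific word given in Lemma~\ref{lem:tildeZ}, and that this word is exactly the inverse of the commutator defining $Z$, so no case analysis on the roots of $P$ is required.
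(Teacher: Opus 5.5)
Your proof is correct and follows essentially the same route as the paper: conjugate into $\SU_3$, use $\tr(U^{-1})=\overline{\tr U}$ on the defining words, and observe that the word for $\tilde Z$ in Lemma~\ref{lem:tildeZ} is exactly the inverse of the commutator defining $Z$. The paper gives the $X,Y$ computation explicitly and treats $Z,\tilde Z$ as analogous; you have simply written that second step out in full.
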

\begin{proof}
Without loss of generality, we assume that all matrices are unitary. Then 
\begin{equation*}
\begin{split}
X=&\tr(M_{1}M_{0}^{-1})=\tr((\bar M^T_{1})^{-1}\bar M^T_{0})
=\tr(\bar M_{0}(\bar M_{1})^{-1})=\bar Y,
\end{split}
\end{equation*}
and analogously for $Z$ and $\tilde Z.$
\end{proof}

Note that the polynomial $P(X,Y,Z)$ \eqref{Def-characterpolynomial} in the variable $Z$ has real coefficients if $\bar X=Y$. Therefore,
Lemma \ref{nec_un_con} implies that the discriminant
\begin{equation}\label{eq:discrimant-complex}
\begin{split}
D:=\,\, &(4 \cos (2 \pi  t)+2 \cos (4 \pi  t)+X Y)^2\\
&-4 \left(2 X Y (2 \cos (2 \pi  t)+\cos (4 \pi  t))+4 \cos (6 \pi  t)+X^3-3 X Y+Y^3+5\right)\\
\end{split}
\end{equation}
of the quadratic polynomial $P$ in $Z$
must be nonpositive in order to have a unitarizable representation.
We
denote by $F^\R$ the set of points satisfying \eqref{eq:necun}, i.e.,
\[F^\R=\{(X,Y,Z)\in F\mid \bar X=Y, \bar Z=\tilde Z\}\]
where $\tilde Z$ is uniquely determined by $(X,Y,Z)$ by Lemma \ref{lem:tildeZ}.

\begin{Lem}\label{suf_un_con}
For a connected component of the set of nonsingular real points $(X,Y,Z)\in F^\R\cap F^*$, 
either all or none of the elements in that component correspond to unitarizable representations.
\end{Lem}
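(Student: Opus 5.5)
The plan is an open-closed argument on a connected component $C$ of $F^\R\cap F^*$. Let $U\subset C$ be the set of points whose representation is unitarizable. Since $C$ is connected and nonempty, it suffices to show that $U$ is both closed and open in $C$. Throughout, Theorem~\ref{irreps} identifies points of $F^*$ with conjugacy classes of irreducible representations, so unitarizability is a well-defined property of a point.

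Closedness comes from compactness of $\SU_3$. Let $p_n\in U$ converge to $p\in C$, and choose unitary representatives $(M_0^{(n)},M_1^{(n)})\in\SU_3^2$. After passing to a subsequence, these converge to some $(M_0,M_1)\in\SU_3^2$. The prescribed conjugacy classes \eqref{local_conjugacy_classes} are closed, since the eigenvalues are distinct, so the limit still has these local classes. Its trace coordinates are the limits of the trace coordinates, hence equal $p$. Because $p\in F^*$, Theorem~\ref{irreps} says the limit representation is irreducible and is, up to conjugation, the representation of $p$. So $p\in U$.

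Openness is the main step, and I would do it by a dimension count. A unitary irreducible representation with the prescribed local classes is a point of the relative $\SU_3$-character variety: pairs $(M_0,M_1)$ with $M_0\in\mathcal C(\diag(\zeta,1,\zeta^{-1}))\cap\SU_3$, $M_1\in\mathcal C(\diag(1,e^{2\pi i t},e^{-2\pi i t}))\cap\SU_3$, and $(M_1M_0)^{-1}$ in the class of $M_\infty$, taken modulo $\SU_3$. Each of these unitary conjugacy classes is a full flag manifold $\SU_3/T$ of real dimension $6$. The compact moduli space is the symplectic reduction of a product of three coadjoint orbits, or in the group-valued setting a quasi-Hamiltonian reduction. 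At irreducible points the conjugation action has finite stabilizer (the center), and the moment map is submersive. Its real dimension is therefore $3\cdot 6-2\cdot 8=2$.

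On the other side, $F^\R\cap F^*$ is a smooth real surface. The complex surface $F^*$ is a smooth complex $2$-dimensional variety. The condition $\bar X=Y,\ \bar Z=\tilde Z$ is the fixed locus of the antiholomorphic involution $\rho\mapsto(\bar\rho^{T})^{-1}$, which acts on trace coordinates by $(X,Y,Z)\mapsto(\bar Y,\bar X,\bar{\tilde Z})$ by Lemma~\ref{lem:tildeZ}. The fixed locus of an antiholomorphic involution on a smooth complex surface is a smooth real surface wherever it is nonempty.

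The map from the unitary relative character variety (irreducible part) into $F^\R\cap F^*$, sending a representation to its traces, is injective by Theorem~\ref{irreps}. It is continuous, and it lands in $F^\R$ by Lemma~\ref{nec_un_con}. It is therefore an injective continuous map between topological $2$-manifolds, and invariance of domain shows its image $U$ is open in $C$.

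To be safe, I would also verify smoothness and the immersion property directly. Smoothness of the unitary moduli space at irreducible points follows from the vanishing of $H^0$, that is, from irreducibility. The differential of the trace map is injective because the complexification of the unitary moduli space is locally the complex one, whose trace coordinates are local coordinates on $F^*$. I expect this local-diffeomorphism and dimension verification to be the only delicate point. Combining closedness and openness with connectedness of $C$ gives that $U$ is either $\emptyset$ or all of $C$.
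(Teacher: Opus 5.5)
Your argument is correct, but it takes a different route from the paper.

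\textbf{The paper's argument.} The paper never splits into open and closed parts. For a point of $F^\R\cap F^*$, Lemma~\ref{lem:tildeZ} and Theorem~\ref{irreps} show that $(\bar\rho^T)^{-1}$ is conjugate to $\rho$, so there is an intertwiner $C$ with $C^{-1}\rho C=(\bar\rho^T)^{-1}$. Applying this relation twice and using irreducibility, $C$ can be normalized to be Hermitian with $\det C=1$, and it is then unique. Its signature is $(3,0)$ or $(1,2)$. Since $C$ varies continuously along the component and stays nondegenerate, the signature is locally constant, hence constant on the component. Finally, $\rho$ is unitarizable exactly when $C=D\bar D^T$ is positive definite, because then $D^{-1}\rho D$ is unitary. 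This uses only linear algebra. As a by-product, it identifies the non-unitarizable real components as representations preserving an indefinite Hermitian form, i.e.\ landing in a conjugate of $\mathrm{SU}(1,2)$.

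\textbf{Your argument.} Closedness comes cheaply from compactness of $\SU_3$. Openness, however, costs more external input:
\begin{itemize}
\item the irreducible part of the unitary relative character variety is a smooth real surface (submersivity of the group-valued moment map at irreducibles);
\item $F^\R\cap F^*$ is a smooth real surface, being the fixed locus of an antiholomorphic involution;
\item Brouwer's invariance of domain.
\end{itemize}

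Two small points should be made explicit.
\begin{itemize}
\item The involution $(X,Y,Z)\mapsto(\bar Y,\bar X,\overline{\tilde Z})$ is genuinely antiholomorphic and preserves $F$ and $F^*$. By Vieta, $\tilde Z=XY+2(2\cos(2\pi t)+\cos(4\pi t))-Z$ is polynomial in $(X,Y,Z)$, and $P$ is invariant under $X\leftrightarrow Y$, $Z\leftrightarrow\tilde Z$.
\item Injectivity of the trace map on $\SU_3$-classes needs more than Theorem~\ref{irreps}. You also need the standard fact that irreducible unitary representations conjugate in $\SL_3(\C)$ are already conjugate in $\SU_3$: if $g\rho g^{-1}=\rho'$ with both unitary, then $g^\dagger g$ commutes with $\rho$, so it is scalar by irreducibility.
\end{itemize}

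\textbf{Comparison.} Your route avoids choosing continuous representatives or a continuous intertwiner along the component, which the paper only asserts. The price is that it is less elementary and less informative than the signature argument.
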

\begin{proof}
Using Lemma \ref{lem:tildeZ}, we obtain from Theorem \ref{irreps} that
the representation $(\bar \rho^T)^{-1}$
determined by
$\tilde M_j:=(\bar M_j^T)^{-1}$, $j=0,1,\infty$,
is conjugate to $\rho.$ Thus,  there is a (unique up to scale by a third root of unity) conjugator $C\in\mathrm{SL}(3,\C)$ satisfying
\[C^{-1}\rho C=(\bar \rho^T)^{-1}.\]
Applying this relation twice we get
\[C(\bar C^T)^{-1}=\hat\zeta\,\Id\quad \text{ with} \quad \hat\zeta^3=1.\]
After scaling with a third root of unity, we can assume without loss of generality that
$C=\bar C^T$
is Hermitian. The signature of the associated quadratic form is either $(3,0)$ or $(1,2).$ In the first case,
$C$ is positive definite and can be written as $D\bar D^T.$ Then, conjugating yields a unitary representation $D^{-1}\rho D.$

The Hermitian choice $C$ is unique, hence  it depends continuously along paths in a real component of irreducible representations, and the signature does not change. For a unitarizable  representation,  the signature is $(3,0)$.
\end{proof}

To analyze the different connected components of $F^\R$ we substitute
 \begin{equation}\label{XYxy}X=A+i B,\quad Y=A-iB\qquad\text{with}\quad A,B\in\R.\end{equation}
Then, the discriminant
 \eqref{eq:discrimant-complex} is a real polynomial of degree 4
\begin{equation}
\begin{split}
D^r=&
-8 \cos (2 \pi  t) \left(A^2+B^2-1\right)-4 \cos (4 \pi  t) \left(A^2+B^2-2\right)-8 \cos (6 \pi  t)+2 \cos (8 \pi  t)\\&+A^4-8 A^3+12 A^2+2 (A (A+12)+6) B^2+B^4-10.
\end{split}
\end{equation}

For a given real $A$, there are at most four real solutions of $D^r=0$ in $B$, counted with multiplicity. They are given by
\begin{equation*}
\begin{split}B_1=&\sqrt{-4 H+4 \cos (2 \pi  t)+2 \cos (4 \pi  t)-A^2-12 A-6}\\
B_2=&\sqrt{4 H+4 \cos (2 \pi  t)+2 \cos (4 \pi  t)-A^2-12 A-6}\\
B_3=&-\sqrt{-4 H+4 \cos (2 \pi  t)+2 \cos (4 \pi  t)-A^2-12 A-6}\\
B_4=&-\sqrt{4 H+4 \cos (2 \pi  t)+2 \cos (4 \pi  t)-A^2-12 A-6},\\
\end{split}
\end{equation*}
where $H:=\sqrt{(-\cos (2 \pi  t)+A+1)^2 (4 \cos (2 \pi  t)+2 A+5)}.$

From this, one sees that
there are four different connected components inside $F^\R\cap F^*$, see Figure \ref{Fig:4components}.
Their closures are obtained by adding the three singular points of $F$. Explicitly, these are given by the following four closed and connected sets
\begin{equation}\label{eq-components0}
\begin{split}
\mathcal C_{-1}^+&:=\{(A+iB,A-iB,Z)\in F\mid (A,B)\in\R^2, -2 \cos (2 \pi  t)-\frac{5}{2}\leq A\leq\cos (2 \pi  t)-1,\; B_1\leq B\leq B_2\}\\
\mathcal  C_{-1}^-&:=\{(A+iB,A-iB,Z)\in F\mid (A,B)\in\R^2, -2 \cos (2 \pi  t)-\frac{5}{2}\leq A\leq\cos (2 \pi  t)-1,\; B_3\leq B\leq B_4\}\\
\mathcal  C_+&:=\{(A+iB,A-iB,Z)\in F\mid (A,B)\in\R^2, 4 \sin ^2(\pi  t)\leq A\leq 8 \cos ^2\left(\frac{\pi  t}{2}\right) \cos (\pi  t),\; B_4\leq B\leq B_2\}.
\end{split}
\end{equation}
and
\begin{equation}\label{eq-components}
\begin{split}
\mathcal  C_u&:=\{(A+iB,A-iB,Z)\in F\mid (A,B)\in\R^2,\cos (2 \pi  t)-1\leq A\leq -8 \sin ^2\left(\frac{\pi  t}{2}\right) \cos (\pi  t),\; B_1\leq B\leq B_2\}\\
&\cup\{(A+iB,A-iB,Z)\in F\mid (A,B)\in\R^2,\cos (2 \pi  t)-1\leq A\leq -8 \sin ^2\left(\frac{\pi  t}{2}\right) \cos (\pi  t),\; B_4\leq B\leq B_3\}\\
&\cup\{(A+iB,A-iB,Z)\in F\mid (A,B)\in\R^2, -8 \sin ^2\left(\frac{\pi  t}{2}\right) \cos (\pi  t)\leq A\leq 4 \sin ^2(\pi  t),\; B_4\leq B\leq B_2\}.\\
\end{split}
\end{equation}
\begin{figure}[t]
\centering
\includegraphics[width=0.3825\textwidth]{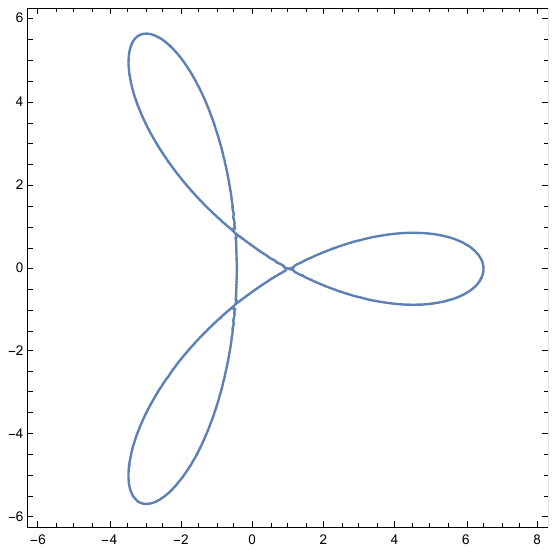}
\includegraphics[width=0.3975\textwidth]{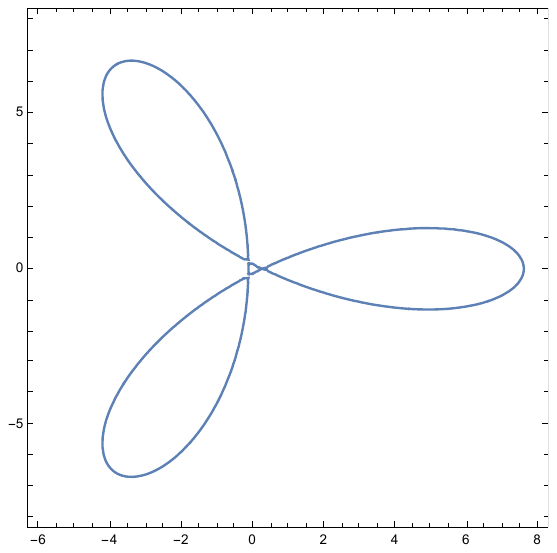}
\caption{The vanishing locus of $D^r$, for $t=1/6$ and $t=1/12$. The unitary component $\mathcal  C_u$ is the small component in the center.}
\label{Fig:4components}
\end{figure}

The intersections of these components consist of singular points of $F$ (i.e., corresponding to reducible representations) only.
Furthermore, if
$(X,Y,Z)\in \mathcal C\subset F^\R$ is contained in some real component $\mathcal C$, then automatically $(X,Y,\tilde Z)\in \mathcal  C\subset F^\R$.

We are now able to characterize  the relevant unitarizable representations through their trace coordinates.

\begin{The}\label{The:unitary-component}
Let $0<t<\tfrac{1}{3}, $ and let $\mathcal C_u\subset F^\mathbb R$ be as in \eqref{eq-components}.
Then $\mathcal C_u$
is a connected component
 which consists of unitarizable
representations only.
\end{The}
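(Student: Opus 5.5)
The plan is to combine Lemma~\ref{suf_un_con} with one explicit unitary representation lying in $\mathcal C_u$. By Lemma~\ref{suf_un_con}, on each connected component of $F^\R\cap F^*$ either every point is unitarizable or none is. So it suffices to prove two things. First, $\mathcal C_u\cap F^*$ is connected; more modestly, every point of it can be joined inside $\mathcal C_u\cap F^*$ to a known unitarizable point. Second, $\mathcal C_u\cap F^*$ contains at least one unitarizable irreducible point. The singular points of $\mathcal C_u$ correspond to reducible representations. By Theorem~\ref{irreps} each of them is the trace of some representation with the prescribed local conjugacy classes. Whether these are unitarizable (as direct sums) does not matter for the statement about irreducible points, so we concentrate on $\mathcal C_u\cap F^*$.

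\emph{Anchor point.} Consider the matrices $M_0,M_1,M_\infty\in\SU_3$ of \eqref{eq:finite-mon}, now for arbitrary $t\in(0,\tfrac13)$. They are unitary and satisfy $M_\infty M_1M_0=\Id$. Their eigenvalues match the conjugacy classes \eqref{local_conjugacy_classes}: $M_0$ and $M_\infty$ have characteristic polynomial $x^3-1$, and $M_1$ has eigenvalues $1,e^{\pm 2\pi i t}$. As in Example~\ref{exa:finitemon}, their trace coordinates are $X=Y=0$ and $Z=2e^{-2\pi i t}+e^{4\pi i t}$; the contragredient gives $\tilde Z=\bar Z$. Since $X=0$ differs from $X_1=4\sin^2(\pi t)$ and from $X_2,X_3$, which are nonzero for $t\in(0,\tfrac13)$, the point is nonsingular. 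Hence the representation is irreducible by Theorem~\ref{irreps}. In the coordinates \eqref{XYxy} it sits at $(A,B)=(0,0)$. We check that $\cos(2\pi t)-1<0<4\sin^2(\pi t)$. We also check that $0$ lies between the middle-range bounds $B_4\le 0\le B_2$; this is just the sign condition $D^r(0,0)\le 0$, equivalently $D\le 0$ at $X=Y=0$, which must hold because the representation is unitary. Therefore $(0,0,Z)$ and $(0,0,\tilde Z)$ both lie in $\mathcal C_u\cap F^*$ and are unitarizable.

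\emph{Connectedness.} Project $\mathcal C_u$ to the $(A,B)$-plane. Its image $R$ is the closed region $\{D^r\le 0\}$ bounded by the branches $B_1,\dots,B_4$ over the stated $A$-intervals. Over each interior point of $R$ there are two values $Z\neq\tilde Z=\bar Z$. Over $\partial R$, where $D^r=0$, these two values coincide. So $\mathcal C_u$ is two copies of $R$ glued along $\partial R$, a topological sphere-like surface. I will show that $R$ is a closed topological disc, i.e.\ that the three pieces in \eqref{eq-components} fit together along the $A$-values $-8\sin^2(\tfrac{\pi t}{2})\cos(\pi t)$ and $\cos(2\pi t)-1$. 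I will also show that the three singular points lie on $\partial R$, where the branches $B_i$ meet (for instance $A=X_1=4\sin^2(\pi t)$ with $B=0$). Removing finitely many points from such a surface leaves it connected. Failing a clean global topological argument, I would fall back on direct paths: from any interior point, a straight segment in the interior of $R$ to $(0,0)$ on a fixed sheet; for boundary points, first step into the interior. This only requires the interior of $R$ to be connected and star-shaped-ish around the origin, which can be read off from the explicit formulas for $B_1,B_2$.

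\emph{Main obstacle.} The delicate step is the local topology of $R$ near the points where the branches $B_1,\dots,B_4$ touch, i.e.\ near the reducible points and the junction abscissae. I must rule out that $\mathcal C_u\cap F^*$ splits there into pieces that avoid $(0,0)$. I would handle it by expanding $H$ and the $B_i$ near these $A$-values, showing that $R$ only pinches at isolated boundary points. Any such pinching is harmless, since the interior remains connected. Combining connectedness with the anchor point via Lemma~\ref{suf_un_con} then gives that every point of $\mathcal C_u\cap F^*$ is unitarizable, which is the theorem.
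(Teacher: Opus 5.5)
Your proposal is correct and follows the paper's proof. The paper likewise takes the finite-image unitary representation \eqref{finite-group-rep1}, which has $X=Y=0$, observes that it lies in $\mathcal C_u$ and is irreducible by Theorem~\ref{irreps}, and then propagates unitarizability over $\mathcal C_u$ via Lemma~\ref{suf_un_con}.

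The one difference is that the paper takes the connectedness of $\mathcal C_u$ as already established by the explicit description \eqref{eq-components} and Figure~\ref{Fig:4components}, whereas you add a sketch that realizes $\mathcal C_u$ as the double of a closed disc in the $(A,B)$-plane with the three reducible points on its boundary. That sketch is a sound elaboration.
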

\begin{proof}
The representation
given by \eqref{finite-group-rep1}
is obviously unitary. It has trace coordinates $X=0=Y$ and $Z=2 e^{-2 i \pi  t}+e^{4 i \pi  t}$ is therefore contained in $\mathcal C_u$.  By Theorem \ref{irreps}, $\rho$ is irreducible.
By Lemma  \ref{suf_un_con}, $\mathcal  C_u$ entirely consists of unitarizable representations. 
\end{proof}

\subsection{Derivative of trace coordinates at $t=0$}
We will make use of the following lemma.

\begin{Lem}\label{derivativeofmonodromy-singularID}
Let $\epsilon>0$, and, for $j=0,1,\infty$, let
$M_j\colon [0,\epsilon)\to\mathrm{SL}(3,\C)$ 
 be smooth such that
$M_\infty M_1 M_0=\Id.$
Assume that for all $t$, $M_0(t)$ and $M_\infty(t)$ are both conjugate to
$\text{diag}(1,\zeta,\zeta^{-1}).$
Moreover, assume \begin{equation}\label{eqMM010}M_0(0)=\text{diag}(1,\zeta, \zeta^{-1})\quad\text{ and}\quad M_1(0)=\Id.\end{equation} Let
$X(t)=\tr(M_1(t)M_0(t)^{-1})$, $Y(t)=\tr(M_0(t)M_1(t)^{-1})$
and
$M'_1(0)=
\begin{pmatrix}
 p_{i\,j} 
\end{pmatrix}_{i,j}
.$
Then we have
\begin{equation}\label{secordtr}
\begin{split}
X(0)&=Y(0)=0,\quad
\quad X'(0)=Y'(0)=0\\
X''(0)&=-\zeta\, p_{12}\, p_{21}- \zeta^{-1}\, p_{13}\, p_{31}- p_{23}\, p_{32}\\
 Y''(0)
&=-\zeta^{-1}\, p_{12}\, p_{21}-\zeta \, p_{13}\, p_{31}- p_{23}\, p_{32}.\\
\end{split}
\end{equation}
\end{Lem}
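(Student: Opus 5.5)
The plan is a second-order Taylor expansion in $t$. I will use only three exact identities that hold for every $t$: $\det M_1(t)=1$, and the two consequences of the assumption that $M_0(t)$ and $M_\infty(t)=M_0(t)^{-1}M_1(t)^{-1}$ are conjugate to $\mathrm{diag}(1,\zeta,\zeta^{-1})$. Since $1+\zeta+\zeta^{-1}=0$, these consequences are
\[
\tr M_0(t)^{\pm1}=0,\qquad W(t):=\tr\bigl(M_0(t)^{-1}M_1(t)^{-1}\bigr)=\tr M_\infty(t)=0,\qquad \tr\bigl(M_1(t)M_0(t)\bigr)=\tr M_\infty(t)^{-1}=0 .
\]
I set $P:=M_1'(0)=(p_{ij})$ and write $M_1(t)=\Id+tP+\tfrac{t^2}{2}Q+O(t^3)$.

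\textbf{Step 1: values and first derivatives.} At $t=0$ we have $M_1(0)=\Id$, so $X(0)=\tr M_0(0)^{-1}=0$ and $Y(0)=\tr M_0(0)=0$.

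Differentiating the three identities at $t=0$ gives three linear conditions on $P$:
\begin{itemize}
\item from $\det M_1=1$: $\tr P=0$;
\item from $\tr(M_1M_0)=0$, using $\tr M_0'=0$: $\tr(PM_0(0))=0$;
\item from $W=0$, using $\tr (M_0^{-1})'=0$: $\tr(PM_0(0)^{-1})=0$.
\end{itemize}
Written out, these say that $(p_{11},p_{22},p_{33})$ is annihilated by the Vandermonde matrix in $1,\zeta,\zeta^{-1}$. That matrix is invertible, so $P$ has vanishing diagonal.

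For the derivatives of $X$ and $Y$, I get
\[
X'(0)=\tr\bigl(PM_0(0)^{-1}\bigr)+\tr\bigl((M_0^{-1})'(0)\bigr)=0 .
\]
The case $Y'(0)=\tr M_0'(0)-\tr(M_0(0)P)=0$ is analogous.

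\textbf{Step 2: second derivatives.} The key trick is to add the vanishing quantity $W$ to $X$. This gives
\[
X=X+W=\tr\bigl((M_1+M_1^{-1})M_0^{-1}\bigr).
\]
Since $M_1^{-1}=\Id-tP+t^2(P^2-\tfrac12Q)+O(t^3)$, the sum is
\[
M_1+M_1^{-1}=2\Id+t^2P^2+O(t^3).
\]
Here the unknown second-order term $Q$ has cancelled. Together with $\tr M_0(t)^{-1}\equiv0$, this yields
\[
X(t)=t^2\,\tr\bigl(P^2M_0(0)^{-1}\bigr)+O(t^3).
\]
So the variation of $M_0(t)$ does not enter at this order.

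Because the diagonal of $P$ vanishes, $(P^2)_{11}=p_{12}p_{21}+p_{13}p_{31}$, and cyclically for the other diagonal entries. Pairing these with the eigenvalues $1,\zeta^{-1},\zeta$ of $M_0(0)^{-1}$, and using $1+\zeta^{-1}=-\zeta$, $1+\zeta=-\zeta^{-1}$ and $\zeta+\zeta^{-1}=-1$, gives
\[
\tr\bigl(P^2M_0(0)^{-1}\bigr)=-\zeta p_{12}p_{21}-\zeta^{-1}p_{13}p_{31}-p_{23}p_{32}.
\]
For $Y$ I use the same argument with $\tr(M_1M_0)=0$ added instead, which replaces $M_0^{-1}$ by $M_0$ and swaps $\zeta$ with $\zeta^{-1}$.

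\textbf{Main obstacle.} The only delicate point is the overall normalization. Taken literally, the expansion above gives $X''(0)=2\,\tr(P^2M_0(0)^{-1})$, which is twice the stated expression. I would therefore recheck the factor carefully, first whether I have mis-expanded, and then whether the intended statement means the $t^2$-coefficient of $X$ rather than its second derivative, adjusting the constant accordingly. This factor does not affect any conclusion that depends only on the ratios or the vanishing of $X''(0)$ and $Y''(0)$.
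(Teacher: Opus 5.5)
Your argument is correct, and the factor of $2$ you flag is not a mis-expansion on your part: the statement itself is off by exactly that factor. Your expansion $X(t)=t^2\tr\bigl(P^2M_0(0)^{-1}\bigr)+O(t^3)$ is right. It gives $X''(0)=-2\bigl(\zeta p_{12}p_{21}+\zeta^{-1}p_{13}p_{31}+p_{23}p_{32}\bigr)$, and similarly for $Y$. So the displayed right-hand sides are $\tfrac12X''(0)$ and $\tfrac12Y''(0)$, i.e.\ $\widehat X(0)$ and $\widehat Y(0)$ in the notation of \eqref{wh-function}.

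Here is a quick check. Take $M_0\equiv\mathrm{diag}(1,\zeta,\zeta^{-1})$ and $M_1(t)=e^{tK}M_0e^{-tK}M_0^{-1}$, where $K$ has $K_{12}=K_{21}=1$ and all other entries $0$. All hypotheses hold, with $p_{12}=1-\zeta^{-1}$, $p_{21}=1-\zeta$, and all other entries of $P$ equal to zero. A direct computation gives $X(t)=-3\zeta\sinh^2t$, so $X''(0)=-6\zeta$, whereas the displayed formula gives $-\zeta p_{12}p_{21}=-3\zeta$.

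In the paper's proof the factor is lost in the second line of \eqref{eq:qqpp}. With $M_0$ constant and $M_0^2=M_0^{-1}$, differentiating $\tr\bigl((M_1M_0)^2\bigr)$ twice gives $\tr(QM_0^{-1})=-\tr(PM_0PM_0)=-\sum_{i\neq j}\mu_i\mu_j\,p_{ij}p_{ji}$, where $(\mu_1,\mu_2,\mu_3)=(1,\zeta,\zeta^{-1})$. This sum over ordered pairs contains each product $p_{ij}p_{ji}$ twice, and the paper keeps only one copy. So you should not "adjust" anything; just state the corrected normalization. The slip propagates to the formulas for $\widehat X_{0,a,b,c}$ and $\widehat Y_{0,a,b,c}$ in Lemma \ref{lem:prep}, whose prefactor should be $4\pi^2$ rather than $2\pi^2$. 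A uniform nonzero factor there does not affect Lemma \ref{Lem:initial-data} or the invertibility argument in Proposition \ref{ProIFT}.

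Your route also differs from the paper's. The paper first conjugates by a smooth $C(t)$ to reduce to constant $M_0$. It then uses first $t$-derivatives of $\tr(M_1M_0)$, $\tr\bigl((M_1M_0)^2\bigr)$ and $\tr\bigl((M_1M_0)^3\bigr)$ to obtain the same Vandermonde system on $p_{11},p_{22},p_{33}$ that you get from $\det M_1=1$ and $\tr M_\infty^{\pm1}=0$. Finally it uses second derivatives of $\tr(M_1M_0)$ and $\tr\bigl((M_1M_0)^2\bigr)$ to solve for the combination $q_{11}+\zeta^{-1}q_{22}+\zeta q_{33}$ of entries of $M_1''(0)$, which equals $X''(0)$ when $M_0$ is constant.

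Your identity $X=X+\tr M_\infty=\tr\bigl((M_1+M_1^{-1})M_0^{-1}\bigr)$ makes $M_1''(0)$ cancel identically. It also absorbs the $t$-dependence of $M_0$ through $\tr M_0(t)^{-1}\equiv 0$. You therefore need neither the conjugation reduction nor any second-order constraint, and you get $X(0)=X'(0)=0$ for free. The vanishing diagonal of $P$ is needed only to evaluate $\tr\bigl(P^2M_0(0)^{-1}\bigr)$ in terms of the products $p_{ij}p_{ji}$. Your argument is shorter and makes the correct constant transparent, whereas the paper's bookkeeping of the $q_{ii}$ is exactly where the factor was dropped.
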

\begin{proof}
That $X(0)=0=Y(0)$ directly follows from \eqref{eqMM010}.
For the remaining formulas, we first consider the case where $M_0(t)=M_0(0)$ is constant in time. 
From the conditions on the conjugacy class of $M_\infty(t)=(M_1(t)M_0(t))^{-1}$, we obtain
the equations 
\[ \tr(M_1 M_0)'(0)=0,\quad \tr(M_1 M_0 M_1 M_0)'(0)=0, \quad \tr(M_1 M_0 M_1 M_0M_1 M_0)'(0)=0.\]
This is a nondegenerate homogeneous linear system on $p_{11},p_{22},p_{33}$, i.e.,
\begin{equation}\label{psvanish}0=p_{11}=p_{22}=p_{33}.\end{equation}
By direct computation, one then obtains
$X'(0)=0=Y'(0).$
Let $M''_1(0)=
\begin{pmatrix}
 q_{i\,j} 
\end{pmatrix}_{i,j}.$
From
\[\tr(M_1 M_0)''=0=\tr(M_1 M_0 M_1 M_0)''\]
we obtain (using \eqref{psvanish})
\begin{equation}\label{eq:qqpp}
\begin{split}q_{11}+\zeta \,q_{22}+\zeta^{-1}\,q_{33}&=0\\
q_{11}+\zeta^{-1}\,q_{22}+\zeta\,q_{33}&=-\zeta\, p_{12}p_{21}-\zeta^{-1}\, p_{13}p_{31}-p_{23}p_{32}.\end{split}\end{equation}
Plugging \eqref{eq:qqpp} into
 \[X''(0)=q_{11}+\zeta^{-1}\,q_{22}+\zeta \,q_{33}\]
 one obtains \eqref{secordtr}. Similarly one can derive the formula for $Y''(0).$
 
 The general case \[M_0(t)=C(t)M_0(0)C^{-1}(t)\]
 is simply reduced to the special case by conjugating with $C(t)$: Using $M_1(0)=\Id$ and the Leibniz rule we obtain
 \[\frac{d}{dt}_{\mid t=0}(C^{-1}(t) M_1(t) C(t))=M_1'(0).\]
 Therefore, \eqref{secordtr} is true in general.
\end{proof}

\section{Construction of unitarizable DPW potentials}\label{sec:solvingIFT}

In this section we construct, for sufficiently small $t\ge0$, a family of Fuchsian potentials on the thrice-punctured sphere whose gauged DPW potentials satisfy the hypotheses of Theorem \ref{THM:MLrecon}. 
The monodromy will be unitarizable for every $\lambda\in\S^1$; the local conjugacy classes are prescribed so that, after pullback to the relevant covering, the singularities become removable; and at one Sym point $\lambda_0\in\S^1$ the pulled-back monodromy becomes trivial.

\subsection{Basic definitions}
Fix $r>1$ and let $\xi=\exp(\pi i/3)$.
Denote by
\[
\mathcal{W}:=\{h\colon \mathbb D_r\to\mathbb C\mid h \text{ is holomorphic} \}\qquad\text{and}\qquad  \mathring{\mathcal{W}}:=\{h\colon \mathbb A_r\to\mathbb C\mid h \text{ is holomorphic} \}
\]
the Banach spaces of bounded holomorphic functions on the disk $\mathbb D_r:=\{\lambda\in\C\mid |\lambda|<r\}$ 
and on the annulus $\mathbb A_r:=\{\lambda\in\C\mid \tfrac 1r<|\lambda|<r\}$, respectively.
 Elements of $\mathcal W$ and $\mathring{\mathcal{W}}$ restrict to  real-analytic functions on $\S^1\subset\mathbb C^\times$.
Define the bounded (real) linear maps
\begin{equation}\begin{split}
(.)^+&\colon \mathring{\mathcal{W}}\to \mathcal{W}; \quad f=\sum_{k\in\Z}f^{(k)}\lambda^k\mapsto  \sum_{k\in\N^{>0}}f^{(k)}\lambda^k.\\
(.)^*&\colon \mathring{\mathcal{W}}\to \mathring{\mathcal{W}}; \quad f=\sum_{k\in\Z}f^{(k)}\lambda^k\mapsto  \sum_{k\in\Z}\bar f^{(k)}\lambda^{-k}.
\end{split}
\end{equation}
Clearly, $f\in\mathring{\mathcal W}$ is real along $\S^1$ if and only if $f^*=f.$ Set
\begin{equation*}
\begin{split}
&\mathring{\mathcal{W}}^{\tau}:=\{f\in\mathring{\mathcal{W}}\mid f(\xi\lambda)=f(\lambda)\ \text{for all}\ \lambda\in\mathbb A_r\},
\qquad
\quad
\mathcal{W}^{\tau}:=\mathcal W\cap \mathring{\mathcal{W}}^{\tau},\\
&\mathring{\mathcal{W}}^{\tau}_{\R}:=\{f\in\mathring{\mathcal{W}}^{\tau}\mid \overline{f(\bar\lambda)}=f(\lambda)\ \text{for all}\ \lambda\in\mathbb A_r\},
\qquad
\quad
\mathcal{W}^{\tau}_{\R}:=\mathcal{W}\cap\mathring{\mathcal{W}}^{\tau}_{\R}.
\end{split}
\end{equation*}
These spaces are (real) closed subspaces of $\mathring{\mathcal W}$ respectively $\mathcal W$, hence Banach spaces. Elements of 
$\mathring{\mathcal{W}}^{\tau}_{\R}$ and of $\mathcal W^\tau_\R$ are real-valued along the real axis.
Since \(\mathbb A_r\) contains \(\S^1\), all these functions restrict to
real-analytic loops on $\S^1.$ Thus the Laurent
expressions in \(\lambda\) used in the potentials below, with coefficients
in \(\mathcal W^\tau\), define loop-algebra-valued meromorphic
potentials as in Section~\ref{sec:loop-groups}.

For a smooth Banach space $\mathcal E$-valued map $f\colon [0,\epsilon)\to\mathcal E$ with $f(0)=f'(0)=0$, define
\begin{equation}\label{wh-function}
\widehat f(t):=
\begin{cases}
t^{-2}f(t),& t>0,\\[4pt]
\tfrac12 f''(0),& t=0.
\end{cases}
\end{equation}

\subsection{The potential}
Let $t\in[0,\tfrac13)\subset\mathbb R$. We consider Fuchsian potentials of the form
\begin{equation}\label{Lawson-type-potential}
\eta=\eta_{t,a,b,c}=R_0\frac{dz}{z}+R_1\frac{dz}{z-1},
\end{equation}
where
\begin{equation}\label{eqR1}
\begin{split}
R_0&=
\left(
\begin{array}{ccc}
 0 & -\tfrac{t \left(a+\lambda ^3 b\right)}{\lambda ^3} & 0 \\
 0 & -\tfrac{1}{3} & 0 \\
 t \left(i \lambda ^3 b-i a\right) &i c t & \tfrac{1}{3} \\
\end{array}
\right),\\
R_1&=
\left(
\begin{array}{ccc}
 0 & \tfrac{t \left(a+\lambda ^3 b\right)}{\lambda ^3} & \tfrac{t \left(i \lambda ^3 b-i a\right)}{\lambda ^3} \\
 -t \left(a+\lambda ^3 b\right) & 0 & i c t \\
 -t \left(i \lambda ^3 b-i a\right) & -i c t & 0 \\
\end{array}
\right),\quad R_{\infty}=-R_0-R_1
\end{split}
\end{equation}
are the residues at $0$, $1$, and $\infty$, respectively. 
 The connection $d+\eta$ 
 is not yet in DPW form, since its coefficients may have higher order poles in $\lambda$. The next lemma shows that, under mild conditions, $d+\eta$ can be gauged into a $\hat{\tau}$-twisted DPW potential.

\begin{Lem}\label{harmonic-family-form}
Let $t\in(0,\tfrac13)$ and $r>1$. Suppose $a,b,c\in\mathcal W^\tau$ satisfy the following:
\begin{enumerate}[(i)]
\item $4ab-c^2=-1$;
\item $a(\lambda)\neq 0$ for all $\lambda\in \D_r$. 
\end{enumerate}
Then there exists a ${\bf\SL}(\D^\times)$-gauge $g$ such that the gauged potential $(d+\eta). g$ is a $\hat{\tau}$-twisted DPW potential and has the local monodromies \eqref{local_conjugacy_classes}.

The poles $z=0$ and $z=\infty$ of $(d+\eta). g$ are $3$-removable, and if $t=\tfrac1k$ for some $k\in\N^{>3}$, then pole $z=1$ is $k$-removable. Moreover, the DPW potential $(d+\eta). g$ is immersive.
\end{Lem}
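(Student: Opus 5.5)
The gauge $g$ will be a constant-in-$z$ diagonal loop, chosen to absorb the $\lambda^{-3}$ factors in the residues. Then twistedness, the local monodromies, removability and immersiveness are checked directly from the residues.

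\emph{Choice of gauge and DPW form.} I would try $g=\diag(\lambda^{\alpha},\lambda^{\beta},\lambda^{\gamma})$ with $\alpha+\beta+\gamma=0$, possibly composed with a constant matrix, chosen so that conjugating the residues rescales the entry $(i,j)$ by $\lambda^{\beta_j-\beta_i}$. Comparing with the eigenspaces \eqref{eq:Tau-es} of $\hat\tau$, the entries $(1,2)$, $(2,3)$, $(3,1)$ should carry $\lambda^{-1}$ (space $\hat{\mathfrak g}_{-1}$), and the entries $(1,3)$, $(2,1)$, $(3,2)$ should carry $\lambda$ (space $\hat{\mathfrak g}_1$). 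Terms with $\lambda^{\pm3}$ times a coefficient in $\mathcal W^\tau$ land in $\lambda^{\pm1}\hat{\mathfrak g}_{\pm1}$ modulo powers $\lambda^{6m}$.

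The diagonal terms $\mp\tfrac13$ in $R_0$ produce, after gauging, the $\hat{\mathfrak g}_3$ or $\hat{\mathfrak g}_0$ parts. I would then verify entry by entry that the entries of $\Ad g^{-1}(R_0)$ and $\Ad g^{-1}(R_1)$ lie in the correct eigenspaces. For this I use the relations $(1,3)\leftrightarrow(2,1)$ with factor $i$, etc., visible in \eqref{eq:Tau-es}, and the symmetric placement of $a+\lambda^3b$ and $i(\lambda^3b-a)$. This also shows that the lowest power of $\lambda$ is $\lambda^{-1}$, with residue a $\hat{\mathfrak g}_{-1}$-valued holomorphic $1$-form, which is exactly the DPW form.

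The monodromy conjugacy classes are unchanged by a $z$-independent gauge. Since the potential is Fuchsian and the eigenvalues of each residue do not differ by nonzero integers, the local monodromies are $\exp(2\pi i R_j)$. The eigenvalues of $R_0$ are $0,\pm\tfrac13$. For $R_1$, the characteristic polynomial is $\mu^3-\mu\,t^2(\dots)$, and using (i), $4ab-c^2=-1$, the middle coefficient should reduce to $t^2$, giving eigenvalues $0,\pm t$. I expect the same for $R_\infty$ up to a check, with eigenvalues $0,\pm\tfrac13$. This matches \eqref{local_conjugacy_classes}.

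\emph{Removability and immersiveness.} Near $z=0$, I would pull back by $z=w^3$; the residue becomes $3R_0$, with integer eigenvalues $0,\pm1$ and diagonalizable. A meromorphic gauge $\diag(w^{m_i})$, after conjugating $R_0$ to diagonal form by a $\lambda$-dependent but holomorphic-at-$\lambda=0$ matrix, removes the pole. Hypothesis (ii), $a\neq0$ on $\D_r$, guarantees that this diagonalizing matrix lies in ${\bf\SL}^{\hat\tau}(\D)$, since it controls the conjugator at $\lambda=0$. The point $\infty$ is handled in the same way. At $z=1$ with $t=1/k$, the pullback $z-1=w^k$ gives a residue $kR_1$ with eigenvalues $0,\pm1$, and the same argument applies.

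Immersiveness requires $(\Res_{\lambda=0}\eta)^2\neq0$ on $\mathring N$ and at the preimages after the removing gauges. The $\lambda^{-1}$-coefficient is built from $a$, $t$ and $1/z$, $1/(z-1)$, so its square is a nonzero multiple of $a(0)^2$ times rational functions. After the removing gauge $w^{m}$, the leading term at $w=0$ stays nonzero because $a(0)\neq0$.

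\emph{Main obstacle.} The hardest step is the removability: producing a gauge that is simultaneously in ${\bf\SL}^{\hat\tau}(\D)$ and $\hat\tau$-twisted, and that does not introduce negative powers of $\lambda$. I would first diagonalize the $\lambda=0$ leading part using $a(0)\neq0$, then check twistedness via uniqueness.
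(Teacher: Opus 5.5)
The first step fails, and it is the core of the lemma: no $z$-independent gauge can bring $d+\eta$ into $\hat\tau$-twisted DPW form.

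Start with your diagonal ansatz $g=\diag(\lambda^{\beta_1},\lambda^{\beta_2},\lambda^{\beta_3})$ and put $m=\beta_2-\beta_1$. The $(1,2)$-entry $ta\lambda^{-3}+tb$ of $R_1$ becomes $ta\lambda^{m-3}+tb\lambda^{m}$, and the $(2,1)$-entry $-ta-tb\lambda^{3}$ becomes $-ta\lambda^{-m}-tb\lambda^{3-m}$. Since $a(0)\neq0$, you would need $m\geq2$ and $m\leq1$ at once.

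Composing with a constant matrix does not help. Any $z$-independent gauge keeps the system Fuchsian, so the Higgs field would be $P_0\frac{dz}{z}+P_1\frac{dz}{z-1}$, where $P_j\in\hat{\mathfrak g}_{-1}$ is the $\lambda^{-1}$-coefficient of $\Ad g^{-1}R_j$. The eigenvalues of $R_0,R_1,R_\infty$ do not depend on $\lambda$, so the limits $P_0$, $P_1$, $-(P_0+P_1)$ of $\lambda\Ad g^{-1}R_j$ are nilpotent. With $p_j,q_j$ the entries of $P_j$ as in \eqref{eq:Tau-es}, this says $p_0^2q_0=p_1^2q_1=(p_0+p_1)^2(q_0+q_1)=0$. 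That forces $p_0=p_1=0$ or $q_0=q_1=0$, so the cubic differential $\det\Phi$ vanishes identically and $\tr(P_0^2P_1)=i(2p_0p_1q_0+p_0^2q_1)=0$. But $\tr(P_0^2P_1)$ is the $\lambda^{-3}$-coefficient of the conjugation invariant $\tr(R_0^2R_1)$. That coefficient equals $-t^2a(0)^2(\tfrac23+c(0)t)$, which is nonzero whenever $c(0)t\neq-\tfrac23$, in particular for the data of Section~\ref{sec:solvingIFT}.

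The paper's gauge \eqref{gauge-Lawson-DPW} is genuinely $z$-dependent. Its shear entry $iz$ cancels the $\lambda^{-2}$-terms; for instance, in the $(2,1)$-slot the $a$-parts of $\eta_{21}-iz\,\eta_{31}$ cancel. It also creates the non-Fuchsian entry $\frac{i(1-3ct)}{3\lambda}\,dz$ in the $(2,3)$-slot. That entry is what makes $\det\Phi$ a multiple of $dz^3/(z^2(z-1)^2)$, whose double poles at $0,1,\infty$ become holomorphic on the covers $z=w^3$ and $z=1+w^k$.

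The removability step has a second gap. The residue at $z=0$ has a pole at $\lambda=0$; in DPW form its $\lambda^{-1}$-coefficient is nonzero and nilpotent. So no $S(\lambda)$ holomorphic and invertible at $\lambda=0$ can conjugate it to $\diag(0,-\tfrac13,\tfrac13)$, since $S\,\diag(0,-\tfrac13,\tfrac13)\,S^{-1}$ would then be holomorphic there. Also, at a resonant point with exponents $0,\pm1$, diagonalizing the residue and applying $\diag(w^{m_i})$ multiplies off-diagonal higher-order terms by negative powers of $w$ (down to $w^{-2}$). So the pole is not removed unless those terms are controlled.

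The paper instead writes explicit gauges on the covers:
\begin{itemize}
\item at $0$, $g_0=\diag(1,z,z^{-1})$, which is $\lambda$-independent and works only because of the shape of $\omega$;
\item at $\infty$, the gauge $g_\infty$;
\item at $z=1$, the gauge $g_k$ of \eqref{eq:gkdesinG}. Its entries contain $1/a(\lambda)$, which is where (ii) is needed on all of $\D_r$, not just at $\lambda=0$. Its regularity at $w=0$ is checked using (i).
\end{itemize}
Immersiveness is then read off from the explicit Higgs fields: $\Phi_0^2\neq0$ because $a(0)\neq0$, and $\Phi_1$ is $3$-step nilpotent at $w=0$ because $k\geq4$.

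Your argument for the local monodromies (non-resonant residues, and eigenvalues $0,\pm t$ of $R_1$ via (i)) is correct and agrees with the paper.
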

\begin{proof}
For a non-resonant Fuchsian system, the conjugacy classes of the local monodromies are determined by the eigenvalues of the residues. The eigenvalues of $R_0$ and $R_\infty$ are $\tfrac13$, $0$, and $-\tfrac13$. By condition (i), the eigenvalues of $R_1$ are $t$, $0$, $-t$. Thus the local monodromies satisfy \eqref{local_conjugacy_classes}.

Consider the non-positive gauge
\begin{equation}\label{gauge-Lawson-DPW}
g:=
\begin{psmallmatrix}
 \frac{1}{\lambda } & 0 & 0 \\
 0 & \lambda  & iz \\
 0 & 0 & 1 \\
\end{psmallmatrix}.
\end{equation}
After gauging, one obtains
\begin{equation}\label{DPWpotentialomega}
d+\omega:=(d+\eta). g=d+\left(
\begin{array}{ccc}
 0 & \frac{t \left(a+\lambda ^3 b\right)}{\lambda  (z-1) z} & i\tfrac{2  \lambda  b t}{z-1} \\
 -\frac{2 \lambda  b t}{z-1} & \frac{1-z (1+3 c t)}{3 (z-1) z} & i\frac{-3 c t+1}{3 \lambda } \\
i \frac{ t \left(a-\lambda ^3 b\right)}{\lambda  (z-1) z} & i\frac{c \lambda  t}{z-z^2} & -\frac{1-z (1+3 c t)}{3 (z-1) z} \\
\end{array}
\right) dz.
\end{equation}
Because $a,b,c\in\mathcal W^\tau$ are $\Z_6$-invariant and due to \eqref{eq:Tau-es}, the connection $d+\omega$ is a $\hat{\tau}$-twisted DPW potential.

It remains to desingularize $d+\omega$ on suitable coverings by $\hat{\tau}$-twisted positive gauges. For $z=0$ and $z=\infty$, we use the $3$-fold covering
\[
\pi_3\colon z\mapsto z^3
\]
totally branched over $0$ and $\infty$. Pulling back \eqref{DPWpotentialomega} gives
\begin{equation}\label{pi3om}
\pi_3^*(d+\omega)=d+\left(
\begin{array}{ccc}
 0 & \frac{3 t \left(a+\lambda ^3 b\right)}{\lambda  z \left(z^3-1\right)} &i \frac{6  \lambda  b t z^2}{z^3-1} \\
 -\frac{6 \lambda  b t z^2}{z^3-1} & \frac{1+z^3 (-1-3  c t)}{z \left(z^3-1\right)} & i\frac{z^2 (-3  c t+1)}{\lambda } \\
 i\frac{3 t \left(a-\lambda ^3 b\right)}{\lambda  z \left(z^3-1\right)} & i\frac{3 c \lambda  t}{z-z^4} & - \frac{1+z^3 (-1-3  c t)}{z \left(z^3-1\right)} \\
\end{array}
\right)\,dz.
\end{equation}

For $z=0$, consider the $\hat{\tau}$-twisted positive gauge
\begin{equation}\label{eq:desgo}
g_0:=
\mathrm{diag}
 (1, 
 z , \tfrac{1}{z}) .
\end{equation}
Then
\begin{equation}\label{eq:desgoga}
(\pi_3^*(d+\omega)). g_0=d+\left(
\begin{array}{ccc}
 0 & \frac{3 t \left(a+\lambda ^3 b\right)}{\lambda  \left(z^3-1\right)} & i\frac{6 \lambda  b t z}{z^3-1} \\
 -\frac{6 \lambda  b t z}{z^3-1} & \frac{-3  c t z^2}{z^3-1} & i\frac{-3 c t+1}{\lambda } \\
i \frac{3  t \left(a-\lambda ^3 b\right)}{\lambda  \left(z^3-1\right)} & -i\frac{3 c \lambda  t z}{z^3-1} & \frac{3  c t z^2}{z^3-1} \\
\end{array}
\right)dz
\end{equation}
is nonsingular at $z=0$. The $\lambda^{-1}$-part $\Phi_0$ of \eqref{eq:desgoga} satisfies $\Phi_0^2\neq0$ since $a(0)\neq0$
by condition (ii). Hence $d+\omega$ has a $3$-removable singularity at $z=0$, and the resulting DPW potential is immersive there.

For $z=\infty$, consider the $\hat{\tau}$-twisted positive gauge
\begin{equation}\label{eq:ginfdes}
g_\infty:=
\begin{psmallmatrix}
 1 & 0 & 0 \\
 0 & -i z^2 & 0 \\
 0 & \frac{\lambda }{z} & \frac{i}{z^2} \\
\end{psmallmatrix}.
\end{equation}
We obtain
\[
(\pi_3^*(d+\omega)). g_\infty=d+\left(
\begin{array}{ccc}
 0 & \frac{3 i t z \left(\lambda ^3 b-a\right)}{\lambda  \left(z^3-1\right)} & -\frac{6 \lambda  b t}{z^3-1} \\
 -\frac{6 i \lambda  b t}{z^3-1} & -\frac{3 i c t}{z-z^4} & \frac{3 c t-i}{\lambda  z^2} \\
 \frac{3 t z \left(a+\lambda ^3 b\right)}{\lambda  \left(z^3-1\right)} & -\frac{3 c \lambda  t}{z^3-1} & \frac{3 i c t}{z-z^4} \\
\end{array}
\right)dz,
\]
which is smooth at $z=\infty$, i.e. $z=\infty$ is a $3$-removable singularity.
Moreover, we obtain as in the case of $z=0$ that the $\lambda^{-1}$-part $\Phi_\infty$ satisfies  $\Phi_\infty^2\neq0,$ i.e.
$z=\infty$ is also immersive.

For $z=1$ and $t=\tfrac1k$ with $k\in\N^{>3}$, consider the $k$-fold covering
\[
\pi_k(w)=w^k+1.
\]
Then
\[
\pi_k^*(d+\omega)=d+\left(
\begin{array}{ccc}
 0 & \frac{a+\lambda ^3 b}{\lambda  w^{k+1}+\lambda  w} & i\frac{2  \lambda  b}{w} \\
 -\frac{2 \lambda  b}{w} & \frac{-\frac{k w^k}{w^k+1}-3 c}{3 w} &i \frac{(-3 c+ k) w^{k-1}}{3 \lambda } \\
 i\frac{ \left(a-\lambda ^3 b\right)}{\lambda  w \left(w^k+1\right)} & -i\frac{c \lambda }{w^{k+1}+w} & \frac{\frac{k w^k}{w^k+1}+3  c}{3 w} \\
\end{array}
\right)dw.
\]
Define
\begin{equation}\label{eq:gkdesinG}
g_k:=\left(
\begin{array}{ccc}
 1 & \frac{(1- c) \lambda ^2 (w-1)}{2 a w} & -i\frac{(c-1) \lambda  (w-1)}{a w} \\
 \frac{( c-1) \lambda  (w-1)}{a} & \frac{4 a^2 w^2-(c-1)^2 \lambda ^3 (w-1)^2}{4 a^2 w} & -i\frac{ ( c-1)^2 \lambda ^2 (w-1)^2}{2 a^2 w} \\
 i\frac{(c-1) \lambda ^2 (w-1)}{2 a} & \frac{i \lambda  (w-1) \left(4 a^2 (w+1)-( c-1)^2 \lambda ^3 (w-1)\right)}{8 a^2 w} & \frac{4 a^2+(c-1)^2 \lambda ^3 (w-1)^2}{4 a^2 w} \\
\end{array}
\right).
\end{equation}
Then $\det(g_k)=1$, and $g_k$ is 
a ${\bf\SL}^{\hat\tau}(\D)$-gauge since
$a(\lambda)\neq0$ for all $\lambda\in\D_r$, and
because $a,b,c\in\mathcal W^\tau$. A direct computation using condition (i) shows that
\[
\pi_k^*(d+\omega). g_k
\]
is holomorphic at $w=0$. Moreover, the Higgs field is
\[
\Phi_1=\left(
\begin{array}{ccc}
 0 & \frac{a}{w^k+1} & 0 \\
 0 & 0 & \frac{i}{3} (-3  c+ k) w^{k-3} \\
 \frac{i a}{w^k+1} & 0 & 0 \\
\end{array}
\right).
\]
At $w=0$ the Higgs field $\Phi_1$ is $3$-step nilpotent, i.e $\Phi_1^2\neq0$ but $\Phi_1^3=0$, provided $k\geq 4$. Therefore, for $t=\tfrac1k$ with $k\in\N^{>3}$, the point $z=1$ is an immersive, $k$-removable singularity of $d+\omega$.
\end{proof}

\subsection{The monodromy functions}
Let $\eta=\eta_{t,a,b,c}$ with $a,b,c\in\mathcal W^\tau_\R$ as above.
Let $\Psi=\Psi_{t,a,b,c}$ be a solution of
\begin{equation}\label{eq:odes}
d\Psi_{t,a,b,c}+\eta_{t,a,b,c}\Psi_{t,a,b,c}=0,
\qquad
\Psi_{t,a,b,c}\bigl(\tfrac12\bigr)=\Id.
\end{equation}
Then $\Psi_{t,a,b,c}$ depends real-analytically on $(t,a,b,c).$
We denote by $M_0=M_0(t,a,b,c)$, $M_1=M_1(t,a,b,c)$, and $M_\infty=M_\infty(t,a,b,c)$ the monodromies around $0$, $1$, and $\infty$, respectively. We also write
\[X=X_{t,a,b,c}\quad\text{and}\quad Y=Y_{t,a,b,c}\]
for the corresponding global trace coordinates,
but we usually omit the dependence on $(t,a,b,c)$, or on $(a,b,c)$, to keep the notation light.

\begin{Lem}\label{Lem:twist-monodromy}
Assume that $a,b,c\in\mathcal{W}^{\tau}$. Then, for all $t\in[0,\tfrac13)$ and for all $\lambda\in\mathbb D_r^\times$,
 \[
 X(\zeta \lambda)=X(\lambda), \qquad  Y(\zeta \lambda)=Y(\lambda), \qquad  X(-\lambda)=Y(\lambda).
 \]
\end{Lem}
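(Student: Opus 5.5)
The plan is to derive both identities from symmetries of the potential itself, passing through the $\hat\tau$-twisted gauged potential $\omega$ of Lemma~\ref{harmonic-family-form}. The trace coordinates $X,Y$ are conjugation invariant. Moreover, the gauge $g$ in \eqref{gauge-Lawson-DPW} is single-valued in $z$ and depends holomorphically on $\lambda\in\mathbb D_r^\times$. So the monodromies of $d+\eta$ and of $d+\omega=(d+\eta).g$ are conjugate by $g(\tfrac12)$, and we may compute $X,Y$ from whichever form is more convenient. The normalization $\Psi(\tfrac12)=\Id$ only changes the representation by conjugation, so it plays no role.

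For the first identity, I would work directly with $\eta$. The residues $R_0,R_1$ in \eqref{eqR1} depend on $\lambda$ only through $\lambda^3$ and through $a,b,c$. Since $a,b,c\in\mathcal W^\tau$, we have $a(\xi\lambda)=a(\lambda)$, hence also $a(\zeta\lambda)=a(\xi^2\lambda)=a(\lambda)$, and likewise for $b$ and $c$. Also $(\zeta\lambda)^3=\lambda^3$. Therefore $\eta_{\zeta\lambda}=\eta_\lambda$ identically, so $M_j(\zeta\lambda)=M_j(\lambda)$ for $j=0,1,\infty$, and both $X$ and $Y$ are $\zeta$-invariant. This holds for all $t\in[0,\tfrac13)$, including $t=0$.

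For the second identity, I would use the twisting. As stated in the proof of Lemma~\ref{harmonic-family-form}, the invariance of $a,b,c$ together with \eqref{eq:Tau-es} gives $\omega_{\xi\lambda}=\hat\tau(\omega_\lambda)$ with $\hat\tau(A)=-\hat{\mathfrak a}^{-1}A^t\hat{\mathfrak a}$. That argument uses only the entrywise form of \eqref{DPWpotentialomega} and not conditions (i) and (ii), so it remains valid here. Iterating three times gives $\omega_{-\lambda}=\hat\tau^3(\omega_\lambda)=-B^{-1}\omega_\lambda^tB$ with $B=\hat{\mathfrak a}^{-1}(\hat{\mathfrak a}^{t})\hat{\mathfrak a}^{-1}$ (or the analogous constant matrix). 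The key point is that an odd power of $\hat\tau$ is again "minus transpose up to constant conjugation". If $\Phi_\lambda$ is a fundamental solution of $d\Phi+\omega_\lambda\Phi=0$, then $B^{-1}(\Phi_\lambda^t)^{-1}$ solves the equation for $\omega_{-\lambda}$. Hence the monodromies satisfy $M_j(-\lambda)=B^{-1}\bigl(M_j(\lambda)^{t}\bigr)^{-1}B$, up to a common conjugation.

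It then remains to compute the traces:
\[
X(-\lambda)=\tr\bigl((M_1^{t})^{-1}M_0^{t}\bigr)=\tr\bigl((M_0M_1^{-1})^t\bigr)=\tr(M_1^{-1}M_0)=Y(\lambda).
\]
As a cross-check one can also verify the symmetry directly on $\eta$: under $\lambda\mapsto-\lambda$ the combinations $a+\lambda^3b$ and $i\lambda^3b-ia$ are interchanged up to factors of $\pm i$. The main obstacle I expect is bookkeeping, namely making sure the twisted symmetry of $\omega$ is used with the right composition order, so that the exponent of $\hat\tau$ is odd and the inverse-transpose, rather than a plain conjugation, appears. For the $\zeta$-identity the even power $\hat\tau^2$ is automatically consistent with the plain conjugation found above.
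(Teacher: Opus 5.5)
Your proof is correct. For the second identity it takes a different route from the paper.

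\textbf{First identity.} Your $\zeta$-invariance argument is the same as the paper's: $\eta_{\zeta\lambda}=\eta_\lambda$, because $R_0$ and $R_1$ depend on $\lambda$ only through $\lambda^3$ and $a,b,c$.

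\textbf{Second identity: the paper's route.} The paper stays with the Fuchsian potential $\eta$. It writes down the constant-in-$z$ gauge $h=\lambda\begin{psmallmatrix}-\lambda^{-3}&0&0\\0&0&i\\0&-i&0\end{psmallmatrix}$ and checks directly that $(d+\eta_\lambda).h=d-\eta_{-\lambda}^T$. Passing to the contragredient representation then gives the trace identity, exactly as in your last step.

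\textbf{Second identity: your route.} You obtain $\omega_{-\lambda}=\hat\tau^3(\omega_\lambda)$ from the order-six twist of the gauged potential. You use that $\hat\tau^3$ is again minus transpose up to a constant conjugation, and then transport the result back to $\eta$ through the single-valued gauge $g$.

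\textbf{How the two compare.} Composing your constant conjugator with $g$ at $\lambda$ and at $-\lambda$ gives a gauge between $d+\eta_{-\lambda}$ and $d-\eta_\lambda^t$. This is what $h$ provides in one step, so both proofs exhibit the same symmetry in different gauges.
\begin{itemize}
\item Your version explains where the symmetry comes from: it is the $\Z_2$-part of the $\Z_6$-twist, i.e.\ property (v) of Lemma~\ref{lem:loop}. It also avoids having to guess $h$.
\item On the other hand, it relies on the entrywise check that $\omega$ is $\hat\tau$-twisted for arbitrary $a,b,c\in\mathcal W^\tau$ and all $t\in[0,\tfrac13)$. You correctly note that this check does not use conditions (i) and (ii).
\item The paper's version is a single matrix identity that can be verified by hand.
\end{itemize}

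\textbf{A bookkeeping slip.} With $\hat\tau(A)=-\hat{\mathfrak a}^{-1}A^t\hat{\mathfrak a}$, one finds $\hat\tau^3(A)=-QA^tQ^{-1}$ with $Q=\hat{\mathfrak a}^{-1}\hat{\mathfrak a}^t\hat{\mathfrak a}^{-1}$. So in your convention $-B^{-1}A^tB$ you need $B=Q^{-1}$, not $Q$. This is harmless, since, as you say, only the existence of some constant conjugator matters.
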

\begin{proof}
The first two identities follow directly from $f(\zeta\lambda)=f(\lambda)$ for all $f\in\mathcal W^\tau$ and all $\lambda\in\D_r.$
 For the last one, we use $f(-\lambda)=f(\lambda)$ for all $f\in\mathcal W^\tau$ and all $\lambda\in\D_r,$ and consider
\[
h=\lambda
\begin{psmallmatrix}
 -\frac{1}{\lambda ^3} & 0 & 0 \\
 0 & 0 & i \\
 0 & -i & 0 \\
\end{psmallmatrix}.
\]
Then, using $\eta_\lambda=\eta_{t,a(\lambda),b(\lambda),c(\lambda)}$ for $\lambda\in\mathbb D_r^\times$,
\[
(d+\eta_\lambda). h=d-\eta_{-\lambda}^T.
\]
Since gauge-equivalent connections have conjugate monodromy, and the monodromy of $d-\eta_{-\lambda}^T$ along $\delta$ is the transpose inverse of the monodromy of $d+\eta_{-\lambda}$ along $\delta^{-1}$, the trace coordinates satisfy $X(-\lambda)=Y(\lambda).$

\end{proof}

Motivated by \eqref{XYxy}, we define for $t\in[0,\tfrac13)$,
\[
A=A_{t,a,b,c}:=\frac12(X+Y),
\qquad
B=B_{t,a,b,c}:=\frac i2(-X+Y).
\]

\begin{Lem}\label{lem:ABprop}
Assume that $a,b,c\in\mathcal W^\tau_\R$ and $t\in[0,\tfrac13).$
Then, $A\in \mathring{\mathcal W}^\tau_\R$ and $\lambda^3 B\in \mathring{\mathcal W}^\tau_\R.$
\end{Lem}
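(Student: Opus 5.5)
To prove Lemma~\ref{lem:ABprop}, I would establish three properties of $A$ and $\lambda^3B$ separately: holomorphy on the annulus $\mathbb A_r$, invariance under $\lambda\mapsto\xi\lambda$, and the reality condition $\overline{f(\bar\lambda)}=f(\lambda)$.

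\emph{Holomorphy.} The potential $\eta_{t,a,b,c}$ depends holomorphically on $\lambda\in\D_r^\times$. The only $\lambda$-singular terms are the entries $t(a+\lambda^3b)/\lambda^3$ and $t(i\lambda^3b-ia)/\lambda^3$. Hence $\Psi$, and with it $M_0$, $M_1$, $X$ and $Y$, are holomorphic in $\lambda$ on $\D_r^\times\supset\mathbb A_r$. For boundedness I would shrink $r$ slightly if needed, or simply note that continuity on a closed sub-annulus suffices. Since the transformations below preserve this holomorphy, $A$ and $\lambda^3B$ lie in $\mathring{\mathcal W}$.

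\emph{Invariance.} Since $\xi=-\zeta^2$, Lemma~\ref{Lem:twist-monodromy} gives
\[
X(\xi\lambda)=X(-\zeta^2\lambda)=Y(\zeta^2\lambda)=Y(\lambda),
\qquad
Y(\xi\lambda)=X(\lambda).
\]
Therefore $A(\xi\lambda)=A(\lambda)$ and $B(\xi\lambda)=-B(\lambda)$. Since $\xi^3=-1$, this yields $(\xi\lambda)^3B(\xi\lambda)=\lambda^3B(\lambda)$, so both $A$ and $\lambda^3B$ are $\xi$-invariant.

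\emph{Reality.} This is the main step. For $a,b,c\in\mathcal W^\tau_\R$, complex conjugation gives $\overline{\eta_{\bar\lambda}}=\eta_{t,\bar a(\lambda),\dots}$, where the coefficients $t$ and $1/3$ are real and $\overline{a(\bar\lambda)}=a(\lambda)$, while every $i$ is replaced by $-i$. I would look for a constant gauge undoing this sign flip, for instance $h=\diag(1,1,-1)$, possibly combined with a power of $\lambda$. Inspecting $R_0$ and $R_1$, conjugation by $\diag(1,1,-1)$ flips the signs of exactly the entries in row $3$ and column $3$, off the diagonal, and these are precisely the entries carrying an $i$. This gives
\[
\overline{\eta_{\bar\lambda}}=h\,\eta_\lambda\, h^{-1}.
\]
Because the base point $z_0=\tfrac12$ and the loops $\gamma_0,\gamma_1$ are invariant under $z\mapsto\bar z$ up to orientation reversal, I must track this carefully. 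Conjugating the ODE maps $\Psi_\lambda(z)$ to $\overline{\Psi_{\bar\lambda}(\bar z)}$, so the loop $\bar\gamma_j$ equals $\gamma_j^{-1}$ (with the standard circle parametrizations). This gives $\overline{M_j(\bar\lambda)}=hM_j(\lambda)^{-1}h^{-1}$. Consequently
\[
\overline{X(\bar\lambda)}=\tr\bigl(M_1^{-1}M_0\bigr)=Y(\lambda),
\]
and likewise $\overline{Y(\bar\lambda)}=X(\lambda)$.

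From this, $\overline{A(\bar\lambda)}=A(\lambda)$. Also $\overline{B(\bar\lambda)}=-\tfrac i2(-Y+X)=B(\lambda)$, and since $\lambda^3$ is real on the real axis, $\lambda^3B$ satisfies the reality condition as well.

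The hardest part is getting the loop orientations and the placement of the inverse exactly right. If $\bar\gamma_j$ turned out to be $\gamma_j$ rather than $\gamma_j^{-1}$, one would get $\overline{X(\bar\lambda)}=X(\lambda)$ instead. In that case I would combine the result with the transpose gauge from the proof of Lemma~\ref{Lem:twist-monodromy} to obtain the needed swap, and check which combination is consistent at $t=0$. There $M_1=\Id$ and $X=Y=0$, so the next order in $t$, computed via Lemma~\ref{derivativeofmonodromy-singularID}, serves as the test.
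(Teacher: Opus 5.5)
Your proposal is correct and follows essentially the paper's route: analyticity of the monodromy in $\lambda$, then $\xi$-invariance from Lemma~\ref{Lem:twist-monodromy}, then reality from the antiholomorphic symmetry $z\mapsto\bar z$. Your computation $B(\xi\lambda)=-B(\lambda)$ with $\xi^3=-1$ spells out what the paper leaves implicit. Your $h=\mathrm{diag}(1,1,-1)$ is equivalent to the paper's gauge $\tilde g=\mathrm{diag}(i,i,-1)$, since $\bar{\tilde g}\,\tilde g^{-1}=-h$. Your check $\bar\gamma_j=\gamma_j^{-1}$ is right, so the hedge in your last paragraph is unnecessary. For boundedness on $\mathbb A_r$ you should not shrink $r$, since that changes the space. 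Instead note that the coefficients of $\eta$ are bounded there, because $|\lambda|^{-3}<r^3$. Gr\"onwall along the fixed loops then bounds $M_j^{\pm1}$ and hence $X$ and $Y$.
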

\begin{proof}
The Riemann-Hilbert map from the space of Fuchsian systems to its monodromy is a complex analytic map (to its image).
Taking traces, and composition with the map $(a,b,c)\mapsto \eta_{t,a,b,c}$ thus shows that
$A,B\in\mathring{\mathcal W}.$ In particular, $A$ and $B$ are bounded on the annulus $\mathbb A_r.$

It then follows directly from Lemma \ref{Lem:twist-monodromy} and their definition that $A,\lambda^3B\in\mathring{\mathcal W}^\tau.$ To show the 
reality properties,
consider
\[
\tilde g=\mathrm{diag}(i,i,-1)
\qquad\text{and}\qquad
d+\tilde\eta:=(d+\eta).\tilde g.
\]
Let $\kappa(z)=\bar z$. A direct computation using $a,b,c\in\mathcal W^\tau_\R$ gives
\[
\kappa^*\overline{(d+\tilde\eta)}=d+\tilde\eta.
\]
Since complex conjugation sends the standard loops $\gamma_0,\gamma_1$ to their inverses, the corresponding monodromies satisfy 
$\overline{M_j}\sim M_j^{-1}.$ It follows from the definition of $X$ and $Y$ that $\overline{X}=Y,$ hence $A$ and $B$ are real-valued on the real axis.

Since the paths defining $X$ and $Y$ are exchanged by $\kappa$, it follows from the definitions that $A,B\in\mathring{\mathcal W}_\R$, i.e. $A, \lambda^3 B\in\mathring{\mathcal W}^\tau_\R$ as claimed.
\end{proof}
\begin{Rem}
The same antiholomorphic symmetry  $\kappa$ used in the proof of Lemma~\ref{lem:ABprop} also implies
$
\overline{Z(\lambda)}=Z(\bar\lambda),
$
hence $Z\in \mathring{\mathcal W}_{\R}$ and, in particular, $Z$ is real along the real axis.
This reality condition should not be confused with unitarity: along the real axis it comes from the involution $z\mapsto \bar z$, whereas the unitary condition relevant on $\S^1$ is $\bar X=Y$ and $\bar Z=\tilde Z$.
\end{Rem}

Set $\lambda_0=\exp(\tfrac{2\pi i}{12})$, and  define
\begin{equation}\label{eq:int-extr}
\begin{split}
\mathcal  F=\mathcal  F_{t,a,b,c}&:=( A-A^*)^+\\
\mathcal  G=\mathcal  G_{t,a,b,c}&:=( B- B^*)^+\\
\mathcal H=\mathcal  H_{t,a,b,c}&:=4ab-c^2+1\\
\mathcal S=\mathcal  S_{t,a,b,c}&:=A(\lambda_0).\\
\end{split}
\end{equation}

\begin{Pro}\label{pro:int-extcond}
Let $t\in(0,\tfrac13)$ be fixed. 
Let $a,b,c\in\mathcal W^\tau_\R$.
Assume that $a(\lambda)\neq 0$ and $c(\lambda)\neq 0$ for all $|\lambda|\leq 1$.
Assume that
\begin{equation}\label{eq:mondrom}\mathcal F=\mathcal G=\mathcal H=0 \quad \text{and} \quad \mathcal S=0.\end{equation}
Then,  the monodromy of
$d+\eta_{t,a,b,c}$ is unitarizable for every $\lambda\in \S^1,$ and the 
monodromy is irreducible for every $\lambda$ with $0<|\lambda|\leq 1$.
Moreover, $B(\lambda_0)=0,$ and 
if $t$ is rational, the monodromy at $\lambda_0$ is $\rho$ or $(\rho^{-1})^T$ defined in \eqref{finite-group-rep1},
and generates a  finite subgroup isomorphic to $\Gamma$ or $\tilde \Gamma$ 
as in Example \ref{exa:finitemon}.
\end{Pro}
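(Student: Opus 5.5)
The plan is to turn the vanishing of $\mathcal F,\mathcal G$ into the reality conditions $\bar X=Y$ on $\S^1$, and then to use the character variety analysis of Section~\ref{sec:chava} together with a continuity argument. The vanishing of $\mathcal S$ will pin down the Sym point.

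\emph{Step 1: reality on $\S^1$.} By Lemma~\ref{lem:ABprop}, $A\in\mathring{\mathcal W}^\tau_\R$, and hence $A-A^*$ has Fourier expansion $\sum_k (A^{(k)}-\bar A^{(-k)})\lambda^k$. This expression is anti-self-adjoint, since $(A-A^*)^*=-(A-A^*)$, so its negative-index coefficients are determined by its positive-index ones. Its zeroth coefficient $A^{(0)}-\bar A^{(0)}$ vanishes because $A^{(0)}$ is real, which follows from the reality of $A$ on the real axis. Therefore $\mathcal F=(A-A^*)^+=0$ forces $A=A^*$, so $A$ is real on $\S^1$.

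The same argument applies to $B$. The factor $\lambda^3$ only shifts indices, but $B$ itself lies in $\mathring{\mathcal W}$, and its zeroth coefficient is real by the reality of $B$ on the real axis. Hence $\mathcal G=0$ gives that $B$ is real on $\S^1$. Thus $X=A+iB$ and $Y=A-iB$ satisfy $\bar X=Y$ on $\S^1$. Together with $\mathcal H=0$, Lemma~\ref{harmonic-family-form} guarantees the local conjugacy classes \eqref{local_conjugacy_classes}, so $(X,Y,Z)(\lambda)\in F$ for every $\lambda$.

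\emph{Step 2: landing in $\mathcal C_u$.} I will show that $\lambda\mapsto(X,Y,Z)(\lambda)$, restricted to $\S^1$, lies in $F^\R$. Given $\bar X=Y$, the polynomial $P$ has real coefficients in $Z$, so its two roots are either both real or complex conjugate. By Lemma~\ref{lem:tildeZ} these roots are $Z$ and $\tilde Z$. One must rule out the case of two distinct real roots, for which $\bar Z=\tilde Z$ fails. This is the \emph{main obstacle}.

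My first approach is continuity from the Sym point. By Step 1, $A(\lambda_0)=\mathcal S=0$. Since $B$ is real on $\S^1$, $\lambda^3B\in\mathring{\mathcal W}^\tau_\R$ (Lemma~\ref{lem:ABprop}), and $\lambda_0^3=i$, the function $\lambda^3 B$ is invariant under $\lambda\mapsto\xi\lambda$ and real-symmetric. Evaluating at $\lambda_0$ and using $\overline{\lambda_0^{-1}}$ symmetry, $i B(\lambda_0)$ must be both real and purely imaginary. Hence $B(\lambda_0)=0$, and therefore $X=Y=0$ at $\lambda_0$.

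At $X=Y=0$ the discriminant of $P$ in $Z$ is negative for $t\in(0,\tfrac13)$, and the roots are $Z$ and $\tilde Z$ from Example~\ref{exa:finitemon}. These are conjugate, so the point lies in $\mathcal C_u$. The real curve $\lambda\mapsto(A,B)(\lambda)$ on $\S^1$ is connected and stays in $F^\R$ for as long as it avoids the real-root region. To cross from $D^r<0$ into $D^r>0$ it would have to leave the closed component $\mathcal C_u$, and $\mathcal C_u$ meets the other components only at the singular, reducible points.

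So I must show that the curve never hits a reducible point. For this I would use irreducibility. A reducible monodromy on $0<|\lambda|\le1$ would give a common invariant line or plane for the Fuchsian system, and the hypotheses $a\neq0$ and $c\neq0$ should exclude it: an invariant subbundle of $d+\eta$ forces specific off-diagonal entries of $R_0,R_1$ to vanish, and these entries are multiples of $a$, $b$, $c$ with $4ab-c^2=-1$. This is the step where the nonvanishing of $a$ and $c$ enters. Given irreducibility, Theorem~\ref{The:unitary-component} and Lemma~\ref{suf_un_con} yield unitarizability for every $\lambda\in\S^1$.

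\emph{Step 3: the Sym point.} At $\lambda_0$ we have $X=Y=0$, and $Z$ is one of the two roots of $P(0,0,Z)$, namely the trace coordinates of $\rho$ or of $(\rho^{-1})^T$ from Example~\ref{exa:finitemon}. Since the representation is irreducible, Theorem~\ref{irreps} shows that the monodromy is conjugate to $\rho$ or to $(\rho^{-1})^T$. For $t=1/k$ these generate finite groups isomorphic to $\Gamma$ or $\tilde\Gamma$ respectively.
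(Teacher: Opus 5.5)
Your Step 1, the evaluation $B(\lambda_0)=0$ (hence $X(\lambda_0)=Y(\lambda_0)=0$, a point of $\mathcal C_u$), and Step 3 all match the paper. The gap is in Step 2, at exactly the point you flag as the main obstacle; your continuity argument does not close it.

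The sentence ``to cross from $D^r<0$ into $D^r>0$ it would have to leave $\mathcal C_u$, and $\mathcal C_u$ meets the other components only at the singular points'' only controls passages from $\mathcal C_u$ into the other components $\mathcal C_{-1}^\pm,\mathcal C_+$ of $F^\R$. It says nothing about the curve $\lambda\mapsto(A,B)(\lambda)$ leaving $F^\R$ altogether. The curve could cross the smooth part of the boundary $\{D^r=0\}$ of $\mathcal C_u$, where $Z=\tilde Z$ is a real double root. It would then enter $D^r>0$, where $Z\neq\tilde Z$ are distinct reals and $\bar Z=\tilde Z$ fails. Those boundary points are irreducible representations, so irreducibility cannot block such an exit. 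Your phrase ``stays in $F^\R$ for as long as it avoids the real-root region'' assumes precisely what must be proved.

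The missing idea is that the discriminant is the square of a holomorphic function of $\lambda$. By Lemma~\ref{lem:tildeZ}, $D(\lambda)=(Z(\lambda)-\tilde Z(\lambda))^2$, where $Z=\tr(M_1M_0M_1^{-1}M_0^{-1})$ and $\tilde Z=\tr(M_0M_1M_0^{-1}M_1^{-1})$ are holomorphic in $\lambda$. On $\S^1$, $D$ is real because $\bar X=Y$ there. Since $D(e^{i\theta})$ is the square of a real-analytic function, its zeros have even order, so it cannot change sign along $\S^1$. As $D(\lambda_0)<0$, you get $D\le0$ on all of $\S^1$, i.e.\ $\bar Z=\tilde Z$, and the curve stays in $F^\R$. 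Only then does irreducibility, which excludes the three singular points, confine the curve to $\mathcal C_u$, so that Theorem~\ref{The:unitary-component} applies.

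Your irreducibility sketch is also incomplete. An invariant line of the monodromy gives, by parallel transport, a holomorphic line subbundle $L\subset\underline{\C}^3$ with a logarithmic connection, and $L$ need not be constant a priori. You need the residue formula, using eigenvalue $0$ at $z=1$ and $\alpha_0+\alpha_\infty=0$ from Section~\ref{sec:redrep}, to get $\deg L=0$. Then $L$ is a constant common eigenline of $R_0,R_1,R_\infty$, which forces $a(\lambda_1)=0$ or $c(\lambda_1)=0$. Separately, an invariant plane must first be reduced to an invariant line. This can be done via $\lambda\mapsto-\lambda$: by the proof of Lemma~\ref{Lem:twist-monodromy} it turns the monodromy into its contragredient, and it preserves the region $0<|\lambda|\le1$.
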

\begin{proof}
We first show that $A$ and $B$ are real along the unit circle $\lambda\in \S^1.$ This is equivalent to $A^*=A$ and $B^*=B.$
Let $A=\sum_kA^{(k)}\lambda^{6k}.$
By reality $A\in \mathring{\mathcal W}_\R$, $\bar A^{(k)}=A^{(k)}$ for all $k.$
In particular, $A^{(0)}\in\R$, and 
therefore $A-A^*$ has no constant part.
Since $(A-A^*)^+=\mathcal F=0$, its negative part also vanishes by applying $(\cdot)^*$, hence $A-A^*=0.$
Analogously, one obtains $B=B^*.$

Hence, $A$ and $B$ are real along the unit circle $\lambda\in \S^1.$ 
With $X=A+iB$ and $Y=A-iB$, this
 implies the necessary 
unitarity condition that 
$X(\lambda)=\overline{Y(\bar\lambda^{-1})}$ for all $\lambda\in\S^1$.
We have to show that for all $\lambda\in \S^1$ the monodromy $(X(\lambda),Y(\lambda),Z(\lambda))$
is unitarizable.
By Theorem \ref{The:unitary-component} it is enough to show that
$(X(\lambda),Y(\lambda), Z(\lambda))\in \mathcal C_u$ for all $\lambda\in \S^1$.

We first show
that $B(\lambda_0)=0$. 
In fact, $\lambda^3B \in\mathring{\mathcal W}^\tau_\R$ implies
$
\overline{B(\lambda)}=B(\bar\lambda)
$
for all $\lambda\in\mathbb A_r.$
Using $\bar\lambda_0=-\zeta^{-2}\lambda_0$ and $\lambda^3B \in\mathring{\mathcal W}^\tau$, in particular $B$ is an odd function in $\lambda$, we therefore obtain
\begin{equation}\label{eq:Boddim}
\overline{B(\lambda_0)}
=
B(\bar\lambda_0)
=
B(\zeta^{-2}(-\lambda_0))
=
B(-\lambda_0)
=
-B(\lambda_0).
\end{equation}
Since $B$ is real along the unit circle, $B(\lambda_0)=0$ as claimed.
We obtain that $X(\lambda_0)=0=Y(\lambda_0),$ and
this implies
$(X(\lambda_0),Y(\lambda_0),Z(\lambda_0))\in\mathcal C_u$. 
For rational $t$, the image of the monodromy representation is therefore conjugate to $\Gamma$ or $\tilde\Gamma,$
and in particular finite.

We next show that $\left(X(\lambda),Y(\lambda),Z(\lambda)\right)$ remains in the unitary component $\mathcal C_u$ for all $\lambda\in\S^1.$
From Section \ref{sec:unirep} this can happen only if the discriminant \eqref{eq:discrimant-complex} 
has an odd order zero along the unit circle, or if we pass through one of the three reducible boundary points.
Since the representation depends holomorphically on $\lambda$, 
also the functions $Z=Z(\lambda)$  and $\tilde Z=\tilde Z(\lambda)$ depend holomorphically on $\lambda$.
The discriminant \eqref{eq:discrimant-complex} of the quadratic polynomial \eqref{Def-characterpolynomial} 
is given by $D(\lambda)=(Z(\lambda)-\tilde Z(\lambda))^2,$
hence its  zeros are even.

It thus remains  to establish that the monodromy cannot be reducible.
Assume the monodromy is reducible at $\lambda_1$ with $0<|\lambda_1|\leq1$. By the proof of Lemma \ref{Lem:twist-monodromy}, the monodromy at $-\lambda_1$
is conjugate to the contragredient of the monodromy at
 $\lambda_1.$ By the classification of reducible representations in Section \ref{sec:redrep}, we can therefore assume without loss of generality that the monodromy at $\lambda_1$ preserves a line.
Using parallel transport, this generates a holomorphic line subbundle $L$ of the trivial holomorphic vector bundle  $\C^3\to\CP^1$ which passes through eigenlines of the residues
of $d+\eta_{t,a,b,c}(\lambda_1).$
The corresponding eigenvalue at $z=1$ is then $0$, while the eigenvalues at $z=0$ and $z=\infty$ satisfy  $\alpha_0 +\alpha_\infty=0.$
Furthermore, the Fuchsian system $d+\eta_{t,a,b,c}(\lambda_1)$ induces a logarithmic connection on $L$, i.e. a meromorphic connection with first-order poles
at $z=0,1,\infty$. By the generalized residue formula for logarithmic connections on a line bundle (see \cite[Lemma 3]{HeHe}), $\mathrm{deg}(L)=0.$
Hence, $L$ is a holomorphic line subbundle of degree 0, i.e. a constant line in $\C^3.$ By construction, $L$ is an eigenline for $R_0$, $R_1$ and $R_\infty.$ A direct algebraic computation shows that
this is only possible if $c(\lambda_1)=0$ or $a(\lambda_1)=0,$ which is excluded by assumption. Thus, the monodromy cannot be reducible for $\lambda$ with $0<|\lambda|\leq 1$. Consequently, the monodromy along $\S^1$ stays in the unitary component $\mathcal C_u.$ 
\end{proof}

\begin{Lem}\label{lem:technicallemmarealtwist}
Let $a,b,c\in\mathcal W^\tau_\R$, $t\in[0,\tfrac13)$. Then
\[\lambda^{-6}{\mathcal F},\lambda^{-3}{\mathcal G},{\mathcal H}\in\mathcal W^\tau_\R\quad \text{and} \quad \mathcal S\in\R.\]
\end{Lem}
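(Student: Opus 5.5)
The proof is a bookkeeping argument on Fourier supports, using Lemma~\ref{lem:ABprop} as the main input. By that lemma, $A\in\mathring{\mathcal W}^\tau_\R$ and $\lambda^3B\in\mathring{\mathcal W}^\tau_\R$. Invariance under $\lambda\mapsto\xi\lambda$ with $\xi=\exp(\pi i/3)$ means that only powers $\lambda^{6k}$ occur. Hence
\[
A=\sum_{k\in\Z}A^{(k)}\lambda^{6k},\qquad B=\sum_{k\in\Z}B^{(k)}\lambda^{6k-3},
\]
and the condition $\overline{f(\bar\lambda)}=f(\lambda)$ forces the real coefficients $A^{(k)},B^{(k)}\in\R$.

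\textbf{The functions $\mathcal F$ and $\mathcal G$.} From the definition, $A^*=\sum_k A^{(k)}\lambda^{-6k}$. Therefore $A-A^*=\sum_k (A^{(k)}-A^{(-k)})\lambda^{6k}$, and its positive part is
\[
\mathcal F=\sum_{k\geq1}(A^{(k)}-A^{(-k)})\lambda^{6k}.
\]
Dividing by $\lambda^6$ gives a power series in $\lambda^6$ with nonnegative exponents and real coefficients, so it is $\Z_6$-invariant and real. It remains to see that it is bounded and holomorphic on $\mathbb D_r$. The positive-part projection $(\cdot)^+$ maps $\mathring{\mathcal W}$ boundedly into $\mathcal W$; if one only wants to use the restriction to $\S^1$, one shrinks $r$ slightly. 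Removing the factor $\lambda^6$ preserves holomorphy on the disk, since $\mathcal F$ vanishes to order six at $0$.

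For $\mathcal G$ the computation is the same. We have $B^*=\sum_k B^{(k)}\lambda^{-6k+3}=\sum_k B^{(1-k)}\lambda^{6k-3}$, so
\[
\mathcal G=\sum_{k\geq1}(B^{(k)}-B^{(1-k)})\lambda^{6k-3}.
\]
Then $\lambda^{-3}\mathcal G=\sum_{k\ge1}(\cdots)\lambda^{6(k-1)}$ is a power series in $\lambda^6$ with real coefficients, and hence lies in $\mathcal W^\tau_\R$.

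\textbf{The functions $\mathcal H$ and $\mathcal S$.} Since $\mathcal W^\tau_\R$ is an algebra, $\mathcal H=4ab-c^2+1$ lies in it directly: products, sums and real constants of $\Z_6$-invariant real holomorphic functions stay in the space. For $\mathcal S=A(\lambda_0)$ with $\lambda_0=e^{2\pi i/12}$, note that $\lambda_0^{6}=e^{\pi i}=-1$. Hence
\[
A(\lambda_0)=\sum_k A^{(k)}(-1)^k\in\R.
\]
The series converges absolutely because $A\in\mathring{\mathcal W}$ and $|\lambda_0|=1$.

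\textbf{Main obstacle.} There is no real obstacle. The only point that needs care is the functional-analytic one: $(\cdot)^+$ lands in the bounded holomorphic functions on $\mathbb D_r$. This holds by the definition of $(\cdot)^+$ as a bounded map $\mathring{\mathcal W}\to\mathcal W$, possibly after decreasing $r$ slightly.
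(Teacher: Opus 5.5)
Your proof is correct and follows the paper's argument. Both deduce everything from Lemma~\ref{lem:ABprop}: you read off $\lambda^{-6}\mathcal F$ and $\lambda^{-3}\mathcal G$ from the positive parts of $A-A^*$ and $B-B^*$, use the algebra property for $\mathcal H$, and get $\mathcal S\in\R$ from the real coefficients together with $\lambda_0^6=-1$; the paper phrases this last step as $\bar\lambda_0=-\zeta^{-2}\lambda_0$ plus evenness and $\zeta$-invariance of $A$, and your aside about shrinking $r$ is unnecessary because $(\cdot)^+$ is already a bounded map $\mathring{\mathcal W}\to\mathcal W$ for the fixed $r$.
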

\begin{proof}
That ${\mathcal H}\in\mathcal W^\tau_\R$ follows as $\mathcal W^\tau_\R$ is a Banach algebra.
By Lemma \ref{lem:ABprop},  $A,\lambda^3 B\in\mathring{\mathcal W}^\tau_\R.$ 
Thus, also $A-A^*,\lambda^{-3}(B-B^*)\in\mathring{\mathcal W}^\tau_\R$.
Taking the positive parts $(.)^+$ of $A-A^*$ and $B-B^*$ therefore yields $\lambda^{-6}\mathcal F,\lambda^{-3}\mathcal G\in\mathcal W^\tau_\R$.
It remains to show $A(\lambda_0)=\mathcal S\in\R.$
This is shown similarly to \eqref{eq:Boddim}: since A is even and $\zeta$-invariant, one obtains $\overline{A(\lambda_0)}=A(\lambda_0).$
\end{proof}

\subsection{Setting up the implicit function theorem}
We seek $\epsilon>0$ and $a,b,c\colon[0,\epsilon)\to\mathcal W^\tau_\R$  which solve
\eqref{eq:mondrom}. We use the implicit function theorem in the Banach spaces of holomorphic functions.
To do so, we first expand the monodromy of $d+\eta_{t,a,b,c}$ given by \eqref{Lawson-type-potential} at $t=0$.  In the following, primes always denote differentiation with respect to $t$, even for functions depending on additional variables.

Let $a,b,c\in\mathcal W^\tau_\R$.
Consider a solution $\Psi_t=\Psi_{t,a,b,c}$ of \eqref{eq:odes}.
At $t=0$ the potential simplifies to
\[
d+\eta_{0,a,b,c}=d+\mathrm{diag}(0,-\tfrac{1}{3},\tfrac{1}{3})\frac{dz}{z},
\]
and a parallel frame with $\Psi_0(\tfrac12)=\Id$ is given by
\begin{equation}
\label{eqPsi0}
\Psi_0(z)=\mathrm{diag}(1,(2z)^{\tfrac{1}{3}},(2z)^{-\tfrac{1}{3}}).
\end{equation}
The branch is chosen holomorphically on the disk of radius $3/4$ centered at $1$, normalized by $\Psi_0(\tfrac12)=\Id$.
Thus $M_1(0)=\Id$,  while the  monodromy of $\Psi_0$ along $\gamma_0$ is
$M_0(0)=\mathrm{diag}(1,\zeta,\zeta^{-1}).$

\begin{Lem}\label{lem10:derivmono}
We have
\begin{equation}\label{eq.m1prime}
M_1'(0)=-2\pi i \,\Psi_0(1)^{-1}\,
\left(
\begin{array}{ccc}
 0 & \frac{a}{\lambda ^3}+b & i \left(b-\frac{a}{\lambda ^3}\right) \\
 -\lambda ^3 b-a & 0 & ci \\
 i \left(a-\lambda ^3 b\right) & -ci & 0 \\
\end{array}
\right)
 \Psi_0(1).
 \end{equation}
\end{Lem}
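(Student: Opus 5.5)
The plan is to differentiate the parallel frame with respect to $t$ at $t=0$ by variation of parameters, and then read off the monodromy along $\gamma_1$.

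Write $\eta_t=\eta_0+t\,\eta_1$, where
\[
\eta_0=\mathrm{diag}(0,-\tfrac13,\tfrac13)\tfrac{dz}{z}
\]
and $\eta_1$ collects the $t$-linear entries of $R_0\frac{dz}{z}+R_1\frac{dz}{z-1}$. Note that $\eta$ is exactly affine in $t$. Differentiating \eqref{eq:odes} in $t$ gives
\[
d\Psi_0'+\eta_0\Psi_0'+\eta_1\Psi_0=0,\qquad \Psi_0'(\tfrac12)=0 .
\]
Setting $\Psi_0'=\Psi_0 U$, we get $dU=-\Psi_0^{-1}\eta_1\Psi_0$ with $U(\tfrac12)=0$.

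The monodromy along $\gamma_1$ is $M_1(t)=\Psi_t(\tfrac12)^{-1}\Psi_t(\gamma_1\cdot\tfrac12)$, with analytic continuation along $\gamma_1$ and the convention of the paper. Since $M_1(0)=\Id$, i.e. $\Psi_0$ is single-valued around $z=1$ on the chosen disk, we obtain
\[
M_1'(0)=-\oint_{\gamma_1}\Psi_0^{-1}\eta_1\Psi_0 .
\]
The integrand is single-valued and meromorphic on the disk of radius $3/4$ about $1$. Its only pole there is at $z=1$, because the $\frac{dz}{z}$ part is holomorphic on that disk. So the integral is $2\pi i$ times the residue at $z=1$, with orientation coming from $\gamma_1$. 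That residue is $\Psi_0(1)^{-1}\,(R_1/t)\,\Psi_0(1)$.

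It remains to check that $R_1/t$ is the displayed matrix. From \eqref{eqR1}, $R_1/t$ has $(1,2)$-entry $\frac{a}{\lambda^3}+b$ and $(1,3)$-entry $i(b-\frac{a}{\lambda^3})$. Its $(2,1)$-entry is $-(a+\lambda^3b)$ and its $(3,1)$-entry is $i(a-\lambda^3b)$. Finally $(2,3)=ic$ and $(3,2)=-ic$. This matches the matrix in \eqref{eq.m1prime}.

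The main obstacle is pinning down the overall sign and orientation. The loop $\gamma_1(s)=1-\tfrac12e^{2\pi is}$ winds counterclockwise around $1$, which together with the minus sign from $dU=-\Psi_0^{-1}\eta_1\Psi_0$ yields the prefactor $-2\pi i$. This still has to be reconciled with the right-action convention for concatenation. If needed, I would check it against the $t=0$ computation $M_0(0)=\mathrm{diag}(1,\zeta,\zeta^{-1})$, obtained from $\Psi_0$ with the same convention.
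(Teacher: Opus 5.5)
Your proposal is correct and follows essentially the same route as the paper. You differentiate the parallel frame in $t$ at $t=0$, use $M_1(0)=\Id$ to obtain $M_1'(0)=-\oint_{\gamma_1}\Psi_0^{-1}\eta'\Psi_0$, and evaluate the integral by the residue at $z=1$. Your residue and orientation bookkeeping is consistent with the paper's convention $M_1(t)=Y_t(1)$; the same convention gives $M_0(0)=\diag(1,\zeta,\zeta^{-1})$, so the final reconciliation step you flagged is not needed.
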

\begin{proof}
Let $Y_t(s):=\Psi_{t,a,b,c}(\gamma_1(s))$. Then
$
\frac{d}{ds}Y_t+\gamma_1^*(\eta_{t,a,b,c})\,Y_t=0$ with
$ Y_t(0)=\Id,
$
and the monodromy along $\gamma_1$ is $M_1(t)=Y_t(1)$.
Differentiating at $t=0$ gives
\[
\frac{d}{ds}(Y_0^{-1}Y_0')
=
-\gamma_1^*(\Psi_0^{-1}\eta'_{0,a,b,c}\Psi_0).
\]
Integrating from $0$ to $1$, and using $Y_0(0)=Y_0(1)=\Id$ and $Y_0'(0)=0$, yields
\[
M_1'(0)=Y_0'(1)
=
-\int_{\gamma_1}\Psi_0^{-1}\eta'_{0,a,b,c}\Psi_0.
\]
Thus, \eqref{eq.m1prime} follows from the residue theorem.
\end{proof}

\begin{Lem}\label{lem:prep}
Let $a,b,c\in \mathcal W^\tau_\R$. Then
\begin{equation}\label{eq:X0Y0Abl0}
X_{0,a,b,c}=Y_{0,a,b,c}=0,
\qquad
 X'_{0,a,b,c}=Y'_{0,a,b,c}=0.
\end{equation}
Using the definition \eqref{wh-function}, we have at $t=0$
\begin{equation}\label{XYdoubleprimeabc}
\begin{split}
\widehat{X}_{0,a,b,c}
&=\tfrac{2 \pi ^2}{\lambda^3} \left(-i \sqrt{3} a^2+2 a b \lambda ^3-i \sqrt{3} b^2 \lambda ^6+c^2 \lambda ^3\right)\\
\widehat{ Y}_{0,a,b,c}
&=\tfrac{2 \pi ^2}{\lambda^3} \left(i \sqrt{3} a^2+2 a b \lambda ^3+i \sqrt{3} b^2 \lambda ^6+c^2 \lambda ^3\right).
\end{split}
\end{equation}
\end{Lem}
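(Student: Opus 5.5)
This statement follows directly from Lemma~\ref{derivativeofmonodromy-singularID}, once its hypotheses are checked and the entries of $M_1'(0)$ are read off from Lemma~\ref{lem10:derivmono}.

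\emph{Checking the hypotheses.} At $t=0$ the frame \eqref{eqPsi0} gives $M_0(0)=\mathrm{diag}(1,\zeta,\zeta^{-1})$ and $M_1(0)=\Id$. For every $t\in[0,\tfrac13)$ the residues $R_0$ and $R_\infty$ have eigenvalues $\tfrac13,0,-\tfrac13$, with no integer differences. Hence $M_0(t)$ and $M_\infty(t)$ are conjugate to $\mathrm{diag}(1,\zeta,\zeta^{-1})$. The product relation $M_\infty M_1M_0=\Id$ holds by the choice of generators \eqref{eq:fggen}. Smoothness in $t$ follows from the real-analytic dependence of $\Psi_{t,a,b,c}$ on $t$. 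Lemma~\ref{derivativeofmonodromy-singularID} then gives \eqref{eq:X0Y0Abl0} immediately: $X=Y=0$ and $X'=Y'=0$ at $t=0$. The same lemma says $\widehat X_{0,a,b,c}=\tfrac12X''(0)$, so $\widehat X_{0,a,b,c}$ can be read off from the off-diagonal products $p_{12}p_{21}$, $p_{13}p_{31}$ and $p_{23}p_{32}$ of $M_1'(0)=(p_{ij})$.

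\emph{Computing the off-diagonal entries.} By Lemma~\ref{lem10:derivmono}, $M_1'(0)=-2\pi i\,\Psi_0(1)^{-1}N\Psi_0(1)$, where $N$ is the explicit matrix displayed there. The matrix $\Psi_0(1)=\mathrm{diag}(1,2^{1/3},2^{-1/3})$ is diagonal, and conjugating by a diagonal matrix leaves each product $N_{ij}N_{ji}$ unchanged. Therefore $p_{ij}p_{ji}=(-2\pi i)^2N_{ij}N_{ji}=-4\pi^2N_{ij}N_{ji}$. From $N$ we read
\[
N_{12}N_{21}=-\tfrac{(a+\lambda^3b)^2}{\lambda^3},\qquad N_{13}N_{31}=\tfrac{(\lambda^3b-a)(a-\lambda^3b)}{\lambda^3}\cdot(i\cdot i)=\tfrac{(a-\lambda^3b)^2}{\lambda^3},\qquad N_{23}N_{32}=c^2 .
\]
It is worth double-checking the signs of the two factors of $i$ in the middle product. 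Substituting into \eqref{secordtr} gives
\[
X''(0)=4\pi^2\bigl(-\zeta\lambda^{-3}(a+\lambda^3b)^2+\zeta^{-1}\lambda^{-3}(a-\lambda^3b)^2+c^2\bigr).
\]

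\emph{Simplifying.} Using $\zeta-\zeta^{-1}=i\sqrt3$ and $\zeta+\zeta^{-1}=-1$, the $a^2$ and $\lambda^6b^2$ terms collect with coefficient $-(\zeta-\zeta^{-1})=-i\sqrt3$. The cross term $ab\lambda^3$ collects with coefficient $-2(\zeta+\zeta^{-1})=2$. Halving then gives exactly \eqref{XYdoubleprimeabc} for $\widehat X$. The formula for $\widehat Y$ comes out the same way, with $\zeta$ and $\zeta^{-1}$ interchanged, which flips the sign of $i\sqrt3$. This is consistent with Lemma~\ref{Lem:twist-monodromy}, since $\lambda\mapsto-\lambda$ takes one formula to the other.

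\emph{Main obstacle.} The argument has no real difficulty; the risk lies in bookkeeping. One has to keep the orientation and sign conventions of Lemma~\ref{lem10:derivmono} consistent with the choice of $\zeta$ versus $\zeta^{-1}$ in $M_0(0)$. An error there would swap the formulas for $\widehat X$ and $\widehat Y$.
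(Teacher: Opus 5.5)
You follow the paper's proof step for step. You check the hypotheses of Lemma~\ref{derivativeofmonodromy-singularID}, note that conjugation by the diagonal matrix $\Psi_0(1)$ leaves each product $p_{ij}p_{ji}$ unchanged, read these products off Lemma~\ref{lem10:derivmono}, and substitute into \eqref{secordtr}. Your products $p_{12}p_{21}=4\pi^2\lambda^{-3}(a+\lambda^3b)^2$, $p_{13}p_{31}=-4\pi^2\lambda^{-3}(a-\lambda^3b)^2$, $p_{23}p_{32}=-4\pi^2c^2$ and your simplification agree exactly with the paper's.

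The problem is that \eqref{secordtr} is short by a factor of two, and your argument inherits this. Write $D=M_0(0)=\mathrm{diag}(d_1,d_2,d_3)$, $P=M_1'(0)$, $Q=M_1''(0)$, and reduce to constant $M_0$ as in that lemma's proof. Then $p_{ii}=0$ and $X''(0)=\tr(QD^{-1})=\tr(QD^2)$. Differentiating $\tr\bigl((M_1M_0)^2\bigr)\equiv0$ twice at $t=0$ gives $2\tr(QD^2)+2\tr(PDPD)=0$, where
\[
\tr(PDPD)=\sum_{i\neq j}p_{ij}p_{ji}\,d_id_j=2\bigl(\zeta p_{12}p_{21}+\zeta^{-1}p_{13}p_{31}+p_{23}p_{32}\bigr),
\]
because each unordered pair $\{i,j\}$ occurs twice. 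Hence $X''(0)=-2\bigl(\zeta p_{12}p_{21}+\zeta^{-1}p_{13}p_{31}+p_{23}p_{32}\bigr)$. For a concrete check, take $M_0=D$ and $M_1(t)=e^{tH}De^{-tH}D^{-1}$, where $H$ has $h_{23}=h_{32}=1$ and all other entries zero. This satisfies every hypothesis of the lemma. It gives $X(t)=\tr(e^{tH}De^{-tH}D)=-3\sinh^2t$, so $X''(0)=-6$. But $p_{23}p_{32}=(1-\zeta^{-1})(1-\zeta)=3$, so \eqref{secordtr} predicts $-3$.

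Consequently the true value is $\widehat X_{0,a,b,c}=\tfrac{4\pi^2}{\lambda^3}\bigl(-i\sqrt3a^2+2ab\lambda^3-i\sqrt3b^2\lambda^6+c^2\lambda^3\bigr)$, and likewise for $\widehat Y$, twice what is displayed. The paper's own data confirm this independently. At $\lambda=1$ one has $\bar X=Y$ and $Z\in\R$ (Lemma~\ref{lem:ABprop} and the remark after it), so the discriminant \eqref{eq:discrimant-complex} is $\geq 0$ there for every $t$. Expanding $D^r$ with $A,B=O(t^2)$ gives $D^r=t^6\bigl(48\pi^2(\widehat A^2+\widehat B^2)-8\widehat A^3+24\widehat A\widehat B^2-256\pi^6\bigr)+O(t^7)$. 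For the data of Lemma~\ref{Lem:initial-data} at $\lambda=1$ one has $\widehat A=0$. The displayed constant then gives $\widehat B^2=\tfrac{4\pi^4}{3}$ and leading coefficient $-192\pi^6<0$, a contradiction. The corrected constant gives $\widehat B^2=\tfrac{16\pi^4}{3}$ and leading coefficient exactly $0$.

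Because the discrepancy is a global nonzero factor, Lemma~\ref{Lem:initial-data} and the invertibility argument in Proposition~\ref{ProIFT} survive; only normalising constants such as $\tfrac{1}{4\pi^2}$ change. As a proof of the displayed formula, however, your argument is only as good as \eqref{secordtr}. You should redo the second-order expansion of $\tr\bigl((M_1M_0)^2\bigr)$ yourself rather than cite it. (A minor point: $\widehat X_{0,a,b,c}=\tfrac12X''(0)$ comes from the definition \eqref{wh-function}, not from the lemma.)
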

\begin{proof}
The formulas are pointwise in $\lambda$, so it suffices to regard $a$, $b$, and $c$ as complex numbers.
Consider the monodromies $M_0(t),M_1(t),M_\infty(t)$, which satisfy $M_\infty(t) M_1(t) M_0(t)=\mathrm{Id}$ for all $t$.
Furthermore, the conjugacy class of $M_0(t)$ and $M_\infty(t)$ are independent of $t$ and given by $\mathrm{diag}(1,\zeta,\zeta^{-1})$.
We have $M_0(0)=\mathrm{diag}(1,\zeta,\zeta^{-1})$ and $M_1(0)=\mathrm{Id}$. 
Hence, the hypotheses of Lemma~\ref{derivativeofmonodromy-singularID} are satisfied, and we obtain \eqref{eq:X0Y0Abl0}.
 Denote $M_1'(0)=(p_{ij})_{i,j}$. 
Using Lemma \ref{lem10:derivmono} together with $\Psi_0(1)=\mathrm{diag}(1,2^{1/3},2^{-1/3})$, we compute 
\[
p_{12}p_{21}=4\pi^2\tfrac{(a+\lambda^3 b)^2}{\lambda^3},
\qquad
p_{13}p_{31}=-4\pi^2\tfrac{(a-\lambda^3 b)^2}{\lambda^3},
\qquad
p_{23}p_{32}=-4\pi^2 c^2.
\]
Together with Lemma \ref{derivativeofmonodromy-singularID} this directly 
gives \eqref{XYdoubleprimeabc}.
\end{proof}

\subsection{The initial data}
We next choose the initial direction for the implicit-function argument. 
This should be viewed as choosing a tangent direction at the singular point corresponding
 to $t=0$ in the moduli space of Fuchsian systems. Geometrically, this choice determines the branch of solutions, and hence the resulting family of surfaces.

\begin{Lem}\label{Lem:initial-data}
The constant functions
\begin{equation}\label{abc}
\begin{split}
a_0:=\tfrac{1}{\sqrt{6}},\quad
b_0:=-\tfrac{1}{\sqrt{6}},\quad
c_0:=\pm\tfrac{1}{\sqrt{3}}
\end{split}
\end{equation}
satisfy $a_0,b_0,c_0\in\mathcal W^\tau_\R,$ and Condition (i) and (ii) of Lemma \ref{harmonic-family-form}. The corresponding Fuchsian potential 
$d+\eta_{t,a_0,b_0,c_0}$ satisfies the monodromy conditions \eqref{eq:mondrom} up to second order at $t=0:$
\begin{equation}\label{eq:initialeqsatisfied}
\begin{split}
\mathcal F_{0,a_0,b_0,c_0}&=0,\quad\mathcal G_{0,a_0,b_0,c_0}=0,\quad \mathcal H_{t,a_0,b_0,c_0}=0,\quad\mathcal S_{0,a_0,b_0,c_0}=0,\\
\mathcal F'_{0,a_0,b_0,c_0}&=0,\quad \mathcal G'_{0,a_0,b_0,c_0}=0, \quad\quad\quad\qquad\quad\qquad\;\;  \mathcal S'_{0,a_0,b_0,c_0}=0\\
\widehat{\mathcal F}_{0,a_0,b_0,c_0}&=0,\quad \widehat{\mathcal G}_{0,a_0,b_0,c_0}=0,\quad \quad\quad\qquad\quad\qquad\;\; \widehat{\mathcal S}_{0,a_0,b_0,c_0}=0\\
\end{split}
\end{equation}
\end{Lem}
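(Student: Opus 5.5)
The proof is a direct verification in three layers: membership and algebraic conditions, the vanishing at $t=0$ and to first order (supplied by Lemma~\ref{lem:prep}), and the second-order vanishing (computed from \eqref{XYdoubleprimeabc}).

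\textbf{Membership and algebraic conditions.} Constant real functions are invariant under $\lambda\mapsto\xi\lambda$ and satisfy $\overline{f(\bar\lambda)}=f(\lambda)$. Hence $a_0,b_0,c_0\in\mathcal W^\tau_\R$. Condition (ii) of Lemma~\ref{harmonic-family-form} holds because $a_0\neq0$. For condition (i),
\[
4a_0b_0-c_0^2=-\tfrac46-\tfrac13=-1 .
\]
Since $a_0,b_0,c_0$ do not depend on $t$, this also gives $\mathcal H_{t,a_0,b_0,c_0}=0$ for all $t$.

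\textbf{Vanishing at $t=0$ and to first order.} By Lemma~\ref{lem:prep}, $X$ and $Y$, and therefore $A$ and $B$, vanish together with their first $t$-derivatives at $t=0$, identically in $\lambda$. Both $(\cdot)^+$ and $(\cdot)^*$ are bounded linear maps that commute with $\partial_t$. It follows that $\mathcal F$, $\mathcal G$ and $\mathcal S$ vanish at $t=0$, and so do $\mathcal F'$, $\mathcal G'$ and $\mathcal S'$. This also justifies forming $\widehat{\mathcal F}$, $\widehat{\mathcal G}$ and $\widehat{\mathcal S}$, which equal the corresponding hat-operations applied to $A$ and $B$.

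\textbf{Second-order vanishing.} Substitute $a_0,b_0,c_0$ into \eqref{XYdoubleprimeabc}. With $a_0^2=b_0^2=\tfrac16$, $a_0b_0=-\tfrac16$ and $c_0^2=\tfrac13$, we get
\[
\widehat X_{0}=\tfrac{2\pi^2}{\lambda^3}\Bigl(-\tfrac{i\sqrt3}{6}(1+\lambda^6)\Bigr)=-\tfrac{i\sqrt3\,\pi^2}{3}\,(\lambda^{-3}+\lambda^{3}),
\qquad
\widehat Y_0=-\widehat X_0 .
\]
The coefficient of $\lambda^0$ cancels exactly because $2a_0b_0+c_0^2=0$, and this is the essential point of the computation. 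It gives
\[
\widehat A_0=0,\qquad \widehat B_0=\tfrac i2(\widehat Y_0-\widehat X_0)=-\tfrac{\sqrt3\,\pi^2}{3}(\lambda^3+\lambda^{-3}).
\]
Consequently $\widehat{\mathcal F}_0=(\widehat A_0-\widehat A_0^*)^+=0$ and $\widehat{\mathcal S}_0=\widehat A_0(\lambda_0)=0$. For $\mathcal G$, the function $\lambda^3+\lambda^{-3}$ has real coefficients and is symmetric under $\lambda\mapsto\lambda^{-1}$, so it is fixed by $(\cdot)^*$. Thus $\widehat B_0^*=\widehat B_0$ and $\widehat{\mathcal G}_0=0$.

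The only real obstacle is bookkeeping: making sure that $(\cdot)^*$ acts on the Laurent coefficients in the way I have used. Both sign choices for $c_0$ work, since only $c_0^2$ enters.
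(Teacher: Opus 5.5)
Your proposal is correct and takes essentially the same route as the paper: check $4a_0b_0-c_0^2=-1$, invoke Lemma~\ref{lem:prep} for the vanishing of $X,Y$ and their first $t$-derivatives, and substitute into \eqref{XYdoubleprimeabc} to get $\widehat Y_0=-\widehat X_0=\tfrac{i\pi^2}{\sqrt3}(\lambda^3+\lambda^{-3})$. Your explicit bookkeeping spells out the paper's ``by inspection'' step: $2a_0b_0+c_0^2=0$ gives $\widehat A_0\equiv0$, $\widehat B_0$ is fixed by $(\cdot)^*$, and hats commute with the bounded linear maps $(\cdot)^+$, $(\cdot)^*$ and evaluation at $\lambda_0$.
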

\begin{proof} Clearly, $4a_0b_0-c_0^2=-1,$ independently of $t$ and $\lambda.$
Using Lemma \ref{lem:prep}, we compute $X_{0,a_0,b_0,c_0}=Y_{0,a_0,b_0,c_0}=0$ and  $X'_{0,a_0,b_0,c_0}=Y'_{0,a_0,b_0,c_0}=0.$
Furthermore,
inserting \eqref{abc} into \eqref{XYdoubleprimeabc} gives
\[
\begin{split}
\widehat X_{0,a_0,b_0,c_0}=&-\tfrac{ i \pi ^2 \left(\lambda ^6+1\right)}{\sqrt{3} \lambda ^3},\qquad
\widehat Y_{0,a_0,b_0,c_0}=\tfrac{ i \pi ^2 \left(\lambda ^6+1\right)}{\sqrt{3} \lambda ^3}.
\end{split}
\]
By inspection, \eqref{eq:initialeqsatisfied} is satisfied.
\end{proof}
The two sign choices for $c_0$ lead to the two branches of solutions produced later by the implicit function theorem.

\subsection{The solution}
We next prove the existence of potentials satisfying the intrinsic and extrinsic closing conditions.

In the following, let $a,b,c\in\mathcal W^\tau_\R$. By Lemma~\ref{lem:technicallemmarealtwist},
$
\lambda^{-6}\mathcal F,\ \lambda^{-3}\mathcal G,\ \mathcal H\in\mathcal W^\tau_\R,$ and 
$
\mathcal S\in\R.
$
Moreover, by \eqref{eq:X0Y0Abl0}, the functions
$
\widehat{\mathcal F}, \widehat{\mathcal G},$ and $\widehat{\mathcal S}
$
extend real-analytically across $t=0$. Hence, after restricting to a sufficiently small open interval $I\subset\R$ containing $0$, we obtain a well-defined real analytic map
\begin{equation}\label{eq_mopro}
\begin{split}
\mathcal M&\colon I\times\mathcal W^\tau_\R\times \mathcal W^\tau_\R\times \mathcal W^\tau_\R\to\mathcal W^\tau_\R\times \mathcal W^\tau_\R\times \mathcal W^\tau_\R\times\R;\\
&(t,a,b,c)\mapsto (\lambda^{-6}\widehat{\mathcal F}_{t,a,b,c}\,,\,\lambda^{-3}\widehat{\mathcal G}_{t,a,b,c}\,,\,\mathcal H_{t,a,b,c}\,,\, \widehat{\mathcal S}_{t,a,b,c})\,,\,
\end{split}
\end{equation}
where $X,Y$ and thus $\mathcal F,\mathcal G,\mathcal S$ are computed with respect to the connection $d+\eta_{t,a,b,c}$ as before.
Since the Riemann--Hilbert map assigning to a Fuchsian system its monodromy is real analytic,
and the involved projections $(.)^+$ and involutions $(.)^*$ are bounded linear, 
 the map $\mathcal M$ is real analytic
as a composition of real analytic maps.

\begin{Pro}\label{ProIFT} Let $r>1,$ and
let $a_0,b_0,c_0$ be as in \eqref{abc}.
 There exists $\epsilon>0$ and unique real analytic maps
\begin{equation}\label{map:abc}
a,b,c\colon [0,\epsilon)\to \mathcal W^\tau_\R
\end{equation}
with $a(0)=a_0$, $b(0)=b_0$, and $c(0)=c_0$, satisfying 
\[\mathcal F_{t,a(t),b(t),c(t)}=\mathcal G_{t,a(t),b(t),c(t)}=\mathcal H_{t,a(t),b(t),c(t)}=0\quad\text{and}\quad \mathcal S_{t,a(t),b(t),c(t)}=0\]
for all $t\in[0,\epsilon).$ Furthermore, $a(t,\lambda)\neq 0$ and $c(t,\lambda)\neq 0$ for all $|\lambda|\leq r$ and all $t\in[0,\epsilon).$
For \(t=1/k<\epsilon\) and $c_0=\tfrac{1}{\sqrt{3}}$, the monodromy $\rho_{\lambda_0}$ is conjugate to $\rho$ in \eqref{finite-group-rep1}, while $\rho_{\lambda_0}$ is conjugate to $(\rho^{-1})^T$ for 
$c_0=-\tfrac{1}{\sqrt{3}}$.
\end{Pro}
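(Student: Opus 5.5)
Apply the implicit function theorem to the real analytic map $\mathcal M$ of \eqref{eq_mopro} at $(0,a_0,b_0,c_0)$. At $t=0$, $\widehat{\mathcal F}$, $\widehat{\mathcal G}$, $\widehat{\mathcal S}$ are obtained from the second-order data $\widehat X_{0,a,b,c}$, $\widehat Y_{0,a,b,c}$ of Lemma~\ref{lem:prep}, and by Lemma~\ref{Lem:initial-data} we have $\mathcal M(0,a_0,b_0,c_0)=0$. The main task is to show that the partial derivative $D_{(a,b,c)}\mathcal M$ at $(0,a_0,b_0,c_0)$ is a Banach space isomorphism $(\mathcal W^\tau_\R)^3\to(\mathcal W^\tau_\R)^3\times\R$. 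Otherwise there is a kernel or cokernel, and we have to quotient out a symmetry.

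First compute $\widehat A_0=\tfrac12(\widehat X_0+\widehat Y_0)=\tfrac{2\pi^2}{\lambda^3}\bigl(2ab\lambda^3+c^2\lambda^3\bigr)=2\pi^2(2ab+c^2)$ and $\widehat B_0=\tfrac i2(\widehat Y_0-\widehat X_0)=-\tfrac{2\sqrt3\pi^2}{\lambda^3}(a^2+b^2\lambda^6)$. Linearizing at $(a_0,b_0,c_0)$ in a direction $(\alpha,\beta,\gamma)$ gives:
\[
\delta\widehat A=2\pi^2\bigl(2a_0\beta+2b_0\alpha+2c_0\gamma\bigr),\qquad
\delta\widehat B=-\tfrac{4\sqrt3\pi^2}{\lambda^3}\bigl(a_0\alpha+b_0\beta\lambda^6\bigr),
\]
\[
\delta\mathcal H=4a_0\beta+4b_0\alpha-2c_0\gamma.
\]
The three equations we need to solve are
\begin{itemize}
\item $(\delta\widehat A-\delta\widehat A^*)^+=f$,
\item $(\delta\widehat B-\delta\widehat B^*)^+=g$,
\item $\delta\mathcal H=h$,
\end{itemize}
together with $\delta\widehat A(\lambda_0)=s$. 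Write $\alpha=\sum\alpha_k\lambda^{6k}$ and similarly for $\beta,\gamma$. The operator $u\mapsto(u-u^*)^+$ on $\mathcal W^\tau_\R$ is the identity on the positive Fourier modes and kills the constant mode. Hence the $\mathcal F$- and $\mathcal H$-equations determine the positive modes of $2a_0\beta+2b_0\alpha+2c_0\gamma$ and of $4a_0\beta+4b_0\alpha-2c_0\gamma$. Since $a_0=-b_0$, these determine the positive modes of $a_0(\beta-\alpha)$ and of $\gamma$ mode by mode. The constant mode of $\mathcal H$ fixes one linear combination of the constant modes, and $\mathcal S$ fixes another.

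The $\mathcal G$-equation reads $\bigl(\lambda^{-3}(\alpha-\beta\lambda^6)-(\text{its }*)\bigr)^+$. Because $\alpha,\beta$ are real with only $\lambda^{6k}$ modes, $(\lambda^{-3}\alpha)^*=\lambda^{3}\bar\alpha(\lambda^{-1})$. The positive $\lambda^{6k+3}$ coefficients therefore become $-\beta_k-\alpha_{k+1}$ plus the corresponding $*$-contributions. This determines the positive modes of $\alpha+\beta$, up to a triangular shift involving $\alpha_0,\beta_0$. So every positive mode is solvable with bounded inverse, since only finitely many shifts occur. The constant modes $(\alpha_0,\beta_0,\gamma_0)$ are three unknowns. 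The $\lambda^3$-coefficient of the $\mathcal G$-equation, the constant part of $\mathcal H$, and $\mathcal S$ give three equations for them, and I would check that the resulting $3\times3$ determinant is nonzero at $c_0=\pm1/\sqrt3$. This determinant, and the bookkeeping of the $\lambda^{-6},\lambda^{-3}$ normalizations in \eqref{eq_mopro} that make the target exactly $(\mathcal W^\tau_\R)^3\times\R$, is where I expect the main obstacle. If it degenerates, the fallback is to fix a gauge normalization, such as a Möbius/diagonal rescaling $a\mapsto sa$, $b\mapsto s^{-1}b$, which preserves $4ab$ and the trace coordinates. I would then take the quotient by it.

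Given invertibility, the implicit function theorem yields unique real analytic $t\mapsto(a(t),b(t),c(t))$ with $\mathcal M=0$ for $t\in I$. For $t>0$ this is equivalent to $\mathcal F=\mathcal G=\mathcal H=\mathcal S=0$, since $\widehat f=t^{-2}f$. Nonvanishing of $a$ and $c$ on $\overline{\D}_r$ for small $t$ follows by continuity in the $\mathcal W$-norm (sup norm) from $a_0,c_0\neq0$, after shrinking $\epsilon$ and slightly shrinking $r$ if needed.

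For the last claim, Proposition~\ref{pro:int-extcond} gives $X(\lambda_0)=Y(\lambda_0)=0$, so $\rho_{\lambda_0}$ is $\rho$ or $(\rho^{-1})^T$; these have $Z$-values $2e^{-2\pi it}+e^{4\pi it}$ and its conjugate, respectively. To decide which, I would compute the first-order term of $Z-\tilde Z$ in $t$ at $\lambda_0$ from $M_1'(0)$ in Lemma~\ref{lem10:derivmono}. The sign of this term is governed by the term linear in $c_0$, namely $p_{23}p_{32}$ versus its transpose-type analogue, so it flips with $c_0$. By continuity in $t$, the branch with $c_0=1/\sqrt3$ gives $\rho$ and the one with $c_0=-1/\sqrt3$ gives $(\rho^{-1})^T$.
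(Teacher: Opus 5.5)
Your overall strategy is the paper's: apply the implicit function theorem to $\mathcal M$, linearize via Lemma~\ref{lem:prep}, split into constant and positive modes, and reduce to a $3\times3$ system. Your formulas for $\delta\widehat A$, $\delta\widehat B$ and $\delta\mathcal H$ are correct. However, the invertibility of the linearization, which is the real content of the proof, is not established, and your description of the infinite-dimensional part is wrong.

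\textbf{Invertibility.} The $\lambda^{6k+3}$-coefficient of $\lambda^{-3}\alpha-\lambda^{3}\beta$ is $\alpha_{k+1}-\beta_k$, not $-\beta_k-\alpha_{k+1}$. The only $*$-contribution to the positive part is $-\alpha_0\lambda^3$. So for $k\ge1$, $\mathcal F$ and $\mathcal H$ prescribe $\beta_k-\alpha_k$, while $\mathcal G$ prescribes $\alpha_{k+1}-\beta_k$. The positive modes are therefore coupled along the whole sequence into the difference equation $\alpha_{k+1}-\alpha_k=(\text{data})_k$; this is not ``finitely many shifts''.
\begin{itemize}
\item The only decaying solution of this equation is given by tail sums, i.e.\ by dividing by $\lambda^6-1$ after subtracting the value at $\lambda=1$. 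This is the paper's map $\mathcal K$, and it is bounded only because $r>1$.
\item The resulting compatibility condition, in the paper's notation $\mathcal K_c(\delta G+\delta F-\sigma\,dc^+)=-da^{(0)}-db^{(0)}$, is an infinite sum over all modes of the data. This is the scalar equation that enters the finite system, not the bare $\lambda^3$-coefficient of $\mathcal G$.
\item You leave the $3\times 3$ determinant unchecked; it equals $6\sigma$ with $\sigma=\pm\sqrt2$, so it is nonzero.
\item Your fallback does not work: $a\mapsto sa$, $b\mapsto s^{-1}b$ is not a symmetry, since $\widehat X$ and $\widehat Y$ contain $a^2$ and $b^2$ separately.
\end{itemize}

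\textbf{The $\rho$ versus $(\rho^{-1})^T$ claim.} Your argument for this last claim fails outright. At $\lambda_0$ the two candidates are $2e^{\mp2\pi it}+e^{\pm4\pi it}=3-12\pi^2t^2\mp8\pi^3it^3+O(t^4)$, and these agree to second order in $t$. More generally, for any parameters the $t^2$-coefficients of $Z$ and $\tilde Z$ both equal $3(p_{12}p_{21}+p_{13}p_{31}+p_{23}p_{32})$, so $Z-\tilde Z=O(t^3)$. Hence no quantity built from $M_1'(0)$ can decide between $\rho$ and $(\rho^{-1})^T$. In addition, $p_{23}p_{32}=-4\pi^2c^2$ is even in $c$, not linear, so it cannot flip sign with $c_0$. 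The paper decides the branch from the $t^3$-coefficient $-8\pi^3 i$ of $Z$ along the solution. Obtaining that coefficient requires a genuinely third-order expansion of the monodromy, which your proposal does not supply.
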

\begin{proof}
By the definition \eqref{wh-function}, together with Lemma~\ref{Lem:initial-data}, the vanishing of
$
\widehat{\mathcal F}, \widehat{\mathcal G},$ and $ \widehat{\mathcal S}
$
near $t=0$ is equivalent to the vanishing of
$
\mathcal F,\mathcal G,$ and $ \mathcal S.
$
To prove existence,  it is therefore enough  to find $a=a(t),b=b(t),c=c(t)$ such that
\[\widehat{\mathcal F}_{t,a(t),b(t),c(t)}=\widehat{\mathcal G}_{t,a(t),b(t),c(t)}={\mathcal H}_{t,a(t),b(t),c(t)}=0\quad\text{and}\quad \widehat{\mathcal S}_{t,a(t),b(t),c(t)}=0\]
for sufficiently small $t\ge0.$ By \eqref{eq:initialeqsatisfied}, we have
\[\widehat{\mathcal F}_{0,a_0,b_0,c_0}=\widehat{\mathcal G}_{0,a_0,b_0,c_0}={\mathcal H}_{0,a_0,b_0,c_0}=0\quad\text{and}\quad \widehat{\mathcal S}_{0,a_0,b_0,c_0}=0.\]
We apply the implicit function theorem for the map $\mathcal M$ in \eqref{eq_mopro} between the corresponding Banach spaces.
As observed above, $\mathcal M$ is well-defined,  Banach-space real analytic, and satisfies
\[\mathcal M(0,a_0,b_0,c_0)=0.\]
We claim that the differential at the initial point
\[d_{(0,a_0,b_0,c_0)}\mathcal M\colon\{0\}\times\mathcal W^\tau_\R\times \mathcal W^\tau_\R\times \mathcal W^\tau_\R
\to\mathcal W^\tau_\R\times \mathcal W^\tau_\R\times \mathcal W^\tau_\R\times\R\]
restricted to the $t=0$-slice is an isomorphism. We compute, 
using \eqref{XYdoubleprimeabc} and $A=\tfrac12(X+Y)$, $B=\tfrac{i}{2}(-X+Y)$
\begin{equation}\label{eq:dAdB}
\begin{split}
\tfrac{1}{4 \pi ^2}d_{(0,a_0,b_0,c_0)}\widehat{A}&=b_0 \,da +a_0\,db +c_0\,dc \\
-\tfrac{1}{4\sqrt{3} \pi ^2}d_{(0,a_0,b_0,c_0)}\widehat{B}&=\frac{1}{\lambda^3} a_0da+ \lambda ^3 b_0db. 
\end{split}
\end{equation}
Consider the Banach space isomorphisms
\begin{equation}
\begin{split}
&\mathcal I=(\mathcal I_{0},\mathcal I_{+})\colon
\mathcal W^\tau_\R\to \R\times\mathcal W^\tau_\R;\;  x+\lambda^6 f \mapsto (x,f)\\
\mathcal K=(\mathcal K_c,\mathcal K_p)&\colon \mathcal W^\tau_\R\to \R\times\mathcal W^\tau_\R;\;
g=x+(\lambda^6-1) f\mapsto (x,f).
\end{split}
\end{equation}
Here $x=g(1)$ and $f=(g-g(1))/(\lambda^6-1)$; this is well-defined
and bounded because functions in $\mathcal W^\tau$ depend on $\lambda^6$.
Decompose (whenever necessary) 
\[da=da^{(0)}+\lambda^6da^+,\quad db=db^{(0)}+\lambda^6db^+,\quad dc=dc^{(0)}+\lambda^6dc^+ .\]
via $\mathcal I(da)=(da^{(0)},da^+),\dots$. Then, using $a_0=\tfrac{1}{\sqrt{6}}=-b_0,c_0=\pm\tfrac{1}{\sqrt{3}}\in\R$ and setting $\sigma:=\pm\sqrt{2}$, we obtain
\begin{equation}\label{eq:dFdG}
\begin{split}
\delta F:=\tfrac{\sqrt{6}}{4 \pi ^2}d_{(0,a_0,b_0,c_0)}\lambda^{-6}\widehat{\mathcal F}
=&- \,da^+ +\,db^+ + \sigma\,dc^+ \\
\delta G:=-\tfrac{\sqrt{6}}{4\sqrt{3} \pi ^2}d_{(0,a_0,b_0,c_0)}\lambda^{-3}\widehat{\mathcal G}
=&da^+-da^{(0)}-db\\
\delta H:=\tfrac12\sqrt{6}d_{(0,a_0,b_0,c_0)}\mathcal H
=&2  db-2 da-\sigma dc\\
\delta S:=\tfrac{\sqrt{6}}{4 \pi ^2}d_{(0,a_0,b_0,c_0)}\widehat{\mathcal S}
=&
-\,da(\lambda_0) +\,db(\lambda_0) +\sigma\,dc(\lambda_0). 
\end{split}
\end{equation}
We split $\delta H=\delta H^{(0)}+\lambda^6 \delta H^+$ and apply $\mathcal K$ to obtain an infinite-dimensional
\begin{equation}\label{eq:dFdG2}
\begin{split}
dc^+&=-\tfrac{1}{3\sigma}(\delta H^+-2\delta F)\\
db^+
&=\mathcal K_p(-\delta G-\delta F+ \sigma\,dc^+) \\
da^+&=- \delta F+db^+ + \sigma dc^+ \\
\end{split}
\end{equation}
and a finite-dimensional part 
\begin{equation}\label{eq:dFdG3}
\begin{split}
\delta H^{(0)}=&-2 da^{(0)}+2  db^{(0)}-\sigma dc^{(0)}\\
\mathcal K_c(\delta G+\delta F-\sigma\,dc^+ )
=& -da^{(0)}-db^{(0)}\\
\delta S-da^+(\lambda_0)+db^+(\lambda_0)+\sigma dc^+(\lambda_0)
=&
-da^{(0)}+db^{(0)} +\sigma dc^{(0)},
\end{split}
\end{equation}
where we used  $\lambda_0^6=-1$ in the last equation.
Obviously, \eqref{eq:dFdG2} has a unique solution. The coefficient matrix
of the finite-dimensional system \eqref{eq:dFdG3} has determinant $6\sigma\neq0,$
 so \eqref{eq:dFdG3} has a unique solution for any $da^+,db^+,dc^+$ determined by \eqref{eq:dFdG2}. 
This shows that the differential
\[d_{(0,a_0,b_0,c_0)}\mathcal M\colon \{0\}\times\mathcal W^\tau_\R\times \mathcal W^\tau_\R\times \mathcal W^\tau_\R\to\mathcal W^\tau_\R\times \mathcal W^\tau_\R\times \mathcal W^\tau_\R\times\R\]
is invertible (as a bounded linear map).
By applying the implicit function theorem, the existence and uniqueness result for the maps \eqref{map:abc} follows.

Since $a_0$ and $c_0$ are nonzero constants, continuity of
$
t\mapsto a(t)$ and $ t\mapsto c(t)
$
in $\mathcal W^\tau_\R$ implies, after shrinking $\epsilon$ if necessary, that
$
a(t,\lambda)\neq0,
$ and $c(t,\lambda)\neq0
$
for all $|\lambda|\le1$ and all $t\in[0,\epsilon)$.

It remains to determine the $Z$-coordinate of $\rho_{\lambda_0}$. Analogously to Lemma \ref{lem:prep} and Lemma \ref{Lem:initial-data}, one computes
for the initial data $a_0=\tfrac{1}{\sqrt{6}},b_0=-\tfrac{1}{\sqrt{6}},c_0=\tfrac{1}{\sqrt{3}}$ the Taylor expansion
\begin{equation}\label{Zexpan}Z_{t,a(t),b(t),c(t)}=3-12 \pi^2t^2-8 \pi^3 i t^3+ O(t^4).\end{equation}
Since the only two solutions of the Lawton equation $P(0,0,Z)=0$ are given by $2 e^{-2 i \pi  t}+e^{4 i \pi  t}$ and
 $2 e^{2 i \pi  t}+e^{-4 i \pi  t}$  corresponding to
the finite representations $\rho$ respectively $(\rho^{-1})^T$, 
and \eqref{Zexpan} picks the first one. Hence the claim for $c_0=\tfrac{1}{\sqrt{3}}$ follows. The case
$c_0=-\tfrac{1}{\sqrt{3}}$ works analogously.
\end{proof}

\section{New minimal Lagrangian surfaces}\label{sec:construction}

In this section we combine Theorem \ref{THM:MLrecon} with the monodromy construction of Section \ref{sec:solvingIFT} to obtain new
compact minimal Lagrangian immersions of Fermat curves. We then discuss their symmetry properties and compute their areas.

\subsection{Construction}

Consider the real analytic maps
$
a,b,c\colon [0,\epsilon)\to \mathcal W^\tau_\R
$
given by Proposition \ref{ProIFT}, corresponding to one of the two choices
$
c_0=c(0)=\pm \tfrac{1}{\sqrt{3}}
$
in Lemma \ref{Lem:initial-data}. For $t\in(0,\epsilon)$ set
\[
\eta_t:=\eta_{t,a(t),b(t),c(t)}.
\]

By Proposition \ref{pro:int-extcond}, 
the monodromy representation $\rho_\lambda$ of the family $d+\eta_t$ is unitarizable for every $\lambda\in\S^1$ and irreducible for $|\lambda|\le1$.

Now let $k\in\N$ with  $\frac{1}{k}=t<\epsilon$. Proposition \ref{ProIFT} implies that 
the monodromy representation
at the Sym point
$
\lambda_0=\exp\left(\tfrac{2\pi i}{12}\right)
$
  of the Fuchsian system $d+\eta_t(\lambda_0)$ is $\rho$ in \eqref{finite-group-rep1} or $(\rho^{-1})^T$, and 
has finite image isomorphic to $\Gamma_{deck}$.
Moreover, Lemma \ref{harmonic-family-form} shows that after the gauge transformation \eqref{gauge-Lawson-DPW} the connection 
$d+\eta_t$ becomes a meromorphic $\hat\tau$-twisted DPW-potential $d+\omega_t$ whose singularities are removable on the branched cover 
corresponding to the covering monodromy $\rho_{\lambda_0}$, and whose Higgs field is immersive.
Furthermore, the monodromies of $d+\omega_t$  are conjugate to the monodromies of
$d+\eta_t(\lambda_0)$, and hence unitarizable for all $\lambda\in\S^1.$

 Therefore, all hypotheses of Theorem \ref{THM:MLrecon} are satisfied (where we use $\hat\tau$ instead of $\tau$). 
 By 
  Proposition \ref{ProIFT}, the monodromy at $\lambda_0$ is given $\rho$ defined in \eqref{finite-group-rep1} 
  or $(\rho^{-1})^T$.
  The representations generate a group 
 $\Gamma\cong \Gamma_{deck}\subset \mathrm{Aut}(\Sigma_k)$ and both have the same kernel $\Lambda\subset \pi_1(S,z_0)$.
By
 \eqref{quotientidentity},  
 the corresponding branched covering is precisely the Fermat curve covering
\[\Sigma_k\to\Sigma_k/\Gamma_{deck}\cong\CP^1.\]  
Theorem \ref{THM:MLrecon} then yields a minimal Lagrangian immersion
$
f_k^\pm\colon \Sigma_k\to \CP^2.
$
Furthermore, the immersion is equivariant with respect to the 
action of the deck group $\Gamma_{deck}$ of the Fermat covering.

\begin{The}\label{mainT}
There exists $k_0\in\N$ such that for every $k\geq k_0$ and each choice $c(0)=c_0=\pm \frac{1}{\sqrt{3}}$ in Lemma \ref{Lem:initial-data}, there exists a minimal Lagrangian immersion
\[
f_k^\pm\colon \Sigma_k\to \CP^2
\]
from the Fermat curve $\Sigma_k$ of genus $\frac12(k-1)(k-2)$. The immersion $f_k^\pm$ is $\Gamma_{deck}$-equivariant.

If $3\nmid k$, then $f_k^\pm$ 
does not factor through a nontrivial lower-genus quotient of
$\Sigma_k$. If $k=3n$, then $f_{3n}^\pm$ factors through the unbranched quotient
$
\Sigma_{3n}\to X_n:=\Sigma_{3n}/\Z_3.
$
\end{The}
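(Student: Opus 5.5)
The existence part follows by assembling results already established; the factorization statements require a separate symmetry argument.

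\emph{Existence and equivariance.} Take $k_0>\max\{3,1/\epsilon\}$, with $\epsilon$ from Proposition~\ref{ProIFT}, and set $t=1/k$. Proposition~\ref{ProIFT} and Proposition~\ref{pro:int-extcond} give unitarizable monodromy for all $\lambda\in\S^1$ and irreducibility for $0<|\lambda|\le 1$. At $\lambda_0$ the monodromy is $\rho$ or $(\rho^{-1})^T$. Both have finite image and kernel $\Lambda$, and their local orders are $3,k,3$.

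Lemma~\ref{harmonic-family-form} converts $d+\eta_t$ into a $\hat\tau$-twisted DPW potential. This potential is immersive, $3$-removable at $0$ and $\infty$, and $k$-removable at $1$. Hence hypotheses (1)--(5) of Theorem~\ref{THM:MLrecon} hold. The covering constructed there is defined by $\ker\rho_{\lambda_0}=\Lambda$, so by the Riemann existence theorem and \eqref{quotientidentity} it is exactly $\Sigma_k\to\Sigma_k/\Gamma_{deck}$. Theorem~\ref{THM:MLrecon} then gives the $\Gamma_{deck}$-equivariant immersion $f_k^\pm$.

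\emph{Factorization.} The immersion $f$ factors through a quotient $\Sigma_k\to\Sigma_k/H$, with $H\le\Gamma_{deck}$, exactly when $f\circ h=f$ for all $h\in H$. By equivariance this means $\rho_{\lambda_0}(h)$ acts trivially on the image. Since $\rho_{\lambda_0}(h)$ is an element of $\SU_3$ acting on $\CP^2$, and the image $f(\Sigma_k)$ is a Lagrangian surface not contained in a complex line, this forces $\rho_{\lambda_0}(h)$ to be scalar, i.e.\ a cube root of unity times $\Id$.

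Conversely, suppose $h$ maps to a nontrivial central element. Then $h$ acts trivially on $f$ in $\CP^2$, but as a deck transformation it is nontrivial, so $f$ factors through $\Sigma_k/\langle h\rangle$. I expect that such an $h$ acts freely: its image is central, whereas the local monodromies have eigenvalues that are not all equal, so $h$ has no fixed points. The resulting quotient is therefore unbranched.

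\emph{Center computation.} So the key step is to compute $\Gamma\cap Z(\SU_3)$ for the group $\Gamma$ generated by $M_0,M_1$ in \eqref{eq:finite-mon} at $t=1/k$.

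Conjugating $M_1$ by $M_0$ produces the diagonal matrices $\diag(\omega^a,\omega^b,\omega^c)$ with $\omega=e^{2\pi i/k}$ and $a+b+c=0$ that are reachable. Using the relation $M_\infty M_1M_0=\Id$, I claim the diagonal subgroup is $\{\diag(\omega^a,\omega^b,\omega^{-a-b})\}\cong\Z_k\times\Z_k$. This matches $|\Gamma|=3k^2$.

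A scalar $\mu\Id$ with $\mu^3=1$ lies in this diagonal subgroup exactly when $\mu$ is a $k$-th root of unity with $\mu^3=1$. For $\mu\neq1$ this happens iff $3\mid k$.

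Non-diagonal elements of $\Gamma$ are monomial with a cyclic permutation part, so they are never scalar. Hence for $3\nmid k$ the group $\Gamma\cap Z=\{\Id\}$ and no nontrivial quotient occurs. For $k=3n$ the subgroup $\Gamma\cap Z$ is $\Z_3$, and $f$ factors through $X_n=\Sigma_{3n}/\Z_3$, which is unbranched by the freeness argument above.

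The step I expect to be the main obstacle is ruling out factorizations through quotients not coming from a subgroup of $\Gamma_{deck}$. I would handle this by arguing that any such factorization induces a holomorphic automorphism preserving $Q$ and the monodromy data, which must then lie in $\Gamma_{deck}$. The claim that $\rho_{\lambda_0}(h)$ acting trivially on the image forces it to be scalar is the same kind of argument: the image is a full-rank Lagrangian surface, not contained in a complex line.
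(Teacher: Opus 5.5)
Your proposal is correct and follows the paper's route. Existence comes from checking the hypotheses of Theorem~\ref{THM:MLrecon} via Proposition~\ref{ProIFT}, Proposition~\ref{pro:int-extcond} and Lemma~\ref{harmonic-family-form}. The factorization comes from comparing $\ker\rho_{\lambda_0}$ with the kernel of the induced projective representation into $\PSU_3$. That comparison is exactly your computation of $\Gamma\cap Z(\SU_3)$, which is trivial for $3\nmid k$ and equal to $\Z_3$ for $k=3n$. Your explicit center computation and freeness argument supply details the paper only asserts. The paper, like you, only treats quotients by subgroups of $\Gamma_{deck}$, so your closing concern about other quotients is not resolved by its proof either.
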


\begin{proof}
The existence part follows from the preceding discussion.
To prove the factorization statement, consider the induced projective representation
\[
\underline\rho_{\lambda_0}\colon \pi_1(\CP^1\setminus\{0,1,\infty\})\to \SU_3\to \PSU_3.
\]
If $3\nmid k$, then the kernel of $\underline\rho_{\lambda_0}$ agrees with the kernel of $\rho_{\lambda_0}$, and therefore the immersion does not descend to a nontrivial quotient of $\Sigma_k$.

If $k=3n$, then the central subgroup $\Z_3\subset \SU_3$ is contained in the image of $\rho_{\lambda_0}$ and becomes trivial in $\PSU_3$. Hence the kernel of $\underline\rho_{\lambda_0}$ is strictly larger than the kernel of $\rho_{\lambda_0}$. The corresponding quotient is a fixed-point-free $\Z_3$-quotient
\begin{equation}\label{eq_S3nXn}
\Sigma_{3n}\to X_n:=\Sigma_{3n}/\Z_3.
\end{equation}
By construction, the projective extrinsic closing condition is already satisfied on $X_n$, so the immersion factors through $X_n$.
\end{proof}

\begin{Cor}\label{cor:SLag}
For every $k\geq k_0$ and each sign choice $c_0=\pm \frac{1}{\sqrt{3}}$, the immersion $f_k^\pm$ admits a special Legendrian lift
$
\hat f_k^\pm\colon \Sigma_k\to \S^5.
$
\end{Cor}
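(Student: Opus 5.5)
The plan is to read off the lift directly from the construction behind Theorem~\ref{mainT}, rather than lifting $f_k^\pm$ abstractly via Proposition~\ref{pro:upanddown}. The latter only gives a lift on a threefold cover, and it is special only after choosing the phase. Theorem~\ref{THM:MLrecon} uses the branched covering $\pi\colon M\to\CP^1$ determined by $\ker(\rho_{\lambda_0})$, not by the kernel of the projectivized representation. By Proposition~\ref{ProIFT}, $\rho_{\lambda_0}$ is conjugate to $\rho$ (for $c_0=\tfrac1{\sqrt3}$) or to $(\rho^{-1})^T$ (for $c_0=-\tfrac1{\sqrt3}$). By Example~\ref{exa:finitemon} both have the same kernel $\Lambda$, so $M=\Sigma_k$ by \eqref{quotientidentity} and the Riemann existence theorem.

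First step: on $\mathring\Sigma_k=\pi^{-1}(\mathring N)$, the pulled-back unitary family $\pi^*(D_\lambda.b_\lambda)$ at $\lambda=\lambda_0$ has monodromy $\rho_{\lambda_0}|_\Lambda$. This is the identity, not merely a multiple $c\,\Id$ with $c^3=1$. Theorem~\ref{the:sing-cover} then produces the $\SU(\C^3)^{\hat\tau}$-bundle $V\to\Sigma_k$ and the connection $\nabla_\lambda$, gauge equivalent over $\mathring\Sigma_k$ via $\Upsilon$. Consequently $\nabla_{\lambda_0}$ has trivial monodromy on $\mathring\Sigma_k$.

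Second step: extend triviality across the punctures. The kernel of $\pi_1(\mathring\Sigma_k)\to\pi_1(\Sigma_k)$ is generated by loops around the removed points, and $\nabla_{\lambda_0}$ is smooth there. So the monodromy of $\nabla_{\lambda_0}$ is a representation of $\pi_1(\Sigma_k)$ factoring through the trivial one; this is the last statement of the lemma preceding \eqref{eq:monext}. Hence $\rho_{\lambda_0}=\Id$ on $\Sigma_k$.

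Third step: Lemma~\ref{lem:loop2mLag2} then gives that both $f_{\lambda_0}$ and its Legendrian lift $\hat f_{\lambda_0}$ descend to $\Sigma_k$. Immersivity ($\Phi^2\neq0$) was verified in Lemma~\ref{harmonic-family-form}. By Remark~\ref{Rem:sllift}, in the $\hat\tau$-normalization the lift $\hat f_k^\pm=F_{\lambda_0}^{-1}(e_1)$ of Corollary~\ref{recon-formula} has phase $\varphi\equiv1$, hence is special Legendrian.

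The main obstacle is the second step, making sure one really has $\rho_{\lambda_0}=\Id$ and not just projective triviality. The subtlety is whether the $\SU(W)^{\hat\tau}$-valued transition functions $h_0$ produced in Lemma~\ref{lem:apparent} could introduce a central factor $\zeta\,\Id$ around the branch points. I would check this by noting that these gluings are gauge transformations of a flat connection, which do not change the holonomy along null-homotopic loops in $\Sigma_k$. Combined with the triviality of $\rho_{\lambda_0}|_\Lambda$ on the generators of $\pi_1(\mathring\Sigma_k)$, this gives genuinely trivial monodromy. The case $k=3n$ requires no separate treatment, since we lift on $\Sigma_{3n}$ itself and not on $X_n$.
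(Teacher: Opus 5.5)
Your proposal is correct and follows essentially the paper's route. The paper's proof combines Theorem~\ref{mainT} with Theorem~\ref{thm:reconstruct1}, using that the monodromy at the Sym point is genuinely trivial on $\Sigma_k$ (i.e.\ $c\equiv 1$) and not merely projectively trivial; your argument unpacks this through the proof of Theorem~\ref{THM:MLrecon} and Lemma~\ref{lem:loop2mLag2}, and your appeal to Remark~\ref{Rem:sllift} makes explicit why the lift is special rather than only minimal Legendrian, which the paper's one-line proof leaves implicit.
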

\begin{proof}
The corollary directly follows from Theorem \ref{mainT} and Theorem \ref{thm:reconstruct1}, because the projective monodromy $c$ is trivial.
\end{proof}

\begin{Rem}
The surfaces obtained in Theorem \ref{mainT} are constructed using the implicit function theorem in the parameter $t=\tfrac{1}{k}.$ If the deformation in $t$ exists until $t=\tfrac{1}{3}$, we would obtain a minimal Lagrangian immersion from $X_1=\Sigma_3/\Z_3$ of genus 1 that exhibits the same symmetries as the minimal Lagrangian Clifford torus
in $\CP^2$. The surfaces in Theorem \ref{mainT} can be interpreted as the minimal Lagrangian counterparts of the minimal Lawson surfaces $\xi_{k,k}$ in the 3-sphere \cite{La}. Furthermore, it is reasonable to anticipate that the methods employed in \cite{CHHT} can be applied to establish that the maximal interval $(0,T)$ of existence for the implicit function theorem is relatively large, implying that we might expect 
the existence of minimal Lagrangian surfaces $f_k\colon\Sigma_k\to\CP^2$ for relatively low values of $k$. 
\end{Rem}

\subsection{Symmetries}
In this subsection we write $f_k$ for either of the two families $f_k^\pm$ whenever the statement is independent of the sign choice.
The group
$\Gamma_{deck}$ of deck transformations
acts on the minimal Lagrangian immersion $f\colon \Sigma_k\to\CP^2$ via the $\SU_3$-representation $\rho^+=\rho_{\lambda_0}=\rho$ in
\eqref{finite-group-rep1} for $c_0=\tfrac{1}{\sqrt{3}}$ or by $\rho^-=(\rho^{-1})^T$  for $c_0=-\tfrac{1}{\sqrt{3}}$. 
 Specifically, for every
 $\gamma\in\Gamma_{deck},$  
\[\gamma^*f_k^\pm=\rho^\pm(\gamma^{-1})\circ f_k^\pm.\]
For $k\geq4$, the full automorphism group of the Riemann surface $\Sigma_k$ has order $6k^2,$ and is given by the semi-direct product $\text{Aut}(\Sigma_k)=(\Z_k\times \Z_k)\rtimes S_3$. 
The additional $\mathbb Z_2$-factor  corresponds to the involution $\delta\colon \Sigma_k\to\Sigma_k$ defined by \[[x:y:z]\mapsto [\exp(-\tfrac{2\pi i}{3k})z:y:\exp(\tfrac{2\pi i}{3k})x].\] 
On $\CP^1=\Sigma_k/\Gamma_{deck}$, $\delta$ corresponds to the involution $z\mapsto z^{-1}$.
Note that $\delta$ transforms the cubic differential $Q$ of the minimal Lagrangian into its negative. Consequently, it cannot be lifted to a $\SU_3$-symmetry. On the other hand, it follows from the uniqueness part of the implicit function theorem  that $\delta$ lifts to an $\mathrm{U}_3$-symmetry $\hat\delta$ of the immersion. 
 Since $\delta^2=\Id$ on $\Sigma_k$, any ambient lift $\hat\delta\in \mathrm U_3$ induces a projective transformation of order two. Hence
$
\hat\delta^2=\zeta^m\Id
$
for some $m\in\{0,1,2\}$, and therefore
$
(\det\hat\delta)^2=\det(\hat\delta^2)=1.
$
Thus $\det\hat\delta=\pm1$. The case $\det\hat\delta=1$ would imply $\hat\delta\in \SU_3$, which is impossible since 
$\delta^*Q=-Q$. Hence $\det\hat\delta=-1$.
Since a lift is unique up to multiplication by a cube root of unity, and this does not change the determinant, the determinant of the ambient lift defines a well-defined character
\begin{equation}\label{def:dete}
\varepsilon\colon \Aut(\Sigma_k)\to \{\pm1\}\cong \Z_2.
\end{equation}

\begin{The}\label{thm:pmsym}
For all sufficiently large $k$, the full holomorphic automorphism group $\Aut(\Sigma_k)$ acts on the minimal Lagrangian immersion $f_k\colon \Sigma_k\to \CP^2$, and the associated character
$
\varepsilon\colon \Aut(\Sigma_k)\to \{\pm1\}
$
satisfies
$
\ker(\varepsilon)=(\Z_k\times \Z_k)\rtimes\Z_3=\Gamma_{deck}^{op}.
$
\end{The}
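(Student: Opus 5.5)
Since $\Aut(\Sigma_k)=(\Z_k\times\Z_k)\rtimes S_3$ is generated by $\Gamma_{deck}$ and the involution $\delta$, the plan has three steps: show that each generator acts on $f_k$ by an ambient transformation in $\mathrm U_3$, deduce that $\varepsilon$ is a well-defined homomorphism, and then compute its kernel.

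\emph{Action of $\Gamma_{deck}$.} This part is already settled. By Theorem~\ref{mainT} (equivalently Theorem~\ref{THM:MLrecon}), every $\gamma\in\Gamma_{deck}$ satisfies $\gamma^*f_k=\rho^\pm(\gamma^{-1})\circ f_k$, and $\rho^\pm(\gamma^{-1})\in\SU_3$. Hence $\varepsilon|_{\Gamma_{deck}}\equiv1$.

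\emph{Action of $\delta$.} This is the main step. On the base $\CP^1$, $\delta$ descends to $z\mapsto 1/z$, which fixes $1$ and exchanges $0$ and $\infty$. I would pull back $d+\eta_{t,a,b,c}$ by this map and look for an explicit gauge, possibly combined with a reparametrization $\lambda\mapsto\mu\lambda$ for a suitable root of unity $\mu$ (to account for $\delta^*Q=-Q$) or with the contragredient operation used in Lemma~\ref{Lem:twist-monodromy}, which returns a potential of the form $\eta_{t,\tilde a,\tilde b,\tilde c}$. Here $(\tilde a,\tilde b,\tilde c)$ should be obtained from $(a,b,c)$ by a linear substitution preserving $\mathcal W^\tau_\R$, preserving the initial data $(a_0,b_0,c_0)$, and preserving the conditions $\mathcal F=\mathcal G=\mathcal H=\mathcal S=0$.

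Given such a substitution, the uniqueness part of Proposition~\ref{ProIFT} forces $(\tilde a,\tilde b,\tilde c)=(a,b,c)$. The pulled-back DPW potential is then gauge equivalent to the original one. By the uniqueness of the Iwasawa factorization in Corollary~\ref{recon-formula}, the unitary frames $F_{\lambda_0}$ then differ by a constant matrix, so $\delta^*f_k=\hat\delta\circ f_k$ with $\hat\delta\in\mathrm U_3$.

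Finding the correct gauge and checking that the substitution preserves the initial data and the normalization $\mathcal S=A(\lambda_0)=0$ is where I expect the real work. One subtlety is that $1/z$ exchanges the loops $\gamma_0$ and $\gamma_\infty$, so the trace coordinates $X,Y$ become traces of other words. Lemma~\ref{lem:tildeZ} and Theorem~\ref{irreps} should be used to show that unitarizability along $\S^1$ and the Sym-point condition are preserved.

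\emph{$\varepsilon$ is a homomorphism.} Since $\rho_{\lambda_0}$ is irreducible and $f_k$ is full, the ambient lift of an automorphism is unique up to cube roots of unity. Composition of automorphisms corresponds to composition of lifts up to such a scalar. Since $\det(\zeta^m\Id)=1$, this makes $\varepsilon$ a well-defined homomorphism on the group generated by $\Gamma_{deck}$ and $\delta$.

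\emph{The kernel.} The subgroup generated by $\Gamma_{deck}$ and $\delta$ is all of $\Aut(\Sigma_k)$, so $\Aut(\Sigma_k)$ acts on $f_k$. The discussion before the theorem gives $\det\hat\delta=-1$, because $\delta^*Q=-Q$ rules out an $\SU_3$-lift. So $\varepsilon$ is nontrivial, and its kernel has index two and contains $\Gamma_{deck}$. Since $\Gamma_{deck}$ also has index two (order $3k^2$ in a group of order $6k^2$), the kernel equals $\Gamma_{deck}=(\Z_k\times\Z_k)\rtimes\Z_3$. The superscript $op$ only records that $\Gamma_{deck}$ acts from the right, so $\gamma\mapsto\rho(\gamma^{-1})$ is the relevant representation.
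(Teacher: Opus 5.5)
Your outline matches the paper's proof: $\Gamma_{deck}\subset\ker\varepsilon$ by Theorem~\ref{mainT}, $\varepsilon(\delta)=-1$, and $\Aut(\Sigma_k)=\Gamma_{deck}\sqcup\delta\,\Gamma_{deck}$. Like you, the paper obtains the symmetry for $\delta$ only by appealing to the uniqueness in Proposition~\ref{ProIFT}. The step that fails is one you take over from the discussion preceding the theorem: that $\delta$ is induced by some $\hat\delta\in\mathrm{U}_3$ with a well-defined determinant $-1$.

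There are two problems with it. First, a $\mathrm{U}_3$-lift of a symmetry of $f_k\colon\Sigma_k\to\CP^2$ is determined only up to a scalar in $\S^1$, not up to a cube root of unity, because scalars act trivially on $\CP^2$. So $\det\hat\delta$ is not defined. Worse, if some $\hat\delta\in\mathrm{U}_3$ induced $\delta$, then $e^{i\theta}\hat\delta$ with $e^{3i\theta}=\overline{\det\hat\delta}$ would be an $\SU_3$-lift, which you yourself rule out. Second, and more to the point, $Q=\tr(\del f_+\del f\del f_-)$ is unchanged when $f$ is replaced by $Af$ for any $A\in\mathrm{U}_3$. Hence $\delta^*Q=-Q$ shows that no holomorphic isometry of $\CP^2$ induces $\delta$ at all. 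The normalization that genuinely makes lifts unique up to $\zeta^m$, namely $\hat\delta\circ\hat f_k=\zeta^m\,\hat f_k\circ\delta$ in $\S^5$, does not help. Comparing Legendrian phases in \eqref{eq:sLeg}, and using that $\delta$ is an orientation-preserving isometry, forces $\det\hat\delta=1$ for every unitary lift.

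Your own mechanism shows what actually happens. Under $\lambda\mapsto\mu\lambda$ the Higgs field becomes $\mu^{-1}\Phi$, so $\delta^*Q=-Q$ forces $\mu^3=-1$, i.e.\ $\mu\in\xi\{1,\zeta,\zeta^2\}$. The $\zeta$-rotations act by constant unitary conjugation. Twistedness, however, gives $F_{\xi\lambda}=\hat\tau(F_\lambda)=\mathfrak{\hat a}^{-1}\overline{F_\lambda}\,\mathfrak{\hat a}$ for $\lambda\in\S^1$, hence $F_{\xi\lambda_0}^{-1}e_1=-\mathfrak{\hat a}^{-1}\,\overline{F_{\lambda_0}^{-1}e_1}$. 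The contragredient option gives the same result, since on unitary frames the contragredient is complex conjugation.

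So if your substitution argument succeeds, it yields $\delta^*\hat f_k=A\,\overline{\hat f_k}$ with $A$ constant and unitary. Your sentence that ``the unitary frames $F_{\lambda_0}$ then differ by a constant matrix'' is false once $\lambda$ has been reparametrized. Thus $\delta$ acts by an \emph{anti}holomorphic isometry. This agrees with the paper's remark that $\hat\mu$ (reflection in a real plane) and $\hat\kappa$ (reflection in a complex line) differ by an element of character $-1$.

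The repair is to set $\varepsilon(\gamma)=\pm1$ according to whether the isometry of $\CP^2$ inducing $\gamma$ is holomorphic or antiholomorphic. Equivalently, take the sign of the real determinant of its lift in $\mathrm{O}(6)$. This isometry is unique: a nontrivial isometry fixing $f_k(\Sigma_k)$ pointwise would put it inside a $\CP^1$ or an $\RP^2$. The first is excluded by the Lagrangian condition, the second by $Q\not\equiv0$ (Lemma~\ref{lem:mLag}). With this definition, $\varepsilon$ is a homomorphism, trivial on $\Gamma_{deck}$ by Theorem~\ref{mainT}, and equal to $-1$ on $\delta$ by the argument above. Your index-two count then gives the kernel.
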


\begin{proof}
By Theorem~\ref{mainT}, the immersion $f_k$ is $\Gamma_{deck}$-equivariant, hence $\Gamma_{deck}^{op}\subset \ker(\varepsilon)$. The involution $\delta$ lifts to a $\mathrm{U}_3$-symmetry with determinant $-1$, so $\delta\notin\ker(\varepsilon)$. Since
$
\Aut(\Sigma_k)=\Gamma_{deck}^{op}\sqcup \delta\Gamma_{deck}^{op},
$
the claim follows.
\end{proof}

 \subsubsection{Real involutions}
 Anti-holomorphic involutive symmetries of a minimal Lagrangian immersion $f\colon \Sigma\to \CP^2$ arise from reflections either across a totally real projective plane $\RP^2\subset \CP^2$ or across a complex projective line $\CP^1\subset \CP^2$. Such a symmetry induces an anti-holomorphic involution $\mu$ of $\Sigma$ satisfying
\[
\hat\mu\circ f=f\circ \mu.
\]
If $\gamma$ parametrizes a component of the fixed point set $\mathrm{Fix}(\mu)$, then $\gamma$ is a geodesic for the induced metric. In the totally real case the cubic differential $Q$ is real along $\gamma$, whereas in the complex-line case it is purely imaginary along
$\gamma$. Conversely, by Schwarz reflection, a geodesic arc along which $Q$ is real, respectively purely imaginary, extends to the fixed-point set of a symmetry of the corresponding type.

Consider the real involution
\[\mu\colon \Sigma_k\to\Sigma_k;\,  [x:y:z]\mapsto[\bar x:\exp(\tfrac{2\pi i}{3k})\bar y:\exp(\tfrac{4\pi i}{3k})\bar z],\]
which induces the real involution $\underline\mu$ given by
$[x,y]\mapsto [\bar y,\bar x]$
on $\CP^1=\Sigma_k/(\Z_k\times\Z_k).$
Note that the cubic differential $Q$ of the immersions constructed in Theorem \ref{mainT} is real along the fixed-point set of $\underline\mu$ because $\lambda_0^3=i$.
By the reality conditions of the potentials in Section \ref{sec:solvingIFT}, $\mu$ is induced by an extrinsic symmetry $\hat\mu$ of the minimal Lagrangian immersion $f_k$, necessarily induced by the reflection across a real plane.
Specifically,
$p_1^1=[0:1:\exp(\tfrac{\pi i}{3k})],$ $p_1^2=\sigma^{-1}(p_1^1),$ and $ p_1^3=\sigma(p_1^1)$
are fixed points of $\mu.$
Furthermore, from the holomorphic symmetries of $f_k$ we know that
$f_k(p_1^1)=[1:0:0]$, 
and similarly $f_k(p_1^2)=
[0:1:0]$
and $f_k(p_1^3)
=[0:0:1].$

Moreover,
\[\kappa\colon \Sigma_k\to\Sigma_k;\,  [x:y:z]\mapsto[\bar x:\bar z:\bar y]\]
is another real involution of $\Sigma_k$ that induces  $[x:y]\mapsto [\bar x:\bar y]$ on the quotient $\CP^1$.
It has $p_1^1$ and $q_{1,1}:=[1:1:1]$ as fixed points (both in the same component of the set of fixed points). Furthermore, $Q$ is imaginary along the fixed-point set of $\kappa$, and it lifts to a reflection $\hat\kappa$ across a complex line. By the holomorphic symmetries, $f_k(q_{1,1})=[1:1:1]$.
The two real symmetries $\mu$ and $\kappa$ are related by an element of $\Aut(\Sigma_k)$ whose character \eqref{def:dete} is $-1$.

\subsection{Area formulas}
The area of the minimal Lagrangian surfaces $f_k\colon\Sigma_k\to\CP^2$ constructed in Theorem \ref{mainT} above can be calculated using Theorem \ref{thm:area-sing}. Note that each of the three singular points $0,1,\infty\in\CP^1$ can be desingularized on a specific covering, as shown in Lemma \ref{harmonic-family-form}. We therefore only need to compute the corresponding contributions from these 3 points and multiply them by the number of preimages in $\Sigma_k$.

\subsubsection{Contribution from the poles over $z=0$}

We start with the meromorphic DPW potential \eqref{DPWpotentialomega}
$d+\omega=(d+\eta).g$. 
Here 
$\eta=R_0 \tfrac{dz}{z}+R_1\tfrac{dz}{z-1}$
with  $R_0$ and $R_1$ as in \eqref{eqR1} for $a,b,c\in\,\mathcal W^\tau_\R$ satisfying
$4ab-c^2=-1.$
For the pullback \(\pi_3^*(d+\omega)\) via
$\pi_3(w)=w^3=z,$ we can  remove the singularity at $w=0$  by applying the diagonal gauge $g_0=\text{diag}(1,w,1/w)$, see \eqref{eq:desgo} and \eqref{eq:desgoga}. Since $g_0$ is constant in $\lambda$
we  get zero contribution in \eqref{eq:en-form} from the points lying over $z=0.$
\subsubsection{Contribution from the singularities over $z=\infty$}
As in the proof of Lemma \ref{harmonic-family-form}, we pullback via $w^3=z$, and then  desingularize $d+\hat\omega=\pi_3^*(d+\omega)$ by using the gauge \eqref{eq:ginfdes}.
 A direct computation of the residue term yields
\[\tr\Res_{w=\infty}( g_\infty^{(1)}(g_\infty^{(0)})^{-1}\hat\omega^{({-1})} )=1-3  c^{(0)} t\]
where  $c^{(0)}=c(\lambda=0)$ as above .
Note that there are $k^2$ many points lying over 
$w=\infty.$

\subsubsection{Contribution from the singularities over $z=1$}
As in the proof of Lemma \ref{harmonic-family-form}, we pull back via $z=\pi_k(y)=y^k+1,$
and desingularize $d+\tilde\omega:=\pi_k^*(d+\omega)$ via \eqref{eq:gkdesinG}.

A direct computation shows that each of the
 the $3k$ points on $\Sigma_k$ 
lying over $z=1$ 
 contributes with
\[\tr\Res_{w=0}( g_k^{(1)}(g_k^{(0)})^{-1}\tilde\omega^{(-1)} )=2-2 c^{(0)} .\]

Using these preliminary calculations, we obtain:
\begin{The}\label{Thm:areas}
The area of the minimal Lagrangian immersion $f_k^\pm\colon\Sigma_k\to\CP^2$ of genus 
$g=\tfrac{1}{2}(k-1)(k-2)$ constructed in Theorem \ref{mainT} is 
\[\mathrm{Area}(f_k^\pm)=6\pi (1-c^{(0)})\, k,\]
where $c^{(0)}$ is the constant term in the $\lambda$ expansion of $c=c^{(0)}+\lambda^6 c^{(1)}+\dots$ in 
\eqref{eqR1}.
Furthermore, there is a (convergent) analytic expansion as $k\to\infty$ of the renormalized area  as
\[\tfrac{1}{k}\mathrm{Area}(f_k^\pm)=6 \pi\,(1\mp\sqrt{\tfrac{1}{3}}\,)+O(\tfrac{1}{k}),\]
where the sign depends  on the choice of the initial data $c_0=\pm\tfrac{1}{\sqrt{3}}$ in Lemma \ref{Lem:initial-data}.
\end{The}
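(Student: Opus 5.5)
The plan is to combine the area formula \eqref{eq:areaML}, $\mathrm{Area}(f)=\tfrac{2\pi}{3}\bigl(\tfrac{1}{2\pi}E(\psi)+(2-2g)\bigr)$, with the residue formula for the energy from Theorem~\ref{thm:area-sing}. The surface $\Sigma_k$ is the Fermat covering of $\CP^1$ with deck group $\Gamma_{deck}$ of order $3k^2$. Over $z=0$ and $z=\infty$ there are $k^2$ points each (branch order $2$), and over $z=1$ there are $3k$ points (branch order $k-1$). Pulling back the DPW potential $d+\omega$ of Lemma~\ref{harmonic-family-form} to $\Sigma_k$ yields a meromorphic DPW connection whose poles lie exactly over $0,1,\infty$. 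Near each such pole the local desingularizing gauges $g_0$, $g_\infty$, $g_k$ (composed with the local coordinate of the covering) are ${\bf\SL}^{\hat\tau}(\D)$-gauges that remove the singularity. Theorem~\ref{thm:area-sing} therefore applies with $\Sigma=\Sigma_k$, and the associated harmonic map $\psi$ is the flag lift of $f_k^\pm$ obtained in Theorem~\ref{THM:MLrecon}.

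The residue is a local computation and is invariant under the deck group, which acts by holomorphic automorphisms intertwining the pulled-back potential. Each point over a given branch value therefore contributes the same amount as computed in the local model. Using the preliminary calculations, the contributions are $0$ for the $k^2$ points over $z=0$ (there $g_0$ is $\lambda$-independent, so $g^{(1)}=0$), $1-3c^{(0)}t$ for the $k^2$ points over $\infty$, and $2-2c^{(0)}$ for the $3k$ points over $1$. With $t=1/k$ this gives
\[
\tfrac{1}{2\pi}E(\psi)=k^2(1-3c^{(0)}/k)+3k(2-2c^{(0)})=k^2+6k-9c^{(0)}k.
\]
Substituting $2-2g=2-(k-1)(k-2)=-k^2+3k$ then gives
\[
\mathrm{Area}=\tfrac{2\pi}{3}\bigl(9k-9c^{(0)}k\bigr)=6\pi(1-c^{(0)})k.
\]
The $k^2$ terms cancel, as they must.

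One point needs checking here: the local residue at a pole upstairs must be computed in the covering coordinate, where the pulled-back forms are $\pi_3^*\omega$ and $\pi_k^*\omega$. The local computations in the preliminary subsections were done in exactly these coordinates, so no multiplicity factor $k_j$ enters beyond counting preimages. I also need to confirm that the normalization $B_\lambda\colon\mathring\Sigma\to{\bf\SL}^{\hat\tau}(\D)$ in Theorem~\ref{thm:area-sing} is satisfied by the pulled-back unitarizing gauge $b_\lambda$ from Corollary~\ref{cor:unitarize}. This holds by the single-valuedness statement in Corollary~\ref{recon-formula}. Finally, the sign convention for residues at $w=\infty$ must match the Stokes orientation; this is where I expect the main bookkeeping risk. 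I would double-check it by verifying that the resulting area is positive and that the $k^2$ cancellation occurs.

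For the asymptotics, Proposition~\ref{ProIFT} gives $c\colon[0,\epsilon)\to\mathcal W^\tau_\R$ real analytic with $c(0)=c_0=\pm\tfrac{1}{\sqrt3}$. The evaluation $c\mapsto c^{(0)}=c(\lambda=0)$ is a bounded linear functional, so $t\mapsto c^{(0)}(t)$ is real analytic with value $\pm\tfrac1{\sqrt3}$ at $t=0$. Hence $\tfrac1k\mathrm{Area}(f_k^\pm)=6\pi(1-c^{(0)}(1/k))$ has a convergent expansion in $1/k$, with leading term $6\pi(1\mp\sqrt{1/3})$ and remainder $O(1/k)$.
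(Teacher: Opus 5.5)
Your proposal is correct and follows essentially the same route as the paper. You apply the residue formula of Theorem~\ref{thm:area-sing} on $\Sigma_k$, with local contributions $0$, $1-3c^{(0)}t$ and $2-2c^{(0)}$ from the $k^2$, $k^2$ and $3k$ preimages of $0$, $\infty$ and $1$. This gives $\tfrac{1}{2\pi}E(\psi)=2g-2+9k(1-c^{(0)})$, which you insert into \eqref{eq:areaML}, and the asymptotics then come from the real analyticity of $t\mapsto c^{(0)}(t)$ with $c^{(0)}(0)=\pm\tfrac{1}{\sqrt3}$. Your extra check of the orientation at $w=\infty$, via the required cancellation of the $k^2$ terms, is a sensible addition that the paper's local residue values pass.
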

\begin{proof}
We first apply the energy formula \eqref{eq:en-form} to the primitive harmonic map $\psi$ for the smooth solutions on
 $\Sigma_k$.
 
 The points over $z=0$ contribute 0.
There are $k^2$ points over $z=\infty$, each contributing $1-3c^{(0)}(t)t$,
and $3k$ points over $z=1$, each contributing $2-2c^{(0)}(t)$.
\begin{equation}
\begin{split}
\tfrac{1}{2\pi}E(\psi)&=k^2(1-3 c^{(0)}t)+3k(2-2 c^{(0)})=k^2+6k-9kc^{(0)}\\
&=2g-2+9k(1-c^{(0)}).
\end{split}
\end{equation}

Using the relation 
\eqref{eq:areaML} between the area of $f$ and the energy of the primitive harmonic map $\psi$
we obtain  
$\mathrm{Area}( f_k^\pm)=6\pi k(1-c^{(0)}).$ 
 
For the second part,  we observe that using $t=\tfrac{1}{k}$
\[\tfrac{1}{k}\mathrm{Area}( f_k^\pm)=6\pi(1-c^{(0)}(t))\] can be interpolated  through a real analytic function in $t\sim0.$
Using the initial data
$c_0=\pm\tfrac{1}{\sqrt{3}}$, the result follows.
\end{proof}
\begin{Cor}\label{cor:two-families}
For all sufficiently large $k$, the two immersions $f_k^+$ and $f_k^-$ are not congruent.
\end{Cor}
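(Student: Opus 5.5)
The plan is to separate $f_k^+$ from $f_k^-$ by their areas, which are invariant under ambient isometries of $\CP^2$. If the two immersions were congruent, there would be an isometry $A$ of $\CP^2$ and a diffeomorphism $\phi$ of $\Sigma_k$ with $f_k^-=A\circ f_k^+\circ\phi$. Here $A$ is either in $\PSU_3$ or is composed with complex conjugation. Every such $A$ preserves the Fubini--Study metric, so the induced metrics on $\Sigma_k$ are pulled back to each other and the induced areas agree. It is therefore enough to show $\mathrm{Area}(f_k^+)\neq\mathrm{Area}(f_k^-)$ for all large $k$.

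To do this, invoke Theorem~\ref{Thm:areas}, which gives
\[
\tfrac{1}{k}\mathrm{Area}(f_k^\pm)=6\pi\bigl(1-c^{(0)}_\pm(1/k)\bigr).
\]
Here $c^{(0)}_\pm(t)$ is the constant Fourier coefficient of $c_\pm(t)$. By Proposition~\ref{ProIFT}, $t\mapsto c_\pm(t)$ is real analytic into $\mathcal W^\tau_\R$, and evaluating the constant coefficient is a bounded linear functional. Hence $c^{(0)}_\pm(t)$ is continuous in $t$, with $c^{(0)}_\pm(0)=\pm\tfrac{1}{\sqrt3}$. It follows that
\[
\tfrac{1}{k}\bigl(\mathrm{Area}(f_k^-)-\mathrm{Area}(f_k^+)\bigr)=6\pi\bigl(c^{(0)}_+(1/k)-c^{(0)}_-(1/k)\bigr)\longrightarrow \tfrac{12\pi}{\sqrt3}\neq0 .
\]
Choose $k_1\ge k_0$ so that $1/k<\epsilon$ for both branches and the difference is at least $\tfrac{6\pi}{\sqrt3}$ for $k\ge k_1$. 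Then the areas differ for all $k\ge k_1$, and the immersions are not congruent.

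The only point that needs care is the invariance of area, in particular the case where the congruence involves an anti-holomorphic isometry or a reparametrization $\phi$ that reverses orientation. Area is unaffected in either case, because it is computed from the induced Riemannian metric and not from the complex structure. Beyond this, the argument follows directly from Theorem~\ref{Thm:areas}.
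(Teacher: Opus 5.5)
Your proposal is correct and matches the paper's proof, which likewise distinguishes $f_k^+$ from $f_k^-$ by the differing leading coefficients $6\pi\bigl(1\mp\tfrac{1}{\sqrt3}\bigr)$ of the area asymptotics in Theorem~\ref{Thm:areas}. You additionally spell out two points the paper leaves implicit: the continuity of $c^{(0)}_\pm(t)$ in $t$, and the invariance of area under all isometries of $\CP^2$ and under reparametrizations of $\Sigma_k$.
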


\begin{proof}
By Theorem \ref{Thm:areas}, their areas have different asymptotic expansions.
Hence their areas are different for all sufficiently large $k$, and the surfaces cannot be congruent.
\end{proof}

For the lower area choice of initial data $c_0=\tfrac{1}{\sqrt{3}}$, we show in
Theorem \ref{thm:expansionorder2} below
 that the first-order coefficients in the $t$-expansion of $a,b,c$ vanish. Consequently, we obtain an improved expansion for the lower area surfaces:
\[\tfrac{1}{k}\mathrm{Area}(f_k^+)=6 \pi\,(1-\sqrt{\tfrac{1}{3}}\,)+O(\tfrac{1}{k^2}).\]
With more computational effort, our methods can also be used to calculate the quadratic and higher-order terms in the
area expansion in $\tfrac{1}{k};
$  compare with \cite{CHHT}.

\subsection{Willmore functional estimates}

The Willmore functional for an immersion $f\colon \Sigma\to M$ into a Riemannian manifold $(M,g)$ is defined to be
\[\mathrm{W}(f)=\int_\Sigma(|H|^2+\bar K) dA\]
where $dA$ is the induced area form, $H$ is the mean curvature vector, and $\bar K$ is the sectional curvature of the tangent planes $\im(df)$ with respect to $g$. 
The functional plays an  important role in the geometry of surfaces and their applications in general relativity.

The Willmore functional has been studied in detail for Lagrangian surfaces in $\C^2$ (\cite{Mini})  and
 in $\CP^2$ (\cite{MU}).
By analogy with the Willmore conjecture in three-dimensional space, which has been proved by Marques and Neves \cite{MN},
 it is conjectured that the Willmore functional for surfaces in $\CP^2$ of genus $g>0$ is minimized by the Clifford torus
\[\{[x,y,z]\in\CP^2\mid |x|=|y|=|z|=\tfrac{1}{\sqrt{3}}\}.\]
If our existence theorem covered the case $t=\tfrac{1}{3}$,  the surface constructed in Theorem \ref{mainT} for $k=3$ 
would be (the 3-fold covering of) the Clifford torus.

\begin{The}
Let $\Sigma_k$ be the Fermat curve of genus $\tfrac{1}{2}(k-1)(k-2).$ 
Consider the holomorphic minimal immersion $F_k\colon\Sigma_k\to\CP^2$
and the minimal Lagrangian immersion $f_k=f_k^+\colon\Sigma_k\to\CP^2$ provided by Theorem \ref{mainT} for the initial choice $c_0=\tfrac{1}{\sqrt{3}}$. Then
\[\lim_{k\to\infty}\frac{\mathrm{W}(f_k)}{\mathrm{W}(F_k)}=\tfrac{1}{2}(3-\sqrt{3})<1.\]
\end{The}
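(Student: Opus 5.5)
The plan is to reduce both Willmore energies to areas. Then Theorem~\ref{Thm:areas} handles $f_k$, and a degree computation handles the algebraic curve $F_k$.

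Both immersions are minimal, so $H\equiv 0$ and $\mathrm{W}(f)=\int_\Sigma \bar K\,dA$. For the Fubini--Study metric in the normalization of Section~\ref{sec:MLgeneral}, the sectional curvature of a real $2$-plane $P$ with Kähler angle $\theta$ is
\[
\bar K(P)=1+3\cos^2\theta,\qquad \cos\theta=\omega(e_1,e_2),
\]
for an orthonormal basis $e_1,e_2$ of $P$. I would first confirm that this is the right normalization: holomorphic sectional curvature $4$ and totally real sectional curvature $1$. The check is the Gauss equation derived above, $G+|Q|^2_{f^*g}-1=0$. It gives $G=1$ for the totally geodesic $\RP^2$ (where $Q=0$), which pins down the scale of $g_{\CP^2}$.

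For the holomorphic curve $F_k$ we have $\cos\theta\equiv 1$, so $\mathrm{W}(F_k)=4\,\mathrm{Area}(F_k)$. For the Lagrangian $f_k$ we have $\cos\theta\equiv 0$, so $\mathrm{W}(f_k)=\mathrm{Area}(f_k)$.

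Next I compute $\mathrm{Area}(F_k)$. Because $F_k$ is holomorphic, the area form is $F_k^*\omega$, so
\[
\mathrm{Area}(F_k)=\int_{\Sigma_k}F_k^*\omega=k\int_{\CP^1}\omega .
\]
Here $\Sigma_k$ is a plane curve of degree $k$, and $\CP^1$ denotes a projective line. In curvature-$4$ normalization a projective line is a round sphere of curvature $4$, with area $\pi$. Hence $\mathrm{Area}(F_k)=\pi k$ and $\mathrm{W}(F_k)=4\pi k$. As a consistency check, Gauss--Bonnet for a line gives $\int G\,dA=4\pi$ with $G=4$.

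On the Lagrangian side, Theorem~\ref{Thm:areas} gives $\mathrm{W}(f_k)=\mathrm{Area}(f_k^+)=6\pi k(1-c^{(0)}(1/k))$. Here $c^{(0)}$ is real analytic in $t=1/k$, with $c^{(0)}(0)=c_0=1/\sqrt3$ by Proposition~\ref{ProIFT}. Therefore
\[
\frac{\mathrm{W}(f_k)}{\mathrm{W}(F_k)}=\frac{6\pi k(1-c^{(0)}(1/k))}{4\pi k}\longrightarrow \tfrac32\Bigl(1-\tfrac1{\sqrt3}\Bigr)=\tfrac12(3-\sqrt3),
\]
which is less than $1$.

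The only delicate step is keeping the normalizations consistent. The sectional curvature formula and the area $\pi$ of a line must both be taken with respect to the same metric $g_{\CP^2}=\Re\,\tr(v^\dagger w)$ that underlies the area formula \eqref{eq:areaML}. I would settle this by verifying directly that, for this metric, a projective line has Gauss curvature $4$.
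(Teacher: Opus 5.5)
Your proposal is correct and follows the paper's own proof. Both reduce the Willmore energies to areas, with $\mathrm{W}(F_k)=4\,\mathrm{Area}(F_k)$ for the holomorphic curve and $\mathrm{W}(f_k)=\mathrm{Area}(f_k)$ for the minimal Lagrangian, use $\mathrm{Area}(F_k)=\pi k$ from the degree, and take the limit in Theorem~\ref{Thm:areas}; the only difference is that you justify the two area identities directly from the K\"ahler-angle formula $\bar K=1+3\cos^2\theta$ and Wirtinger's identity, where the paper cites \cite{MU}.
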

\begin{proof}
For our normalization of the Fubini--Study metric, a holomorphic curve of degree $k$ has area $\pi k$.
Thus the Willmore energy of a holomorphic curve of degree $k$ is
\[\mathrm{W}(F_k)=4 \mathrm{Area}(F_k)=4\pi \deg(F_k)=4\pi k.\]
 By  \cite{MU}, the Willmore functional for minimal Lagrangian surfaces is also given by the area
$\mathrm{W}(f_k)=\mathrm{Area}(f_k).$
Consequently, the theorem follows from Theorem \ref{Thm:areas}.
\end{proof}

Let $k=3n.$
Consider the unbranched 3-fold covering $\Sigma_{3n}\to X_n$ in \eqref{eq_S3nXn}.
By Riemann--Hurwitz, the genus of $X_n$ is given by 
$g(X_n)=\tfrac{1}{2}(3n^2-3n+2)\leq\tfrac{1}{2}(3n-1)(3n-2)=g(\Sigma_{3n})$
with equality if and only if $n=1$. In that case the genus of both surfaces is 1.
The following lemma shows that there are infinitely many Fermat curves $\Sigma_{\tilde k}$ of the same genus as $X_n=\Sigma_{3n}/\Z_3$ for suitable $\tilde k,n\in \N.$
\begin{Lem}
There are infinitely many pairs
$(\tilde k,n)\in\N$ with
$3n^2-3n+2=(\tilde k-1)(\tilde k-2).$
\end{Lem}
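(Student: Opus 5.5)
The plan is to reduce the equation to a Pell-type equation and generate infinitely many solutions from one seed solution using the unit group of $\Z[\sqrt3]$.

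\emph{Step 1: reduction.} Multiply $3n^2-3n+2=(\tilde k-1)(\tilde k-2)$ by $4$ and complete squares. The right side becomes $4\tilde k^2-12\tilde k+8=(2\tilde k-3)^2-1$. The left side becomes $12n^2-12n+8=3(2n-1)^2+5$. With $u:=2\tilde k-3$ and $v:=2n-1$, the equation is therefore equivalent to
\[
u^2-3v^2=6 .
\]
Conversely, any solution of this equation with $u,v$ odd and positive gives integers $\tilde k=(u+3)/2$ and $n=(v+1)/2$ solving the original equation, with $\tilde k,n\in\N$.

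\emph{Step 2: a seed and the unit action.} The pair $(u,v)=(3,1)$ solves $u^2-3v^2=6$; it corresponds to $(\tilde k,n)=(3,1)$, the genus-one case noted above. The unit $2+\sqrt3$ has norm $1$. Multiplication by it in $\Z[\sqrt3]$ therefore preserves the norm form, giving the map
\[
(u,v)\mapsto (2u+3v,\;u+2v),
\]
which sends solutions to solutions. If $u,v$ are odd, then $2u+3v$ and $u+2v$ are again odd, so the parity condition is preserved. If $u,v>0$, both new coordinates are strictly larger, so iterating from $(3,1)$ yields infinitely many distinct positive odd solutions.

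The first iterate is $(9,5)$, i.e.\ $(\tilde k,n)=(6,3)$. As a sanity check, $3\cdot 9-9+2=20=5\cdot 4$. Translating back through Step 1 gives infinitely many pairs $(\tilde k,n)$.

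There is no real obstacle here. The only point needing care is the parity bookkeeping in Step 2, which ensures that the solutions $(u,v)$ actually come from integers $\tilde k$ and $n$.
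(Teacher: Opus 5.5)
Your proof is correct and follows the paper's argument: the substitution $u=2\tilde k-3$, $v=2n-1$ reduces the equation to $u^2-3v^2=6$, the seed is $(3,1)$, and new solutions come from multiplying by units of $\Z[\sqrt3]$. The one difference is that you use the fundamental unit $2+\sqrt3$, with an explicit parity check, where the paper uses $7+4\sqrt3=(2+\sqrt3)^2$; this gives you the paper's solutions together with the interleaved ones such as $(\tilde k,n)=(6,3)$.
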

\begin{proof}
The equation 
is equivalent to
$
(2\tilde k-3)^2-3(2n-1)^2=6.
$
This is a Pell-type equation in $\mathbb Z[\sqrt3]$. Starting from the solution
$
(\tilde k,n)=(3,1),
$
and multiplying $2\tilde k-3+(2n-1)\sqrt3$ by powers of $7+4\sqrt3$, one obtains infinitely many integer solutions. 
\end{proof}

We denote by $\underline f_n\colon X_n\to \CP^2$ the map such that $f_{3n}=\underline f_n\circ\pi$ for $\pi\colon\Sigma_{3n}\to\Sigma_{3n}/\mathbb Z_3=X_n.$
We compare the Willmore energies of these surfaces for the same (large) genus.
\begin{The}
Let $\epsilon>0$. 
There exists $N_0\in\N$ such that for all
 $(\tilde k,n)$ with $n>N_0$ and
  $3n^2-3n+2=(\tilde k-1)(\tilde k-2)$ one has
 $\mathrm{W}(f_{\tilde k})> (\sqrt{3}-\epsilon)\mathrm{W}(\underline f_n).  $
\end{The}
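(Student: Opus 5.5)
The plan is to reduce the comparison to the area formula of Theorem~\ref{Thm:areas}, since for minimal Lagrangian surfaces in $\CP^2$ the Willmore functional equals the area by \cite{MU}. I take both surfaces from the same family (say $c_0=\tfrac{1}{\sqrt3}$; the argument is identical for the other sign). Then
\[
\mathrm{W}(f_{\tilde k})=\mathrm{Area}(f_{\tilde k})=6\pi\bigl(1-c^{(0)}(\tfrac1{\tilde k})\bigr)\tilde k .
\]
Since $f_{3n}=\underline f_n\circ\pi$ with $\pi\colon\Sigma_{3n}\to X_n$ an unbranched $3$-fold covering, the area of $\underline f_n$ is one third of that of $f_{3n}$. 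Hence
\[
\mathrm{W}(\underline f_n)=\tfrac13\,\mathrm{Area}(f_{3n})=6\pi\bigl(1-c^{(0)}(\tfrac1{3n})\bigr)\,n .
\]
For these formulas we need $\tilde k\ge k_0$ and $3n\ge k_0$, so that Theorem~\ref{mainT} applies. This holds once $n$ is large, because $\tilde k$ grows with $n$ by the Pell relation below.

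Next I form the quotient
\[
\frac{\mathrm{W}(f_{\tilde k})}{\mathrm{W}(\underline f_n)}=\frac{\tilde k}{n}\cdot\frac{1-c^{(0)}(1/\tilde k)}{1-c^{(0)}(1/(3n))}
\]
and treat the two factors separately. For the first factor, the relation $3n^2-3n+2=(\tilde k-1)(\tilde k-2)$ is equivalent to $(2\tilde k-3)^2=3(2n-1)^2+6$, as in the preceding lemma. Therefore
\[
\frac{2\tilde k-3}{2n-1}=\sqrt{3+\tfrac{6}{(2n-1)^2}}\;\longrightarrow\;\sqrt3 ,
\]
and so $\tilde k/n\to\sqrt3$ as $n\to\infty$ (and $\tilde k\to\infty$).

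For the second factor, $t\mapsto c^{(0)}(t)=c(t)(0)$ is real analytic, hence continuous, on $[0,\epsilon)$. This follows from Proposition~\ref{ProIFT}, since evaluation at $\lambda=0$ is a bounded functional on $\mathcal W^\tau_\R$. Its limit is $c^{(0)}(0)=c_0=\pm\tfrac1{\sqrt3}$, so $1-c_0=1\mp\tfrac1{\sqrt3}>0$. Both $1/\tilde k$ and $1/(3n)$ tend to $0$, so numerator and denominator converge to the same positive number and the second factor tends to $1$. Altogether the quotient tends to $\sqrt3$, and for any $\epsilon>0$ there is $N_0$ with quotient $>\sqrt3-\epsilon$ for all $n>N_0$.

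There is no real obstacle here. The only points needing care are that the denominator $1-c^{(0)}$ stays bounded away from zero, which holds because $|c_0|<1$, and that both surfaces exist for the pairs considered, which follows from $\tilde k, 3n\to\infty$.
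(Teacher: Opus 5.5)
Your proof is correct and follows the paper's route. The paper likewise combines the area formula of Theorem~\ref{Thm:areas} (Willmore equals area for minimal Lagrangians), the relation $3\mathrm{W}(\underline f_n)=\mathrm{W}(f_{3n})$ from the unbranched cover, and $\tilde k/n\to\sqrt3$; you additionally spell out that the ratio of the factors $1-c^{(0)}$ tends to $1$. Your relation $(2\tilde k-3)^2=3(2n-1)^2+6$ is also the right one: the paper's closed form for $\tilde k$, with $13$ under the square root, should read $9$, though this does not affect the limit.
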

\begin{proof}
This follows from Theorem \ref{Thm:areas} together with the  observation that
$3\mathrm{W}(\underline f_n)=\mathrm{W}(f_{3n})$
and the relation
$\tilde k=\frac{1}{2} \left(\sqrt{12 n^2-12 n+13}+3\right)$, which implies
$\lim_{n\to\infty} \tfrac{\tilde k}{n}=\sqrt{3}.$
\end{proof}

\begin{Rem}
Note that for both minimal Lagrangian surfaces and holomorphic curves, the modified Willmore functional  $\mathrm{W}^-$ (see \cite{MU} for details)  coincides with the area. 
For $k$ large, the area of $f_k$ is approximately $6 \left(1-\sqrt{\frac{1}{3}}\right)>1$ times the area of the holomorphic  Fermat curve $F_k$. Of course, the holomorphic immersion and the minimal Lagrangian immersion
 lie in different homotopy classes.
Similarly, when $k=3n$ is large,  the area of $\underline f_n$ exceeds -- by roughly the factor 
$2\sqrt{3}\left(1-\sqrt{\frac{1}{3}}\right)>1$ -- the area of the Fermat curve $F_{\tilde k}$ of the corresponding genus. Our computer experiments suggest that this is true for all $(\tilde k,n)$
 satisfying $3n^2-3n+2=(\tilde k-1)(\tilde k-2)$, with a single exception occurring for the Clifford torus.
\end{Rem}

\section{Embeddedness of the special Legendrian surfaces}\label{sec:em}
In this section, we investigate the special Legendrian lifts (Corollary \ref{cor:SLag}) of the minimal Lagrangian surfaces constructed in Theorem \ref{mainT}. 
Topological obstructions prevent these minimal Lagrangian surfaces from being embedded; see for example \cite{Zhang}. We show, however, that the special Legendrian lifts $\hat f_k^+$ for the initial condition $c_0=\tfrac{1}{\sqrt{3}}$
are embedded minimal surfaces in $\S^5$.
We expect the second family of special Legendrian surfaces $\hat f_k^-$
to have self-intersections.

 As a byproduct of our analysis, we obtain a detailed geometric description of the shape of the surfaces $ \hat f_k:=\hat f_k^+$ for large $k$.
In the limit $k \to \infty,$ the geometry of $\hat f_k$ resembles that of the Lawson minimal surfaces $\xi_{k,k}$ in $\S^3$: these arise as desingularizations of infinitely many totally geodesic discs foliating the interior and exterior of the Clifford torus and meeting orthogonally; compare with Figure \ref{Fig:lim}. 
In our setting, the blow-up of the tangent space at an intersection point yields a minimal Lagrangian surface in $\mathbb C^2,$ analogous to the classical doubly periodic Scherk surface
for $\xi_{k,k}$; see Theorem \ref{the:scherk}.

 \subsection*{Strategy}
 We begin by outlining the overall strategy of the proof. The key observation is that the asymptotic behaviour of the DPW potential enables the computation of the limiting surface via explicit expansion of the loop group factorization in $t$. In order to obtain sufficient geometric control it is necessary to compute the DPW potential to second order in the parameter $t.$ This refinement is achieved by explicitly solving the monodromy problem of Section \ref{sec:solvingIFT}
 to one additional order, with the monodromies themselves expressed via iterated integrals; see Section \ref{ssec:mon-exp}.

In the limit $k \to \infty$ corresponding to $t\to0,$ two fundamentally distinct asymptotic regimes emerge. The first occurs on compact subsets of the thrice-punctured sphere $\Sigma_k/(\Z_k\times\Z_k)$, where each of the three singularities has order $k$. In this region, we show that the surfaces converge to Scherk-type minimal Lagrangian surfaces in the complex plane, identified with a horizontal subspace of the Hopf fibration. These limit surfaces possess infinite topological type, although their embeddedness can be established without difficulty; see Section \ref{ssec:scherk} for details.
The second regime concerns the behaviour near the three singular points on $\Sigma_k/(\Z_k\times\Z_k)$. We show in Theorem \ref{The:limit-surfacerp2}, that, in neighbourhoods of each singularity, the surfaces converge to a spherical cap of radius $\arctan(\sqrt{2})$, in precise agreement with the area asymptotics in Theorem \ref{Thm:areas}.

The principal analytical effort in establishing embeddedness lies in controlling the transition between these two asymptotic regimes, as well as the relative positioning of the spherical caps at higher order in $t.$ This analysis is carried out in Section \ref{ssec:emb}, relying on several preparatory estimates established earlier in the section, e.g., Proposition \ref{pro:uniform}, Lemma \ref{lem:sphericallemmaT}, and Theorem \ref{The:limit-surfacerp2T}.

\begin{figure}[ht]\centering
\includegraphics[width=0.5\textwidth]{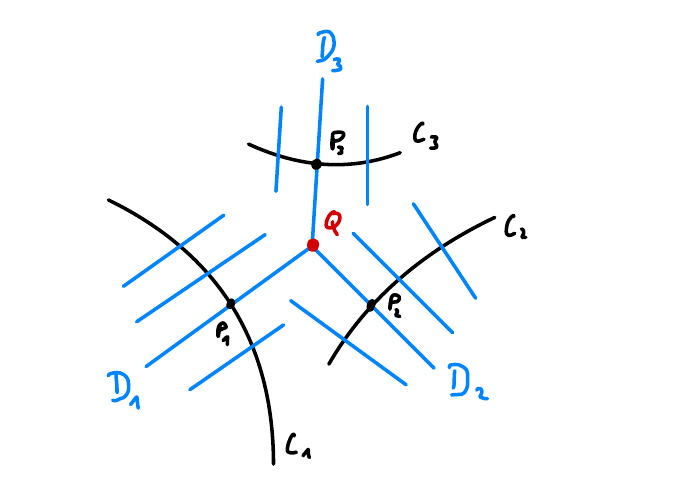}
\includegraphics[width=0.419\textwidth]{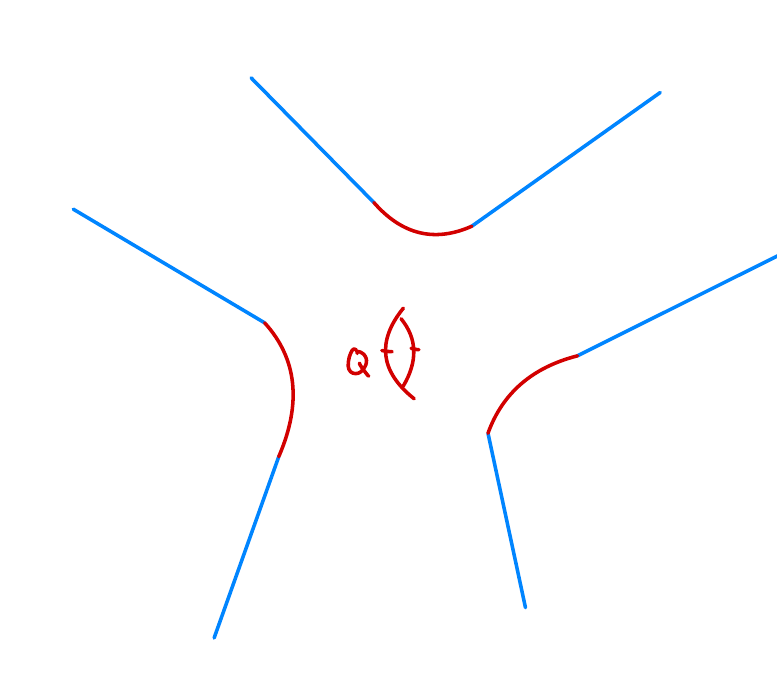}
\caption{Three {\color{blue} spherical caps} with centers $P_j$ along the coordinate circles $C_{j}$, respectively,  meet at one {\color{red} point $Q$} which lies on the Clifford torus.
At the  {\color{red} points $Q$} on the Clifford torus, the surface becomes smooth after a blow-up. It is a doubly periodic minimal Lagrangian surface in $\C^2$ analogous to the doubly periodic Scherk surface, and has three asymptotically 
{\color{blue} planar ends}.}
\label{Fig:lim}
\end{figure}

\subsection{Computation of the parameters to first order}\label{ssec:mon-exp}
The special Legendrian surface $\hat f_k$ is constructed via loop group factorization from a twisted potential $d+\omega$. The potential $\omega$ itself is determined by $t=\tfrac{1}{k}$ and $a(t),b(t),c(t)\in \mathcal{W}^\tau_\R$, where $a(t),b(t)$ and $c(t)$ are obtained in Proposition \ref{ProIFT}
via the implicit function theorem. 
In particular, the functions $a,b,c\colon [0,\epsilon)\to\mathcal W^\tau_\R$ are real analytic in $t$, and their values at $t=0$ are
$a_0=\tfrac{1}{\sqrt{6}}, $ $b_0=-\tfrac{1}{\sqrt{6}},$ and $c_0=\tfrac{1}{\sqrt{3}}.$
We now compute their first-order derivatives at $t=0$, by expanding the solution given by 
the implicit function.
We do the computations on the 3-fold covering 
\[\pi_3\colon \CP^1\to \CP^1, z\mapsto z^3.\]
We start with the potential $d+\eta:=d+\eta_{t,a_0,b_0,c_0}$ defined in \eqref{Lawson-type-potential}, i.e.
we do not use the potential $d+\eta_{t,a(t),b(t),c(t)}$ at the moment.
The pullback of $d + \eta$ by 
$\pi_3$ followed by the desingularization gauge
$\tilde g=\mathrm{diag}(1,z,z^{-1})$
yields
\[\pi_3^*(d+\eta).\tilde g=d+t\,(Q_1\frac{dz}{z-1}+Q_2\frac{dz}{z-\zeta}+Q_3\frac{dz}{z-\zeta^{2}}),\]
where as before we have 
$\zeta=\exp(\tfrac{2\pi i}{3}) $. Setting $\hat\zeta=\text{diag}(1,\zeta,\zeta^2)$ we have
\begin{equation*}
Q_1= 
\begin{psmallmatrix}
 0 & \frac{p }{\lambda ^3} & \frac{q }{\lambda ^3} \\
 -p  & 0 & ic  \\
 -q  & -ic  & 0 \\
\end{psmallmatrix},
\qquad Q_{2}= \hat\zeta^{-1}Q_1\hat\zeta \, , \qquad Q_3=\hat\zeta^{-1}Q_2\hat\zeta
\\
\end{equation*}
for
$p=\tfrac{1}{\sqrt{6}}(1-\lambda^3)$
and
$q=-i\, \tfrac{1}{\sqrt{6}} (1+\lambda^3).$

For computational convenience, we apply another coordinate transformation. Set 
\begin{equation*}\label{eq:nondpw}\hat\eta:=-Q_1\tfrac{dz}{1-z}+Q_3\tfrac{dz}{z}\end{equation*}
with residue $Q_2$ at $z=\infty.$ Thus,
the connection $d+t\hat \eta$ is the pullback of the $\Z_3$-equivariant connection
\[d+t\,\sum_jQ_j \tfrac{dz}{z-\zeta^{j-1}}\]
under the M{\"o}bius transformation 
\begin{equation}\label{mobius}
z\mapsto\tfrac{2 i z-i+\sqrt{3}}{\left(\sqrt{3}-i\right) z+2 i}
\end{equation}
which maps
$0$ to $\zeta^2$, $1$ to $1$, and $\infty$ to $\zeta.$
Our new basepoint $q_0$ is therefore
\[q_0=\tfrac{1}{2} \left(1+i \sqrt{3}\right),\]
which is mapped under the M{\"o}bius transformation to $0$.
All logarithms below use the principal branch (real on $(0,\infty)$), and all dilogarithms $\mathrm{Li}_2$
use the standard branch with cut $[1,\infty)$; this fixes the monodromy signs unambiguously.

In order to determine the first-order terms in the expansion of
the real analytic functions $a,b,c$ at $t=0$
we first consider a solution
\[\tilde\Psi=\tilde\Psi_0+ \tilde\Psi_1t+\tilde\Psi_2t^2+\dots\]
of the linear ODE
\[d\tilde\Psi+ t \hat\eta\tilde\Psi=0\qquad\text{
with initial condition}
\qquad\tilde\Psi(\tfrac{1}{2})=\Id.\]
The choice of the basepoint $\frac{1}{2}$ is made in order to apply well-known results about the monodromies of iterated integrals of the (poly)logarithm; see \cite{Hain}. To preempt any possible confusion, we remark this is neither the same as nor related to the basepoint $\frac{1}{2}$ on the furthest
quotient thrice-punctured sphere where the relative character variety was taken.

Expanding the linear ODE in $t$ gives
\begin{equation*}
\begin{split}
0&=d\tilde\Psi_0,\qquad
0=d\tilde\Psi_1+\hat\eta\tilde\Psi_0,\qquad
0=d\tilde\Psi_2+\hat\eta\tilde\Psi_1,\qquad
\dots
\end{split}
\end{equation*}
Hence,
$\tilde\Psi_0=\Id,$
and using the principal branch of the logarithm, which is real on the positive real axis,
\begin{equation}\label{eq:tildePsi1}\tilde\Psi_1=-\int_{\tfrac{1}{2}}^z\hat\eta=-Q_1\log(1-z)-Q_3\log(z)-(Q_1+Q_3)\log(2)\end{equation}
as well as
\begin{equation*}
\begin{split}
\tilde\Psi_2=&-\int_{\tfrac{1}{2}}^z \hat\eta\tilde\Psi_1\\
=&\int_{\tfrac{1}{2}}^z (-Q_1\tfrac{dz}{1-z}+Q_3\tfrac{dz}{z})(Q_1\log(1-z)+Q_3\log(z))+\int_{\tfrac{1}{2}}^z (-Q_1\tfrac{dz}{1-z}+Q_3\tfrac{dz}{z})(Q_1+Q_3)\log(2)\\
=&\tfrac{1}{2}Q_1^2\log^2(1-z)-Q_3Q_1\Li2(z)-Q_1Q_3\Li2(1-z) +\tfrac{1}{2}Q_3^2\log^2(z)+\tfrac{1}{2}(Q_1^2+Q_3^2)\log^2(2)\\&-(Q_1Q_3+Q_3Q_1)\Li2(\tfrac{1}{2})+(Q_1\log(1-z)+Q_3 \log(z))(Q_1+Q_3)\log(2)\\&-(Q_1+Q_3 )(Q_1+Q_3)\log^2(2)\\
\end{split}
\end{equation*}
where \[\Li2(\tfrac{1}{2})=\tfrac{1}{12} \left(\pi ^2-6 \log ^2(2)\right)\qquad\text{
and}\qquad\li_1(z)=-\log(1-z),\quad d\li_k(z)=\tfrac{dz}{z}\li_{k-1}(z).\]

The monodromies along the loop 
\[\tilde\gamma_1\colon [0,1]\to\C\setminus\{0,1\};\,s\mapsto 1-\tfrac{1}{2}\exp(2\pi i s)\]
based at $\frac{1}{2}$ of the involved functions are given by (see e.g. \cite{Hain})
\begin{equation*}
\begin{split}
\tilde\gamma_1^*\log(z)&=\log(z)\\
\tilde\gamma_1^*\log^2(z)&=\log^2(z)\\
\tilde\gamma_1^*\Li2(1-z)&=\Li2(1-z)\\
\tilde\gamma_1^*\log(1-z)&=\log(1-z)+ 2 \pi i\\
\tilde\gamma_1^*\log^2(1-z)&=\log^2(1-z)+4 \pi i \log(1-z)-4 \pi^2\\
\tilde\gamma_1^*\Li2(z)&=\Li2(z)-2 \pi i\log(z).\\
\end{split}
\end{equation*}
From  this we obtain
\begin{equation*}
\begin{split}
\tilde\gamma_1^*\tilde\Psi_1=\tilde \Psi_1-2\pi iQ_1
\end{split}
\end{equation*}
and
\begin{equation*}
\begin{split}
\tilde\gamma_1^*\tilde\Psi_2=\dots
=&\tilde\Psi_{2}+\tilde\Psi_1(-2\pi iQ_1)-2 \pi^2 Q_1^2+ 2\pi i[Q_1,Q_3]\log(2).\\
\end{split}
\end{equation*}
Therefore 
\begin{equation*}
\begin{split}
\tilde\gamma_1^*\tilde\Psi=&\tilde\Psi(1-2 \pi iQ_1 t+(-2 \pi^2 Q_1^2+ 2\pi i[Q_1,Q_3]\log(2))t^2+\dots)
\end{split}
\end{equation*}
and hence the expansion of the monodromy along $\tilde\gamma_1$ based at $\tfrac{1}{2}$ is
\[M(\tilde\gamma_1)=1-2 \pi iQ_1 t+(-2 \pi^2 Q_1^2+ 2\pi i[Q_1,Q_3]\log(2))t^2+\dots ).\]

To compute the monodromy based at $q_0$,
we expand the parallel transport
from $\tfrac{1}{2}$ to $q_0$, i.e., we consider the end point of the parallel transport $P$ along the vertical line from $\tfrac{1}{2}$ to
$q_0=\tfrac{1+i\sqrt{3}}{2}$:
\begin{equation*}
\begin{split}
P:=&\tilde\Psi(q_0)
=\Id+ t\tilde\Psi_1(q_0)+t^2\tilde\Psi_2(q_0)+\dots.
\end{split}
\end{equation*}

Thus
\[\Psi=\tilde\Psi P^{-1}\]
is a solution of
$d\Psi+\hat\eta\Psi=0$
with initial value
$\Psi(q_0)=\Id,$
and we obtain for the monodromy along $\tilde\gamma_1$ (based at $q_0$ and winding around the singular point $z_0=1$):
\begin{equation*}
\begin{split}
M(\tilde\gamma_1)=&PM(\tilde\gamma_1) P^{-1}\\
=&(\Id+t\tilde\Psi_1(q_0)+\dots)(\Id-2 \pi i Q_1 t+t^2(-2 \pi^2 Q_1^2+ 2\pi i[Q_1,Q_3]\log(2))+\dots)(\Id-t\tilde\Psi_1(q_0)+\dots)\\
=&\Id- 2\pi i Q_1 t+t^2(-2 \pi^2 Q_1^2+ 2\pi i[Q_1,Q_3]\log(2)- 2\pi i [\tilde\Psi_1(q_0),Q_1])+O(t^3)
\end{split}
\end{equation*}
where we have by \eqref{eq:tildePsi1} that
\[\tilde\Psi_1(q_0)=(Q_1-Q_3)\tfrac{\pi i}{3}-(Q_1+Q_3)\log(2).\]

Plugging in the values of $p,q,c$ at $t=0$, we obtain the expansion
\begin{equation*}\label{eq:mon-expq0}
\begin{split}
M_t(\tilde\gamma_1)^{-1}\frac{d}{dt}M_t(\tilde\gamma_1)=& 
\begin{psmallmatrix}
 0 & \frac{i \sqrt{\frac{2}{3}} \pi  \left(\lambda ^3-1\right)}{\lambda ^3} & -\frac{\sqrt{\frac{2}{3}} \pi  \left(\lambda ^3+1\right)}{\lambda ^3} \\
 -i \sqrt{\frac{2}{3}} \pi  \left(\lambda ^3-1\right) & 0 & \frac{2 \pi }{\sqrt{3}} \\
 \sqrt{\frac{2}{3}} \pi  \left(\lambda ^3+1\right) & -\frac{2 \pi }{\sqrt{3}} & 0 \\
\end{psmallmatrix}\\
&+ t\begin{psmallmatrix}
 -\frac{4 i \pi ^2 \left(\lambda\,^6+1\right)}{3 \sqrt{3} \lambda ^3} & 0 & 0 \\
 0 & \frac{2 i \pi ^2 \left(\lambda\,^6+1\right)}{3 \sqrt{3} \lambda ^3} & 0 \\
 0 & 0 & \frac{2 i \pi ^2 \left(\lambda\,^6+1\right)}{3 \sqrt{3} \lambda ^3} \\
\end{psmallmatrix}\;+ O(t^2),
\end{split}
\end{equation*}
which is unitary to second order, and satisfies the extrinsic closing condition to second order.
From these computations, we are able to deduce the following theorem.
\begin{The}\label{thm:expansionorder2}
The linear terms of the expansion in $t$ of $a,b,c$ of the monodromy problem vanish, i.e., $a'(0)=b'(0)=c'(0)=0$. The positive unitarizer 
$R\in {\bf\SL}(\mathbb{D})$ at $q_0$ can be chosen to be diagonal with positive real entries at $\lambda=0.$ Moreover, $R(t)$ is uniquely
determined by these properties,
and its Taylor expansion at $t=0$ is
$R(t)=\Id+O(t^2).$
\end{The}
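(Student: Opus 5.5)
The plan is to read off the first-order terms of $a,b,c$ from the linearisation of the implicit function problem in Proposition~\ref{ProIFT}. Differentiating $\mathcal M(t,a(t),b(t),c(t))=0$ at $t=0$ gives
\[
d_{(0,a_0,b_0,c_0)}\mathcal M\,(0,a'(0),b'(0),c'(0))=-\partial_t\mathcal M(0,a_0,b_0,c_0).
\]
The restriction of $d_{(0,a_0,b_0,c_0)}\mathcal M$ to the $t=0$ slice was shown to be invertible in the proof of Proposition~\ref{ProIFT}. So it suffices to prove $\partial_t\mathcal M(0,a_0,b_0,c_0)=0$, that is, the constant initial data solve the problem to one order higher than Lemma~\ref{Lem:initial-data} guarantees. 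The component $\mathcal H$ does not depend on $t$, so nothing is needed there. For $\widehat{\mathcal F},\widehat{\mathcal G},\widehat{\mathcal S}$ we need the $t^3$-coefficients $X_3,Y_3$ of $X_{t,a_0,b_0,c_0}$ and $Y_{t,a_0,b_0,c_0}$, and we must show three things: $A_3$ and $B_3$ are real on $\S^1$, and $A_3(\lambda_0)=0$.

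To compute $X_3,Y_3$, I would use the expansion of $M_t(\tilde\gamma_1)$ at the base point $q_0$ derived above on the threefold cover. By the $\Z_3$-equivariance, the monodromies around $1,\zeta,\zeta^2$ are conjugates of $M_t(\tilde\gamma_1)$ by $\hat\zeta$. The traces $X=\tr(M_1M_0^{-1})$ and $Y$ of the quotient are traces of words in these monodromies together with the constant gauge $\tilde g$ and the deck action. I would express $X,Y$ through the logarithmic derivative $M_t^{-1}\tfrac{d}{dt}M_t$. Since $X(0)=X'(0)=0$ by Lemma~\ref{lem:prep}, the cubic term of $X$ is bilinear in the zeroth- and first-order coefficients of $M_t^{-1}\tfrac{d}{dt}M_t$. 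The zeroth-order coefficient reproduces $\widehat X_0$ of Lemma~\ref{Lem:initial-data}. The first-order coefficient is diagonal and a multiple of $i(\lambda^6+1)/\lambda^3$. This function is purely imaginary on $\S^1$, so the coefficient is skew-Hermitian there, and it vanishes at $\lambda_0$ because $\lambda_0^6=-1$.

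From these two facts, $X_3$ and $Y_3$ should satisfy $\overline{X_3}=Y_3$ on $\S^1$, which gives $(A_3-A_3^*)^+=(B_3-B_3^*)^+=0$. They should also give $A_3(\lambda_0)=0$. Hence $\partial_t\mathcal M=0$, and invertibility yields $a'(0)=b'(0)=c'(0)=0$.

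For the unitarizer, since the linear terms vanish, the monodromy at $q_0$ of the actual solution agrees with $M_t(\tilde\gamma_1)$ up to $O(t^3)$. Write $M=\Id+tA_1+t^2A_2+O(t^3)$; from the displayed expansion, $A_1$ is skew-Hermitian on $\S^1$, and $A_2-\tfrac12A_1^2$ is skew-Hermitian there as well. Next, make the ansatz $R=\Id+tR_1+t^2R_2+\dots$ with $R_j$ positive loops and diagonal positive real normalisation at $\lambda=0$. Unitarity of $RMR^{-1}$ at order $t^2$ forces the Hermitian part of $[R_1,A_1]$ to vanish on $\S^1$, and the same must hold for the analogous conditions on the other generators $\hat\zeta^{-1}A_1\hat\zeta$.

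This is where I expect the main obstacle. At $t=0$ the monodromy is trivial, so the uniqueness argument of Lemma~\ref{lem:tau-unitarize} does not apply directly. Instead I would show that the Lie algebra generated by $A_1$ and its $\hat\zeta$-conjugates is irreducible for every $\lambda\in\S^1$; for the explicit entries, which are nonvanishing for $|\lambda|=1$, this holds. Then a Hermitian $H_1=R_1+R_1^\dagger$ commuting, up to skew terms, with all these generators must be scalar, hence zero by the $\SL$ condition. The positive-loop normalisation then forces $R_1=0$, so $R$ is unique with $R=\Id+O(t^2)$. For $t>0$, uniqueness follows from the irreducibility in Proposition~\ref{pro:int-extcond}.
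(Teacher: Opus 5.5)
Your reduction of the first claim is sound, and it is a different route from the paper's. You differentiate $\mathcal M(t,a(t),b(t),c(t))=0$ at $t=0$ and use the invertibility of the $t=0$-slice differential from Proposition~\ref{ProIFT}. This shows that $a'(0)=b'(0)=c'(0)=0$ is equivalent to $\partial_t\mathcal M(0,a_0,b_0,c_0)=0$, i.e.\ to properties of the $t^3$-coefficients $X_3,Y_3$ for the constant data. The paper instead inserts the unknowns $\dot p,\dot q,\dot c$ and the first-order unitarizer entries $d_1,d_2$ into \eqref{eq:mon-expq0primes}. It then eliminates them using ${\bf su}_3^{\hat\tau}(\S^1)$-membership, commuting residues at the Sym point, and the eigenvalue condition. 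Your route reuses linear algebra already done and decouples the two claims.

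However, the decisive step is only asserted ("should satisfy"), and the reason you give does not work. Write $M_t^{-1}\tfrac{d}{dt}M_t=L_0+tL_1+O(t^2)$, with $L_0,L_1$ the two matrices displayed before the theorem. In the frame at the $\Z_3$-fixed point $q_0$, the quotient monodromy $M_0$ is the constant matrix $\hat\zeta^{\pm1}$. Write $M_1=\Id+tA_1+t^2A_2+t^3A_3+O(t^4)$ with $A_1=L_0$ and $A_2=\tfrac12(L_0^2+L_1)$. The condition $X(0)=X'(0)=0$ does not make $X_3$ bilinear in $(L_0,L_1)$. Taking the $t^3$-coefficient of $\tr\bigl((M_1M_0)^2\bigr)=\tr(M_\infty^{-2})=0$ and using $M_0^{-1}=M_0^2$ gives
\[
X_3=-\tr(A_2M_0A_1M_0)=-\tfrac12\tr(L_0^2M_0L_0M_0)-\tfrac12\tr(L_1M_0L_0M_0).
\]
This contains a term cubic in $L_0$, and the facts you invoke about $L_1$ (skew-Hermitian on $\S^1$, vanishing at $\lambda_0$) say nothing about it. In particular $A_3(\lambda_0)=0$ does not follow from your argument. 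The repair is short and gives more:
\begin{itemize}
\item the second term vanishes because $L_1$ is diagonal while $M_0L_0M_0$ has zero diagonal;
\item the first term is a multiple of $m_1m_2+m_2m_3+m_3m_1=\zeta+\zeta^2+1=0$, where $m_j$ are the eigenvalues of $M_0$;
\item the same computation with $M_1^{-1},M_0^{-1}$ gives $Y_3=0$.
\end{itemize}
Hence $X_3=Y_3=0$ identically in $\lambda$, so $\partial_t\mathcal M(0,a_0,b_0,c_0)=0$.

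For $R$, your Schur-type argument is a cleaner substitute for the paper's entrywise reading of \eqref{eq:mon-expq0primes}: the conditions $[R_1+R_1^\dagger,\hat\zeta^{j}A_1\hat\zeta^{-j}]=0$ force $R_1=0$. Three points need fixing.
\begin{itemize}
\item The entries of $A_1$ are not nonvanishing on $\S^1$: $p$ vanishes at $\lambda^3=1$ and $q$ at $\lambda^3=-1$. Irreducibility still holds there (check directly, using $c\neq0$). In any case, generic irreducibility plus real analyticity of $R_1+R_1^\dagger$ on $\S^1$ suffices.
\item Diagonality of $R$ is not a normalisation you may simply impose; it must be derived as in the paper. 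Since $\hat\zeta$ normalises the monodromy group at $q_0$, $\hat\zeta R\hat\zeta^{-1}$ is again a normalised positive unitarizer, and uniqueness gives $[R,\hat\zeta]=0$.
\item Proposition~\ref{pro:int-extcond} concerns the representation on $\Sigma_k/\Gamma_{deck}$, whereas the monodromy on the threefold cover is reducible at $\lambda_0$. Before citing it for uniqueness, note that a diagonal $R$ also unitarizes $\hat\zeta$, hence the whole quotient representation.
\end{itemize}
Finally, your ansatz presupposes that $R(t)$ is smooth at $t=0$ with $R(0)=\Id$; the paper's expansion makes the same assumption.
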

\begin{proof}
We first prove the uniqueness part. Assume $\tilde R=\tilde R(t)$ is a second unitarizer with the same properties.
We use the fact that the monodromy is irreducible
for generic $\lambda$ with $0<|\lambda|\le 1$. This can be shown similarly to the proof of 
Proposition \ref{pro:int-extcond}. We omit the details. (As opposed to $\Sigma_k/\Gamma_{deck}$, the monodromy on $\Sigma_k/(\Z_k\times\Z_k)$ is reducible for some spectral parameters, for example
for the Sym point $\lambda_0.$) 
In particular, the monodromy representation is unitary for generic $\lambda\in\S^1.$
Thus, $(R^{-1}\tilde R)(\lambda)$
is unitary for generic $\lambda\in \S^1$, and hence for all $\lambda\in\S^1.$ Since $R,\tilde R\in{\bf\SL}(\mathbb{D})$ are both
normalized, the claim follows from the uniqueness of the Iwasawa decomposition.

Recall that $d+t \hat\eta$ is the desingularization by $\tilde g$ of the pullback of the potential $d+\eta_{t,a_0,b_0,c_0}$.
Consider also the desingularization by $\tilde g$ of the pullback of the potential $d+\eta_{t,a(t),b(t),c(t)}$ constructed by the implicit function theorem.
After conjugation with its unique positive unitarizer the latter potential has unitary monodromy at the base point $q_0$ (corresponding to $z=0$ after the M\"obius transformation). 
By $\Z_3$-equivariance with respect to $z\mapsto\zeta z$ 
and by uniqueness of the positive unitarizer, $R$ commutes with $\hat\zeta$; hence $R$ is diagonal.
Its expansion in $t$ is given by \[R(t)=\Id +t \, \mathrm{diag}(d_1,d_2,-d_1-d_2)+O(t^2)\]
where $d_1,d_2$ are holomorphic functions in $\lambda^3$ which evaluate to real positive values at $\lambda=0.$
As in Lemma \ref{lem10:derivmono} we  compute the contributions of $a'(0),b'(0),c'(0)$ 
in the second order term of the expansion of the monodromy. 
We will use the coordinate transformation
\[a=\tfrac{1}{2} (p+i q)\quad\text{and}\quad b= -\tfrac{-p+i q}{2 \lambda ^3}\]
and denote the first-order derivatives in $t$ at $t=0$ by $\dot p$ and $\dot  q$.
Then, $\dot p,\dot q$ and $\dot c$ are holomorphic functions in $\lambda^3$.
After conjugation with $R(t)$, the expansion of the monodromy $\tilde M$ of $d+\eta_{t,a(t),b(t),c(t)}$  then satisfies
\begin{equation}\label{eq:mon-expq0primes}
\begin{split}
&R^{-1}(t)M_t(\gamma_1)^{-1}\frac{d}{dt}M_t(\gamma_1)R(t)\\
=&
\begin{psmallmatrix}
 0 & \frac{i \sqrt{\frac{2}{3}} \pi  \left(\lambda ^3-1\right)}{\lambda ^3} & -\frac{\sqrt{\frac{2}{3}} \pi  \left(\lambda ^3+1\right)}{\lambda ^3} \\
 -i \sqrt{\frac{2}{3}} \pi  \left(\lambda ^3-1\right) & 0 & \frac{2 \pi }{\sqrt{3}} \\
 \sqrt{\frac{2}{3}} \pi  \left(\lambda ^3+1\right) & -\frac{2 \pi }{\sqrt{3}} & 0 \\
\end{psmallmatrix}
+ t\,
\begin{psmallmatrix}
 -\frac{4 i \pi ^2 \left(\lambda\,^6+1\right)}{3 \sqrt{3} \lambda ^3} & 0 & 0 \\
 0 & \frac{2 i \pi ^2 \left(\lambda\,^6+1\right)}{3 \sqrt{3} \lambda ^3} & 0 \\
 0 & 0 & \frac{2 i \pi ^2 \left(\lambda\,^6+1\right)}{3 \sqrt{3} \lambda ^3} \\
\end{psmallmatrix}\\
&
-4\pi\, t\,
\begin{psmallmatrix}
 0 & \frac{i \left(\sqrt{6} \left(\lambda ^3-1\right) (d_1-d_2)+6 \dot p\right)}{6 \lambda ^3} & \frac{-\sqrt{6} \left(\lambda ^3+1\right) (2 d_{1}+d_{2})+6 i \dot q}{6 \lambda ^3} \\
 \frac{i \left(\lambda ^3-1\right) (d_{1}-d_{2})}{\sqrt{6}}-i \dot p & 0 & \frac{d_{1}+2 d_{2}}{\sqrt{3}}-\dot c \\
 -\frac{\left(\lambda ^3+1\right) (2 d_{1}+d_{2})}{\sqrt{6}}-i \dot q & \frac{d_{1}+2 d_{2}}{\sqrt{3}}+ \dot c & 0 \\
\end{psmallmatrix}
\;+ O(t^2)
\end{split}
\end{equation}
The diagonal part is already in ${\bf su}_3^{\hat\tau}(\S^1).$ Since $\dot c$ and $d_1$ and $d_2$ are holomorphic functions on an open neighbourhood of 
the unit disc, one can easily deduce that $d_1+2d_2$ and $\dot c$ must be constants in order for \eqref{eq:mon-expq0primes} to be ${\bf su}_3^{\hat\tau}(\S^1)$-valued. Furthermore, we see that $d_1+2d_2$ is imaginary and $\dot c$ must be real, and therefore  $d_1+2d_2=0$
(as $d_1(0),d_2(0)\in\R$  by assumption). A similar, but slightly more involved derivation shows
that if \eqref{eq:mon-expq0primes} is ${\bf su}_3^{\hat\tau}(\S^1)$-valued then
\begin{equation*}
\begin{split}
\dot p&=p_1 (\lambda^3-1),\qquad
\dot q=q_1 (\lambda^3+1),\quad\text{and}\quad
d_1=d_2=0.
\end{split}
\end{equation*}
In particular, $R(t)=\mathrm{Id}+O(t^2).$ By \eqref{finite-group-rep1}, the monodromy on $\Sigma_k/(\Z_k\times\Z_k)$ at the Sym point $\lambda_0=\exp(\tfrac{2\pi i}{12})$ is diagonal up to conjugation. Hence,
the three residues of the non-resonant Fuchsian system at the Sym point commute. This then implies
$\dot q=-i\,\dot p,$  
and
$ \dot c= \sqrt{2} \dot p.$
Expanding the eigenvalues of the residues of $d+\eta_{t,a(t),b(t),c(t)}$ in $t$ then gives $\dot p=0,$ as claimed.
\end{proof}

\subsection{Limit construction of the surfaces}
To prove embeddedness, we need precise control of the surfaces for large $k$.
The integrable-systems setup allows us to compute their limit explicitly.
We carry out the  main computations on the $3$-fold cover
$
\pi_3:\CP^1\to\CP^1,\; z\mapsto z^3,
$
which is branched over $0$ and $\infty$. After pullback and desingularization by $\tilde g$, the three singular points 
of the potential are located at $1,\zeta,\zeta^2$.
We consider the $k^2$-fold branched covering $\pi_k\colon\Sigma_k\to\CP^1=\Sigma_k/(\Z_k\times\Z_k),$ and the induced
topological covering  (compare with \eqref{uni-topo-cov})
\[\widetilde\pi_k\colon \widetilde S:=\widetilde{\CP^1\setminus\{1,\zeta,\zeta^2\}}\longrightarrow \Sigma_k\setminus \pi_k^{-1}\{1,\zeta,\zeta^2\}.\]
The composition of  $\hat f_k\circ \widetilde \pi_k$  is well-defined on $\widetilde S$ for all $k\geq k_0.$
Furthermore, we can smoothly interpolate between those surfaces on $\widetilde S$:
Consider
\[\tilde g=\mathrm{diag}(1,z,z^{-1})\qquad \text{and}\qquad h=
\begin{psmallmatrix}
 \frac{1}{\lambda } & 0 & 0 \\
 0 & \lambda  & i z \\
 0 & 0 & 1 \\
\end{psmallmatrix}
\]
and the $\hat\tau$-twisted potential
\begin{equation*}d+\omega_t:=(d+\pi_3^*\eta_{t,a(t),b(t),c(t)}).\tilde g h\,.
\end{equation*} 
Note that
\[
\tilde gh=
\begin{psmallmatrix}
 \frac{1}{\lambda } & 0 & 0 \\
 0 & \lambda  & i z^3 \\
 0 & 0 & 1 \\
\end{psmallmatrix}
\begin{psmallmatrix}
 1 & 0 & 0 \\
 0 & z & 0 \\
 0 & 0 & \frac{1}{z} \\
\end{psmallmatrix},
\]
so we work with the pull-back of the $\hat\tau$-twisted potential in Lemma \ref{harmonic-family-form}, see \eqref{pi3om}.
Note  that on $\CP^1\setminus\{1,\zeta,\zeta^2\},$
the base point $q_0$  in Theorem~\ref{thm:expansionorder2} corresponds to $z=0$.
\begin{Lem}\label{Lem:tau-twist-unitaizer}
For each $t\in[0,\epsilon)$, the normalized positive diagonal unitarizer  $R(t)$  
from Theorem~\ref{thm:expansionorder2} 
at the base point $z=0$ 
is $\hat\tau$-twisted.
\end{Lem}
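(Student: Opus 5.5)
The natural route is a uniqueness argument, modeled on the proof of Lemma~\ref{lem:tau-unitarize}. Fix $t\in[0,\epsilon)$ and let $M_\lambda$ denote the monodromy representation, based at $z=0$, of the $\hat\tau$-twisted potential $d+\omega_t$ on $\CP^1\setminus\{1,\zeta,\zeta^2\}$. Because $d+\omega_t$ is an ${\bf\SL}^{\hat\tau}(\S^1)$-connection (Remark~\ref{rem:tauparallel}), $D_{\xi\lambda}=\hat\tau D_\lambda$. Parallel transport based at $z=0$ therefore gives
\[
M_{\xi\lambda}=\hat\tau(M_\lambda).
\]
Note that $z=0$ is a regular point of $d+\omega_t$ after the desingularization, so the base point causes no trouble. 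The unitarizer $R=R(t)$ of Theorem~\ref{thm:expansionorder2}, transported to the frame of $d+\omega_t$ (the gauges $\tilde g h$ are $\hat\tau$-compatible in the sense that the resulting potential is twisted), satisfies $R_\lambda^{-1}M_\lambda R_\lambda\in\SU_3$ for $\lambda\in\S^1$. It is positive, $R\in{\bf\SL}(\D)$, and it is normalized: diagonal with positive real entries at $\lambda=0$.

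The first step is to form the candidate $\tilde R_\lambda:=\hat\tau^{-1}(R_{\xi\lambda})$ and check that it has the same three properties as $R$.
\begin{itemize}
\item \emph{Unitarizer.} Since $\hat\tau$ preserves $\SU_3$ (it preserves the Hermitian structure), $\tilde R_\lambda^{-1}M_\lambda\tilde R_\lambda=\hat\tau^{-1}\bigl(R_{\xi\lambda}^{-1}M_{\xi\lambda}R_{\xi\lambda}\bigr)$ is unitary for $\lambda\in\S^1$.
\item \emph{Positivity.} $\tilde R$ extends holomorphically to $\D$ because $R$ does and $|\xi|=1$.
\item \emph{Normalization.} At $\lambda=0$ we get $\tilde R_0=\hat\tau^{-1}(R_0)$. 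For $D=\diag(d_1,d_2,d_3)$, formula \eqref{def.mTta} gives
\[
\hat\tau(D)=\mathfrak{\hat a}^{-1}D^{-1}\mathfrak{\hat a}=\diag(d_1^{-1},d_3^{-1},d_2^{-1}).
\]
This is again diagonal with positive real entries, and the same holds for $\hat\tau^{-1}$.
\end{itemize}
So $\tilde R$ satisfies all the defining properties of the normalized positive diagonal unitarizer.

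The second step applies the uniqueness statement of Theorem~\ref{thm:expansionorder2}, which yields $\tilde R=R$, that is, $R_{\xi\lambda}=\hat\tau(R_\lambda)$. This is exactly the assertion that $R$ is $\hat\tau$-twisted. If one prefers not to quote uniqueness directly, the argument in its proof can be repeated. By irreducibility of $M_\lambda$ for generic $\lambda$ with $0<|\lambda|\le1$, the loop $R^{-1}\tilde R$ commutes with a unitary irreducible representation. Hence it is unitary up to a scalar cube root of unity, and that scalar is forced to be $1$ by positivity at $\lambda=0$. Uniqueness of the Iwasawa decomposition (Theorem~\ref{thm:Iwasawa} with the untwisted loop group and the normalization $B$ equal to positive diagonal times unipotent) then gives $R^{-1}\tilde R=\Id$.

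The main obstacle is keeping the bookkeeping of gauges consistent. The unitarizer in Theorem~\ref{thm:expansionorder2} was defined for the potential $\pi_3^*\eta.\tilde g$, while the twist lives on $d+\omega_t$, which differs by the non-positive gauge $h$. The plan is therefore to define $R$ as the unitarizer of the monodromy of $d+\omega_t$ at $z=0$ itself. This only conjugates the monodromy by the constant loop $h(0)$, and $h(0)$ commutes with the diagonal normalization up to the $\lambda^{\pm1}$ scalings. I would also check that these scalings are compatible with $\hat\tau$, so that diagonality and positivity at $\lambda=0$ survive; this is the one spot where an explicit matrix check with $\mathfrak{\hat a}$ and $h(0)$ is needed. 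A second point to watch is the reducibility at isolated $\lambda$, such as the Sym point. It is harmless, because both $R$ and $\tilde R$ are real-analytic in $\lambda$ and agree on a dense subset of $\S^1$.
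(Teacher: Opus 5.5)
Your proposal is correct and is essentially the paper's argument. The paper observes that $\hat\tau(R(t,\lambda))$ and $R(t,\xi\lambda)$ are both normalized positive unitarizers of $M_t(\xi\lambda)=\hat\tau(M_t(\lambda))$ and concludes by uniqueness; this is equivalent to your comparison of $\hat\tau^{-1}(R_{\xi\lambda})$ with $R_\lambda$, and your explicit check that $\hat\tau$ maps positive diagonal matrices to positive diagonal matrices fills in a step the paper leaves implicit. The gauge bookkeeping you flag as the main obstacle takes one line in the paper: $h(0)=\diag(\lambda^{-1},\lambda,1)$ is diagonal and unitary on $\S^1$, so it commutes with the diagonal loop $R(t)$, and the same $R(t)$ unitarizes the monodromy of $d+\omega_t$ at $z=0$ without any redefinition.
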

\begin{proof}
Since
$
h(0)=\diag(\lambda^{-1},\lambda,1)
$
is diagonal and unitary, the same loop $R(t)$ also unitarizes the monodromy of $d+\omega_t$ at $z=0$.

Let $M_t(\lambda)$ denote the monodromy of $d+\omega_t$ at the base point $z=0$. Because $d+\omega_t$ is $\hat\tau$-twisted, its monodromy satisfies
\[
M_t(\xi\lambda)=\hat\tau\bigl(M_t(\lambda)\bigr).
\]
Hence $\hat\tau(R(t,\lambda))$ and $R(t,\xi\lambda)$ are both normalized positive unitarizers of $M_t(\xi\lambda)$. By uniqueness of the normalized positive unitarizer, they coincide:
\[
\hat\tau(R(t,\lambda))=R(t,\xi\lambda).
\]
Therefore $R(t)\in {\bf SL}^{\hat\tau}(\mathbb D)$.
\end{proof}

Now let $\Omega_t$ be the parallel frame of $d+\omega_t$ on $\widetilde S$ normalized by
\[
\Omega_t(z=0)=R(t).
\]
Since both the potential $d+\omega_t$ and the initial value $R(t)$ are $\hat\tau$-twisted, uniqueness of solutions of the ODE implies that $\Omega_t$ is $\hat\tau$-twisted as well. We then apply the $\hat\tau$-twisted Iwasawa decomposition and write
\[
\Omega_t=B_tF_t,
\qquad
B_t\in {\bf SL}^{\hat\tau}_{B}(\mathbb D),\quad
F_t\in {\bf SU}^{\hat\tau}(\S^1).
\]

By Remark~\ref{Rem:sllift}, the map
\begin{equation}\label{t-reconstruction}
\hat f_t:=F_t(\lambda_0)^{-1}e_1\colon \widetilde S\to \S^5
\end{equation}
is a special Legendrian immersion. For $t=\frac1k$, Corollary~\ref{recon-formula} yields
\begin{equation}\label{t-reconstruction0}
\hat f_t=\hat f_k\circ \widetilde\pi_k
\end{equation}
up to an ambient $\SU_3$-transformation.

\subsection{The Scherk-type limit surface}\label{ssec:scherk}
As a first consequence of Theorem \ref{thm:expansionorder2}, we determine the limit surface for $k\to\infty$ on 
compact subsets of the thrice-punctured sphere. 

We record the following observation: Since $\hat f_k$ is a horizontal lift of $f_k\colon\Sigma_k\to\CP^2$ through the Hopf fibration $\S^5\to\CP^2$, we have
$d_p\hat f_k(T_p\Sigma_k)\subset \mathcal H_q=q^\perp\subset T_q\S^5. $

\begin{The}\label{the:scherk}
Let $\mathbb K\subset \CP^1\setminus\{1,\zeta,\zeta^2\}$ be simply connected and compact with $0\in \mathbb K.$
Consider $\mathbb K$ as a subset of the universal covering $\widetilde S$ by fixing a lift of $0.$
On $\mathbb K$, we have uniform convergence
\[
d\hat f_t\to 0 \qquad (t\to0).
\]
Since the family is normalized at $z=0$, it follows that $\hat f_t$ converges uniformly on $\mathbb K$ to a constant map with value $q\in \S^5$.
Denoting the derivative with respect to $t$ at $t=0$ by $\dot{\hat f}$, we then  have
\[d\dot{\hat{f}}=\sqrt{\tfrac{3}{2}}\begin{pmatrix}e^{-\tfrac{\pi i}{3}} z \\ -e^{\tfrac{\pi i}{3}}\end{pmatrix}
\frac{dz}{z^3-1}
+\sqrt{\tfrac{3}{2}}\begin{pmatrix}e^{\tfrac{\pi i}{6}} \\ -e^{-\tfrac{\pi i}{6}} \bar z\end{pmatrix}
\frac{d\bar z}{\bar z^3-1} \in\Omega^1({\mathbb K},\C^2)\]
where we identify $\mathcal H_q=\C^2$ with its standard Hermitian structure.
\end{The}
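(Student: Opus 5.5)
We use the reconstruction formula \eqref{t-reconstruction}, $\hat f_t=F_t(\lambda_0)^{-1}e_1$, and expand the Iwasawa factorization $\Omega_t=B_tF_t$ in $t$ uniformly on $\mathbb K$. Differentiating $F_t^{-1}e_1$ gives
\[
d\hat f_t=-F_t(\lambda_0)^{-1}\,\alpha_t(\lambda_0)\,e_1,\qquad \alpha_t:=dF_tF_t^{-1}=\lambda^{-1}\alpha_t^{(-1)}+\alpha_t^{(0)}+\lambda\alpha_t^{(1)} .
\]
Since $\Omega_t$ is parallel for $d+\omega_t$, we have $\alpha_t=\Ad B_t^{-1}\,\omega_t+B_t^{-1}dB_t$, and so $\alpha_t^{(-1)}=\Ad (B_t^{(0)})^{-1}\omega_t^{(-1)}$, with $\alpha_t^{(1)}=-(\alpha_t^{(-1)})^\dagger$ by unitarity.

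\textbf{Step 1: $B_t=\Id+O(t)$ and $F_t=\Id+O(t)$ uniformly on $\mathbb K$.} At $t=0$ the pulled-back, desingularized potential $\pi_3^*\eta.\tilde g$ vanishes identically; this is the computation $d+t\hat\eta$ of Section~\ref{ssec:mon-exp}. Hence the frame of $\pi_3^*\eta.\tilde g$ is $\Id+t\,\tilde\Psi_1+O(t^2)$ on the simply connected set $\mathbb K$, with $\tilde\Psi_1$ the logarithmic integral \eqref{eq:tildePsi1} adapted to base point $0$. The same argument applies with $a(t),b(t),c(t)$, because Theorem~\ref{thm:expansionorder2} gives $a'(0)=b'(0)=c'(0)=0$.

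\textbf{Step 2: absorb the gauge $h$.} Write $\Omega_t=R(t)\,(\Id+t\Psi_1+O(t^2))\,h$ on the universal cover. Here $R(t)=\Id+O(t^2)$ by Theorem~\ref{thm:expansionorder2}, and $\Psi_1$ is obtained by conjugating $\tilde\Psi_1$ with the constant $h(0)$; it is a holomorphic function of $z$ with coefficients in $\lambda^{\pm1},\lambda^{\pm3}$.

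The factor $h$ is not positive, so the remaining work is to Iwasawa-factorize the product, combining $h$ with the leading term to first order. I would first factor $h$ itself: it equals $\diag(\lambda^{-1},\lambda,1)$ times a positive unipotent factor, and the first factor is unitary. Then I would Iwasawa-split $\Id+t\,\Ad(\cdot)\Psi_1$ to first order. By the loop-algebra splitting $\mathbf{sl}=\mathbf{sl}_B(\D)\oplus\mathbf{su}(\S^1)$, the unitary part is $t\,(X-X^\dagger)$-type, obtained from the negative-power coefficients of $\Psi_1$.

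Plugging into the formula for $d\hat f_t$ shows that $d\hat f_t=O(t)$. The leading term is $t$ times the $e_2,e_3$ components of $-(\lambda_0^{-1}\omega^{(-1)}_1-\lambda_0\,\omega^{(-1)\dagger}_1)e_1$, where $\omega^{(-1)}_1$ is the $t$-coefficient of the Higgs part in \eqref{pi3om} at $a_0,b_0,c_0$.

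\textbf{Step 3: read off $d\dot{\hat f}$.} From \eqref{pi3om}, the $\lambda^{-1}$ part of the first column involves only the $(3,1)$ entry $3i\,t\,a_0\,\dfrac{dz}{\lambda z(z^3-1)}$. After the gauge $\tilde g$ and the $\diag$ factor of $h$, this becomes a multiple of $\dfrac{z\,dz}{z^3-1}$ or $\dfrac{dz}{z^3-1}$ in the $e_2,e_3$ slots. The $\lambda$ part gives the conjugate $d\bar z$ terms. Evaluating at $\lambda_0=e^{\pi i/6}$, with $a_0=-b_0=1/\sqrt6$ and $3/\sqrt6=\sqrt{3/2}$, produces the phases $e^{\mp\pi i/3}$ and $e^{\pm\pi i/6}$. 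Identifying $\mathcal H_q=e_1^\perp=\C^2$ via $(e_2,e_3)$ then yields the displayed formula.

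Uniformity follows because all expansions are analytic in $t$, with coefficients continuous on the compact set $\mathbb K$; this uses the real analyticity of the Iwasawa map (Theorem~\ref{thm:Iwasawa}). Finally, $d\hat f_t\to0$ together with the normalization at $z=0$ gives convergence of $\hat f_t$ to a constant $q$.

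The main obstacle is Step 2. The gauge $h$ contains the $\lambda^{-1}$ factor, which is not positive, and the off-diagonal entry $iz$ mixes the $e_2,e_3$ slots. Getting the exact phases and the factor $z$ versus $\bar z$ correct requires carefully carrying out the twisted Iwasawa splitting to first order, rather than merely reading off residues.
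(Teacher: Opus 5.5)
Your overall framework can work: express $d\hat f_t$ through the Maurer--Cartan form of $F_t$ and keep only the $\lambda^{\mp1}$ parts acting on $e_1$. However, Step~1 is false, and the error propagates into a wrong leading-order formula.

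The potential $\omega_t$ is not $\pi_3^*\eta.\tilde g$. It is the further gauge of that potential by the $z$-dependent loop $h$, which is not positive. Hence $\omega_0=\begin{psmallmatrix}0&0&0\\0&0&i\lambda^{-1}\\0&0&0\end{psmallmatrix}dz\neq0$.

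Gauges act on the frame from the left, and the initial condition $R(t)$ acts from the right. So $\Omega_t=h(z)^{-1}\Phi_t(z)\Phi_t(0)^{-1}h(0)R(t)$, where $\Phi_t$ is the frame of $\pi_3^*\eta.\tilde g$. Your formula $\Omega_t=R(t)(\Id+t\Psi_1+\dots)h$ has both factors on the wrong side. In particular $\Omega_0=h(z)^{-1}h(0)=\begin{psmallmatrix}1&0&0\\0&1&-iz\lambda^{-1}\\0&0&1\end{psmallmatrix}$.

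This twisted unipotent loop carries $\lambda^{-1}$, so it is negative, not positive. In the other ordering of $h$, the unipotent factor is not $\hat\tau$-twisted. Either way it cannot be absorbed into $B_t$, and its Iwasawa splitting is nontrivial and $z$-dependent. With $s=\sqrt{1+|z|^2}$, one has $B_0^{(0)}=\diag(1,s,s^{-1})$, and $F_0$ has lower-right block $s^{-1}\begin{psmallmatrix}1&-iz\lambda^{-1}\\-i\bar z\lambda&1\end{psmallmatrix}$. Thus $F_t\neq\Id+O(t)$. The identity $d\hat f_0=0$ holds because $F_0e_1=e_1$ (equivalently, $\omega_0$ has vanishing first row and column), not because $F_0=\Id$.

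Consequently your leading term is wrong. Reading off the $e_2,e_3$ components of $(\lambda_0^{-1}\omega_1^{(-1)}-\lambda_0\omega_1^{(-1)\dagger})e_1$ with $F_0=B_0^{(0)}=\Id$ produces only the $\frac{d\bar z}{\bar z^3-1}$ term in the first slot and the $\frac{dz}{z^3-1}$ term in the second. The terms $z\frac{dz}{z^3-1}$ and $\bar z\frac{d\bar z}{\bar z^3-1}$ are missing. No diagonal gauge, as you suggest in Step~3, can create them, because they require mixing $e_2$ and $e_3$.

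The repair is exactly the factorization above. Note the sign: $dF_tF_t^{-1}=-(\omega_t.B_t)$. Then $d\hat f_t=F_t(\lambda_0)^{-1}\bigl(\lambda_0^{-1}\Psi_t-\lambda_0\Psi_t^{\dagger}\bigr)e_1$ with $\Psi_t=\Ad(B_t^{(0)})^{-1}\omega_t^{(-1)}$. Differentiating at $t=0$ gives
\[
d\dot{\hat f}=F_0(\lambda_0)^{-1}\Bigl(\lambda_0^{-1}s\,\tfrac{i\sqrt{3/2}\,dz}{z^3-1}\,e_3-\lambda_0 s\,\tfrac{\sqrt{3/2}\,d\bar z}{\bar z^3-1}\,e_2\Bigr).
\]
The off-diagonal entries $iz\lambda_0^{-1}/s$ and $i\bar z\lambda_0/s$ of $F_0(\lambda_0)^{-1}$ supply the missing terms, and the factors $s$ cancel. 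This reproduces the stated formula, up to the overall sign fixed by the identification $\mathcal H_q\cong\C^2$.

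Done this way, your route would be shorter than the paper's. The paper expands $\Omega_t=\Omega_0(\Id+t\xi_1+\dots)$, solves $\xi_1=F_0^{-1}b_1F_0+X_1$ explicitly in logarithms and arctangents, and differentiates $X_1(\lambda_0)e_1$. In both routes, though, the explicit splitting $\Omega_0=B_0F_0$ is the indispensable ingredient, and it is what your proposal is missing.
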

\begin{proof}
The proof is a direct computation, analogous to the CMC and minimal-surface cases in $\mathbb R^3$ and $\S^3$ treated in \cite{Traizet,CHHT}.
We work with the continuous parameter $t=\tfrac{1}{k}$.
Recall that the surface $\hat f_{t}$ is constructed from  the $\hat\tau$-twisted potential
\begin{equation*}d+\omega_t:=(d+\pi_3^*\eta_{t,a(t),b(t),c(t)}).\tilde g h.
\end{equation*} 
We  first need to expand the solution of
$(d+\omega_t)\Omega_t=0$
with diagonal initial condition 
\[\Omega_t(z=0)=R(t)=\Id+O(t^{2}).\] 
Then we split 
$\Omega_t=B_tF_t$
where $B_t$ is positive and normalized at $\lambda=0$ and $F_t$ is unitary along $\lambda\in\S^1$.
To understand the limiting surface, we use \eqref{t-reconstruction} and expand $F_t$ in time $t$ .

By construction, the monodromy of $\Omega_t$ with base point $z=0$ is unitary.
Since the Iwasawa decomposition is unique, the positive part
 $B_t$ has trivial monodromy on the surface, both on the thrice-punctured sphere and on the covering $\Sigma_k$  when $t=\tfrac{1}{k}$.
By Theorem \ref{thm:expansionorder2}, the Taylor expansion of $\omega_t$ is given by
\[\omega_t=
\begin{psmallmatrix}
 0 & 0 & 0 \\
 0 & 0 & \frac{i}{\lambda } \\
 0 & 0 & 0 \\
\end{psmallmatrix}dz+
\begin{psmallmatrix}
 0 & -\frac{\sqrt{\frac{3}{2}} \left(\lambda ^3-1\right)}{\lambda  \left(z^3-1\right)} & -\frac{i \sqrt{6} \lambda  z}{z^3-1} \\
 \frac{\sqrt{6} \lambda  z}{z^3-1} & -\frac{\sqrt{3} z^2}{z^3-1} & -\frac{i \sqrt{3}}{\lambda } \\
 \frac{i \sqrt{\frac{3}{2}} \left(\lambda ^3+1\right)}{\lambda  \left(z^3-1\right)} & -\frac{i \sqrt{3} \lambda  z}{z^3-1} & \frac{\sqrt{3} z^2}{z^3-1} \\
\end{psmallmatrix}dz\, t+O(t^3).\]
With this, we expand as before
\[\Omega_t=\begin{psmallmatrix}1& 0&0\\ 0& 1&-\tfrac{i}{\lambda}z\\0&0&1\end{psmallmatrix}(\Id+t \xi_1+O(t^{2}))\qquad\text{with}\qquad
\xi_1=
\begin{pmatrix}
 0 &y^T \\
 x & 
*  \\
\end{pmatrix}
\]
where
\[y^T={\tiny\begin{pmatrix}\tfrac{\left(\lambda ^3-1\right) \left(\sqrt{3} \left(2 \log (1-z)-\log \left(z^2+z+1\right)\right)-6 \tan ^{-1}\left(\tfrac{2 z+1}{\sqrt{3}}\right)+\pi \right)}{6 \sqrt{2} \lambda },-\tfrac{i \left(\lambda ^3+1\right) \left(\sqrt{3} \log \left(z^2+z+1\right)-2 \sqrt{3} \log (1-z)-6 \tan ^{-1}\left(\tfrac{2 z+1}{\sqrt{3}}\right)+\pi \right)}{6 \sqrt{2} \lambda ^2}\end{pmatrix}}\]
and
\[x=
\begin{pmatrix}   
\frac{\left(\lambda ^3-1\right) \left(\sqrt{3} \log \left(z^2+z+1\right)-2 \sqrt{3} \log (1-z)-6 \tan ^{-1}\left(\frac{2 z+1}{\sqrt{3}}\right)+\pi \right)}{6 \sqrt{2} \lambda ^2}\\
-\frac{i \left(\lambda ^3+1\right) \left(\sqrt{3} \left(2 \log (1-z)-\log \left(z^2+z+1\right)\right)-6 \tan ^{-1}\left(\frac{2 z+1}{\sqrt{3}}\right)+\pi \right)}{6 \sqrt{2} \lambda }
\end{pmatrix}.
\]
Consider the expansion into positive and unitary parts
\begin{equation}\label{eq:texpposun}
\Omega_t=\Omega_0(1+t\xi_1+\dots)=B_tF_t=B_0(1+t b_1+\dots)F_0(1+t X_1+\dots)\end{equation}
with explicit factorization at $t=0$
\begin{equation}\label{eq:fact=0}
\begin{split}
\Omega_0&=\begin{psmallmatrix} 1& 0&0\\ 0& 1&-i z\lambda^{-1}\\0&0&1\end{psmallmatrix}=B_0F_0=
\begin{psmallmatrix}
 1 & 0 & 0 \\
 0 & \sqrt{z \bar z+1} & 0 \\
 0 & \frac{i \lambda  \bar z}{\sqrt{z \bar z+1}} & \frac{1}{\sqrt{z \bar z+1}} \\
\end{psmallmatrix}\,
\begin{psmallmatrix}
 1 & 0 & 0 \\
 0 & \frac{1}{\sqrt{z \bar z+1}} & -\frac{i z}{  \sqrt{z \bar z+1}} \lambda^{-1}\\
 0 & -\frac{i  \bar z}{\sqrt{z \bar z+1}} \lambda& \frac{1}{\sqrt{z \bar z+1}} \\
\end{psmallmatrix}.
\end{split}
\end{equation}
We therefore obtain from
\eqref{eq:texpposun}
that
\[\xi_1=F_0^{-1}b_1F_0+X_1,\]
where
$b_1$ and $X_1$ are uniquely determined by the property of being elements in the positive and in the unitary loop algebra, respectively. Solving the linear system yields
\[b_1=
\begin{pmatrix}
 0 & \beta_1 & \beta_2 \\
 \alpha_1 & * & * \\
 \alpha_2 & * & * \\
\end{pmatrix}\]
with
\begin{equation*}
\begin{split}
\alpha_1&= \tfrac{\lambda  \left(-\sqrt{3} (z+1) \left(-\log \left(z^2+z+1\right)+2 \log (1-z)-\log \left(\bar z^2+\bar z+1\right)+2 \log (1-\bar z)\right)-6 (z-1) \tan ^{-1}\left(\tfrac{2 \bar z+1}{\sqrt{3}}\right)+6 (z-1) \tan ^{-1}\left(\tfrac{2 z+1}{\sqrt{3}}\right)\right)}{6 \sqrt{2 z \bar z+2}}\\
\alpha_2&= \tfrac{i \lambda ^2 \left(\sqrt{3} (\bar z-1) \left(-\log \left(z^2+z+1\right)+2 \log (1-z)-\log \left(\bar z^2+\bar z+1\right)+2 \log (1-\bar z)\right)+6 (\bar z+1) \tan ^{-1}\left(\tfrac{2 z+1}{\sqrt{3}}\right)-6 (\bar z+1) \tan ^{-1}\left(\tfrac{2 \bar z+1}{\sqrt{3}}\right)\right)}{6 \sqrt{2 z \bar z+2}}\\
\beta_1&= -\tfrac{\lambda ^2 \left(\sqrt{3} (\bar z-1) \left(-\log \left(z^2+z+1\right)+2 \log (1-z)-\log \left(\bar z^2+\bar z+1\right)+2 \log (1-\bar z)\right)+6 (\bar z+1) \tan ^{-1}\left(\tfrac{2 z+1}{\sqrt{3}}\right)-6 (\bar z+1) \tan ^{-1}\left(\tfrac{2 \bar z+1}{\sqrt{3}}\right)\right)}{6 \sqrt{2 z \bar z+2}}\\
\beta_2&= -\tfrac{i \lambda  \left(-\sqrt{3} (z+1) \left(-\log \left(z^2+z+1\right)+2 \log (1-z)-\log \left(\bar z^2+\bar z+1\right)+2 \log (1-\bar z)\right)-6 (z-1) \tan ^{-1}\left(\tfrac{2 \bar z+1}{\sqrt{3}}\right)+6 (z-1) \tan ^{-1}\left(\tfrac{2 z+1}{\sqrt{3}}\right)\right)}{6 \sqrt{2 z \bar z+2}}
\end{split}
\end{equation*}
and
\[
X_1=\begin{pmatrix} 0& y_1&y_2\\x_1&*&*\\
x_2&*&*\end{pmatrix}\]
for
\begin{equation*}
\begin{split}
y_1&= \tfrac{\pi  \left(\lambda ^3-1\right)+\sqrt{3} \left(\log \left(z^2+z+1\right)-2 \log (1-z)+\lambda ^3 \left(\log \left(\bar z^2+\bar z+1\right)-2 \log (1-\bar z)\right)\right)+6 \tan ^{-1}\left(\tfrac{2 z+1}{\sqrt{3}}\right)-6 \lambda ^3 \tan ^{-1}\left(\tfrac{2 \bar z+1}{\sqrt{3}}\right)}{6 \sqrt{2} \lambda }\\
y_2&= -\tfrac{i \left(\pi  \left(\lambda ^3+1\right)+\sqrt{3} \left(\log \left(z^2+z+1\right)-2 \log (1-z)+\lambda ^3 \left(2 \log (1-\bar z)-\log \left(\bar z^2+\bar z+1\right)\right)\right)-6 \tan ^{-1}\left(\tfrac{2 z+1}{\sqrt{3}}\right)-6 \lambda ^3 \tan ^{-1}\left(\tfrac{2 \bar z+1}{\sqrt{3}}\right)\right)}{6 \sqrt{2} \lambda ^2} \\
x_1&= \tfrac{\pi  \left(\lambda ^3-1\right)+\sqrt{3} \left(-\log \left(z^2+z+1\right)+2 \log (1-z)+\lambda ^3 \left(2 \log (1-\bar z)-\log \left(\bar z^2+\bar z+1\right)\right)\right)+6 \tan ^{-1}\left(\tfrac{2 z+1}{\sqrt{3}}\right)-6 \lambda ^3 \tan ^{-1}\left(\tfrac{2 \bar z+1}{\sqrt{3}}\right)}{6 \sqrt{2} \lambda ^2}  \\
x_2&= -\tfrac{i \left(\pi  \left(\lambda ^3+1\right)+\sqrt{3} \left(-\log \left(z^2+z+1\right)+2 \log (1-z)+\lambda ^3 \left(\log \left(\bar z^2+\bar z+1\right)-2 \log (1-\bar z)\right)\right)-6 \tan ^{-1}\left(\tfrac{2 z+1}{\sqrt{3}}\right)-6 \lambda ^3 \tan ^{-1}\left(\tfrac{2 \bar z+1}{\sqrt{3}}\right)\right)}{6 \sqrt{2} \lambda }.
\end{split}
\end{equation*}
By \eqref{t-reconstruction}, the horizontal lift $\hat f_t$ of the minimal Lagrangian surface $f_t\colon \widetilde{S} \to \CP^2$ is given by
\[F^{-1}_t(\lambda_0\,) \begin{psmallmatrix}1\\0\\0\end{psmallmatrix}\colon \widetilde S\to \S^5\]
where $\lambda_0=\exp(\tfrac{2\pi i}{12}).$ 

By \eqref{eq:fact=0}, $\hat f_{t=0}$  is constant on simply connected compact subsets ${\mathbb K}$, i.e., $d\hat f_{t=0}=0.$
Furthermore, taking the derivative with respect to $t$ at $t=0$ yields
\begin{align}\label{eq:sherk}
&\dot{\hat f} \colon \widetilde S \to \C^2\cong (1,0,0)^\perp;\quad z\mapsto  \\
&\begin{psmallmatrix}
\tfrac{(-1)^{2/3} \left(\sqrt{3} \left(\log \left(z^2+z+1\right)-2 \log (1-z)+i \log \left(\bar z^2+\bar z+1\right)-2 i \log (1-\bar z)\right)-6 \tan ^{-1}\left(\tfrac{2 z+1}{\sqrt{3}}\right)+6 i \tan ^{-1}\left(\tfrac{2 \bar z+1}{\sqrt{3}}\right)+\pi  (1-i)\right)}{6 \sqrt{2}}\\
\tfrac{\sqrt[3]{-1} \left(\sqrt{3} \left(\log \left(z^2+z+1\right)-2 \log (1-z)-i \log \left(\bar z^2+\bar z+1\right)+2 i \log (1-\bar z)\right)+6 \tan ^{-1}\left(\tfrac{2 z+1}{\sqrt{3}}\right)+6 i \tan ^{-1}\left(\tfrac{2 \bar z+1}{\sqrt{3}}\right)+\pi  (-1-i)\right)}{6 \sqrt{2}}\end{psmallmatrix}\nonumber
\end{align}
with differential
\begin{equation}\label{eq:sherkder}
\begin{split}
d\dot{\hat f}&=\sqrt{\tfrac{3}{2}}\begin{pmatrix}e^{-\tfrac{\pi i}{3}} z \\ -e^{\tfrac{\pi i}{3}}\end{pmatrix}
\frac{dz}{z^3-1}
+\sqrt{\tfrac{3}{2}}\begin{pmatrix}e^{\tfrac{\pi i}{6}} \\ -e^{-\tfrac{\pi i}{6}} \bar z\end{pmatrix}
\frac{d\bar z}{\bar z^3-1}\end{split},
\end{equation}
as claimed.
\end{proof}

It is well-known (\cite{HR0,HR1}) that minimal Lagrangian surfaces $f$ in $\C^2$ admit a Weierstrass-type representation in terms of holomorphic 1-forms.
Using a different complex structure $j$ on the real vector space $\C^2=\R^4=\mathbb H$, 
the surface $f$ becomes a holomorphic curve in $\C^2\cong(\mathbb H,j).$ In fact, 
\[d\dot{\hat f}=(dH_1,dH_2)+(-i d\bar H_2,i d\bar H_1)\]
for
\[d H_1=\sqrt{\tfrac{3}{2}}e^{-\frac{\pi  i}{3}}\frac{zdz}{z^3-1}\quad\text{and}\quad d H_2=-\sqrt{\tfrac{3}{2}}e^{\frac{\pi  i}{3}}\frac{dz}{z^3-1}.\]
Equivalently, we have
\begin{equation}\label{eq:rotdfdot}
\begin{psmallmatrix}
 \frac{1}{\sqrt{2}} & 0 & 0 & \frac{1}{\sqrt{2}} \\
 0 & \frac{1}{\sqrt{2}} & \frac{1}{\sqrt{2}} & 0 \\
 0 & -\frac{1}{\sqrt{2}} & \frac{1}{\sqrt{2}} & 0 \\
 -\frac{1}{\sqrt{2}} & 0 & 0 & \frac{1}{\sqrt{2}} \\
\end{psmallmatrix} d\dot{\hat f}=\sqrt{2}\begin{pmatrix}dH_1\\dH_2\end{pmatrix}.\end{equation}
As the monodromy of $d\dot{\hat f}$ is abelian, $\dot{\hat f}$ is already a well-defined immersion (without nontrivial translational periods) on the universal abelian covering
\[M:=\widetilde S/\Lambda^{com} \to \CP^1\setminus\{1,\zeta,\zeta^2\}\]
where $\widetilde S\to\CP^1\setminus\{1,\zeta,\zeta^2\}$ is the universal covering, and $\Lambda^{com}:=[\pi_1,\pi_1]\subset \pi_1(\CP^1\setminus\{1,\zeta,\zeta^2\})$
is the commutator subgroup.
As the periods of $dH_1,dH_2$ vanish on $\Lambda^{com}$,  $\dot{\hat f}$ descends to $M$.

\begin{Cor}\label{cor:scherk}
The surface $\dot{\hat f}\colon M\to \C^2 $ as defined in \eqref{eq:sherk} with differential \eqref{eq:sherkder} is properly embedded.
We call it the Scherk minimal Lagrangian surface.
\end{Cor}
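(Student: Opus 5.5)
The plan is to use the holomorphic curve description \eqref{eq:rotdfdot}. After the fixed real orthogonal rotation of $\R^4$ given there, $\dot{\hat f}$ becomes $\sqrt2\,(H_1,H_2)$, where
\[
H_1=\sqrt{\tfrac32}\,e^{-\frac{\pi i}{3}}\int\frac{z\,dz}{z^3-1},\qquad H_2=-\sqrt{\tfrac32}\,e^{\frac{\pi i}{3}}\int\frac{dz}{z^3-1}
\]
are holomorphic functions on the abelian cover $M$. Embeddedness and properness are invariant under this rotation, so it suffices to show that $H=(H_1,H_2)\colon M\to\C^2$ is a proper holomorphic embedding.

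\textbf{Step 1: injectivity.} I would study the single holomorphic function $w:=\alpha H_1+\beta H_2$ with $dw=(\alpha z+\beta)\,dz/(z^3-1)$. Partial fractions give
\[
\frac{dz}{z^3-1}=\tfrac13\sum_j \frac{\zeta^{j}\,dz}{z-\zeta^j},\qquad \frac{z\,dz}{z^3-1}=\tfrac13\sum_j\frac{\zeta^{2j}\,dz}{z-\zeta^j}.
\]
Hence $H_1$ and $H_2$ are linear combinations of the three logarithms $\ell_j=\log(z-\zeta^j)$, with $\ell_1+\ell_2+\ell_3=\log(z^3-1)$. The differences $\ell_j-\ell_k$ are therefore recovered from $(H_1,H_2)$ by a linear map, which I would check is invertible using the explicit coefficients. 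It follows that $H$ determines the two quantities $\log\frac{z-1}{z-\zeta}$ and $\log\frac{z-1}{z-\zeta^2}$ including their branches. Exponentiating recovers the two cross-ratio-type values, and these determine $z$ because a M\"obius map is injective. The branch data then determine the deck class in $\pi_1^{ab}=\Z^2$, which is generated by the loops around $1$, $\zeta$ and $\zeta^2$ subject to the loop at infinity being a relation. So $H(p)=H(p')$ forces $p=p'$.

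\textbf{Step 2: immersion.} This is immediate because $(dH_1,dH_2)\propto(z,1)\,dz/(z^3-1)$ never vanishes on $\CP^1\setminus\{1,\zeta,\zeta^2\}$, including at $z=\infty$, where the ratio $H_1{:}H_2$ stays finite.

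\textbf{Step 3: properness, the main obstacle.} I would need to show that a sequence leaving every compact subset of $M$ has $|H|\to\infty$. There are two ways to escape. In the first, $z\to\zeta^j$, and then $|\log(z-\zeta^j)|\to\infty$ with bounded contributions from the other logarithms. Since the map $(\ell_1-\ell_2,\ell_1-\ell_3)\mapsto H$ is a linear isomorphism, $|H|\to\infty$. In the second, $z$ stays in a compact set while the deck index in $\Z^2$ goes to infinity. Then the periods $2\pi i(\ell_j-\ell_k)$ grow linearly in the index, and since they form a lattice-like $\Z^2$ family of vectors in $\C^2$ that are linearly independent over $\R$ (the real-linear independence of the period vectors still has to be checked), $H$ again diverges. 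Mixed sequences are handled by combining the two estimates, because the period and local logarithm terms add linearly. A point still to be verified is that $z\to\infty$ is an interior point of $M$, not an end: the loop around $\infty$ lies in the lattice generated by the other three loops, and $H$ is finite there since $dH$ has no residue sum issue at infinity. This is consistent with the three asymptotically planar ends described in Figure~\ref{Fig:lim}.
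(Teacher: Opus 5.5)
Your proposal is correct and follows essentially the same route as the paper. Both arguments write $(H_1,H_2)$ as an invertible complex-linear image of two logarithms with distinct singular points, and then read off injectivity and properness from that logarithm pair on the universal abelian cover; the paper first normalizes by the M\"obius map \eqref{mobius} to get $A\log z+B\log(z-1)$, while you work directly with the differences $\log\frac{z-1}{z-\zeta^j}$. The two points you leave open are immediate from your Step 1. The period vectors are the images of $2\pi i e_1$ and $2\pi i e_2$ under that invertible $\C$-linear map, hence $\R$-independent. Also, only the differences $\ell_j-\ell_k$, not the individual $\ell_j$, are single-valued on $M$, and the differences are all you actually use.
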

\begin{proof}
This follows from \eqref{eq:rotdfdot}. In fact, after applying the same M{\"o}bius transformation \eqref{mobius} as before, the differential of the $j$-holomorphic curve becomes
$A\frac{dz}{z}+B\frac{dz}{z-1}$ with linearly independent vectors 
\[A:=(\tfrac{1+i \sqrt{3}}{2 \sqrt{6}},\tfrac{-1 + i \sqrt{3}}{2 \sqrt{6}}) \qquad\text{ and }\qquad B:=(\tfrac{1-i \sqrt{3}}{2 \sqrt{6}},\tfrac{-1 - i\sqrt{3}}{2 \sqrt{6}}).\]
Hence on the universal abelian cover, where $\log z$ and $\log(z-1)$ are single-valued,
\[(H_1,H_2)=A\log z+B\log(z-1)+\text{const}\]
Since $A$ and $B$ are linearly independent, this is an affine embedding of the universal abelian cover into $\C^2$. That gives injectivity immediately, and properness follows from the additional fact that 
$p\mapsto \bigl(\log z(p),\log(z-1)(p)\bigr)$
is a holomorphic embedding of M into $\C^2$.
\end{proof}

\subsection{The limiting structure of the Riemann surfaces, and the normalization in space}\label{sec:RsandN}
Recall that the underlying Riemann surface $\Sigma_k$ is the Fermat curve
\[\{[X:Y:Z]\in\CP^2\mid X^k+\zeta\, Y^k+\zeta^2\, Z^k=0\}\]
where $\zeta=\exp(\tfrac{2\pi i}{3}).$
The group
$\Gamma_{deck}
$
of deck transformations of $\Sigma_k\to\CP^1\cong\Sigma_k/\Gamma_{deck}$
acts by extrinsic symmetries on minimal Lagrangian immersions into $\CP^2$ as well as on their special Legendrian lifts in $\S^5.$
On the holomorphic curve $\Sigma_k\subset\CP^2$, this group
is generated by
$\varphi_1, \varphi_2, \varphi_3:=(\varphi_2\varphi_1)^{-1}$ and $\sigma$ defined in \eqref{eq:autos}.
Their orders are $k,\,k,\,k$, and $3$, respectively.

The monodromy of $\nabla_{\lambda_0}$ at $\lambda_0=\exp(\tfrac{2\pi i}{12})$ 
on the thrice-punctured sphere is the representation $\rho$ in \eqref{finite-group-rep1} generated by
$\sigma\cong M_0$ and $\varphi_1\cong M_1.$ Furthermore, the symmetries $\varphi_j$ for $j=1, 2, 3$ pairwise
commute, while $\varphi_1$ and $\sigma$ do not commute; for example, $\varphi_2=\sigma^{-1}\circ\varphi_1\circ\sigma.$

We label the special points on 
$\Sigma_k, $ i.e., the fixed points of certain elements of $\langle\varphi_1,\sigma\rangle\cong\Gamma_{deck}^{op}$ as follows:
\begin{equation*}
\begin{split}
p^1_1&=[0,1,\exp(\tfrac{\pi i}{3k})],\quad\quad
q_{1,1}=[1,1,1]\\
p^1_\ell&
=\varphi_3^{\ell-1}\,p^1_1,\quad
  p^2_\ell  =\sigma^{-1}\,p^1_\ell,\quad
p^3_\ell
=\sigma\, p^1_\ell,\quad
q_{j,\ell}=\varphi_2^{\ell-1}\varphi_1^{j-1} \, q_{1,1}
\end{split}
\end{equation*}
where $\ell, j\in\{1,\dots,k\}.$  Correspondingly, we denote the symmetries of $\S^5$ as follows:
\begin{equation}\label{eq:symSU3}\tilde\varphi_1=\begin{psmallmatrix}1&0&0\\0&\exp(\tfrac{2\pi i}{k})&0\\0&0&\exp(-\tfrac{2\pi i}{k})\end{psmallmatrix},\; \tilde\varphi_2=\begin{psmallmatrix}\exp(\tfrac{2\pi i}{k})&0&0\\0&\exp(-\tfrac{2\pi i}{k})&0\\0&0&1\end{psmallmatrix},\;
\tilde\varphi_3=(\tilde\varphi_2\tilde\varphi_1)^{-1},\; 
\tilde\sigma=
\begin{psmallmatrix} 0&1&0\\0&0&1\\1&0&0\end{psmallmatrix}.\end{equation}
Note that $\tilde\varphi_1=M_1^{-1}$ and $\tilde\sigma=M_0^{-1}$ and recall (e.g. \eqref{eq:equivara}) that the deck transformations $\gamma\in\Gamma_{deck}$
 act via $\rho(\gamma)^{-1}$ on the special Legendrian immersion.

\begin{Rem}\label{rem:normalization}
We will see that the surfaces $\hat f_k$ converge to infinitely many spherical caps, which are {\em generated} by the infinitesimal symmetries $\tilde\varphi_j$ for $k\to\infty$. Moreover, the surfaces $\hat f_k$
are normalized through the condition that
the distinguished points $p^j_\ell$ and $q_{j,\ell}$, each fixed by an appropriate nontrivial element of $\Gamma_{deck}$,
 are mapped to fixed points of 
the corresponding $\SU_3$ transformations acting on $\S^5.$ Because the $\SU_3$-representation $\rho$ 
of $\Gamma_{deck}$ is irreducible, 
and the special Legendrian lift is unique up to global multiplication by a cube root of unity, the images
of $p^j_\ell$ and $q_{j,\ell}$ in $\S^5$ are uniquely determined up to overall multiplication by $\zeta$ or $\zeta^2,$ 
which can be ignored as this action is discrete.
\end{Rem}

\subsubsection{The coordinate $w$}
We consider
the order $k$ automorphism
$\varphi:=\varphi_1\circ\varphi_2^{-1}.$
If $k$ is odd, the automorphism $\varphi$ has no fixed points. This yields an unramified cover
$\Sigma_k\to\underline{\Sigma}_k$ where $\underline{\Sigma}_k$ is the hyperelliptic curve given by
\[v^2=w^k-1.\]
The Weierstrass points of $\underline{\Sigma}_k$ are the $k$th roots of unity and $\infty.$ Removing the preimages of the unit circle yields three disjoint discs
\[\mathbb D_\pm,\mathbb D_\infty\subset \underline{\Sigma}_k.\]
Each of these three discs has exactly $k$ connected components as preimages in $\Sigma_k$. Although $\underline\Sigma_k$ does not admit a well-defined $\Z_3$ action 
permuting $\mathbb D_\pm,\mathbb D_\infty,$ 
the corresponding preimages in $\Sigma_k$ are mapped by $\hat f_k$ to extrinsically isometric regions in $\S^5$.
Note that \[w\colon \mathbb D_\pm\to \{w\in\C\mid |w|<1\}\] is a natural coordinate on $\mathbb D_\pm.$ Indexing the discs by $\{1,2,3\}$ (up to permutation, i.e.\ after applying the symmetry $\sigma$), we also obtain a natural coordinate, still denoted by $w$, on $\mathbb D_\infty$, whose image is again the unit disc.
The preimages of the three discs $\mathbb D_\pm$, $\mathbb D_\infty$ in $\Sigma_k$ consist of $3k$ many connected components.
 They are labeled by $\mathbb D^j_\ell$ according to their centers,
 \[p_\ell^j\in \mathbb D^j_\ell\subset \Sigma_k,\]
 where $j \in \{1, 2, 3\}$ and $\ell \in \{1, 2, ..., k\}$.
 Note that for every $k$, the union of these $3k$ discs is open and dense in $\Sigma_k$. 
 For all of them, we use $w$ as a natural coordinate which maps them to the unit disc.
 Note that on $\mathbb D^j_\ell$, $j=1,2,3$, we have
 \begin{equation}\label{eq:zw}z=\zeta^{j-1}-w^k\end{equation}
 where $z$ is the standard affine coordinate on $\CP^1=\Sigma_k/(\Z_k\times\Z_k)$, normalized by the condition that the 
 three branch values 
 of the degree $k^2$ map  $\Sigma_{k}\to\CP^1$
 are at $1,\zeta,\zeta^2.$ Furthermore
 \begin{equation*}w_{\mid \mathbb D^j_\ell}\circ\varphi_j=\exp(\tfrac{2\pi i}{k}) w_{\mid \mathbb D^j_\ell},\end{equation*}
 while $w$ is invariant under some appropriate $\varphi_{\tilde j},$ e.g., on $\mathbb D^1_\ell$ the coordinate $w$ is invariant under $\varphi_3$ (as $\varphi_3$ maps 
 $\mathbb D^1_\ell$ to $\mathbb D^1_{\ell+1}$).
 
 Similarly, for even $k$, we also obtain $3k$ disjoint discs $\mathbb D^j_\ell\subset\Sigma_k$ with centers $p^j_\ell$.  On each $\mathbb D^j_\ell$ there exists
 a coordinate $w$ which maps biholomorphically to the unit disc and satisfies \eqref{eq:zw}. Furthermore, the union of the  $3k$ discs 
 is open and dense in $\Sigma_k.$
 
\subsubsection*{The special Legendrian caps}
Note that the unique (up to multiplication by $\zeta$ or $\zeta^2$)  totally geodesic special Legendrian closed cap $\hat{\mathbb D}_1^1$
in $\S^5$ of radius $\arctan(\sqrt{2})$
with center on $\S^1\oplus0\oplus0\subset \S^5\subset\C^3$ and with a boundary point that is a fixed point of $\tilde\sigma$ (defined in \eqref{eq:symSU3}) is
contained in and therefore determined by the plane \[E=\mathrm{span}_\R(\,\begin{psmallmatrix}i\\0\\0\end{psmallmatrix}, \begin{psmallmatrix}0\\i\\i\end{psmallmatrix}, \begin{psmallmatrix}0\\1\\-1\end{psmallmatrix}\,)\subset\C^3.\]
Explicitly
\[\hat{\mathbb D}_1^1=\{\cos(u)\begin{psmallmatrix}i\\0\\0\end{psmallmatrix}+\sin(u)(\tfrac{\cos(v)}{\sqrt{2}}\begin{psmallmatrix}0\\i\\i\end{psmallmatrix}+\tfrac{\sin(v)}{\sqrt{2}}\begin{psmallmatrix}0\\1\\-1\end{psmallmatrix})\,\mid\, u\in[0,\arctan(\sqrt{2})],\,v\in[0,2\pi]\,\}.\]

As a consequence of the above discussion, we define
\[P^1_1=\begin{psmallmatrix}i\\0\\0\end{psmallmatrix}\quad\text{and}\quad Q_{1,1}=\tfrac{i}{\sqrt{3}}\begin{psmallmatrix}1\\1\\1\end{psmallmatrix}\]
as well as
\begin{equation}\label{eq:S5liftedpoints}
P^1_\ell
=\tilde\varphi_3^{\ell-1}\,P^1_1,\quad
  P^2_\ell
  =\tilde\sigma^{-1}\,P^1_\ell,\quad
P^3_\ell
=\tilde\sigma^{-2}\, P^1_\ell,\quad\text{and}\quad
Q_{j,\ell}=\tilde\varphi_2^{\ell-1}\tilde\varphi_1^{j-1} \, Q_{1,1}.
\end{equation}

Finally, we define the following $3k$ spherical caps with centers $P^1_\ell,P^2_\ell, P^3_\ell$, $\ell=1,\dots,k$,
\[\hat{\mathbb D}^1_\ell=\tilde\varphi_3^{\ell-1}\, \hat{\mathbb D}_1^1,\quad \hat{\mathbb D}^2_\ell=\tilde\sigma^{-1}\hat{\mathbb D}^1_\ell,\quad\text{and}\quad \hat{\mathbb D}^3_\ell=\tilde\sigma\,\hat{\mathbb D}^1_\ell.\]
As discussed in Remark \ref{rem:normalization}, the very form of the symmetries already fixes the surfaces $\hat f_k$ in $\S^5$ up to multiplication by $\zeta$ or $\zeta^2$. 
It is not clear whether all the points in \eqref{eq:S5liftedpoints} are
contained in the image of the normalized surface \(\hat f_k\) for every
finite \(k\geq k_0\).  However, their projections to \(\CP^2\) lie on the
corresponding minimal Lagrangian surfaces \(f_k\).  The points themselves
appear in the limiting surface \(\hat f_0=\lim_{k\to\infty}\hat f_k\), as
we prove below.

In fact, it follows from our analysis below that the limit of $\hat f_k$ for $k\to\infty$ is given by
\begin{itemize}
\item the closed discs $\hat {\mathbb D}^1_\ell$, $\hat {\mathbb D}^2_\ell$, and $\hat {\mathbb D}^3_\ell$ for  $\ell=1,2,\dots$ with centers 
equidistributed on the three circles $\S^1\oplus0\oplus0$, $0\oplus\S^1\oplus0$ and $0\oplus0\oplus \S^1;$
\item the boundaries of the discs which all lie on the Clifford torus generated from $Q_{1,1}$ by applying the symmetries $(\tilde\varphi_1)^j\circ (\tilde\varphi_2)^\ell$ for $j,\ell=1,2,\dots\;\,$.
\end{itemize}
\begin{Rem}\label{rem:intersection}
For the model caps defined above, and for fixed finite $k$,
\[\hat {\mathbb D}^j_\ell\cap \hat {\mathbb D}^j_{\tilde\ell}=\emptyset \quad\text{for all}\quad  j=1,2,3,\ \ell\neq\tilde\ell,\]
while for $j_1\neq j_2$ and all $\ell,\tilde\ell$ the closures of the discs
$\hat {\mathbb D}^{j_1}_\ell$ and $\hat {\mathbb D}^{j_2}_{\tilde \ell}$ intersect in precisely one point $a\in\S^5$, which is contained in the intersection of their boundaries. Furthermore, there is a unique $\hat\ell\in\{1,\dots,k\}$
such that for $\{j_1,j_2,j_3\}=\{1,2,3\}$ 
\[\hat {\mathbb D}^{j_3}_{\hat\ell}\cap \hat {\mathbb D}^{j_1}_\ell\cap \hat {\mathbb D}^{j_2}_{\tilde \ell}=\{a\}.\]
\end{Rem}

\subsection{The spherical part}
In this subsection we determine the limiting behaviour of the special Legendrian surfaces $\hat f_k$ restricted to the discs 
as $k\to\infty$. 
We first derive the asymptotic expansion of the normalized parallel frame, which then yields the limit of $\hat f_k$ on the discs.
 We finally establish uniform convergence of the surfaces up to the boundary of the discs.

We continue to work with the interpolating family
\[
\hat f_t=F_t(\lambda_0)^{-1}e_1\colon \widetilde S\to \S^5
\]
from \eqref{t-reconstruction}. For $t=\frac1k$, this agrees, up to an ambient $\SU_3$-transformation, with $\hat f_k\circ\widetilde\pi_k$.
To analyze the component lying over $z=1$, we use its natural coordinate $w$
on the discs ${\mathbb D}^1_\ell \subset\Sigma_k$ introduced in \eqref{eq:zw}. For $t=\frac1k$ this is characterized by
\[
z=1-w^k.
\]
For the continuous $t$-family it is convenient to write instead
\[
z=1-e^u,\qquad w=e^{tu}.
\]
Consider the limit  
\begin{equation}\label{eq:tildeOmegalimit}
\tilde\Omega(w):=\lim_{t\to 0} \Omega_t(1-w^{\tfrac{1}{t}}).
\end{equation}
\begin{Lem}\label{lem:sphericallemma}
The limit in \eqref{eq:tildeOmegalimit}
converges locally uniformly on $\{w\in\C\mid 0< |w|<1\}$ to
\begin{equation*}
\tilde\Omega(w)=
\begin{psmallmatrix}
 1 & 0 & 0 \\
 0 & 1 & -\frac{i }{\lambda } \\
 0 & 0 & 1 \\
\end{psmallmatrix}
\exp(-
A_1 \log w)\qquad\text{for}\quad
A_1:=\begin{psmallmatrix}
 0 & -\frac{\lambda ^3-1}{\sqrt{6} \lambda  } & -\frac{i \left(\lambda ^3+1\right)}{\sqrt{6} \lambda ^2 } \\
 \frac{\lambda ^3-1}{\sqrt{6} \lambda ^2 } & 0 & \frac{i}{\sqrt{3} \lambda  } \\
 \frac{i \left(\lambda ^3+1\right)}{\sqrt{6} \lambda  } & -\frac{i \lambda }{\sqrt{3} } & 0 \\
\end{psmallmatrix}.
\end{equation*}
Furthermore,  \eqref{eq:tildeOmegalimit}
 converges uniformly for $w\in[\tfrac12,1]\subset\R.$ 
\end{Lem}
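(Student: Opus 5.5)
The plan is to view $\Omega_t(1-w^{1/t})$ as the solution of a rescaled ODE in the variable $u=\log w$ (so $z=1-e^{u/t}$ with $u=t\log$-scale), and to pass to the limit $t\to0$ in that ODE. Write $z=1-e^{u/t}$ with $\Re u<0$. Then $z\to 1$ exponentially fast as $t\to0$, uniformly for $u$ in compact subsets of $\{\Re u<0\}$, i.e. for $w$ in compact subsets of the punctured unit disc. The potential $\omega_t$ has a simple pole at $z=1$. The expansion from the proof of Theorem~\ref{the:scherk} gives $\omega_t=\omega_0+t\,\omega_1+O(t^3)$, with $\omega_0$ holomorphic. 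The pole part of $t\,\omega_1$ at $z=1$ is $t\,\frac{dz}{z-1}$ times a constant matrix; the factor $1/3$ from $z^3-1=3(z-1)+\dots$ is absorbed.

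Changing variables, $\frac{dz}{z-1}=\frac{1}{t}\,du=\frac{1}{t}\frac{dw}{w}$. Hence in the $w$-variable,
\[
\omega_t = \Res_{z=1}(\omega_1)\,\frac{dw}{w} + O(t)\,\frac{dw}{w}+\omega_0\text{-terms},
\]
and the $\omega_0$-terms contribute $\omega_0(z)\,dz$ with $dz=-\tfrac1t e^{u/t}du$. This last piece is integrable and exponentially small in total variation on compacts of $\Re u<0$. I would check that $\Res_{z=1}\omega_1$, conjugated appropriately, equals $A_1$ (or $-A_1$ with the sign convention $d\Omega+\omega\Omega=0$). This is a direct read-off from the displayed $O(t)$ term, evaluated at $z=1$.

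The frame is then assembled by transporting $\Omega_t$ from $z=0$ to a point near $z=1$, and then along the rescaled region. First take a fixed path from $0$ to a point $z_*=1-e^{u_*/t}$, where $w_*$ is fixed, for instance $w_*=\tfrac12$. On the part of this path staying in a fixed compact set away from $1$, $\Omega_t=\Omega_0(\Id+O(t))$ by the Taylor expansion already established in Theorem~\ref{the:scherk}, using $R(t)=\Id+O(t^2)$ from Theorem~\ref{thm:expansionorder2}. Here $\Omega_0$ is the explicit unipotent matrix evaluated at $z=1$, which gives the prefactor $\begin{psmallmatrix}1&0&0\\0&1&-i/\lambda\\0&0&1\end{psmallmatrix}$. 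On the remaining part, near $z=1$, I would use the $w$-coordinate. By the remark above, the connection form is $-A_1\,dw/w$ plus an error whose $L^1$-norm along radial paths tends to $0$ uniformly. Gronwall's inequality then yields convergence to $\Omega_0(1)\exp(-A_1\log w)$, locally uniformly on $0<|w|<1$. To match normalizations, the transport from a fixed neighborhood of $1$ to the circle $|z-1|=\delta$ must be handled within the $w$-picture. The segment $w\in[\delta^t,1)$ shrinks in $\log w$-length like $t|\log\delta|\to0$, so it contributes nothing in the limit.

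The step I expect to be the main obstacle is uniform convergence up to $w=1$ on $[\tfrac12,1]$, where $z=1-w^{1/t}$ runs all the way to $z=0$ and leaves the pole region. My approach is to split $[\tfrac12,1]$ at $w=1-t^{1/2}$. On $[1-\sqrt t,1]$, $z$ lies in a fixed compact set away from $1$ and is covered by the first-order expansion, which is uniform there. Moreover $\exp(-A_1\log w)=\Id+O(\sqrt t)$ there, so the two descriptions agree to $o(1)$. On $[\tfrac12,1-\sqrt t]$, the error term from $\omega_0\,dz$ and from $O(t)\,dw/w$ is bounded by $\int |dz|\cdot C+Ct$, which is $O(1)\cdot e^{-1/\sqrt t}$-small only in the inner part. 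Near the junction, one must instead use that $\omega_0$ is nilpotent and its integral is exactly the unipotent factor already in $\Omega_0(z)$. This factor tends uniformly to $\Omega_0(1)$, since $z\to1$ uniformly for $w\le 1-\sqrt t$ as $(1-\sqrt t)^{1/t}\to0$. Gluing the two estimates then gives uniform convergence on $[\tfrac12,1]$.
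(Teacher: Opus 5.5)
There is a genuine gap, and it sits in the transition region between your two regimes. Your expansion $\Omega_t=\Omega_0(z)(\Id+O(t))$, with $\Omega_0(z)=\phi(z)$ the unipotent matrix whose $(2,3)$-entry is $-iz/\lambda$, is uniform only for $z$ in compacts avoiding $1$. Your $w$-picture is uniform only on compacts of $0<|w|<1$. The region where $z$ is near $1$ while $w$ is near $1$ is covered by neither.

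Your claim that for $w\in[1-\sqrt t,1]$ the point $z=1-w^{1/t}$ stays in a fixed compact set away from $1$ is false. Since $(1-\sqrt t)^{1/t}\approx e^{-1/\sqrt t}\to0$ (as you observe yourself a few lines later), that interval corresponds to $z\in[0,1-e^{-1/\sqrt t}]$. In fact $z$ stays away from $1$ only for $w\in[\delta^t,1]$, which is an $O(t)$-neighbourhood of $w=1$. Likewise, $\omega_0=\tfrac{i}{\lambda}E_{23}\,dz$ is not negligible in the $w$-picture near $|w|=1$. On $[\delta^t,1]$ it produces the order-one transport $\phi(1-\delta)$. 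On the radial path from $w=\delta^t$ to $w_*=\tfrac12$ its $L^1$-norm is about $\delta$, not $o(1)$.

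For the locally uniform statement this can be rescued by a diagonal argument ($t\to0$ first, then $\delta\to0$). The matching on $[\tfrac12,1]$ cannot be rescued. For $w\in[1-\sqrt t,1]$ you have $\Omega_t\approx\phi(z)$ with $z$ sweeping essentially all of $[0,1)$, whereas $\phi(1)\exp(-A_1\log w)\approx\phi(1)$, so the two descriptions do not agree to $o(1)$. Indeed $\Omega_t(0)=R(t)\to\Id\neq\phi(1)$, so the pointwise limit is discontinuous at $w=1$. Hence $\Omega_t(1-w^{1/t})$ cannot converge uniformly on $[\tfrac12,1]$ to the displayed formula. What does hold, and what the paper's Gr\"onwall argument actually establishes, is that $\phi(z)^{-1}\Omega_t(z)\to\exp(-A_1\log w)$ uniformly for $w\in[\tfrac12,1]$. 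Equivalently, $\Omega_t(1-w^{1/t})-\phi(1-w^{1/t})\exp(-A_1\log w)\to0$ uniformly.

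The missing idea is to remove $\omega_0$ exactly and globally, not just near the junction. Set $\Psi_t=\phi^{-1}\Omega_t$, which solves $d\Psi_t+\mu_t\Psi_t=0$ with $\mu_t=(\omega_t).\phi=t\,\phi^{-1}\omega_1\phi+O(t^3)$. The cancellation is exact because $\phi$ commutes with $\omega_0$ and $\phi^{-1}d\phi=-\omega_0$. Moreover $A_1=\phi(1)^{-1}\Res_{z=1}(\omega_1)\phi(1)$, which is your ``conjugated appropriately''.

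Then split $\mu_t=\nu+(\mu_t-\nu)$ with $\nu=tA_1\,dz/(z-1)$. Its explicit solution $\phi_1=\exp(-tA_1\log(1-z))=\exp(-A_1\log w)$ is uniformly bounded for $w\in[\tfrac12,1]$. The remainder $\mu_t-\nu$ is $O(t)\,|dz|$ plus $O(t^2)\,|dw/w|$, so it has total mass $O(t)$ along the entire real segment $z\in[0,1-2^{-1/t}]$. A single Gr\"onwall estimate for $Y_t=\phi_1^{-1}\Psi_t$, with $Y_t(0)=R(t)=\Id+O(t^2)$, then gives $Y_t\to\Id$ uniformly on that segment, with no junction to match. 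The resulting value $\Psi_t\to\exp(A_1\log 2)$ at $w=\tfrac12$ seeds the locally uniform statement on compacts of the punctured disc. There $\mu_t=A_1\,dw/w+O(t^2)$ and $\phi(z)\to\phi(1)$ exponentially fast.
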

\begin{Rem}
The singularity of $\tilde \Omega$ at $w=0$ will be removed through the loop group factorization. 
\end{Rem}
 \begin{proof}
 By Theorem \ref{thm:expansionorder2}, the functions $a,b,c$ are, up to second order in $t$, given by the central values, i.e.,
 \[a(t)=\tfrac{1}{\sqrt{6}}+O(t^2),\; b(t)=-\tfrac{1}{\sqrt{6}}+O(t^2), \quad\text{and}\quad c(t)=\tfrac{1}{\sqrt{3}}+O(t^2).\]
 Define \[\phi:=
\begin{psmallmatrix}
 1 & 0 & 0 \\
 0 & 1 & -\frac{i z}{\lambda } \\
 0 & 0 & 1 \\
\end{psmallmatrix}\]
and $\mu_t$ via $d+\mu_t=(d+\omega_t).\phi.$
Thus, we have
on the thrice-punctured sphere with coordinate $z$
\begin{equation}
\begin{split}\label{eq:muzexp}
\mu_t&=\omega_t.\phi=t
\begin{psmallmatrix}
 0 & -\frac{\sqrt{\frac{3}{2}} \left(\lambda ^3-1\right)}{\lambda  \left(z^3-1\right)} & -\frac{i \sqrt{\frac{3}{2}} \left(\lambda ^3+1\right) z}{\lambda ^2 \left(z^3-1\right)} \\
 \frac{\sqrt{\frac{3}{2}} \left(\lambda ^3-1\right) z}{\lambda ^2 \left(z^3-1\right)} & 0 & \frac{i \sqrt{3}}{\lambda  \left(z^3-1\right)} \\
 \frac{i \sqrt{\frac{3}{2}} \left(\lambda ^3+1\right)}{\lambda  \left(z^3-1\right)} & -\frac{i \sqrt{3} \lambda  z}{z^3-1} & 0 \\
\end{psmallmatrix}dz+O(t^3).
\end{split}
\end{equation}
Set
\[\Psi_t:=\phi^{-1}\Omega_t.\]
Since $\phi(z=0)=\Id,$ it then satisfies the ODE
\[d\Psi_t+\mu_t\Psi_t=0\qquad\text{
with initial value}
\qquad\Psi_t(z=0)=R(t)=\Id+O(t^2).\]

Using $z=1-w^{\tfrac{1}{t}}$, 
we then obtain on every compact subset of the unit disc 
 in the $w$-coordinate (including the pole at $w=0$)
 uniform convergence
 \begin{equation}\label{eq:mut}\mu_t=A_{1}\frac{dw}{w}+O(t^2).
 \end{equation}

Thus, consider
\[\phi_1(z):=\exp(-t A_{1}\log(1-z))=\exp(-A_{1} \log w).\]
For each $\lambda$, the matrix $A_1$ is conjugate to $\diag(1,0,-1)$; in particular,
$\exp(-2\pi i A_1)=\Id$, so the expression $\exp(-A_1\log w)$ is single-valued on $0<|w|<1$.

The map $\phi_1(z)$ is uniformly bounded in $\Lambda\mathrm{SL}(3,\C)$ on the real interval 
$z\in[0,1-\tfrac{1}{2^{1/t}}]$
and satisfies \begin{equation}\label{eq:Gz0P}\phi_1(z=0)=\Id=\Psi_t(z=0).\end{equation}
Write
\[\Psi_t=\phi_1 Y_t.\]
As in  \cite[Section 3.5, Lemma 24]{CHHT} we use the variation of constants method and Gr{\"o}nwall's inequality to prove
\[Y_t(z=1-\tfrac{1}{2^{1/t}})\to_{t\to 0}\Id.\]
In fact, we obtain from \eqref{eq:muzexp}
\begin{equation}\label{eq:dYBY}dY_t=Q_tY_t\end{equation}
where $Q_t$ is uniformly bounded (note that $Q_0$ can be computed explicitly in terms of the residues  at $z=\xi$ and $z=\xi^2$ of the potential $\mu_t$, for details see Section \ref{sec:firstorderexpansion} below).
Integrating \eqref{eq:dYBY}, using \eqref{eq:Gz0P} and taking norms, we obtain for $z\in[0,1-\tfrac{1}{2^{1/t}}]$
\[\parallel Y_t(z)-\Id\parallel\leq C |t|\int_0^z\parallel Y_t(z)\parallel\]
for some constant $C>0.$
Gr{\"o}nwall's inequality yields
\[\parallel Y_t(z=1-\tfrac{1}{2^{1/t}})-\Id\parallel\leq C |t|\exp(C|t|)\to_{t\to0}0.\]
Hence,
\[\lim_{t\to0}\Psi_t(z=1-\tfrac{1}{2^{1/t}})=\lim_{t\to0}\phi_1(z=1-\tfrac{1}{2^{1/t}})=\exp(A_{1}\log 2).\]
Furthermore, we obtain from the locally uniform equation \eqref{eq:mut} with respect to the coordinate $w$ with $z=1-w^{1/t}$ and on compact subsets of $|w|<1$ that
\[\lim_{t\to0}\Psi_t=\exp(-A_{1} \log w)\]
or equivalently
$\tilde\Omega(w)=\phi(z=1) \exp(-A_{1} \log w)$
as claimed. 

 That \eqref{eq:tildeOmegalimit}
 converges uniformly for $w\in[\tfrac12,1]\subset\R$
then clearly follows from the first part of the proof, i.e., from our uniform estimate along the $z$-segment $[0,1-2^{-\tfrac{1}{t}}],$
together with the locally uniform estimate for $0<|w|<1.$
\end{proof}
With this preparation, we can compute the limit surface:
\begin{The}\label{The:limit-surfacerp2}
For each $k$, let $\mathbb D_1^1\subset \Sigma_k$ be the distinguished disc containing $p_1^1$, and let
\[
w:\mathbb D_1^1\to \mathbb D:=\{\,|w|<1\,\}
\]
be its natural coordinate. 
Then, with the normalization fixed in Section \ref{sec:RsandN},
\[
\hat f_k\circ w^{-1}\colon \mathbb D\to \S^5
\]
converge locally uniformly, as $k\to\infty$, to
\begin{equation}\label{eq:spherecappara}
\hat f
=\begin{pmatrix}\frac{2 i\left(\sqrt{3}+2\right)}{x^2+y^2+\sqrt{3}+2}-i\\
\frac{\left(\sqrt{3}+1\right) (ix+y)}{x^2+y^2+\sqrt{3}+2}\\
\frac{\left(\sqrt{3}+1\right) (ix-y)}{x^2+y^2+\sqrt{3}+2}\end{pmatrix} \qquad\text{with}\quad w=x+iy.\end{equation}
\end{The}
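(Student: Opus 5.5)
The plan is to start from Lemma~\ref{lem:sphericallemma}, which gives the locally uniform limit $\tilde\Omega(w)=\phi(1)\exp(-A_1\log w)$ of the frames $\Omega_t$ on $0<|w|<1$. We then pass this limit through the loop Iwasawa decomposition. Since the $\hat\tau$-twisted Iwasawa splitting of Theorem~\ref{thm:Iwasawa} is a real-analytic diffeomorphism, locally uniform convergence $\Omega_t\to\tilde\Omega$ in the loop group norm gives locally uniform convergence of the unitary factors $F_t\to\tilde F$. Here $\tilde\Omega=\tilde B\tilde F$. By \eqref{t-reconstruction} and \eqref{t-reconstruction0}, $\hat f_k\circ w^{-1}$ then converges to $\tilde F(\lambda_0)^{-1}e_1$, up to an ambient $\SU_3$-transformation that we fix at the end.

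To compute $\tilde F$, we first remove the apparent singularity at $w=0$. The constant factor $\phi(1)$ and $\exp(-A_1\log w)$ give a DPW frame for the potential $A_1\,dw/w$. Note that $A_1\in\mathfrak{\hat g}_{-1}\oplus\mathfrak{\hat g}_1$ after the constant gauge is absorbed. Write $\tilde\Omega$ as a product of a constant loop and $\exp(-A_1\log w)$. We diagonalize $A_1$: its eigenvalues are $1,0,-1$. Its eigenvectors can be chosen as positive-loop combinations, in analogy with the gauge $g_0$ in Lemma~\ref{harmonic-family-form}. Then $\exp(-A_1\log w)=P\,\diag(w^{-1},1,w)P^{-1}$, and a positive gauge $\diag(w,1,w^{-1})$-type factor makes the potential smooth at $w=0$. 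In fact the residue $A_1$ is $\lambda$-independent up to the twisting. After the gauge $\diag(\lambda^{-1},\lambda,1)$-type conjugation it becomes the constant matrix $A_1|_{\lambda=1}$ times $\lambda^{-1}$-graded pieces. So the limiting surface is the DPW surface of an essentially constant potential $\lambda^{-1}N\,dw$ after the substitution $w\mapsto\log w$. This is the totally geodesic special Legendrian sphere, and the Iwasawa factorization can be done explicitly by a $\CP^1$-type (rank-one) computation as in \eqref{eq:fact=0}.

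Concretely, I would do the following. (i) Compute $\tilde F(\lambda)^{-1}e_1$ by solving $\tilde\Omega=\tilde B\tilde F$ with the ansatz that $\tilde F$ lies in an $\SU_2$-type subgroup determined by the two-dimensional invariant structure of $A_1$. Equivalently, find the Hermitian metric $\tilde\Omega^\dagger\tilde\Omega$ on the unit circle and Cholesky/Birkhoff-factor it; the entries will be rational in $|w|^2$ with the constant $\sqrt3+2$ coming from the eigen-data of $A_1$ at $\lambda_0$. (ii) Evaluate at $\lambda_0=e^{2\pi i/12}$ and check that the resulting map is \eqref{eq:spherecappara}. One can verify directly that \eqref{eq:spherecappara} has unit length, lies in the plane $E$ spanned by $ie_1$, $i(e_2+e_3)$, $e_2-e_3$, is conformal in $w$, and sends $w=0$ to $P^1_1=ie_1$. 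Moreover $|w|\to1$ gives the boundary circle at distance $\arctan\sqrt2$: at $|w|=1$ the first entry becomes $i(\sqrt3-1)/(\sqrt3+3)\cdot\ldots$, with $\cos u=1/\sqrt3$. (iii) Fix the ambient $\SU_3$ ambiguity using Remark~\ref{rem:normalization}. The center $p^1_1$ is a fixed point of $\varphi_1$, hence maps to the fixed circle of $\tilde\varphi_1$ through $P^1_1$. The real involutions $\mu,\kappa$ pin the boundary point to $Q_{1,1}$, and the residual discrete ambiguity by cube roots of unity is irrelevant.

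The main obstacle is step (i), together with justifying that the convergence is locally uniform near $w=0$, where $\tilde\Omega$ itself is singular. I would handle the latter by applying the positive desingularizing gauge (the analogue of $g_k$ in \eqref{eq:gkdesinG}) uniformly in $t$. This gauge does not change $F_t$, and it converts the statement into convergence of smooth frames on the full disc. Then the continuity of the Iwasawa map gives uniformity on compacta of $\mathbb D$, including $w=0$.
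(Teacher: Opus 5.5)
You have the same overall architecture as the paper. You take the limit frame $\tilde\Omega=\phi(1)\exp(-A_1\log w)$ from Lemma~\ref{lem:sphericallemma}, use continuity of the Iwasawa splitting, factor explicitly, evaluate at $\lambda_0$, fix the ambient $\SU_3$-normalization, and treat $w=0$ with a $t$-uniform desingularizing positive gauge plus Gr\"onwall. The last step is exactly the paper's.

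The gap is in the one step that actually produces \eqref{eq:spherecappara}, the explicit factorization. The shortcut you propose for it rests on false claims about $A_1$.

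\begin{itemize}
\item As a loop, $A_1$ has Fourier modes at $\lambda^{-2},\lambda^{-1},\lambda,\lambda^{2}$; for instance its $(1,2)$-entry is $(\lambda^{-1}-\lambda^{2})/\sqrt6$. So it is neither in $\hat{\mathfrak g}_{-1}\oplus\hat{\mathfrak g}_1$ nor ``$\lambda$-independent up to twisting''. No diagonal $\lambda$-conjugation makes it homogeneous in $\lambda$. Conjugating by $\phi(1)$ does not help either: it produces a $\lambda^0$-term $\diag(0,-\tfrac{1}{\sqrt3},\tfrac{1}{\sqrt3})$ and keeps the $\lambda^{2}$-modes.
\item Consequently there is no reduction to a constant potential $\lambda^{-1}N$, and no rank-one computation as in \eqref{eq:fact=0}. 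The ``positive-loop eigenvectors'' of $A_1$ are unjustified, and in any case unnecessary.
\item Your fallback has the order reversed. The quantity determined by the positive factor is $\tilde\Omega\tilde\Omega^*=\tilde B\tilde B^*$, with $g^*(\lambda)=g(1/\bar\lambda)^\dagger$, not $\tilde\Omega^\dagger\tilde\Omega$.
\item The checks in your step (ii) are properties of the target formula. They do not show that the limit equals it. Also, at $|w|=1$ the first entry is $i/\sqrt3$.
\end{itemize}

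The missing idea is what makes the paper's computation finite-dimensional: $A_1$ is Hermitian along $\S^1$, i.e.\ $iA_1\in{\bf su}_3^{\hat\tau}(\S^1)$. Writing $w=re^{i\varphi}$, you get $\tilde\Omega=\phi(1)\exp(-A_1\log r)\exp(-iA_1\varphi)$, and the last factor $F_\varphi$ is already unitary. So only the one-real-parameter family $\phi(1)\exp(-A_1\log r)=BF_0$ must be factored; the paper gives $B$ and $F_0$ explicitly. Note that $F_0$ stays finite as $r\to0$ although $B$ does not, so no desingularization is needed to compute the limit map itself.

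Then the surface is $(F_0F_\varphi)^{-1}(\lambda_0)e_1$. The explicit constant $T\in\SU_3$ sends $z=0$ to $Q_{1,1}$ and $w\to0$ to $P^1_1$. It conjugates $F_\varphi^{-1}(\lambda_0)$ into $\diag(1,e^{-i\varphi},e^{i\varphi})$, which is the source of the $ix\pm y$ structure in \eqref{eq:spherecappara}. Meanwhile $TF_0^{-1}(\lambda_0)e_1$ supplies the radial profile.

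With this observation your metric-factorization route would also work, since $\tilde\Omega\tilde\Omega^*=\phi(1)\exp(-2A_1\log r)\phi(1)^*$ depends only on $r$. As written, however, the proposal does not reach the formula.
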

\begin{Rem}
Note that the closure of $\hat f({\mathbb D})$ is exactly $\hat {\mathbb D}_1^1,$
and that the area of this spherical cap is exactly
$2\pi(1-\sqrt{\tfrac{1}{3}}),
$
which perfectly fits with Theorem \ref{Thm:areas}.
\end{Rem}
\begin{proof}
We factor $\tilde\Omega(w)$ into positive and unitary parts. Write 
\[w=r\exp(i\varphi)\]
to obtain
\begin{equation*}
\begin{split}
&\tilde\Omega(w)=
\begin{psmallmatrix}
 1 & 0 & 0 \\
 0 & 1 & -\frac{i }{\lambda } \\
 0 & 0 & 1 \\
\end{psmallmatrix} \exp(-A_{1}\log r)\exp(-i A_{1}\varphi)
\end{split}
\end{equation*}
where the last factor 
\[F_\varphi:=\exp(-iA_{1}\varphi)
\]
maps into
${\bf SU}^{\hat\tau}(\S^1)$
because $iA_{1}\in{\bf su}_3^{\hat\tau}(\S^1).$ 
Hence, it is enough to factor \[
\begin{psmallmatrix}
 1 & 0 & 0 \\
 0 & 1 & -\frac{i }{\lambda } \\
 0 & 0 & 1 \\
\end{psmallmatrix} \exp(-A_{1}\log r)=BF_0\]
into its (twisted) positive part $B$  and its unitary part $F_0$.
We only state the result of the computation:
\begin{equation}\label{eq:comF0}
\begin{split}
&B=
\begin{psmallmatrix}
 1 & -\frac{\left(\sqrt{3}+1\right) \lambda ^2 \left(r^2-1\right)}{\sqrt{2} \left(r^2+\sqrt{3}+2\right)} & -\frac{i \sqrt{2} \left(\sqrt{3}+1\right) \lambda  \left(r^2-1\right)}{r^2+\sqrt{3}+2} \\
 \frac{\lambda  \left(r^2-1\right)}{\sqrt{3} r} & \frac{-\frac{\sqrt{3} \left(4 \sqrt{3}+7\right) \lambda ^3 \left(r^2-1\right)^2}{r^2+\sqrt{3}+2}+\left(2 \sqrt{3}+3\right) r^2+7 \sqrt{3}+12}{6 \left(\sqrt{3}+2\right)^{3/2} r} & -\frac{i \sqrt{\frac{2}{3}+\frac{1}{\sqrt{3}}} \lambda ^2 \left(r^2-1\right)^2}{r \left(r^2+\sqrt{3}+2\right)} \\
 0 & -\frac{i \left(7 \sqrt{3}+12\right) \lambda  r}{\left(\sqrt{3}+2\right)^{3/2} \left(r^2+\sqrt{3}+2\right)} & \frac{2 \sqrt{3 \left(\sqrt{3}+2\right)} r}{r^2+\sqrt{3}+2} \\
\end{psmallmatrix}\\
&\\
&F_0=\\
&\begin{psmallmatrix}
 \frac{r \left(\sqrt{3} -r+2 \sqrt{3}+6\right)+2 \sqrt{3}+3}{3 \left(r^2+\sqrt{3}+2\right)} & -\frac{\left(\lambda ^3+1\right) (r-1) \left(\left(\sqrt{3}+3\right) r+5 \sqrt{3}+9\right)}{6 \sqrt{\sqrt{3}+2} \lambda  \left(r^2+\sqrt{3}+2\right)} & -\frac{i \left(\lambda ^3-1\right) (r-1) \left(\left(\sqrt{3}+3\right) r+5 \sqrt{3}+9\right)}{6 \sqrt{\sqrt{3}+2} \lambda ^2 \left(r^2+\sqrt{3}+2\right)} \\
 \frac{\lambda  (r-1) \left(\sqrt{3} r+2 \sqrt{3}+3\right)}{3 \left(r^2+\sqrt{3}+2\right)} & \frac{\sqrt{3} r^2-\left(2 \sqrt{3}+3\right) \lambda ^3 (r-1)^2+6 r+4 \sqrt{3} r+7 \sqrt{3}+12}{6 \sqrt{\sqrt{3}+2} \left(r^2+\sqrt{3}+2\right)} & -\frac{i \left(\sqrt{3} r^2+\left(2 \sqrt{3}+3\right) \lambda ^3 (r-1)^2+6 r+4 \sqrt{3} r+7 \sqrt{3}+12\right)}{6 \sqrt{\sqrt{3}+2} \lambda  \left(r^2+\sqrt{3}+2\right)} \\
 -\frac{i (r-1) \left(r+\sqrt{3}+2\right)}{\sqrt{3} \lambda  \left(r^2+\sqrt{3}+2\right)} & \frac{\frac{i \left(2 \sqrt{3}+3\right) (r-1)^2}{\lambda ^2}-i \lambda  \left(r \left(\sqrt{3} r+4 \sqrt{3}+6\right)+7 \sqrt{3}+12\right)}{6 \sqrt{\sqrt{3}+2} \left(r^2+\sqrt{3}+2\right)} & \frac{\frac{\left(2 \sqrt{3}+3\right) (r-1)^2}{\lambda ^3}+\sqrt{3} r \left(r+2 \sqrt{3}+4\right)+7 \sqrt{3}+12}{6 \sqrt{\sqrt{3}+2} \left(r^2+\sqrt{3}+2\right)} \\
\end{psmallmatrix}.
\end{split}
\end{equation}
Note that $\exp(iA_{1}\varphi)(\lambda_0)$, where $\lambda_0=\exp(\tfrac{2 \pi i}{12}),$ is an $\SU_3$ rotation with eigenvalues $1, \exp(i\varphi),\exp(-i\varphi)$ and corresponding eigenvectors
\[\tfrac{1}{\sqrt{3}}\begin{psmallmatrix}1\\-(-1)^{11/12}\\-(-1)^{1/12}\end{psmallmatrix},\tfrac{1}{\sqrt{3}}\begin{psmallmatrix}-(-1)^{7/12}\\\sqrt[6]{-1}\\1\end{psmallmatrix},\tfrac{1}{\sqrt{3}}\begin{psmallmatrix}1\\-(-1)^{\tfrac{1}{4}}\\-(-1)^{\tfrac{3}{4}}\end{psmallmatrix}.\]
Up to $\SU_3$, the special Legendrian surface in $\S^5\subset\C^3$ is therefore given by
\[\tilde f=(F_0 \cdot F_\varphi)^{-1}(\lambda_0)\begin{psmallmatrix} 1\\0\\0\end{psmallmatrix}.\]

To align the surface with our previous normalization (coming from a specific form of its symmetries, see Section \ref{sec:RsandN}), the point $z=0$ needs to be mapped to
$\tfrac{i}{\sqrt{3}}\begin{psmallmatrix} 1\\1\\1\end{psmallmatrix}$ and 
the limit $w\to0,$ equivalently $z\to1,$ is mapped to 
$\begin{psmallmatrix}i\\0\\0\end{psmallmatrix}$. To achieve this, we apply the 
(constant) $\SU_3$ transformation
\[T=
\begin{psmallmatrix}
 \frac{i}{\sqrt{3}} & \frac{\left(\frac{1}{6}+\frac{i}{6}\right) \left(\sqrt{3}+3 i\right)}{\sqrt{2}} & -\frac{(-1)^{5/12}}{\sqrt{3}} \\
 \frac{i}{\sqrt{3}} & -\frac{1+i}{\sqrt{6}} & -\frac{1-i}{\sqrt{6}} \\
 \frac{i}{\sqrt{3}} & \frac{\left(\frac{1}{6}+\frac{i}{6}\right) \left(\sqrt{3}-3 i\right)}{\sqrt{2}} & \frac{(3+3 i)+(1-i) \sqrt{3}}{6 \sqrt{2}} \\
\end{psmallmatrix},\]
to obtain
\[\hat f=(T(F_\varphi)^{-1}(\lambda_0) T^{-1} )\cdot (T F_0^{-1}(\lambda_0))\begin{psmallmatrix}1\\0\\0\end{psmallmatrix}.\]
Note that
$T(F_\varphi)^{-1}(\lambda_0) T^{-1} =
\mathrm{diag}
(1,
 e^{-i \varphi } ,e^{i \varphi })$
and from \eqref{eq:comF0} we then obtain
\[(T F_0^{-1}(\lambda_0))\begin{psmallmatrix}1\\0\\0\end{psmallmatrix}=i\begin{psmallmatrix}\frac{2  \left(\sqrt{3}+2\right)}{r^2+\sqrt{3}+2}-1
\\ \frac{\left(\sqrt{3}+1\right) r}{r^2+\sqrt{3}+2}\\
\frac{\left(\sqrt{3}+1\right) r}{r^2+\sqrt{3}+2}\end{psmallmatrix}.\]
Therefore, the parametrized limit surface on the disc  $\{w\in\C\mid 0<|w|<1\}\subset\Sigma_k$ is given in conformal coordinates $w=x+iy$ 
by \eqref{eq:spherecappara}.

It remains to show that this locally uniform limit on $\{w\in\C\mid 0<|w|<1\}\subset\Sigma_k$ extends uniformly to $w=0.$
By the construction in Lemma \ref{harmonic-family-form} and by adapting the gauges \eqref{eq:gkdesinG}, 
the pole of \eqref{eq:mut} at $w=0$ is apparent for all small $t$:
there exists a smooth family of meromorphic gauge transformations $g_t\colon\{w\in\C\mid 0<|w|<\tfrac12\}\to{\bf SL}^{\hat\tau}(\D)$
(with pole at $w=0$)
such that
\[(d+\mu_t).g_t=d+\tilde A(w) dw +t^2 R_t(w) dw\]
for holomorphic $\tilde  A(w)$ and  $R_t(w)$ with 
uniform bound $|R_t(w)|<\tilde C$ for all $|w|<\tfrac12$ and small $t.$
Applying Gr{\"o}nwall's inequality and the Iwasawa decomposition proves the result.
\end{proof}

We improve the previous result to include the boundary of the disc.
\begin{Pro}\label{pro:uniform}
With the same notations as in Theorem \ref{The:limit-surfacerp2},
the surfaces $\hat f_k\circ w^{-1}\colon {\mathbb D}\to\S^5$ converge uniformly to $\hat f$ in Theorem \ref{The:limit-surfacerp2}.
\end{Pro}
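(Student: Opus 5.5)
Theorem~\ref{The:limit-surfacerp2} already gives uniform convergence on every closed disc $\{|w|\le\rho\}$ with $\rho<1$: the apparent-singularity gauge together with Gr\"onwall handles a neighbourhood of $w=0$, and the locally uniform limit of Lemma~\ref{lem:sphericallemma} covers the rest. What remains is a uniform statement on an annulus $\{\rho\le|w|\le1\}$, i.e.\ up to the boundary circle $|w|=1$. Under $z=1-w^k$ this circle corresponds to $|1-z|=1$, which passes through $z=0$ and approaches the other two singular points $\zeta,\zeta^2$. By the rotational symmetry $\varphi_j$ (which acts by $w\mapsto e^{2\pi i/k}w$ and by the $\SU_3$-transformation $\tilde\varphi_j$ on $\S^5$), it suffices to control a single sector $\{\rho\le|w|\le 1,\ |\arg w|\le\pi/k\}$. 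In the $z$-plane this sector is a compact region bounded away from $\zeta,\zeta^2$ and containing the base point $z=0$.

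\textbf{Step 1 (frame on the sector).} I would first show $\sup_{\text{sector}}\|\Omega_t-\tilde\Omega(w)\|\to0$. This extends the argument of Lemma~\ref{lem:sphericallemma}, which proved the estimate along the real segment $w\in[\tfrac12,1]$. Write $\Psi_t=\phi^{-1}\Omega_t=\phi_1Y_t$ with $\phi_1=\exp(-A_1\log w)$, which is uniformly bounded for $\rho\le|w|\le1$. Then $Y_t$ satisfies $dY_t=Q_tY_t$ with $Q_t=\phi_1^{-1}(\mu_t-tA_1\tfrac{dz}{z-1})\phi_1$. The contribution of the residues at $\zeta,\zeta^2$ is of size $t$ times a $dz$-form that is bounded on the sector. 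I would integrate along paths from $z=0$ that stay in the sector: first the real segment, then arcs $|w|=\mathrm{const}$. Gr\"onwall then gives $\|Y_t-\Id\|\le Ct$ uniformly, using $R(t)=\Id+O(t^2)$ and $\mu_t=\mathcal O(t)$ with $O(t^3)$ remainder from Theorem~\ref{thm:expansionorder2}. The needed uniformity in $z$ on a compact set away from $\zeta,\zeta^2$ holds because the coefficients $a(t),b(t),c(t)$ are real analytic in $t$ with values in $\mathcal W^\tau_\R$.

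\textbf{Step 2 (Iwasawa).} Since the Iwasawa decomposition of Theorem~\ref{thm:Iwasawa} is a smooth diffeomorphism of Banach Lie groups, uniform convergence $\Omega_t\to\tilde\Omega$ in ${\bf SL}^{\hat\tau}(\S^1)$ implies uniform convergence $F_t\to\tilde F$ of the unitary factors. Evaluating at $\lambda_0$ then gives $\hat f_t\to\hat f$ uniformly on the sector, in the normalization fixed in Section~\ref{sec:RsandN}. Here $\hat f$ is the formula \eqref{eq:spherecappara}, which extends continuously to $|w|=1$.

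\textbf{Main obstacle.} Step 1 is the hard part, and specifically the portion of the boundary arc near $\arg w=\pm\pi/k$. Here $z=1-w^k$ moves on $|1-z|=1$ toward $\zeta$ or $\zeta^2$, so the naive bound on $Q_t$ degenerates like $t/|z-\zeta|$. If the sector does not actually reach the singular points, $|z-\zeta|$ stays bounded below and the estimate holds as stated. Otherwise I would cut out small discs $|z-\zeta^{\pm1}|<\delta$ and treat them with the local model at $\zeta$ used for the other disc families. That is, I would apply the same $\exp(-A\log w')$ asymptotics in the coordinate $w'$ of the adjacent disc $\mathbb D^{2}_\ell$ or $\mathbb D^3_\ell$, where both models converge to the common boundary point on the Clifford torus. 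The gluing error is then bounded by $Ct\log(1/\delta)$, and letting $t\to0$ and then $\delta\to0$ completes the argument.
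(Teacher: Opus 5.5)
Your strategy is the same as the paper's. You reduce by the $\Z_k$-rotation to a sector at $\arg w=0$, start from the real segment of Lemma~\ref{lem:sphericallemma}, move off it by Gr\"onwall along arcs $|w|=\mathrm{const}$ (in the paper: vertical segments in $u$, where $w=e^{tu}$), and conclude by continuity of the Iwasawa splitting.

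\textbf{The gap: Step~1 is false, and Step~2 inherits it.} At $w=1$ one has $z=0$, so $\Omega_t=R(t)\to\Id$, whereas $\tilde\Omega(1)=\phi(1)\neq\Id$. What your Gr\"onwall argument actually proves on the sector is
\[
\Omega_t=\phi(z)\exp(-A_1\log w)\,(\Id+O(t)),\qquad z=1-w^k .
\]
On the thin annulus where $|w|^k$ is not small (the Scherk regime), $z$ sweeps out essentially the whole disc $|1-z|\le 1$, so $\phi(z)\not\to\phi(1)$. Hence the unitary factors do not converge uniformly either: $F_t=\Id$ at $w=1$, while the unitary factor of $\phi(1)$ is not $\Id$ by \eqref{eq:fact=0}.

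\textbf{The missing observation.} Only $F_t(\lambda_0)^{-1}e_1$ enters the surface, and by \eqref{eq:fact=0} the unitary factor of $\phi(z)$ fixes $e_1$ for every $z$. So split the sector at some $r<1$:
\begin{enumerate}[(i)]
\item On $\{\rho\le|w|\le r\}$, $z\to 1$ uniformly, and your two steps go through as written.
\item On $\{r\le|w|\le 1,\ |\arg w|\le\pi/k\}$, one has $\|\exp(-A_1\log w)-\Id\|\le C(|\log r|+1/k)$. By uniform continuity of the Iwasawa projection near the compact set $\{\phi(z): |1-z|\le 1\}$, both the surface and the limit cap are, for $k$ large, within $\epsilon(r)$ of $e_1$, i.e.\ of $\hat f(1)$ after the normalizing rotation, with $\epsilon(r)\to0$ as $r\to1$.
\end{enumerate}
The paper's own proof has the same slip. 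It quotes uniform frame convergence along the real segment from Lemma~\ref{lem:sphericallemma}, and it applies the $\mu_t$-bound to $\Omega_t$ rather than to $\Psi_t=\phi^{-1}\Omega_t$. It needs the same repair.

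\textbf{Your ``main obstacle'' does not exist.} Since $|1-\zeta^{\pm1}|=\sqrt3$, the closed disc $\{|1-z|\le 1\}$, which is the image of $\overline{\mathbb D}$, stays at distance $\sqrt3-1$ from $\zeta$ and $\zeta^2$. The corner $|w|=1$, $\arg w=\pm\pi/k$ corresponds to $z=2$. The paper makes this uniform via $z=1-e^u$. Then $dz/(z^3-1)=du/(e^{2u}-3e^u+3)$ with $|e^{2u}-3e^u+3|\ge(\sqrt3-1)^2$ for $\Re u<0$, so $\|\mu_t(\partial_u)\|\le Ct$ on the whole half-plane. The pole at $z=1$ becomes the bounded term $tA_1\,du$, so no $\phi_1$-gauge is needed. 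Drop the $\delta$-excision and the gluing with the models on $\mathbb D^2_\ell$ and $\mathbb D^3_\ell$; the claimed $Ct\log(1/\delta)$ gluing error was not justified in any case.
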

\begin{proof}
To extend convergence to the boundary, we control the behaviour for complex $w$ near $|w|=1.$
Let $y_k\in {\mathbb D}\subset\Sigma_k$,  be a sequence of points with $y:=\lim_k y_k\in\partial {\mathbb D}$
such that 
\[\lim_k \hat f_k(y_k)\neq  \hat f(y).\]
Recall that each surface $\hat f_k$ has a rotational $\Z_k$-fold symmetry around the origin, where $\Z_k$ acts on the disc ${\mathbb D}$ in the obvious manner by a $k$-fold rotation $w\mapsto\exp(\tfrac{2\pi i}{k})w$.
Hence, without loss of generality (i.e., after applying the symmetries if necessary)
\[\arg(y_k)\in[0,\tfrac{2\pi}{k})\]
which gives $y=\lim_k y_k=1$ as the limit. 

Using $t=1/k$, we obtain  
 from Lemma \ref{lem:sphericallemma}, that $\tilde \Omega$ is the uniform limit of
$\Omega_{1/k}$  along the real line $w\in(0,1]$ for $k\to\infty$. 
In particular, $\lim_k \hat f_k(y_k)= \hat f(y)$  is true if $y_k\in(0,1]$ for all $k$ and $\lim y_k=1$ (as a consequence of
Theorem \ref{The:limit-surfacerp2}). We introduce a new coordinate $u$ to control the case when $y_k$ is not real.
For small $1/k=t>0$, and $0<|w|<1$ write
\begin{equation}\label{eq:coowu}w
= \exp( t u),\end{equation}
 where
$u\in\mathbb R^{<0}\times i[0,2\pi t^{-1})$.
In particular $z=1-e^u.$
Because $\arg(y_k)\in[0,\tfrac{2\pi}{k})$ the points $y_k$ correspond (via the coordinate transformation \eqref{eq:coowu}) to points
\[u_k\in \R^{<0}\times i[0, 2\pi). \]
On this domain, the DPW potential $\mu_t$ has a uniform expansion as
\[\mu_t=t\begin{psmallmatrix}
 0 & -\frac{\sqrt{\frac{3}{2}} (\lambda ^3-1) }{\lambda  (e^u (e^u-3)+3)} & \frac{i \sqrt{\frac{3}{2}} (\lambda ^3+1)  (e^u-1)}{\lambda ^2 (e^u (e^u-3)+3)} \\
 -\frac{\sqrt{\frac{3}{2}} (\lambda ^3-1)  (e^u-1)}{\lambda ^2 (e^u (e^u-3)+3)} & 0 & \frac{i \sqrt{3} }{\lambda  (e^u (e^u-3)+3)} \\
 \frac{i \sqrt{\frac{3}{2}} (\lambda ^3+1) }{\lambda  (e^u (e^u-3)+3)} & \frac{i \sqrt{3} \lambda (e^u-1)}{e^u (e^u-3)+3} & 0 \\
\end{psmallmatrix} du+ O(t^3).\]
In particular, there exists a constant  $C$ such that (with respect to the loop-algebra norm)
\[|\mu_t\tfrac{1}{du}|\leq  C t \quad\forall u\in \R^{<0}\times i[0, 2\pi).\]
Set 
\[u_k=x_k+i\varphi_k \quad\mathrm{ where} \quad x_k\in\R^{<0},\, \varphi_k\in[0,2\pi).\]
Hence, we can apply Gr{\"o}nwall's inequality once more to deduce that
\[|\ \Omega_t(x_k+i\varphi_k)-\Omega_t(x_k)|< \tilde C \,t.\]
Combining Theorem \ref{The:limit-surfacerp2} with the real-analyticity of the Iwasawa factorization 
and the fact that $\Omega_t(x_k+i\varphi_k)$ stays bounded if the corresponding sequence in $\D$ satisfies $y_k\to 1$
 completes  the proof.
\end{proof}

In summary, the local limit on each disc ${\mathbb D}^j_\ell$ is a totally geodesic special Legendrian cap, and the collection of these caps assembles -- according to the group symmetries -- into the global limiting configuration described in Remark \ref{rem:intersection}.

\subsection{First order expansion on the disc ${\mathbb D}$}\label{sec:firstorderexpansion}
To interpolate between the two asymptotic regimes -- spherical caps on the discs $\mathbb D$ at zeroth order and Scherk-type surfaces on simply connected domains of the thrice-punctured sphere at first order -- we must analyze the first-order variation of the limiting spherical caps. This analysis is an essential ingredient in establishing embeddedness.

We first determine the behaviour of
$\Omega_t$ up to first order $t$ in the disc ${\mathbb D}$. 
As before, we use
\[\Psi_t=\phi^{-1}\Omega_t\] which solves
$d\Psi_t+\mu_t\Psi_t=0$
with initial value $\Psi_t(z=0)=\Id+O(t^2).$
Set
\[\nu:=t A_1\frac{dz}{z-1}
\]
with primitive
\[\int_0^z\nu=t A_1\log(1-z).\]
Define
\[\phi_1=\exp(-\int_0^z\nu)\]
which then satisfies
$d\phi_1+\nu\phi_1=0.$
Set
\[\alpha_t=\mu_t-\nu.\]

Then a direct computation yields
\begin{equation}\label{eq:phi1twispot}
\begin{split}
(d+\mu_t).\phi_1&=d+\phi_1^{-1}(\mu_t)\phi_1+\phi_1^{-1}d\phi_1\\
&=d+{\phi_1}^{-1}\alpha_t \phi_1.
\end{split}
\end{equation}

\begin{Lem}\label{lem:secondestimate}
Let $u$ be the coordinate as above with 
$z=1-e^u.$
Then
\[{\phi_1}^{-1}\alpha_t \phi_1=t\beta_1+t^2 \beta_2+t^3\hat \beta(t)\]
where
\begin{equation*}
\begin{split}
\beta_1&=
\begin{psmallmatrix}
 0 & \frac{\left(\lambda ^3-1\right) e^u \left(e^u-3\right)}{\sqrt{6} \lambda  \left(e^u \left(e^u-3\right)+3\right)} & \frac{i \left(\lambda ^3+1\right) e^{2 u}}{\sqrt{6} \lambda ^2 \left(e^u \left(e^u-3\right)+3\right)} \\
 -\frac{\left(\lambda ^3-1\right) e^{2 u}}{\sqrt{6} \lambda ^2 \left(e^u \left(e^u-3\right)+3\right)} & 0 & -\frac{i e^u \left(e^u-3\right)}{\sqrt{3} \lambda  \left(e^u \left(e^u-3\right)+3\right)} \\
 -\frac{i \left(\lambda ^3+1\right) e^u \left(e^u-3\right)}{\sqrt{6} \lambda  \left(e^u \left(e^u-3\right)+3\right)} & \frac{i \lambda  e^u \left(e^u-3\right)}{\sqrt{3} \left(e^u \left(e^u-3\right)+3\right)} & 0 \\
\end{psmallmatrix}du\,,\\
\beta_2&=\begin{psmallmatrix}
 \frac{\left(\lambda\,^6+1\right) u}{\lambda ^3 (-2 \sinh (u)+4 \cosh (u)-3)} & 0 & 0 \\
 0 & -\frac{\left(\lambda\,^6+1\right) u}{2 \lambda ^3 (-2 \sinh (u)+4 \cosh (u)-3)} & 0 \\
 0 & 0 & -\frac{\left(\lambda\,^6+1\right) u}{2 \lambda ^3 (-2 \sinh (u)+4 \cosh (u)-3)} \\
\end{psmallmatrix}du\,.
\end{split}\end{equation*}
Moreover, there exists a constant $C_3$ with
\[||\hat\beta(t,u)(\tfrac{\partial}{\partial u})\,||<C_3\]
for all small $t>0$ and for all
\begin{equation}\label{eq:udomain} 
u\in\C
 \quad\text{with}\quad 
 -\tfrac{1}{t}\log(2)<\Re(u)<0.
 \end{equation}
\end{Lem}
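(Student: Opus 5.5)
The proof is a direct computation in the coordinate $u$, followed by a uniform estimate for the remainder on the strip \eqref{eq:udomain}. First write $\mu_t$ in the $u$-coordinate. Substituting $z=1-e^u$, $dz=-e^u\,du$ in the expansion \eqref{eq:muzexp} gives the leading term already displayed in the proof of Proposition~\ref{pro:uniform}; the $O(t^3)$ remainder comes from Theorem~\ref{thm:expansionorder2}, which gives $a,b,c$ equal to their central values up to $O(t^2)$. The order-$t^2$ part of $\mu_t$ must be kept separately. It consists of $a''(0),b''(0),c''(0)$, which enter only at order $t^3$ after the overall factor $t$, and of the diagonal contribution from the conjugation by $\tilde g h$ and $\phi$. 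Since the potential depends on $t$ only through $t\cdot(a,b,c)$ and the explicit diagonal entries in \eqref{pi3om}, there is no genuine $t^2$ term in $\mu_t$ itself. Next observe that $\nu=tA_1\,dz/(z-1)=tA_1\,du\cdot e^u/(e^u-1)$ and $\phi_1=\exp(-tA_1\log(1-z))=\exp(-tA_1u)$. Hence
\[
\phi_1^{-1}\alpha_t\phi_1=\exp(tA_1u)\,(\mu_t-\nu)\,\exp(-tA_1u).
\]

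The key step is to expand this conjugation as $\alpha_t+t[A_1u,\alpha_t]+\tfrac{t^2}{2}[A_1u,[A_1u,\alpha_t]]+\dots$. Write $\alpha_t=t\,\alpha^{(1)}+O(t^3)$, where $\alpha^{(1)}$ is the leading coefficient of $\mu_t$ minus $A_1\,du\,e^u/(e^u-1)$. Then $\beta_1=\alpha^{(1)}$: the residue at $u\to-\infty$ (that is, $z=1$) cancels, and simplifying the rational function in $e^u$ gives the stated formula with denominator $e^u(e^u-3)+3=e^{2u}-3e^u+3$. The term $\beta_2=u[A_1,\alpha^{(1)}]$ is a computation in the twisted loop algebra. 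Because $A_1\in\mathfrak{\hat g}_{-1}\oplus\mathfrak{\hat g}_1$ and $\alpha^{(1)}$ has the same type, the commutator should land in $\mathfrak{\hat g}_3\oplus\mathfrak{\hat g}_{-3}$, which are diagonal matrices of shape $\diag(-2,1,1)$. This explains the $(\lambda^6+1)/\lambda^3$ factor. Rewriting $e^{2u}-3e^u+3$ with $e^{-u}$ factored out as $e^{u}(4\cosh u-2\sinh u-3)$ (up to the normalization) yields the displayed $\beta_2$.

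The main obstacle is the uniform bound on $\hat\beta$ over the whole strip $-\tfrac1t\log 2<\Re u<0$, where $u$ is unbounded, so $\exp(\pm tA_1u)$ is not close to the identity. I would bound the Taylor remainder of the conjugation series by the full exponential series. For this, use that $tu$ stays bounded: $|t\Re u|<\log 2$, and $\Im u$ can be restricted by the $\Z_k$-symmetry to $[0,2\pi)$ as in Proposition~\ref{pro:uniform}. Then $\exp(\pm tA_1u)$ is uniformly bounded in the loop group norm on $\mathbb A_r$, since $A_1$ is diagonalizable with eigenvalues $0,\pm1$ for $\lambda$ on the annulus. The remainder terms have the form $t^3u^2\cdot(\text{bounded})\cdot\alpha^{(1)}$ together with $t^3\exp(tA_1u)\,O_{\mu}\exp(-tA_1u)$. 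The $u^2$ growth must be absorbed: $\alpha^{(1)}$ decays like $e^{u}$ as $\Re u\to-\infty$, because the $z=1$ pole has been removed, so $|u|^2e^{\Re u}$ is bounded. Finally, $|e^{2u}-3e^u+3|$ is bounded below on $\Re u<0$, since its zeros $e^u=(3\pm i\sqrt3)/2$ have modulus $\sqrt3>1$. Together these give the constant $C_3$.
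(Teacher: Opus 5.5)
Your route is the paper's. You write $\alpha_t=t\alpha_1+t^3\hat\alpha$ (there is no $t^2$ term because $a'(0)=b'(0)=c'(0)=0$), conjugate by $\phi_1=\exp(-tA_1u)$, read off $\beta_1=\alpha_1$ and $\beta_2=u[A_1,\alpha_1]$, and bound the rest.

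On the remainder you are more careful than the paper. Its proof only cites boundedness of $\phi_1$, $\alpha_1$ and $\hat\alpha$. That does not control the term of size $t^3|u|^2|\alpha_1|$ coming from the conjugation series, because $|u|$ can be of order $1/t$ on the strip. Your use of the decay $\alpha_1=O(e^{\Re u})$, together with bounded $\Im u$, is exactly what is needed. Three points must be repaired, however.

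\emph{First, the formula for $\nu$.} Since $z-1=-e^u$ and $dz=-e^u\,du$, one has $dz/(z-1)=du$, so $\nu=tA_1\,du$. Your expression $tA_1\,du\,e^u/(e^u-1)$ is $tA_1\,dz/z$. Only $\nu=tA_1\,du$ is compatible with your (correct) $\phi_1=\exp(-tA_1u)$, and only with it does the residue at $z=1$ cancel. With your formula, $\alpha^{(1)}\to A_1\,du$ as $\Re u\to-\infty$, and the decay you need to absorb $u^2$ is lost.

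\emph{Second, the justification for $\beta_2$.} The grading reason you give is wrong on two counts. As a twisted loop, $A_1$ also has $\lambda^{\pm2}$ coefficients. Moreover $[\hat{\mathfrak g}_{\pm1},\hat{\mathfrak g}_{\pm1}]\subset\hat{\mathfrak g}_0\oplus\hat{\mathfrak g}_{\pm2}$, which never reaches $\hat{\mathfrak g}_3$. The actual mechanism is a cyclic structure.
\begin{itemize}
\item Split $A_1=N_++N_-$. Here $N_+$ carries the entries $(1,2),(2,3),(3,1)$, which come from $dz/(z^3-1)$ in \eqref{eq:muzexp}. $N_-$ carries the entries $(1,3),(2,1),(3,2)$, which come from $z\,dz/(z^3-1)$.
\item With $D=e^{2u}-3e^u+3$ one gets $\beta_1=-\tfrac{e^u(e^u-3)}{D}N_+-\tfrac{e^{2u}}{D}N_-$.
\item Hence $[A_1,\beta_1]=\tfrac{3e^u}{D}[N_-,N_+]$.
\item The products $N_\pm N_\mp$ are diagonal because the two cyclic shifts are mutually inverse. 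Explicitly, $[N_-,N_+]=\tfrac{\lambda^6+1}{3\lambda^3}\diag(1,-\tfrac12,-\tfrac12)$, which gives the displayed $\beta_2$.
\end{itemize}
When you carry this out, the $(3,2)$ entry of $\beta_1$ comes out as $\tfrac{i\lambda e^{2u}}{\sqrt3(e^u(e^u-3)+3)}$. With $e^u(e^u-3)$ in that slot, as displayed, $[A_1,\beta_1]$ would acquire off-diagonal terms and would not match the displayed $\beta_2$.

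\emph{Third, the restriction on $\Im u$.} Restricting $\Im u$ to a bounded interval is essential, not a symmetry reduction. On the whole strip the bound is false. Take $\Re u=-1$ with $|\Im u|\to\infty$ and $t|u|\to0$. Then $\hat\beta=\tfrac{u^2}{2}[A_1,[A_1,\beta_1]]+o(|u|^2)$, which is unbounded because $[A_1,\diag(1,-\tfrac12,-\tfrac12)]\neq0$.

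Nor does the $\Z_k$-symmetry $u\mapsto u+2\pi i$ transport the estimate. It conjugates the full potential $\phi_1^{-1}\alpha_t\phi_1$ by $\exp(2\pi i tA_1)$, but $\beta_2$ contains the non-periodic factor $u$, so $\hat\beta$ itself does not transform covariantly.

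What your argument proves, and what is true, is the bound for $-\tfrac1t\log 2<\Re u<0$ with $\Im u$ in a fixed bounded interval such as $[0,2\pi)$. That is the form in which such estimates are used; compare the restriction $u\in\R^{<0}\times i[0,2\pi)$ in Proposition~\ref{pro:uniform}.
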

\begin{proof}
As in the proof of Lemma \ref{lem:sphericallemma}, $\phi_1$ is bounded for all $t>0$ and all $u$ satisfying \eqref{eq:udomain}.
We expand (where $\hat\alpha$ extends real-analytically to $t=0$)
\[\alpha_t =t\alpha_1+ t^3\hat\alpha(t).\]
By construction $\alpha_1$ and $\hat\alpha$ are uniformly bounded for all (small) $t>0$ and all $u$ with $\Re(u)<0.$
The very form of $\beta_1$ and $\beta_2$ then follows by direct computation.

The norm of $\hat \beta(t)$ can be estimated because the norm of $\phi_1(u)=\phi_1(t,u)$ is uniformly bounded for all $t>0$ and for all
$u\in\C$ with $-\tfrac{1}{t}\log(2)<\Re(u)<0,$ and 
because $\alpha_1$ and $\hat \alpha(t)$
are uniformly bounded by construction.
\end{proof}

\begin{Pro}\label{pro:sphericalT}
There exists a constant $C_4>0$ 
and a smooth map \[\hat U\colon(0,\epsilon)\times \{u\in\C\mid \Re(u)<0\}\to {\bf sl}_3^{\hat\tau}(\S^1)\]
such that
 for all small $t>0$ and all
$u\in\C$ with $ -\tfrac{1}{t}\log(2)<\Re(u)<0$ the following holds
\begin{equation}\label{eq:tildeOmegaTU}
\Omega_t(u)=\begin{psmallmatrix}
 1 & 0 & 0 \\
 0 & 1 & -\frac{i }{\lambda } \\
 0 & 0 & 1 \\
\end{psmallmatrix}\exp(-tA_1u)
\,\left(\Id-t\,U+t^2\,\hat U(t,u)\,\right)
\end{equation}
where
\begin{equation*}
\begin{split}
&U=\\
&\hspace{-1.cm}
{\tiny\begin{psmallmatrix}
 0 & \frac{\left(\lambda ^3-1\right) \left(\log \left(-3 e^u+e^{2 u}+3\right)+2 \sqrt{3} \tan ^{-1}\left(\frac{3-2 e^u}{\sqrt{3}}\right)\right)}{2 \sqrt{6} \lambda } & \frac{i \left(\lambda ^3+1\right) \left(\log \left(-3 e^u+e^{2 u}+3\right)-2 \sqrt{3} \tan ^{-1}\left(\frac{3-2 e^u}{\sqrt{3}}\right)\right)}{2 \sqrt{6} \lambda ^2} \\
 -\frac{\left(\lambda ^3-1\right) \left(\log \left(-3 e^u+e^{2 u}+3\right)-2 \sqrt{3} \tan ^{-1}\left(\frac{3-2 e^u}{\sqrt{3}}\right)\right)}{2 \sqrt{6} \lambda ^2} & 0 & -\frac{i \left(\log \left(-3 e^u+e^{2 u}+3\right)+2 \sqrt{3} \tan ^{-1}\left(\frac{3-2 e^u}{\sqrt{3}}\right)\right)}{2 \sqrt{3} \lambda } \\
 -\frac{i \left(\lambda ^3+1\right) \left(\log \left(-3 e^u+e^{2 u}+3\right)+2 \sqrt{3} \tan ^{-1}\left(\frac{3-2 e^u}{\sqrt{3}}\right)\right)}{2 \sqrt{6} \lambda } & -\frac{1}{6} i \lambda  \left(6 \tan ^{-1}\left(\frac{3-2 e^u}{\sqrt{3}}\right)-\sqrt{3} \log \left(-3 e^u+e^{2 u}+3\right)\right) & 0 \\
\end{psmallmatrix}}
\end{split}
\end{equation*}

 and
\[||\hat U(t,u)||<C_4.\]
\end{Pro}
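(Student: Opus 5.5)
Write $\Psi_t=\phi^{-1}\Omega_t$. By \eqref{eq:phi1twispot}, $Y_t:=\phi_1^{-1}\Psi_t$ solves
\[
dY_t+\phi_1^{-1}\alpha_t\phi_1\,Y_t=0,\qquad Y_t(u\to-\infty)\leftrightarrow Y_t(z=0)=\Id+O(t^2).
\]
Here $\phi_1=\exp(-tA_1\log(1-z))=\exp(-tA_1u)$, and $\phi$ evaluated near $z=1$ gives the leading constant matrix. The plan is to expand $Y_t$ by Picard iteration in the $u$-coordinate, using Lemma~\ref{lem:secondestimate}:
\[
Y_t=\Id-t\int\beta_1+t^2\hat U,
\]
with $U$ a primitive of $\beta_1$, normalized so that it vanishes at $z=0$. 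The base point $z=0$ sits at $u=\log 1=0$ in the limit sense: $z=1-e^u$ gives $z=0$ exactly when $u=0$. The integrals are therefore taken from $u=0$ along paths in the strip \eqref{eq:udomain}. The explicit $U$ comes from integrating $\beta_1$. Each entry is a rational function of $e^u$ with denominator $e^{2u}-3e^u+3$, and substituting $s=e^u$ produces the $\log(s^2-3s+3)$ and $\arctan\frac{3-2s}{\sqrt3}$ terms. I would verify $dU=\beta_1$ directly and fix the constants so that $U(0)=0$, since $\arctan(1/\sqrt3)$ and $\log 1=0$ fit that normalization. Finally, $\phi\cdot\phi_1$ must be rearranged into the stated prefactor. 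On the disc, $\phi(z)=\phi(1)+O(e^u)$, and the correction $(\phi(z)-\phi(1))$ has to be absorbed into the $t$-expansion; this is the point to check most carefully. If it is not automatically small, I would instead redo the gauge splitting with $\phi(1)$ from the start, i.e.\ use $\mu_t$ defined with respect to the constant $\phi(1)$, and check that Lemma~\ref{lem:secondestimate} still holds in that form.

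For the remainder, write $Y_t=\Id-tU+t^2\hat U$ and substitute into the ODE. This gives a linear ODE for $\hat U$:
\[
d\hat U=-\beta_2+\beta_1U-t\hat\beta+t\beta_1\hat U\cdots,
\]
with all inhomogeneous terms uniformly bounded by Lemma~\ref{lem:secondestimate}. Gr\"onwall's inequality then bounds $\hat U$ on paths of bounded length. The main obstacle is that the domain \eqref{eq:udomain} has $\Re u$ ranging over an interval of length $\frac1t\log 2$, so a naive Gr\"onwall estimate gives growth like $\exp(C\,t\cdot t^{-1})$ from the $t\beta_1$ term. That is acceptable, because it stays bounded. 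The real issue is the inhomogeneous terms $\beta_2$ and $\beta_1U$ integrated over length $1/t$, which could contribute $O(1/t)$.

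To handle this, I would use the decay of $\beta_1$ and $\beta_2$ as $\Re u\to-\infty$. Both involve $e^u$ in the numerator and tend to $3$ in the denominator; for $\beta_2$ the relevant combination is $u/(4\cosh u-2\sinh u-3)\sim u e^{u}$. So $\int|\beta_1|$, $\int|\beta_2|$ and $\int|\beta_1U|$ are finite uniformly along horizontal paths, and $U$ is bounded on the strip. The only non-decaying term is $t\hat\beta$, whose integral over length $1/t$ is $O(1)$, which is still acceptable. Integrating from $u=0$ horizontally to $\Re u$ and then vertically (the imaginary part is bounded modulo the periodicity used earlier, via the $\Z_k$-symmetry), Gr\"onwall then yields $\|\hat U\|<C_4$. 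Smoothness of $\hat U$ in $(t,u)$ follows from smooth dependence of ODE solutions on parameters, and the ${\bf sl}_3^{\hat\tau}$-valuedness follows because every factor is $\hat\tau$-twisted.
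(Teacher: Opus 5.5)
Your route is the paper's: factor $\Omega_t=\phi\,\phi_1\,Y_t$ with $dY_t+\phi_1^{-1}\alpha_t\phi_1\,Y_t=0$ and $Y_t(0)=R(t)=\Id+O(t^2)$, expand $Y_t=\Id-t\int_0^u\beta_1+t^2\hat U$ using Lemma~\ref{lem:secondestimate}, and close with Gr\"onwall. Your bookkeeping for the remainder is more explicit than the paper's: $\beta_1,\beta_2$ decay as $\Re u\to-\infty$, and the non-decaying $t\hat\beta$ contributes $O(1)$ over length $\tfrac1t\log 2$. Your restriction to a bounded range of $\Im u$ is actually necessary, since $\beta_2$ and $\hat\beta$ carry powers of $u$.

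The step you flagged, however, is a genuine gap that neither of your remedies closes. The paper's proof passes over it by calling $\phi$ ``the first factor'' of \eqref{eq:tildeOmegaTU}. One has $\phi(1-e^u)=\phi(1)\,N(e^u)$ with $N(s)=\Id+\tfrac{is}{\lambda}E_{23}$ ($E_{23}$ the elementary matrix). So absorbing $\phi(z)-\phi(1)$ costs $O(e^{\Re u})$, which is of order one near $\Re u=0$, not $O(t^2)$.

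Regauging by the constant $\phi(1)$ fails as well. Since $\phi(1)$ commutes with $E_{23}$, the $t^0$-term $\tfrac{i}{\lambda}E_{23}\,dz=-\tfrac{i}{\lambda}e^{u}E_{23}\,du$ of $\omega_t$ survives. The potential is then no longer $O(t)$, and Lemma~\ref{lem:secondestimate} does not apply.

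In fact no argument can work. As $u\to0$ the left side tends to $R(t)=\Id+O(t^2)$, while the right side is $\phi(1)(\Id+O(t))$ with $\phi(1)\neq\Id$. What your argument (and the paper's) proves is the identity with the $u$-dependent prefactor $\phi(1-e^u)$. Replacing it by $\phi(1)$ is harmless only where $e^{\Re u}=O(t^2)$, for instance locally uniformly on the $w$-disc, where $e^{\Re u}=|w|^{1/t}$. That is the regime used in Lemma~\ref{lem:sphericallemmaT}.

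Second, your claim that the displayed $U$ vanishes at $u=0$ is false. Since $\tan^{-1}(1/\sqrt3)=\pi/6$, one gets for instance $U_{12}(0)=\tfrac{(\lambda^3-1)\pi}{6\sqrt2\,\lambda}\neq0$. Because $Y_t(0)=\Id+O(t^2)$, the first-order term must be $-t\bigl(U(u)-U(0)\bigr)$. With the displayed $U$ the identity is off by $t\,U(0)$, which cannot be absorbed into $t^2\hat U$. The normalized primitive $\int_0^u\beta_1=U-U(0)$ is also the one consistent with \eqref{defWw}: the matrix displayed there equals $-\bigl(U(-\infty)-U(0)\bigr)$, not $-U(-\infty)$.

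A minor point: in your first display, $z=0$ corresponds to $u=0$, not to $u\to-\infty$. You correct this yourself a few lines later.
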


\begin{proof}
By construction, i.e., from \eqref{eq:phi1twispot},
\[\Omega_t(u)=\phi\;\phi_1\,\phi_2\]
where $\phi$ and $\phi_1$ are as above (i.e. the first two factors in \eqref{eq:tildeOmegaTU}), and
$\phi_2$ solves
\[d\phi_2+\beta\phi_2=0\quad\quad\text{with}\quad\quad \phi_2(z=0)=R(t)=\Id+O(t^2).\]

Note that the primitives
\[U=\int_0^u\beta_1\quad \quad\text{and}\quad\quad\int_0^u\beta_2\]
of $\beta_1$ and $\beta_2$ are uniformly bounded for small $t>0$ and for $u$ satisfying \eqref{eq:udomain}.
Therefore, the statement follows from Lemma \ref{lem:secondestimate} using Gr\"onwall inequality once more.
\end{proof}

Define
\begin{equation}\label{defWw}W:=-\lim_{u\to-\infty}U(u)=-\begin{psmallmatrix}
0&\frac{\left(\lambda ^3-1\right) \left(\pi +\sqrt{3} \log (3)\right)}{6 \sqrt{2} \lambda }& -\frac{i \left(\lambda ^3+1\right) \left(\pi -\sqrt{3} \log (3)\right)}{6 \sqrt{2} \lambda ^2}\\
\frac{\left(\lambda ^3-1\right) \left(\pi -\sqrt{3} \log (3)\right)}{6 \sqrt{2} \lambda ^2}&0& -\frac{i \left(\pi +\sqrt{3} \log (3)\right)}{6 \lambda }\\
-\frac{i \left(\lambda ^3+1\right) \left(\pi +\sqrt{3} \log (3)\right)}{6 \sqrt{2} \lambda }& -\frac{1}{6} i \lambda  \left(\pi -\sqrt{3} \log (3)\right)&0\\
\end{psmallmatrix}.\end{equation}
We then obtain the following refinement of Lemma \ref{lem:sphericallemma}:
\begin{Lem}\label{lem:sphericallemmaT}
Consider the coordinate $w$ with $z=1-w^{\tfrac{1}{t}}$ on the disc ${\mathbb D}$. 
It holds locally uniformly on $\{w\in\C\mid 0<|w|<1\}$ and for all small $t>0:$
\begin{equation*}\Omega_t(w)=\begin{psmallmatrix}
 1 & 0 & 0 \\
 0 & 1 & -\frac{i }{\lambda } \\
 0 & 0 & 1 \\
\end{psmallmatrix}\exp(-A_1\log w)(\Id+t W+O(t^2)\,).\end{equation*}
\end{Lem}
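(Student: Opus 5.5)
The plan is to derive the statement from Proposition~\ref{pro:sphericalT} by a change of coordinates. Throughout, $w$ and $u$ are related by $w=\exp(tu)$, so that $\log w = tu$ and $\exp(-tA_1u)=\exp(-A_1\log w)$. A compact subset $K\subset\{0<|w|<1\}$ has $\log|w|\in[-c_2,-c_1]$ for some $0<c_1<c_2$. In the $u$-coordinate this gives $\Re(u)\in[-c_2/t,-c_1/t]$. For small $t$ we have $c_2<\log 2$, so $K$ lies inside the domain \eqref{eq:udomain} where \eqref{eq:tildeOmegaTU} holds. If $c_2\geq\log 2$, we instead rerun the Gr\"onwall argument of Lemma~\ref{lem:secondestimate} on the larger strip $-\tfrac{1}{t}C<\Re(u)<0$. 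There $\phi_1=\exp(-A_1\log w)$ remains bounded for $w\in K$, so the same constants work with $C$ depending on $K$. Rewriting \eqref{eq:tildeOmegaTU} in $w$ then gives
\[
\Omega_t(w)=\begin{psmallmatrix}1&0&0\\0&1&-\frac{i}{\lambda}\\0&0&1\end{psmallmatrix}\exp(-A_1\log w)\bigl(\Id-tU(u)+t^2\hat U(t,u)\bigr),
\]
and the last term is $O(t^2)$ uniformly on $K$ because $\hat U$ is bounded.

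The second step is to show that $-U(u)=W+O(t)$ uniformly for $w\in K$. Since $\Re(u)\leq -c_1/t\to-\infty$, we have $e^u\to0$ uniformly, with $|e^u|\leq e^{-c_1/t}$. The entries of $U$ are built from $\log(e^{2u}-3e^u+3)$ and $\tan^{-1}\bigl((3-2e^u)/\sqrt3\bigr)$, and these are holomorphic in $e^u$ near $e^u=0$. Their values at $e^u=0$ are $\log 3$ and $\tan^{-1}(\sqrt3)=\pi/3$. Hence $U(u)=U|_{e^u=0}+O(e^u)$, and the definition \eqref{defWw} identifies $-U|_{e^u=0}$ with $W$. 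The error is $O(e^{-c_1/t})$, which is far smaller than $O(t)$. Substituting, $\Id-tU+O(t^2)=\Id+tW+O(t^2)$ uniformly on $K$, which is the claim.

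The main obstacle is checking that the branches of $\log$ and $\tan^{-1}$ in $U$ are the ones continuous from $u=0$, since that is where the primitive $U=\int_0^u\beta_1$ is normalized. The limit must equal the stated $W$ along every path to $\Re u\to-\infty$, and not just along the real axis. The argument $e^{2u}-3e^u+3$ stays in a neighbourhood of $[\tfrac34,3]$, which avoids the branch cut for $\Re u<0$; this holds because $|e^u|<1$ there. A second point needs care: the domain \eqref{eq:udomain} only covers $|w|>1/2$. For general compact $K$, the uniform boundedness of $\phi_1$ and of $\hat\beta$ must be rechecked on the enlarged strip before Gr\"onwall's inequality can be applied.
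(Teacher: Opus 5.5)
Your proposal is correct and follows the paper's argument: rewrite \eqref{eq:tildeOmegaTU} via $w=e^{tu}$, then replace $-U(u)$ by its limit $W$, with error $O(e^{-c_1/t})$ because $\Re u\le -c_1/t$ on a compact $K$. Two small points: ``for small $t$ we have $c_2<\log 2$'' should read ``if $c_2<\log 2$'', and for $|w|\le\tfrac12$, which lies outside \eqref{eq:udomain}, you rerun the same Gr\"onwall estimate, a case the paper's one-line proof passes over. Your caution about $U$ is well placed, though the issue is normalization rather than branches. The branches are harmless, since $\Re(e^{2u}-3e^u+3)\geq\tfrac78$ and $\Re\tfrac{3-2e^u}{\sqrt3}\geq\tfrac{1}{\sqrt3}$ for $\Re u\le 0$. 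However, the explicit formula for $U$ in Proposition~\ref{pro:sphericalT} is nonzero at $u=0$, because $\tan^{-1}(1/\sqrt3)=\pi/6$. It is $-(U(-\infty)-U(0))$, the limit of the primitive normalized by $U(0)=0$, that reproduces \eqref{defWw}.
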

\begin{proof}
Recall that in the domain $\{w\in\C\mid |w|<1\}$ it holds locally uniformly
$\mu_t=\nu+ O(t^2).$
Since $w=\exp(tu)$ and for any $w_0$ with $|w_0|<1$
\[\int_{w=1}^{w_0}w^*\beta_1=\int_{u=0}^{-\infty}\beta_1=
\lim_{u\to-\infty}U(u)=-W\]
we obtain the statement directly from Proposition \ref{pro:sphericalT}. 
\end{proof}

Using Lemma \ref{lem:sphericallemmaT}, we can study the first-order deformation in $t$ of the special Legendrian surfaces  $\hat f_k$, $k=t^{-1}$,
at $t=0.$
Namely, the following theorem shows that on the $w$-disc the first-order deformation of the surface is just given by a conformal reparametrization
by an inward-pointing conformal vector field.
\begin{The}\label{The:limit-surfacerp2T}
Consider $\hat f_t\colon {\mathbb D}\to \S^{5}$  
as in Theorem \ref{The:limit-surfacerp2}. Then
\[\dot{\hat f}_{t=0}= d\hat f(X),\qquad\quad\text{
where}
\quad X=-\tfrac{\log(3)}{2}r\tfrac{\partial}{\partial r}=-\tfrac{\log(3)}{2} (x\tfrac{\partial}{\partial x}+y\tfrac{\partial}{\partial y})\]
is a conformal vector field on the disc with conformal coordinate $w=x+iy$.
\end{The}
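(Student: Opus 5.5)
The plan is to differentiate the Iwasawa factorization of the expansion in Lemma~\ref{lem:sphericallemmaT} with respect to $t$ at $t=0$. Write $\tilde\Omega(w)=\phi(1)\exp(-A_1\log w)$ for the limit frame of Lemma~\ref{lem:sphericallemma}, with factorization $\tilde\Omega=B\,F$, where $F=F_0F_\varphi$ is given explicitly in \eqref{eq:comF0}. By Lemma~\ref{lem:sphericallemmaT}, $\Omega_t=\tilde\Omega(\Id+tW+O(t^2))$ locally uniformly. Writing $\Omega_t=B_t F_t$ with $B_t=B(\Id+tb+\dots)$ and $F_t=F(\Id+tX_W+\dots)$, we obtain, exactly as in the proof of Theorem~\ref{the:scherk},
\[
W=F^{-1}bF+X_W,
\]
where $b$ lies in the positive twisted loop algebra and $X_W$ in ${\bf su}^{\hat\tau}(\S^1)$. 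Then
\[
\dot{\hat f}=-X_W(\lambda_0)\,F(\lambda_0)^{-1}e_1 ,
\]
up to the fixed normalizing transformation $T$. We also need to check that $T$ has no first-order correction in $t$; this should follow from Remark~\ref{rem:normalization}, since the symmetry normalization is rigid.

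The key structural observation is that $W$ is constant in $w$ and has the same off-diagonal $\hat\tau$-shape as $A_1$. Decompose $W$ as $W=\alpha A_1+\beta\,iA_1^{\sharp}+W^\perp$. Here $A_1^\sharp$ denotes a $\lambda$-twisted unitary-type companion. In practice this means writing $W$ in terms of $A_1$ and its unitary projection: $A_1$ itself is "$-i$ times unitary", because $iA_1\in{\bf su}_3^{\hat\tau}$, as used in the proof of Theorem~\ref{The:limit-surfacerp2}.

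The two limits of $\log\bigl(e^{2u}-3e^u+3\bigr)$ and of the arctan in $U$ combine into the constants $\pi\pm\sqrt3\log 3$ appearing in $W$. I expect $W$ to split as
\[
W=-\tfrac{\log 3}{2}A_1+\tfrac{\pi}{2\sqrt3}\,\widetilde W,
\]
where $\widetilde W\in{\bf su}^{\hat\tau}(\S^1)$ commutes appropriately or is a constant unitary rotation. The first task is therefore a finite-dimensional check comparing the entries of $W$ in \eqref{defWw} with those of $A_1$. This check should confirm that the $\log 3$-part is exactly $-\tfrac{\log 3}{2}A_1$ and that the $\pi$-part is unitary. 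The $\pi$-part then only contributes a constant unitary loop, hence a rigid ambient motion, and this motion is fixed by the normalization of Section~\ref{sec:RsandN}. Because the $\pi$-part commutes with the $\Z_k$-symmetry generator and must preserve the normalized points $P^1_1$ and $Q_{1,1}$, it should act trivially at $\lambda_0$.

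For the $A_1$-part, note that $\tilde\Omega(\Id-\tfrac{\log 3}{2}tA_1)=\tilde\Omega(w e^{-t\log 3/2})+O(t^2)$, because $\tilde\Omega$ depends on $w$ only through $\exp(-A_1\log w)$. Hence the first-order change amounts to replacing $w$ by $w\,e^{-t\log3/2}$. By uniqueness of the Iwasawa factorization, $\hat f_t(w)=\hat f(w e^{-t\log 3/2})+O(t^2)$, and differentiating gives
\[
\dot{\hat f}=d\hat f\bigl(-\tfrac{\log3}{2}\,r\partial_r\bigr),
\]
which is conformal on the disc. The main obstacle is the bookkeeping of the $\pi$-contribution: I must verify that it is genuinely a constant in ${\bf su}^{\hat\tau}$, so that it does not enter the positive part $b$ and does not produce a $w$-dependent unitary correction. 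If instead it is not unitary, I would compute $b$ and $X_W$ explicitly from $F_0$ in \eqref{eq:comF0} and check directly that the component of $X_W(\lambda_0)F(\lambda_0)^{-1}e_1$ tangential to the cap vanishes apart from the radial term.
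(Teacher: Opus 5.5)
Your argument is correct in substance, and it takes a different route from the paper at the decisive step. Both proofs linearize the Iwasawa factorization of $\Omega_t=\tilde\Omega(w)(\Id+tW+O(t^2))$ and split $W=W_u+W_s$ with $W_u,\,iW_s\in{\bf su}_3^{\hat\tau}(\S^1)$. The paper then discards $W_u$ and splits $F_0W_sF_0^{-1}$ into positive, constant and negative Fourier parts to obtain the unitary correction $\hat X$. It evaluates $-\hat X F_0^{-1}(\lambda_0)e_1$ explicitly from \eqref{eq:comF0} and recognizes the result as $d\hat f(X)$. You instead observe that $W_s$ is a multiple of $A_1$, so the correction is a reparametrization of $\tilde\Omega$ and no splitting has to be computed. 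The check you defer does work, but with the opposite sign from your guess. Separating the $\log 3$- and $\pi$-terms in \eqref{defWw} gives
\[
W=\tfrac{\log 3}{2}\,A_1+\tfrac{\pi}{2\sqrt3}\,A_1^\sharp ,
\]
where $A_1^\sharp$ is $A_1$ with the signs of its $(2,1)$, $(3,2)$ and $(1,3)$ entries reversed. Since $A_1$ is Hermitian on $\S^1$ and $A_1^\sharp\in{\bf su}_3^{\hat\tau}(\S^1)$, this is exactly $W_s+W_u$. Moreover $\tilde\Omega(w)(\Id-\tfrac{\log 3}{2}tA_1)=\tilde\Omega(we^{+t\log 3/2})+O(t^2)$, so your two sign slips cancel. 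The correct identity is $\tilde\Omega(w)\exp(\tfrac{t\log3}{2}A_1)=\tilde\Omega(we^{-t\log 3/2})$. Hence $F_t(w)=F(we^{-t\log3/2})\exp(tW_u)+O(t^2)$ locally uniformly, and in the frame normalization (before applying $T$)
\[
\dot{\hat f}=d\hat f(X)-W_u(\lambda_0)\,\hat f .
\]
Your route explains why the variation is a conformal radial reparametrization, and it needs neither \eqref{eq:comF0} nor the Fourier splitting. The paper's direct computation, on the other hand, does not rely on the coincidence that $W_s$ is proportional to $A_1$.

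Where your proposal goes wrong is the claim that the $\pi$-part vanishes, and likewise any $t$-dependence of the normalizing transformation $T$. It is true that $[A_1^\sharp,A_1]=\tfrac{\lambda^6+1}{3\lambda^3}\diag(-2,1,1)$ vanishes at $\lambda_0$. However, $\ker A_1(\lambda_0)$ is spanned by $v_0=(1,-(-1)^{11/12},-(-1)^{1/12})$, which gives the cap centre, and $A_1^\sharp(\lambda_0)v_0=-\tfrac{2i}{\sqrt3}v_0$. So $W_u(\lambda_0)$ is a nontrivial infinitesimal isometry that moves the centre along its Hopf fibre; it does not fix $P^1_1$. Rigidity of the symmetry normalization for each fixed $k$ also does not make $T$ independent of $t$. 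The paper does not claim these terms vanish. It discards them (``neglecting the infinitesimal rotational part''), and the theorem is to be read modulo an infinitesimal isometry of $\S^5$. Drop that step and state the result in this form.
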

\begin{Rem}
This first-order deformation  is
locally uniform, but not uniform, on the disc ${\mathbb D}.$ We will use Proposition \ref{pro:sphericalT} to improve the statement.
\end{Rem}
\begin{proof}
Consider the solution $\Omega_t$ of $d\Omega_t+\omega_t\Omega_t=0$ with appropriate initial condition $R(t)$
which guarantees unitary monodromy. Consider the Iwasawa decomposition
$\Omega_t=B_tF_t$ where $B_t$ is (normalized) positive in the loop group and $F_t$ is unitary.  
We have computed $B_0$ and $F_0$ in the proof of Theorem \ref{The:limit-surfacerp2}.
Expand $B_t=B_0(1+t P+\dots)$ and $F_t=F_0(1+t \widetilde X+\dots)$
where
\[P\colon {\mathbb D}\to {\bf sl}_3^{\hat\tau}(\D)\quad\text{
and}\quad 
\widetilde X\colon {\mathbb D}\to{\bf su}_3^{\hat\tau}(\S^1),\]
and use
\begin{equation*}
\begin{split}
\Omega_t&=\Omega_0(1+t W+\dots)=B_0F_0(1+t W+\dots)=B_0(1+t P+\dots) F_0(1+t \widetilde X+\dots)
\end{split}
\end{equation*}
to obtain the equation
\[F_0W=PF_0+F_0 \widetilde X.\]
Recall from \eqref{defWw} that $W\in  {\bf sl}_3^{\hat\tau}(\S^1)$ is independent of $w\in \mathbb D.$
Split
\[W=W_u+W_s\] with
$W_u,(iW_s)\in{\bf su}_3^{\hat\tau}(\S^1).$
We can ignore $W_u$ in the following since it is unitary and independent of $w, $ i.e., it only corresponds to an infinitesimal symmetry of $\S^5.$

Then $W_s$ satisfies $(W_s)^*=W_s$. This property is preserved under conjugation with elements of $\Lambda\SU_3.$ Consider
\begin{equation}\label{eq:Wsplitting}
\begin{split}\widetilde W:=F_0W_sF_0^{-1}=P+F_0 \hat X F_0^{-1}\end{split}
\end{equation}
where $\hat X=\widetilde X-W_u$, and where the left hand side $\widetilde W$ is symmetric. Splitting
\[\widetilde W=\widetilde W^++\widetilde W^0+\widetilde W^-\]
into positive, constant and negative parts and using $F_0 \hat X F_0^{-1}\in {\bf su}_3^{\hat\tau}(\S^1)$ therefore gives:
\[P=2 \widetilde W^++\widetilde W^0\quad
\text{and}
\quad F_0 \hat X F_0^{-1}=-\widetilde W^++\widetilde W^-.\]
Then, (neglecting the infinitesimal rotational part) a direct computation yields
\begin{equation*}
\begin{split}
\dot{\hat f}^{t=0}&=\dot{(F_t)^{-1}(\lambda_0)}\begin{psmallmatrix}1\\0\\0\end{psmallmatrix}=-(\hat X F_0^{-1})(\lambda_0)\begin{psmallmatrix}1\\0\\0\end{psmallmatrix}\\
&=\tfrac{\left(\sqrt{3}+2\right) \log (3)}{2\left(x^2+y^2+\sqrt{3}+2\right)^2}
\begin{psmallmatrix}4 i(x^2+y^2)\\
(\sqrt{3}-1) (ix+y) (x^2+y^2-\sqrt{3}-2)\\
 (\sqrt{3}-1) (ix-y) (x^2+y^2-\sqrt{3}-2)\end{psmallmatrix}=d{\hat f}^{t=0}(-\tfrac{\log(3)}{2} (x\tfrac{\partial}{\partial x}+y\tfrac{\partial}{\partial y}))
\end{split}
\end{equation*}
as claimed.
\end{proof}

 \subsection{Proof of Embeddedness}\label{ssec:emb}
 Before proving embeddedness of the Legendrian lifts, we note that the projected minimal Lagrangian immersions in $\CP^2$ cannot be embedded. Indeed, for any compact oriented Lagrangian surface $\Sigma\subset\CP^2$ one has
\[
[\Sigma]\cdot[\Sigma]=-\chi(\Sigma)=2g-2,
\]
see for example \cite{Zhang}. Since $g(\Sigma_k)=\frac12(k-1)(k-2)$, the algebraic self-intersection number of $f_k(\Sigma_k)$ equals $k^2-3k$.
 It can be shown that the $3k$ many points 
 $p^j_\ell\in\Sigma_k$ are mapped via $f_k$ to the three points $[1,0,0],$ $[0,1,0]$ and $[0,0,1]$, respectively, and each
 of the three
 contributes with self-intersection number $-k.$
 Similarly, each of the
  $k^2$-many preimages of $0\in\CP^1\in\Sigma_k/\Gamma_{deck}$ is mapped to the same point in $\CP^2$ as
  exactly one of the $k^2$-many preimages of $\infty\in\CP^1\in\Sigma_k/\Gamma_{deck}$, and the corresponding
  self-intersection number is $1.$ Using the symmetries of $\hat f_k$, one sees that these points get separated through
  the horizontal special Legendrian lift $\hat f_k.$ In fact, we have:

\begin{The}\label{the:embedded}
For all large $k$ and $c_0=\tfrac{1}{\sqrt{3}}$, the special Legendrian surfaces $\hat f_k=\hat f_k^+\colon\Sigma_k\to\S^5$ constructed in Corollary \ref{cor:SLag} and Theorem \ref{mainT} are embedded.
\end{The}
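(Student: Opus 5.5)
The plan is a contradiction argument built on the two asymptotic regimes established above. Suppose that for infinitely many $k$ there are points $x_k\neq y_k$ in $\Sigma_k$ with $\hat f_k(x_k)=\hat f_k(y_k)$. We use the $3k$ discs $\mathbb D^j_\ell$ (open and dense) together with the complementary region. That region lies over compact subsets of the thrice-punctured sphere $\Sigma_k/(\Z_k\times\Z_k)$, possibly after shrinking the discs to $|w|<1-\delta$ and using the coordinate $u$ with $w=e^{tu}$ near the boundary. The symmetry group $\Gamma_{deck}$ acts on $\hat f_k$ by the $\SU_3$-representation $\rho^{-1}$, so we may normalize $x_k$ to lie in a fixed fundamental domain, e.g. in $\mathbb D^1_1$ or in the corresponding neighbourhood of $q_{1,1}$. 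We then split into cases according to where $x_k,y_k$ lie.

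\emph{Case 1: both points lie in discs.} By Proposition~\ref{pro:uniform}, $\hat f_k$ on $\mathbb D^j_\ell$ converges uniformly to the cap $\hat{\mathbb D}^j_\ell$. By Remark~\ref{rem:intersection}, distinct caps are disjoint except at boundary points lying on the Clifford torus, and each cap is embedded. Hence, after passing to subsequences, either (a) $x_k,y_k$ lie in the same disc and converge to the same interior point, which is excluded by the uniform $C^1$ convergence to an embedded cap on compact subdomains; or (b) they converge to a common boundary point $a$ of two or three caps. In subcase (a) near the boundary we use the first-order expansion of Proposition~\ref{pro:sphericalT} and Theorem~\ref{The:limit-surfacerp2T}. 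The deformation is a radial reparametrization by $-\tfrac{\log 3}{2}r\partial_r$, so the $O(t)$ correction does not destroy injectivity along radial rays. Injectivity in the angular direction follows from the $\Z_k$-rotation symmetry, which acts freely away from the centre of the disc.

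\emph{Case 2: points near a Clifford-torus point $Q$.} This is the transition regime and the main obstacle. Near $Q_{j,\ell}$, the configuration at scale $t$ is governed by Theorem~\ref{the:scherk}: $\hat f_t=q+t\,\dot{\hat f}+o(t)$ with the error controlled uniformly in $C^1$ on compact sets of $\widetilde S$. By Corollary~\ref{cor:scherk}, $\dot{\hat f}$ is a properly embedded surface in $\mathcal H_q\cong\C^2$ whose three asymptotic ends are planar. The plan is to blow up by $1/t$ around $Q$. On compact pieces, embeddedness of the Scherk surface and $C^1$ closeness give injectivity. To pass to the ends we must match the Scherk ends with the cap boundaries. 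The $u$-coordinate estimate of Proposition~\ref{pro:sphericalT}, valid uniformly for $-\tfrac1t\log 2<\Re u<0$, shows that each end region is a $C^1$-small graph over the corresponding tangent plane of the cap (equivalently over the planar end), with error $O(t^2)$ after rescaling by $t$. Transversality of the three caps at $a=Q$ (they meet pairwise at angles bounded below, by the explicit planes $E$ and the symmetries) then separates the different ends. Pieces from the same end are separated by the $\Z_k$-periodicity, which matches the double periodicity of the Scherk surface in the blow-up.

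\emph{Case 3: points in different regimes, or far apart.} Points mapping to distinct limit sets at positive distance are separated by uniform convergence. All remaining coincidences are localized near a single $Q$ or within a single cap, and so are covered by Cases 1 and 2.

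The hardest step is the uniform gluing in Case 2. Specifically, one must show the $O(t^2)$ remainder in Proposition~\ref{pro:sphericalT} is small relative to the $O(t)$ separation of neighbouring sheets throughout the annular neck $|w|\in(1-\delta,1)$. The approach I would try first is to use the explicit first-order term $U(u)$, which interpolates between the Scherk differential \eqref{eq:sherkder} as $u\to0$ and the constant $W$ as $u\to-\infty$. The aim is to verify that the first-order map is an embedding on the whole strip, so that the $O(t^2)$ perturbation preserves it by a quantitative inverse-function argument.
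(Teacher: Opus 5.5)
\textbf{Gap 1: Case 1 cannot separate caps of the same family.} Your Case 1, and the reduction in Case 3, fail as stated. The model caps $\hat{\mathbb D}^1_\ell=\tilde\varphi_3^{\ell-1}\hat{\mathbb D}^1_1$ differ by rotating the centre along $C_1$ through multiples of $2\pi/k$. So neighbouring caps of one family are disjoint only by a margin of order $1/k$. As $k\to\infty$, every $\hat{\mathbb D}^1_{\ell(k)}$ with $\ell(k)/k\to 0$ converges to the same cap $\hat{\mathbb D}^1_1$.

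Proposition~\ref{pro:uniform} gives convergence with no rate. It therefore cannot exclude $x_k\in\mathbb D^1_1$, $y_k\in\mathbb D^1_2$ with $\hat f_k(x_k)=\hat f_k(y_k)$ near an \emph{interior} point of the cap. Your dichotomy (a)/(b) omits exactly this configuration, and ``localized within a single cap'' in Case 3 assumes what must be proved.

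The missing idea is an order-$t$ separation estimate on the interior of the discs. Theorem~\ref{The:limit-surfacerp2T} shows the first-order variation on the disc is $d\hat f(X)$, which is tangent to the cap. Hence on $\{|w|<r\}$ the piece $\hat f_k(\mathbb D^1_\ell)$ stays within $o(1/k)$ of a reparametrization of $\hat{\mathbb D}^1_\ell$. Separately, the generator of the rotation $\phi^s$ is transverse to the compact cap, which gives $\mathrm{dist}(\hat f(\mathbb D),\phi^s\hat f(\mathbb D))\geq c_0 s$ as in \eqref{eq:distcap}. Combining the two yields the $c_1/k$ separation \eqref{eq:drift} between different discs of one family. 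You use Theorem~\ref{The:limit-surfacerp2T} only for injectivity inside a single disc, which is not where it is needed. Also, the free $\Z_k$-action only rules out coincidences between symmetry-related points, not between arbitrary points of one disc.

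\textbf{Gap 2: Case 2 is only a plan, and the plan is problematic.} You propose to show that a first-order map is an embedding on the whole strip $-\tfrac1t\log 2<\Re u<0$. Across that strip $tu$ is of order one. So the leading term $\hat f(e^{tu})$ is the full curved annular part of the cap, not a graph over a tangent plane at scale $t$, and there is no single linearization to perturb.

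The paper avoids this by using two overlapping regimes.
\begin{itemize}
\item On a fixed compact set $\mathbb K$, the rescaled maps $k\exp_c^{-1}\circ\hat f_k$ converge in $C^1$ to the embedded Scherk surface.
\item On $\{\Re u<-r\}$ with $r$ large, the fact that $U(u)+W\to 0$ as $\Re u\to-\infty$ gives $\hat f_t=\hat f\circ\Theta_t+tY_t$ with $\|Y_t\|_{C^1}<\epsilon$, where $\Theta_t$ is the flow of $X$; see \eqref{eq:firstorddefo} and \eqref{eq:annular-reparam}.
\end{itemize}
In the second regime, pieces from different caps are separated at order $t$ by the same transversality as above: for different $j$ the inward directions meet at a fixed angle, and for equal $j$ the $\tilde\varphi_3$-rotation moves the boundary point with nonzero speed. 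Within a single disc, injectivity follows from $C^1$-stability of the embedded annulus $w\mapsto\hat f(\Theta_t(w))$. The mixed case uses a $c/k$ separation between the Scherk piece and the annular piece.
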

\begin{proof}
The proof is by contradiction based on Theorem \ref{the:scherk}, Theorem \ref{The:limit-surfacerp2} and Theorem \ref{The:limit-surfacerp2T}. Let
\[a_k\neq b_k\in\Sigma_k\]
be a sequence of distinct points satisfying
\[c_k:=\hat f_k(a_k)=\hat f_k(b_k)\in\S^5 \qquad \text{for all sufficiently large } k.\]
Since $\S^5$ is compact, after taking a subsequence if necessary, the sequence $c_k$ has a limit point, denoted by $c.$

We make use of the notations introduced in Section \ref{sec:RsandN}.
In particular, we recall that 
the centers of the discs ${\mathbb D}^j_\ell$, $j=1,2,3$, are mapped to equidistant points 
on the circles \[C_1=\S^5\cap \C\oplus0\oplus 0,\quad  C_2=\S^5\cap 0\oplus \C\oplus0,\quad  C_3=\S^5\cap 0\oplus0\oplus \C\]
respectively. 
Composing, for each $k$, with a suitable element of $\Gamma_{deck}$ and the corresponding $\SU_3$-symmetry of $\S^5$, we may assume
\[
a_k\in \overline{\mathbb D^1_1},\qquad \arg w(a_k)\in\Bigl[0,\frac{2\pi}{k}\Bigr).
\]
Since these ambient symmetries are isometries of $\S^5$, the equality
$\hat f_k(a_k)=\hat f_k(b_k)$ is preserved.
In particular, Proposition \ref{pro:uniform} then implies that $c$ is contained in the image of the geodesic segment
from $(i,0,0)$ to $\tfrac{i}{\sqrt{3}}(1,1,1).$

We distinguish two cases.

{\bf Case 1: $c\neq \tfrac{i}{\sqrt{3}}(1,1,1)$} 

Using Proposition \ref{pro:uniform}, we have $a_k\in {\mathbb D}^1_1$. Let $b_k\in \bar{\mathbb D}^{j(k)}_{\ell(k)}$ 
where $j(k)\in\{1,2,3\}$ and $\ell(k)\in\{1,\dots,k\}.$
For $k$ large, Proposition \ref{pro:uniform} (together with the symmetries) implies that $j(k)=1$. 
Moreover, if \[\lim_{k\to\infty}|w(b_k)|=1\] it follows that the limit
\[\lim_{k\to\infty} \hat f_k(b_k)\in\{ \psi^s\phi^t \tfrac{i}{\sqrt{3}}(1,1,1)\mid\; s,t\in[0,1]\}=:\text{Cliff}\]
is contained in the Clifford torus\footnote{This Clifford torus is also minimal Legendrian, but with different Legendrian phase.} in $\S^5$, where 
$
\psi^s=\text{diag}(1,\exp(2\pi i s),\exp(-2\pi i s))
$ and $
\phi^t=\text{diag}(\exp(2\pi i t),1,\exp(-2\pi i t))
$
are the continuous symmetries corresponding to the discrete symmetries $\tilde\varphi_1$ and $\tilde\varphi_3,$ respectively.  Hence, there exists  $r<1$ such that for all large enough $k$
\[|w(b_k)|<r.\] 

By construction, i.e.
because the infinitesimal generator of $\phi^t$ is transverse to the compact cap,
 (compare also with Remark \ref{rem:intersection}),  there is a constant $c_0>0$ such that for small $t>0$
\begin{equation}\label{eq:distcap}\text{dist}_{\S^5}(\,\hat f({\mathbb D}),\phi^t(\hat f({\mathbb D}))\,)\geq c_0 t.\end{equation}
Denote
\[{\mathbb D}^1_\ell(r):=\{b\in {\mathbb D}_\ell^1\mid |w(b)|<r<1\}.\]
From Proposition \ref{pro:uniform} and Theorem \ref{The:limit-surfacerp2T} together with the symmetries 
and \eqref{eq:distcap}
we therefore
obtain
\begin{equation}\label{eq:drift}\text{dist}_{\S^5}(\,\hat f_k({\mathbb D}^1_1(r)),\hat f_k({\mathbb D}^1_\ell(r))\,)\geq c_1 \tfrac{1}{k} \end{equation}
for a suitable $c_1>0$, $k$ large and every $\ell\in\{2,\dots,k\}.$ We therefore conclude that $b_k\in {\mathbb D}^1_1(r)$ for all large $k.$
Therefore, Proposition \ref{pro:uniform} and Theorem \ref{The:limit-surfacerp2T}
imply $a_k=b_k$ for large $k,$ showing that {Case 1} is not possible.

{\bf Case 2: $c= \tfrac{i}{\sqrt{3}}(1,1,1)$} 

Similar to the proof of \eqref{eq:drift}, we  assume without loss of generality that 
$b_k\in \bar {\mathbb D}^{j(k)}_{\ell(k)}$ for all large $k$ with
$\ell(k)/k\to 0$, $j(k)\in\{1,2,3\}$ and
\[\lim_k\arg(w(b_k))=0\]
where $w$ is the natural holomorphic coordinate on the corresponding discs.
Likewise, this can also be deduced from Corollary \ref{cor:scherk}.

We distinguish three cases by whether the projections of $a_k,b_k$ remain in a fixed compact subset of the base $\CP^1\setminus\{1,\zeta,\zeta^2\}$.
\begin{enumerate}
\item[(i)]
the points $a_k$ and $b_k$ project to some compact subset $\mathbb K\subset\CP^1\setminus\{1,\zeta,\zeta^2\}=\Sigma_k/(\Z_k\times\Z_k)$ for all large $k;$
\item[(ii)] for every compact subset $\mathbb K\subset\CP^1\setminus\{1,\zeta,\zeta^2\}$ both $a_k$ and $b_k$ are not contained in $\mathbb K$ for all large $k;$
\item[(iii)] for every (large enough) compact subset $\mathbb K\subset\CP^1\setminus\{1,\zeta,\zeta^2\}$ exactly one of $a_k$ and $b_k$ is contained in $\mathbb K$ for all large $k.$
\end{enumerate}
We will show in all three cases that $a_k=b_k$ for large $k$. Going to subsequences if necessary, this then proves the general case.

{\bf Case 2 (i)}
In this case, there exists $\delta>0$ such that
\[\text{dist}(\hat f_k(a_k),c)<\delta \tfrac{1}{k}\quad\text{ and }\quad\text{dist}(\hat f_k(b_k),c)<\delta \tfrac{1}{k}\]
for all sufficiently large $k.$ 
By Theorem \ref{the:scherk}, after identifying the horizontal space $\mathcal H_c\subset T_c\S^5$ 
of the Hopf fibration with $\C^2$, the rescaled maps
\[
k\,\exp_c^{-1}\circ \hat f_k
\]
converge in $C^1$ on  $\mathbb K$ to the embedded Scherk surface (Corollary \ref{cor:scherk}).
 Hence, we deduce $a_k=b_k$ for large $k.$

 {\bf Case 2 (ii).}
Put $t=\tfrac{1}{k}$, and write $\hat f_t=\hat f_k$.  On
$\mathbb D^1_1\setminus\{p^1_1\}$ we use the logarithmic coordinate
\[
w=e^{tu},\qquad z=1-w^k=1-e^u,\qquad u=x+iy.
\]
Thus the annular region near the point $z=1$ is described by
large negative values of $x=\Re u$.  For $r>0$, set
\[
S_{r,t}:=
\left\{
u=x+i y\;\middle|\;
-\tfrac{\log 2}{t}<x<-r
\right\},
\qquad
A_{r,t}:=\{\,e^{tu}\mid u\in S_{r,t}\,\}.
\]
Thus we have
$
A_{r,t}=\{\,w\in\mathbb D\mid \tfrac12<|w|<e^{-rt}\,\}.
$
When a branch of $u=t^{-1}\log w$ is chosen, the estimates below are
independent of the branch; on adjacent branches one obtains the same
estimate by the corresponding $\mathbb Z_k$-symmetry.

We first recall what the assumption of Case 2(ii) means in this
coordinate.  Since $c=\frac{i}{\sqrt 3}(1,1,1)$, Proposition
\ref{pro:uniform} and the fact that $\hat f\colon\overline{\mathbb D}\to
\S^5$ is an embedding imply
$ w(a_k)\longrightarrow 1$.
Hence
\[
        \tfrac1k\,\Re u(a_k)=\log |w(a_k)|\longrightarrow 0 .
\]
On the other hand, Case 2(ii) says that the projections of $a_k$ leave
every compact subset of
$\CP^1\setminus\{1,\zeta,\zeta^2\}$.  Since $a_k\in\mathbb D^1_1$, this
means $z(a_k)\to 1$, or equivalently
$e^{u(a_k)}=1-z(a_k)\longrightarrow 0.$
Therefore
\[
\Re u(a_k)\longrightarrow -\infty .
\]
Combining the two conclusions, we obtain: for every fixed $r>0$, there
exists $k_0$ such that
\begin{equation}\label{transition-regime}
        -\log(2)\,k < \Re u(a_k)<-r
\end{equation}
for all $k\geq k_0$.  The same statement holds for $b_k$, in the
corresponding logarithmic coordinate on the disc containing $b_k$.

We use the following uniform first-order estimate on the
transition annulus.  For every $\epsilon>0$, there are $r>0$ and
$t_0>0$ such that, for all $0<t<t_0$ and all $u\in S_{r,t}$,
\begin{equation}\label{eq:firstorddefo}
\hat f_t(e^{tu})
=
\hat f(e^{tu})
+
t\,d\hat f_{e^{tu}}\left(X(e^{tu})\right)
+
t\,Y_t(u),
\qquad
\|Y_t\|_{C^1(S_{r,t})}<\epsilon .
\end{equation}
Here $X=-\frac{\log 3}{2}\,r\partial_r$ is the vector field from Theorem
\ref{The:limit-surfacerp2T}, and we view all maps as $\C^3$-valued maps
using the inclusion $\S^5\subset\C^3$.

Indeed, Proposition \ref{pro:sphericalT} gives, on the $u$-domain,
\[
\Omega_t(u)=
\begin{psmallmatrix}
1&0&0\\
0&1&-\frac{i}{\lambda}\\
0&0&1
\end{psmallmatrix}
\exp(-tA_1u)
\left(\Id-t\,U(u)+t^2\hat U(t,u)\right),
\]
with $\hat U$ uniformly bounded.  On the other hand, the expansion
used in Theorem \ref{The:limit-surfacerp2T} is obtained by replacing
$-U(u)$ by the limiting value $W$, where
$ W=-\lim_{\Re u\to-\infty}U(u).
$
Thus
\[U(u)+W\longrightarrow 0\]
as $\Re u\to-\infty$, uniformly in $C^1$ on the strips
$\{\Re u<-r\}$.  Since the Iwasawa decomposition is real analytic 
on a
neighbourhood of the compact family of frames under consideration, the
induced maps into $\S^5$ differ in $C^1$ by
\[O\left(t\,\|U+W\|_{C^1(\{\Re u<-r\})}\right)+O(t^2).
\]
Choosing $r$ large and then $t$ small gives \eqref{eq:firstorddefo}.
Theorem \ref{The:limit-surfacerp2T} identifies the first-order term coming
from $W$ precisely with $d\hat f(X)$.

Let $ \Theta_s(w)=e^{-\frac{\log 3}{2}s}w
$
 denote the flow of $X$. Since
\[
\hat f(\Theta_t(w))
=
\hat f(w)+t\,d\hat f_w(X(w))+O(t^2)
\]
uniformly on $\overline{\mathbb D}$, \eqref{eq:firstorddefo} can be
rewritten on $A_{r,t}$ as
\begin{equation}\label{eq:annular-reparam} \hat f_t(w)
        =\hat f(\Theta_t(w))
       + t\,Y_t(t^{-1}\log w)
        +O(t^2),
        \qquad
        \|Y_t\|_{C^1}<\epsilon .
\end{equation}
Thus, in the transition region, the first-order deformation is a
reparametrization of the limiting cap, up to an arbitrarily small
$C^1$-error.

We now exclude the possible positions of $b_k$.  First suppose that
$b_k$ lies in a disc different from $\mathbb D^1_1$.  The leading term
in \eqref{eq:annular-reparam} is the corresponding model cap, moved by the
inward flow $\Theta_t$ and then by the appropriate ambient symmetry.
For the model caps this gives a separation of order $t$.  More precisely,
after possibly increasing $r$, there is a constant $c_r>0$ such that
for all sufficiently small $t$,
\[
\operatorname{dist}_{\S^5}
\Bigl(
\hat f(\Theta_t(A_{r,t})),
\,
\hat{\mathbb D}^{j}_{\ell}\text{ moved by }\Theta_t
\Bigr)
\geq c_r t
\]
whenever $(j,\ell)\neq(1,1)$ and the corresponding boundary point tends
to $c=\frac{i}{\sqrt 3}(1,1,1)$.  This follows directly from the explicit model caps: 
If \(j\neq1\), the two model caps meet at
\(\frac{i}{\sqrt3}(1,1,1)\) at the common boundary point.
  In the transition region the points have moved a positive
distance 
 inward from the common boundary point.
Since the inward directions of the two caps are separated by a fixed
angle, the points move away from each other. Thus $b_k\notin \mathbb D^j_\ell$ for large $k.$

If
$j=1$ but $\ell\neq1$, the infinitesimal $\tilde\varphi_3$-rotation
moves the boundary point with non-zero speed, giving separation at least
of order $|\ell-1|t$, hence at least of order $t$.
The error term in \eqref{eq:annular-reparam} is bounded by
$\epsilon t+O(t^2)$.  Choosing $\epsilon<c_r/4$ and then $t$ small,
this error cannot close the order-$t$ gap between the different cap
pieces.  Hence $b_k$ cannot lie in any disc different from
$\mathbb D^1_1$.

It remains to consider the case $b_k\in\mathbb D^1_1$.  By
\eqref{transition-regime}, both $a_k$ and $b_k$ lie in the annulus
$A_{r,\tfrac1k}$ for all sufficiently large $k$.  The map
\[w\longmapsto \hat f(\Theta_{\tfrac1k}(w))\]
is an embedding of the closed annulus
$\{\,w\in\mathbb D\mid \tfrac12\leq |w|\leq e^{-r\tfrac1k}\,\}.$
Since the limit surface $\hat f$ is an embedding of the closed disc, this embedding is
stable under sufficiently small $C^1$-perturbations.  Therefore, after
choosing $\epsilon$ and $\tfrac1k$ sufficiently small, the map
$\hat f_{k}$ is injective on $A_{r,\tfrac1k}$.  Thus
$\hat f_k(a_k)=\hat f_k(b_k)$
forces $w(a_k)=w(b_k)$, and hence $a_k=b_k$, because $w$ is a
coordinate on $\mathbb D^1_1$.  This contradicts the assumption that
$a_k\neq b_k$.  Therefore Case 2(ii) cannot occur.

  {\bf Case 2 (iii)}
 This case follows by combining the arguments of  Case 2 (i) and Case 2 (ii):
 Choose a compact set $\mathbb K$ so large that points projecting into $\mathbb K$ are in the Scherk regime of Case 2(i), while points outside $\mathbb K$ lie in the boundary-annulus regime of Case 2(ii). The rescaled Scherk image of the compact part and the annular cap image of the noncompact part are separated by a distance bounded below by $c/k$ for some $c>0$, for all sufficiently large $k$. 
 Hence  $\hat f_k(a_k)=\hat f_k(b_k)$ is impossible.
 \end{proof}

 \bigskip
 \noindent \footnotesize \textsc{Beijing Institute of Mathematical Sciences and Applications, Beijing 101408, China
}\\
\emph{E-mail address:}  \verb|sheller@bimsa.cn|
 
 \bigskip
 \noindent \footnotesize \textsc{Department of Mathematics, Washington University in St. Louis}\\
\emph{E-mail address:}  \verb|ouyang@math.wustl.edu|

\bigskip
\noindent \footnotesize \textsc{Department of Mathematics and Statistics, University of Massachusetts, Amherst}\\
\emph{E-mail address:}  \verb|pedit@math.umass.edu|

\end{document}